\definecolor{darkyellow}{rgb}{0.7, 0.7, 0}
\definecolor{darkred}{rgb}{0.6, 0.1, 0.1}
\definecolor{darkblue}{rgb}{0.2, 0.2, 0.7}
\definecolor{darkgreen}{rgb}{0.1, 0.4, 0.1}
\definecolor{bettercyan}{rgb}{0.1, 0.4, 0.7}
\renewcommand*{\bibnamedash}{%
	\leavevmode\raise +0.6ex\hbox to 5.5ex{\hrulefill}.\space\space}
\newtheorem{propositionx}{Proposition}[section]
\newenvironment{proposition}
{\pushQED{\qed}\propositionx}
{\popQED\endpropositionx}
\newenvironment{propositionp}
{\pushQED{\qed}\propositionx}
{\popQED\endpropositionx}
\newtheorem*{theorem*}{Theorem}
\newtheorem{theorem}{Theorem}
\newenvironment{corollary}
{\pushQED{\qed}\corollaryx}
{\popQED\endcorollaryx}
\newenvironment{lemma}
{\pushQED{\qed}\lemmax}
{\popQED\endlemmax}
\newenvironment{lemmap}
{\pushQED{\qed}\lemmax}
{\popQED\endlemmax}
\theoremstyle{remark}
\newtheorem{remark}[propositionx]{Remark}
\newtheorem*{remark*}{Remark}
\newtheorem*{example}{Example} 
\newcommand{\bbC}{\mathbb{C}}
\newcommand{\bbN}{\mathbb{N}}
\newcommand{\bbR}{\mathbb{R}}
\newcommand{\bbS}{\mathbb{S}}
\newcommand{\bbZ}{\mathbb{Z}}
\newcommand{\calA}{\mathcal{A}}
\newcommand{\calC}{\mathcal{C}}
\newcommand{\calD}{\mathcal{D}}
\newcommand{\calE}{\mathcal{E}}
\newcommand{\calF}{\mathcal{F}}
\newcommand{\calG}{\mathcal{G}}
\newcommand{\calI}{\mathcal{I}}
\newcommand{\calJ}{\mathcal{J}}
\newcommand{\calK}{\mathcal{K}}
\newcommand{\calM}{\mathcal{M}}
\newcommand{\calQ}{\mathcal{Q}}
\newcommand{\calR}{\mathcal{R}}
\newcommand{\calS}{\mathcal{S}}
\newcommand{\calX}{\mathcal{X}}
\newcommand{\frakW}{\mathfrak{W}}
\newcommand{\dd}{\,\mathrm{d}}
\title{Full semiclassical asymptotics near transition points}
\author{Ethan Sussman}
\date{March 25th, 2025 (Last update). October 23, 2023 (Draft).}
\email{ethanws@stanford.edu}
\email{ethan.sussman@northwestern.edu}
\address{Department of Mathematics, Stanford University, California, USA}
\address{Department of Mathematics, Northwestern University, Illinois, USA}
\subjclass[2020]{Primary 34E20. Secondary 
	34D05, 34E13.}
\begin{document}

\begin{abstract}
	We construct complete asymptotic expansions of solutions of the 1D semiclassical Schr\"odinger equation near transition points. There are three main novelties: (1) transition points of order $\kappa\geq 2$ (i.e.\ trapped points --- the simple turning point is $\kappa=1$, the simple pole is $\kappa=-1$) are handled, (2) various terms in the operator are allowed to have controlled singularities of a form compatible with the geometric structure of the problem (some applications are given in the text), and (3) the term-by-term differentiability of the expansions with respect to the semiclassical parameter is included. 
	We prove that any solution to the semiclassical ODE with initial data of exponential type is of exponential-polyhomogeneous type on a suitable manifold-with-corners compactifying the $h\to 0^+$ regime. Consequently, such a solution has an atlas of full asymptotic expansions in terms of elementary functions, and these expansions are well-behaved. The Airy and Bessel functions show up in the expected way, as the asymptotic data at one boundary edge. We are able to handle cases that Langer--Olver could not because the framework of polyhomogeneous functions on manifolds-with-corners provides more flexibility (two matched $h\to 0^+$ expansions, possibly with logarithms, in this case) than that employed by Langer--Olver (one uniform $h\to 0^+$ expansion without logarithms). We work entirely in the $C^\infty$ category. No analyticity is ever assumed, nor proven.
\end{abstract}
	
\maketitle

\tableofcontents

\section{Introduction}

In this note, we revisit the old problem of producing asymptotic expansions of solutions of semiclassical ODEs near transition points, where the classical Liouville--Green theory breaks down.
Consider the one-dimensional semiclassical Schr\"odinger operator
\begin{equation}
P = \{P(h)\}_{h>0} = - h^2 \frac{\partial^2}{\partial z^2} + \varsigma z^\kappa W(z)  + h^2 \psi(z,h)
\label{eq:1}
\end{equation}
on the interval $[0,Z]_z$,
where $\varsigma \in \{-1,+1\}$ is a sign ($\varsigma>0$ is known as the ``classically forbidden'' case, and $\varsigma<0$ is the ``classically allowed'' case), $\kappa \in \{-1\} \cup \bbN$, $W \in C^\infty([0,Z]_z;\bbR^+)$, 
and $\psi \in C^\infty( (0,Z]_z\times [0,\infty)_{h^2} ;\bbC)$ is drawn from some suitable class of admissible functions of the independent variable $z$ and the semiclassical parameter $h$. 
We will be more precise later on about the meaning of ``admissible'' in the previous sentence; for now, it suffices to note that any function $\psi: (0,Z]_z\times [0,\infty)_h\to \bbC$ of the form
\begin{equation}
\psi = \frac{\nu}{z^2} + \frac{\varphi(h)}{z} + G(z,h)
\label{eq:psi_init}
\end{equation}
for $\nu\in \bbC$, $\varphi(h) \in C^\infty([0,\infty)_{h^2};\bbC)$, and $G\in C^\infty([0,Z]_z\times [0,\infty)_{h^2};\bbC)$ is allowed. However, this does not exhaust the admissible $\psi$. The reader is invited to take $\psi=0$ if simplification is desired, though some of the intended applications, not to mention our proofs, require greater generality.
 
Restated, the problem is producing $h\to 0^+$ asymptotics of solutions $u=\{u(-;h)\}_{h>0}$ to $Pu=0$.
The key structural feature of $P$ is the \emph{transition point} at $z=0$, at which the potential $V(z)=\varsigma z^\kappa W(z)$
may have a zero or singularity. The Liouville--Green theory suffices away from the transition point \cite[\S6]{OlverBook}. Moreover, for each individual $h>0$, the operator $P(h) \in \operatorname{Diff}^2(0,Z)$ is (at worst) a regular singular ordinary differential operator depending smoothly on $h$ (without varying indicial roots), so the behavior of solutions follows from general theory \cite{MelroseAPS}. It is only the behavior near the corner 
\begin{equation} 
	\{z=0,h=0\}\in [0,Z]_z\times [0,\infty)_{h}
\end{equation} 
that needs to be further understood. Put differently, the issue at hand is asymptotics as $h,z\to 0^+$ \emph{together}.
The use of the term ``corner'' is the first instance of our emphasis on geometric structures.

Our main theorem will be phrased (somewhat imprecisely) in terms of the notion of \emph{exponential-polyhomogeneous type}. We will give a brief introduction to polyhomogeneous and exponential-polyhomogeneous functions in \S\ref{subsec:primer}. Roughly, an exponential-polyhomogeneous function is one which admits, in the relevant limits, a full atlas of matched (well-behaved) asymptotic expansions in terms of elementary functions. ``Exponential-polyhomogeneity'' is just a way to express the qualitative fact that asymptotic expansions exist and behave in the expected way under differentiation without having to write out explicitly the actual expansions. This is very convenient when trying to state a general-purpose theorem.

We say that a function $u:(0,Z]_z\times (0,\infty)_h  \to \bbC$ with $C^1$ slices $u|_{h=h_0} \in C^1(0,Z]$ has ``initial data of exponential-polyhomogeneous type'' if the restrictions 
\begin{align}
	\begin{split}
		u(Z,h)&:(0,\infty)_h  \to \bbC, \\
		u'(z,h)|_{z=Z}&:(0,\infty)_h  \to \bbC
	\end{split}
\end{align}
are of exponential-polyhomogeneous type on $[0,\infty)_h$. This restricts their $h\to 0^+$ behavior while saying nothing about the irrelevant $h\to \infty$ regime. If the reader prefers, they may consider the case where $u(Z,h),u'(Z,h)$ are independent of $h$; $h$-independent initial data is of exponential-polyhomogeneous type.

Our main theorem is a constructive version of: 
\begin{theorem}
	If $Pu=0$ and $u$ has initial data of exponential-polyhomogeneous type, then $u$ is of corresponding exponential-polyhomogeneous type on a certain compactification $M\hookleftarrow (0,Z)_z\times (0,\infty)_h$
	defined below, in \S\ref{subsec:compactification}. 
	\label{thm1}
\end{theorem}

See \Cref{fig} for a depiction of $M$. Skip directly to \S\ref{sec:examples}, \S\ref{subsec:more_examples} for examples.

The theorem states the existence of full, well-behaved asymptotic expansions of solutions in suitable asymptotic regimes which suffice to cover all possible ways of following $u$ along some  smooth graph $(\Gamma(h),h): [0,\infty)_h \to [0,Z]_z\times [0,\infty)_h$
as $h\to 0^+$. A key point is that the asymptotic expansions in powers of the boundary-defining-functions of the edges of $M$ do not depend on the angle or other aspects of the manner via which $\Gamma$ approaches $\partial M$. Only the endpoint $\lim_{h\to 0^+} (\Gamma(h),h) \in \partial M$ in $M$ matters. This sort of behavior can be contrasted with the behavior of the polar angle 
\begin{equation} 
	\theta = \operatorname{arctan}(y/x) : ([0,\infty)_x\times [0,\infty)_y)\backslash \{0\}\to [0,\pi/2],
\end{equation} 
the limiting value of which when followed along a curve ending at the origin depends on the angle of approach. The difference is that $\theta$ is not polyhomogeneous on the punctured quadrant but rather on the ``polar coordinates'' blowup $[[0,\infty)^2_{x,y}; (0,0)]$.

A more explicit and precise version of \Cref{thm1} appears below, in \S\ref{sec:thm}. See \Cref{thm2}. The proof is constructive, in the sense that it provides an algorithm for computing all asymptotic expansions, as well as joint asymptotic expansions at the corners. That algorithm is our main result. We will deduce \Cref{thm1} from \Cref{thm2} as a corollary.

\begin{figure}[t]
	\begin{tikzpicture}[scale=1]
		\fill[gray!5] (5,1.5) -- (0,1.5) -- (0,0)  arc(90:0:1.5) -- (5,-1.5) -- cycle;
		\draw[dashed] (5,1.5) -- (5,-1.5);
		\node (ff) at (.75,-.75) {$\mathrm{fe}$};
		\node (zfp) at (-.35,.75) {$\mathrm{ze}$};
		\node (pf) at (2.5,1.8) {$\mathrm{ie}=\Gamma^{(Z)}$};
		\node (mf) at (3.25,-1.8) {$\mathrm{be}$};
		\draw[->, darkred] (.1,.1) -- (.1,.5) node[right] {$z$};
		\draw[->, darkred] (.1,1.4) -- (1.7,1.4) node[below] {$h^2$};
		\draw[->, darkred] (.1,1.4) -- (.1,.9) node[right] {$Z-z$};
		\draw[->, darkred] (.1,.1) to[out=0, in=150] (1,-.23) node[above] {$\quad\qquad\lambda^{-(\kappa+2)}$};
		\draw[->, darkred] (1.6,-1.4) -- (2.2,-1.4) node[above] {$\qquad\;h^{2/(2+\kappa)}$};
		\draw[->, darkred] (4.9,-1.4) -- (4.2,-1.4) node[above] {$h_0-h$};
		\draw[->, darkred] (4.9,-1.4) -- (4.9,-.4) node[left] {$z$};
		\draw[->, darkred] (1.6,-1.4) to[out=90, in=-60] (1.375,-.65) node[right] {$\lambda$};
		\draw[->, darkred] (4.9,1.4) -- (4.2,1.4) node[below left] {$h_0-h$};
		\draw[->, darkred] (4.9,1.4) -- (4.9,.7) node[below left] {$Z-z$};
		\draw (5,1.5) -- (0,1.5) -- (0,0)  arc(90:0:1.5)  -- (5,-1.5);
	\end{tikzpicture}
	\quad 
	\begin{tikzpicture}[scale=1]
		\fill[gray!5] (5,1.5) -- (0,1.5) -- (0,0)  arc(90:0:1.5) -- (5,-1.5) -- cycle;
		\draw[dashed] (5,1.5) -- (5,-1.5);
		\node[white] (ff) at (.75,-.75) {ff};
		\node[white] (zfp) at (-.35,.75) {$\mathrm{ze}$};
		\node[white] (pf) at (2.5,1.8) {$\mathrm{ie}$};
		\node[white] (mf) at (3.25,-1.8) {$\mathrm{be}$};
		\draw[dashed] (1.3,-.76) to[out=30,in=185] (5,0);
		\draw[dashed] (0,0) to[out=30,in=150] (2,.4) to[out=-30,in=180] (5,.3);
		\draw[dashed] (0,.85) to[out=0,in=180] (5,.6);
		\draw[dashed] (1.5,-1.5) to[out=5,in=150] (3,-1.3) to[out=-30,in=180] (5,-1);
		\draw (5,1.5) -- (0,1.5) -- (0,0)  arc(90:0:1.5)  -- (5,-1.5);
		\node[below] (?) at (3.4,1.4) {${\Gamma^{(1)}= \{z=1\}}$};
		\node (?) at (1.5,.3) {${\Gamma_{\mathrm{H}}}$};
		\node (?) at (3,-.6) {${\Gamma_{1}}$};
		\node (?) at (4.5,-.8) {${\Gamma_{\mathrm{L}}}$};
	\end{tikzpicture}
	\caption{\textit{Left:} The manifold-with-corners $M$ (the portion with $h<h_0$, for $h_0>0$ arbitrary). Some local coordinate charts are depicted. 
	Here, as elsewhere in the paper, $\smash{\lambda=z/h^{2/(\kappa+2)}}$. The quasihomogeneous blowup used to create $M$ is the one which resolves curves $\Gamma_{\lambda_0}=\{\lambda=\lambda_0\}$ of constant $\lambda$, defined in \cref{eq:Gamma_curve}. 
	\textit{Right}: some curves in $M$, including (the lift of) $\Gamma_1$, which hits the front face of the blowup. Also shown is a level set of $z$, a curve $\Gamma_{\mathrm{H}}$ probing the ``intermediate'' regime $\mathrm{ze}\cap \mathrm{fe}$ (the high corner of $\mathrm{fe}$), and one, ${\Gamma_{\mathrm{L}}}$, probing the other such regime $\mathrm{fe}\cap \mathrm{be}$ (the low corner of $\mathrm{fe}$). One of the upshots of this paper is that we can understand the asymptotics of solutions of the ODE along each of these curves, even $\Gamma_{\mathrm{H}},\Gamma_{\mathrm{L}}$. }
	\label{fig} 
\end{figure}

Given how well-trodden this subject is, it may be surprising that there is something left to say. \Cref{thm1} improves on the existing literature in three ways:
\begin{enumerate}
	\item we handle the case $\kappa\geq 3$, for which no full expansions had previously been known (see the remarks of Olver quoted below), and the expansion in the $\kappa=2$ case has been given a proof.
	\item  Poincar\'e expandability is improved to polyhomogeneity. This implies control of all derivatives in the semiclassical parameter, filling an apparent hole in the earlier literature even in the otherwise well-understood cases.
	\item The potential is allowed to have a natural sort of singularity at the transition point, as arises in several applications discussed below. See \S\ref{subsec:more_examples}.
\end{enumerate}
Each of these points will be elaborated upon later.

A key point to stress vis-\`a-vis (1) and (3) is that we are able to handle cases that Olver explicitly states in \cite[\S12.14]{OlverBook} cannot be handled using traditional methods. (Point (2) is more of a technical improvement.) What allows us to handle the cases that Olver cannot is that the set of polyhomogeneous functions on $M$ is bigger than the set of polyhomogeneous functions on the rectangle $[0,Z]_z\times [0,\infty)_{h^2}$. Because $M$ has \emph{two} boundary edges over $\{h=0\}$, in contrast to $[0,Z]_z\times [0,\infty)_{h^2}$, which just has one, polyhomogeneous functions on $M$ have two different $h\to 0^+$ asymptotic expansions associated to two different $z$-scales --- see the exposition in \S\ref{subsec:compactification}. This flexibility turns out to be exactly what is required to push the analysis through. The Langer--Olver expansion prescribed by \cite[\S12.14]{OlverBook} will then be interpreted as \emph{one} of the two asymptotic expansions. The other we will need to construct, but it is arguably the easier of the two. 

In short, \Cref{thm1} provides two matched $h\to 0^+$ expansions (possibly with logarithmic terms), whereas Langer and Olver demanded a single uniform $h\to 0^+$ expansion (without logarithms). This is why our analysis is able to go further.

The rest of this introduction consists of subsections which can be read in any order or skipped entirely, except \S\ref{subsec:compactification}, which must be read and read first.  The subsections are:
\begin{itemize}
	\item \S\ref{subsec:compactification} contains the definition of the compactification appearing in \Cref{thm1}. What we denote `$M$' is a manifold-with-corners (mwc). In this subsection, we will say a few words about why mwcs are the natural setting for multi-scale analysis.
	\item In \S\ref{subsec:JWKB}, we discuss the ur-example, the case of simple turning points, i.e.\ $\kappa=1$, for which our results reduce to (and slightly strengthen) the JWKB theory of Airy-function patching. This basic example does not serve to illustrate the novelty of our results, but it must be understood by any reader. It is hoped moreover that a familiar example will illuminate the geometric formalism we adopt and its use for multi-scale analysis.
	\item \S\ref{subsec:history} is our too-brief literature review. Our goal is not to be comprehensive.
	\item \S\ref{subsec:primer} is a primer on the notion of exponential-polyhomogeneous type appearing in \Cref{thm1}. A rough idea suffices for reading the first few sections of this paper.
	The technical details are important only for readers interested in the technical details.
	\item Related topics in the theory of PDE are described in \S\ref{subsec:PDE}. 
	\item Finally, \S\ref{subsec:conc} contains some concluding remarks, including a list of some directions in which this work could be expanded.
\end{itemize}
Readers may benefit from skipping directly to \S\ref{sec:examples}, which contains a number of examples.
Then, \S\ref{sec:thm} contains more, somewhat technical, introductory material (including a few more examples in \S\ref{subsec:more_examples}), together with an outline of the remainder of the paper. \Cref{thm2} appears there, as does a summary of its proof.

\begin{remark}[Reduction from the general second-order case to that above]
	Any second-order semiclassical ordinary differential operator 
	\begin{equation} 
		a(z,h) h^2 \frac{\partial^2}{\partial z^2} + b(z,h) h \frac{\partial}{\partial z} + c(z,h)
	\end{equation} 
	on the real line (with coefficients which are smooth save for isolated poles at locations not depending on $h$) is, assuming that the leading coefficient $a$ is smooth and nonvanishing, equivalent modulo conjugation (in the sense of possessing identical null space) to 
	\begin{equation}
		P=-h^2 \frac{\partial^2}{\partial z^2} + V(z)+ hQ(z,h)
		\label{eq:misc_klo}
	\end{equation}
	for some $V,Q$. 
	This is why we assumed, in \cref{eq:1}, that $P$ lacks first-order terms.
	
	A very common situation is $Q=h \psi$ for $\psi$ a smooth function of $h^2$. It is this common situation to which we restrict attention. 
	
	Generically, we should expect that, insofar as the potential $V$ in \cref{eq:misc_klo} vanishes, it vanishes only at isolated points, at each of which the function vanishes to some finite order. This is the case when $V$ is meromorphic and not identically zero. 
	
	If one is concerned with the local properties of solutions, then it suffices to restrict attention to closed intervals $I\subset \bbR$ of $z$'s containing at most a single zero/singularity of $V$ or singularity of $\psi$. Without loss of generality, it may be assumed that the interval $I$ is given by $I=[0,Z]$ for some $Z>0$, with the zero/singularity (if it exists at all) at the endpoint $z=0$. 
	Then, we can write the potential as $V(z) = \varsigma z^\kappa W(z)$
	for $\varsigma \in \{-1,+1\}$, $W$ as above, and $\kappa\in \bbZ$.
	\label{rem:reduction}
\end{remark}
\begin{remark}[$\kappa\leq -2$]
	We do not consider $\kappa \leq -2$, as then either $P$ has a regular singularity with variable indicial roots (if $\kappa=-2$) or an irregular singularity (if $\kappa\leq -3$). These cases are of a rather different character than those handled here. We believe that an irregular singularity is actually easier to handle than the case here, and that exponential-polyhomogeneity holds on the original rectangle $[0,Z]_z\times [0,\infty)_h$. No blowup is necessary. 
	This should be possible to prove using a variant of the argument in \S\ref{sec:collapsing}. See \cite[\S10,4/5, \S12.14.4]{OlverBook}\cite{OlverTransition} for Olver's discussion of the case of an irregular singularity. Olver's conclusion is that, under certain reasonable constraints on $\psi$, the Langer--Olver method utilized in \cite{Langer31, Langer32,Langer35}\cite{OlverOriginal, OlverBook} applies. Other treatments of different aspects of the problem come to the same conclusion \cite[\S31.(c)]{Wasow}.

	For the $\kappa=-2$ case, we believe that exponential-polyhomogeneity holds on $M\backslash \mathrm{be}$ if $M$ is defined as above \emph{for any different $\kappa\geq -1$}. This case is known to be recalcitrant (or at least it appears that way to us). It is conspicuously absent from \cite[\S12.14.4]{OlverBook}, though partial results (e.g.\ approximations lacking an extension to full expansions) exist, see e.g.\ \cite{Cashwell}\cite{McKelvey}.
	We expect the argument in \S\ref{sec:collapsing} to yield this after the ``Langer correction'' of the Liouville--Green ansatz is taken into account. 
	\label{rem:extension}
\end{remark}

\begin{remark}[Fractional $\kappa$]
	We believe that \Cref{thm1}, \Cref{thm2} hold for any real $\kappa>-2$ and that the argument below, with small modifications (in particular, more complicated index sets), can handle this level of generality, though we have not checked the details. See \cite{OlverFrac} for Olver's discussion of fractional $\kappa$. It is worth noting that Olver provides only an approximation, not an expansion --- see the remark from \cite{OlverFrac} quoted below. 
\end{remark}

\subsection{The compactification}
\label{subsec:compactification}

Let $M$ denote the manifold-with-corners\footnote{We use ``manifold-with-corners'' in the sense of Melrose \cite{MelroseCorners}, though the precise definition is not important here. Roughly, a $d$-dimensional mwc is a space smoothly modeled on $\bbR^k\times [0,\infty)^{d-k}$ for $k\leq d$. } (mwc), depending on $\kappa$, constructed by performing a quasihomogeneous blowup of the corner $\{z=0,h=0\}$ of the rectangle $[0,Z]_z\times [0,\infty)_{h^2}$ so as to separate the family $\{\Gamma_\lambda\}_{\lambda>0}$ of curves 
\begin{equation}
\Gamma_\lambda = \{ z= \lambda h^{2/(2+\kappa)} \}.
\label{eq:Gamma_curve}
\end{equation}
See \Cref{fig:blowup}.
This blowup resolves the ratio 
\begin{equation} 
	\lambda=z/h^{2/(\kappa+2)},
\end{equation} 
which becomes a smooth coordinate $\lambda \in C^\infty(\mathrm{fe}^\circ)$ parametrizing the front edge $\mathrm{fe}^\circ$ of the blowup and extending smoothly down to one boundary point. 
Besides $\mathrm{fe}$, the other edges of $M$ are 
\begin{itemize}
	\item 
	$\mathrm{ze} =\mathrm{cl}_M \{h=0,z>0\}$, the ``zero edge,'' the edge left over from the edge of the rectangle $[0,Z]_z\times [0,\infty)_{h^2}$ where the semiclassical parameter $h$ vanishes,
	\item $\mathrm{be} = \mathrm{cl}_M\{h>0,z=0\}$, the ``boundary edge'' (or the locus of b-analysis, where, as in \cite{MelroseAPS}, b- means associated with regular singular differential equations),
	\item and $\mathrm{ie} = \mathrm{cl}_M\{z=Z\} = \{z=Z\}$, the ``initial edge,'' where the initial data is specified.\footnote{Throughout this paper, we will identify submanifolds disjoint from a neighborhood of a blown up locus with their lifts to the resolved space.
	This holds, in particular, for points in the interior of the mwc. (We will not blow up any interior submanifolds in this paper.) Thus, we will consider blowups to leave unchanged the interior, at the level of sets.}
\end{itemize}  
Again, see \Cref{fig} for a depiction of $M$.

\begin{figure}
	\begin{tikzpicture}[scale=.8]
		\fill[gray!5] (4,1.5) -- (0,1.5) -- (0,-1.5) -- (4,-1.5) -- cycle;
		\draw[dashed] (4,1.5) -- (4,-1.5);
		\node[white] (ff) at (.75,-.75) {ff};
		\node[white] (zfp) at (-.35,.75) {$\mathrm{ze}$};
		\node[white] (pf) at (2.5,1.8) {$\mathrm{ie}$};
		\node[white] (mf) at (3.25,-1.8) {$\mathrm{be}$}; to[out=-30,in=180] (5,-1);
		\draw (4,1.5) -- (0,1.5) -- (0,-1.5) -- (4,-1.5);
		\node[right] (?) at (4,1.2) {${\Gamma_{3}}$};
		\node[right] (?) at (4,.7) {${\Gamma_{2}}$};
		\node[right] (?) at (4,.2) {${\Gamma_{1}}$};
		\draw[dashed] (0,-1.5) to[out=90,in=182] (4,.2);
		\draw[dashed] (0,-1.5) to[out=90,in=185] (4,.7);
		\draw[dashed] (0,-1.5) to[out=90,in=185] (4,1.2);
		\draw[dotted] (0,-1) -- (4,-1) node[right] {$\Gamma^{(1)}$};
		\draw[dotted] (0,-.5) -- (4,-.5) node[right] {$\Gamma^{(2)}$};
		\draw[dotted] (2,-1.5) -- (2,1.5) node[above] {$\{h=h_0\}$};
		\node[above] () at (0,1.5) {$\{h=0\}$};
		\node[above] () at (4,1.5) {$\{h=h_1\}$};
		\node[right] () at (4,-1.5) {$\Gamma^{(0)}$};
		\draw[darkred,->] (-.1,-1.6) -- (-.1,-.4) node[left] {$z$};
		\draw[darkred,->] (-.1,-1.6) -- (1.1,-1.6) node[below] {$\;\;h^2$};
	\end{tikzpicture}
	\begin{tikzpicture}[scale=.8]
		\fill[gray!5] (4,1.5) -- (0,1.5) -- (0,-1.5) -- (4,-1.5) -- cycle;
		\draw[dashed] (4,1.5) -- (4,-1.5);
		\node[white] (ff) at (.75,-.75) {ff};
		\node[white] (zfp) at (-.35,.75) {$\mathrm{ze}$};
		\node[white] (pf) at (2.5,1.8) {$\mathrm{ie}$};
		\node[white] (mf) at (3.25,-1.8) {$\mathrm{be}$}; to[out=-30,in=180] (5,-1);
		\draw (4,1.5) -- (0,1.5) -- (0,-1.5) -- (4,-1.5);
		\node[right] (?) at (4,1) {${\Gamma_{3}}[\kappa=0]$};
		\node[right] (?) at (4,0) {${\Gamma_{2}}[\kappa=0]$};
		\node[right] (?) at (4,-1) {${\Gamma_{1}}[\kappa=0]$};
		\draw[dashed] (0,-1.5) -- (4,-1);
		\draw[dashed] (0,-1.5) -- (4,0);
		\draw[dashed] (0,-1.5) -- (4,1);
		\draw[dashed] (0,-1.5) to[out=0, in=250] (3.8,1.5) node[above] {$\Gamma_1[\kappa=-1]$};
		\draw[dashed] (0,-1.5) to[out=0, in=265] (1.6,1.5) node[above] {$\Gamma_3[\kappa=-1]\qquad$};
		\draw[darkred,->] (-.1,-1.6) -- (-.1,-.4) node[left] {$z$};
		\draw[darkred,->] (-.1,-1.6) -- (1.1,-1.6) node[below] {$\;\;h^2$};
	\end{tikzpicture}
	\begin{tikzpicture}[scale=.8]
		\fill[gray!5] (4,1.5) -- (0,1.5) -- (0,0)  arc(90:0:1.5) -- (4,-1.5) -- cycle;
		\draw[dashed] (4,1.5) -- (4,-1.5);
		\node (ff) at (.75,-.75) {$\mathrm{fe}$};
		\node (zfp) at (-.35,.75) {$\mathrm{ze}$};
		\node (pf) at (2.5,1.8) {$\mathrm{ie}$};
		\node (mf) at (3.25,-1.8) {$\mathrm{be}$};
		\draw (4,1.5) -- (0,1.5) -- (0,0)  arc(90:0:1.5)  -- (4,-1.5);
		\node[white] () at (0,-2.2) {};
		\draw[dotted] (3.5,-1.5) to[out=90,in=270] (3.25,1.5);
		\draw[dotted] (3,-1.5) to[out=90,in=275] (2.125,1.5);
		\draw[dotted] (2.5,-1.5) to[out=90,in=280] (1,1.5);
		\draw[dotted] (0,1.2) to[out=0,in=180] (4,1);
		\draw[dotted] (0,.7125) to[out=0,in=180] (4,0);
		\draw[dotted] (0,.3) to[out=0,in=180] (4,-1);
		\begin{scope}
			\clip  (4,1.5) -- (0,1.5) -- (0,0)  arc(90:0:1.5) -- (4,-1.5) -- cycle;
			\draw[dashed] (0,-1.5) -- (1.3,1.5);
			\draw[dashed] (0,-1.5) -- (4,1.5);
			\draw[dashed] (0,-1.5) -- (4,-.5) node[above left] {$\Gamma_1$};
		\end{scope}
		\node[above] () at (4,1.5) {$\Gamma_2$};
		\node[above] () at (1.3,1.5) {$\Gamma_3$};
	\end{tikzpicture}
	\caption{\textit{Left:} the rectangle $[0,Z]_z\times [0,\infty)_{h^2}$ whose lower-left corner is blown up (quasihomogeneously) to create $M$. Three families of curves are shown in $M$. These are the level sets $\Gamma^{(z_0)}=\{z=z_0\}$ of $z$ (horizontal lines), the level sets of $h$ (vertical lines), and the level sets $\Gamma_\lambda$ of $\lambda$. We depict the $\kappa\geq 1$ case. \textit{Middle:} some of the curves $\Gamma_\lambda$ in the $\kappa=-1,0$ cases, where their convexity differs from the $\kappa\geq 1$ case. In the $\kappa=0$ case, they are lines. In the $\kappa=-1$ case, they are parabolas.
	\textit{Right:} the same curves in $M$. Note that, in this compactification, the $\Gamma_\lambda$ limit to distinct points of $\mathrm{fe}^\circ$. This is what the quasihomogeneous blowup of the corner $\{z=0,h=0\}$ accomplishes. Note that the level sets of $h,z$ (dotted lines) appear distorted in this rendering.}
	\label{fig:blowup} 
\end{figure}

Insofar as we have a choice of smooth structure, that choice does not affect the spaces of polyhomogeneous functions and is therefore not important for what we do here. For definiteness, we fix a choice: choose the smooth structure at the front face $\mathrm{fe}$ of the blowup  such that 
\begin{equation} 
	\varrho_{\mathrm{fe}}  = z + \smash{h^{2/(\kappa+2)}}
	\label{eq:fe_bdf}
\end{equation} 
becomes a (boundary-)defining-function (bdf) of it. 
Defining functions of the faces $\mathrm{ze}$, $\mathrm{be}$, $\mathrm{ie}$ are then given by
\begin{equation}
	\varrho_{\mathrm{ze}}  = h^2/ \varrho_{\mathrm{fe}}^{\kappa+2},\quad \varrho_{\mathrm{be}} = z \varrho_{\mathrm{fe}}^{-1},\quad \varrho_{\mathrm{ie}} = Z-z.
	\label{eq:other_bdfs}
\end{equation}
Other choices are possible. These are simply the most convenient.

When restricting attention to local coordinate charts, it is almost always possible to work instead with local bdfs:
\begin{itemize}
	\item Outside of any neighborhood of $\mathrm{ze}$, the ratio $z/h^{2/(\kappa+2)}$ is a bdf for $\mathrm{be}$ and $h^{2/(\kappa+2)}$ is a bdf for $\mathrm{fe}$, and
	\item  outside of any neighborhood of $\mathrm{be}$, $h^2 / z^{\kappa+2}$ is a local bdf for $\mathrm{ze}$ and $z$ is a bdf for $\mathrm{fe}$. 
\end{itemize}

The mwcs we care about arise as compactifications of manifolds-without-boundary, and we would like to keep track of these compactifications. This is usually done through the use of subscripts which indicate the compactifying map. For example, when we write ``$[0,Z]_z\times [0,\infty)_h$,'' we mean the compactification of $(0,Z)_z\times (0,\infty)_h$ which is just the inclusion of the latter into the former, i.e.\ the identity map. Likewise, when we write ``$[0,Z]_z\times [0,\infty)_{h^{2/(\kappa+2)}}$,'' the compactification is 
\begin{equation} 
(0,Z)\times \bbR^+\ni 	(z,h)\mapsto (z,h^{2/(\kappa+2)}) \in [0,Z]\times [0,\infty).
\end{equation}
Even though $[0,Z]_z\times [0,\infty)_h$ and $[0,Z]_z\times [0,\infty)_{h^{2/(\kappa+2)}}$ are diffeomorphic as mwc --- they are both rectangles with one open side --- they are inequivalent in the category of $C^\infty$-compactifications of $(0,Z)\times (0,\infty)$. (Two compactifications are equivalent in this category if there exists a diffeomorphism which restricts to the identity map on the space compactified, in which case that diffeomorphism is canonical. This just means equivalence in the category of topological compactifications with the added requirement that the resultant homeomorphism be a diffeomorphism.) None of the mwcs we work with are topologically complicated --- their diffeomorphism classes are just polygons --- but the way in which they arise as compactifications of $[0,Z]_z\times (0,\infty)_h$ matters.

Figures such as \Cref{fig} should be understood as illustrating the combinatorial structure of $M$, i.e.\ the boundary components/edges and how they intersect. 

 In the depicted atlas of coordinate charts, the coordinates are identified with their restrictions to the interior
\begin{equation}
	M^\circ = (0,Z)_z\times (0,\infty)_{h}.
	\label{eq:misc_04}
\end{equation}
Any smooth, or more generally polyhomogeneous, function on $M$ is uniquely determined by its restriction to the interior $M^\circ$ and can therefore be identified with this restriction.\footnote{Polyhomogeneous functions are certain distributions on $M$, meaning elements of the dual of $C_{\mathrm{c}}^\infty(M)$, and they are smooth on $M^\circ$. (Note that elements of $C_{\mathrm{c}}^\infty(M)$ can be nonvanishing at the boundary.) The only distributions on $M$ which, when restricted to the interior, become zero are those supported on the boundary. These involve Dirac $\delta$-distributions or their derivatives and are not counted among the polyhomogeneous \emph{functions}. See \cite{MelroseCorners}.} Consequently, we can think of polyhomogeneous functions on $M$ as certain smooth functions of $z,h>0$ (which may not extend smoothly down to $z=0$ or $h=0$). 
We will employ this sort of identification whenever thinking about different compactifications and polyhomogeneous functions on them. Functions are always identified with their restrictions to the interior.

The reader may wish to think of $M$ as being some explicit subset of $\bbR^2$. (Unfortunately, then \cref{eq:misc_04} no longer holds literally, at the level of sets, but it still makes sense as a canonical diffeomorphism.) For example, one can choose an embedding $\iota:M\hookrightarrow \bbR^2_{x,y}$ with image
\begin{equation}
	\Omega = \{(x,y) \in \bbR^2: y\in [0,2], x\geq 0, x^2+y^2\geq 1\}.
	\label{eq:Omega}
\end{equation}
The set $\Omega$ is a sub-mwc of $\bbR^2$. (This should hold given any reasonable notion of ``mwc.'') The edge ie is taken as $\{y=2\}$, be is taken as $\{y=0\}$, ze is taken as $\{x=0\}$, and fe is taken as $\{x^2+y^2=1\}$. 
An advantage of thinking about $M$ in this way is that then the smooth functions on $M$ are exactly the restrictions of smooth functions on the ambient $\bbR^2$: smooth means extendable. In addition, diagrams such as those in \Cref{fig} can then be interpreted literally, showing the subset of $\bbR^2$ (identifying $\bbR^2$ with the plane of the page) we are identifying with $M$.

The fact that $M$ has two boundary edges at $h=0$, namely ze and fe, means that analysis on $M$ is a form of \emph{multi-scale analysis}. There are two scales relevant in the $h\to 0^+$ limit, 
\begin{itemize}
	\item the $z=\Omega(1)$ scale (associated with ze) and
	\item the $z= O(h^{2/(\kappa+2)})$ scale (associated with fe). 
\end{itemize}
To say that a function is polyhomogeneous on $M$  means very roughly that only those two scales are relevant in understanding its $h\to 0^+$ asymptotics. For example, polyhomogeneity at the corner $\mathrm{ze}\cap\mathrm{fe}$ means that no ``intermediate scales'' between $z=\Omega(1)$ and $z=O(h^{2/(\kappa+2)})$ need to be resolved to fully understand the $h\to 0^+$ limit. Polyhomogeneity at $\mathrm{be}\cap\mathrm{fe}$ means that no scale smaller than $z=O(h^{2/(\kappa+2)})$ poses a problem either. No interesting behavior is hidden in a corner of $M$, unlike on the original rectangle $[0,Z]_z\times [0,\infty)_{h^2}$, where interesting behavior is hidden in the corner $\{z=0,h=0\}$.

The passage to the blowup is a specific instance of a general strategy in geometric singular analysis, that of \emph{trading analytic complexity for geometric complexity}. Rather than working with complicated function spaces defined on simple spaces, one can choose to work with simple function spaces on complicated spaces.\footnote{In this case, the polyhomogeneous function spaces are considered simple. If the reader is not familiar with these function spaces, they may not appear simple, but it should be stressed that polyhomogeneity is only a mild generalization of smoothness. Given that $C^\infty$ is a familiar function space, the spaces of polyhomogeneous functions should not be considered too exotic.}

\begin{example}
	As a simple example of a function on $(0,Z)_z\times \bbR^+_h$ with ``multi-scale dependence'' resolved by working on the $\kappa=0$ version of $M$, consider
	\begin{equation}
		f(z,h) = \operatorname{arctan}\Big(\frac{z}{h} \Big) -z-h.
		\label{eq:f_ex}
	\end{equation}
	Intuitively, if one thinks of $f$ as a one-parameter family $\{f(-,h)\}_{h>0}$ of functions of $z$, it is varying on two scales. The term $-z$ is varying on the $z=O(1)$ scale, whereas $\operatorname{arctan}(z/h)$ is varying fast, on the $\Omega(h)$ length scale.
	A contour plot of $f$ is shown in \Cref{fig:fur_Nick}. In that same figure, we show $f\circ T:\Omega\to \bbR$ for $T:(x,y)\mapsto (z,h)$ for 
	\begin{align}
		\begin{split} 
			h &= \frac{x(-1+\sqrt{x^2+y^2})}{\sqrt{x^2+y^2}} \\ 
			z&= \frac{y (-1+\sqrt{x^2+y^2})\sqrt{x^2+4}}{2\sqrt{x^2+y^2}(-1+\sqrt{x^2+4})}.
		\end{split} 
		\label{eq:fur_Nick}
	\end{align} 
	This map $T$ does not extend to a diffeomorphism $\Omega \to M$, but it does extend to a homeomorphism. (This is just a matter of the choice of smooth structure. The choice that is most convenient for the asymptotic analysis below is not the most convenient for constructing an explicit diffeomorphism $M\cong \Omega.$)
	
	Notice how $f\circ T$ is continuous on $\Omega$, whereas $f$ is not continuous on $[0,Z]_z\times [0,\infty)_{h^2}$, specifically at the corner $\{z=0,h=0\}$. 
	This demonstrates the utility of the geometric perspective --- passing to $M$ or $\Omega$, it is easy to describe the regularity of $f$, it is just continuous. How would one describe the behavior of $f$ working directly with the $z,h$ variables? No clean solution presents itself. 
	This is the philosophy of geometric singular analysis in action: we locate $f$ in a simple function space on a complicated object, namely the space of continuous functions (a simple function space) on $M$ (a ``complicated'' object, though not a very complicated one), rather than in some complicated function space defined directly in terms of the $z,h$ variables.
\end{example}

\begin{figure}
	\centering
	\includegraphics[scale=.45]{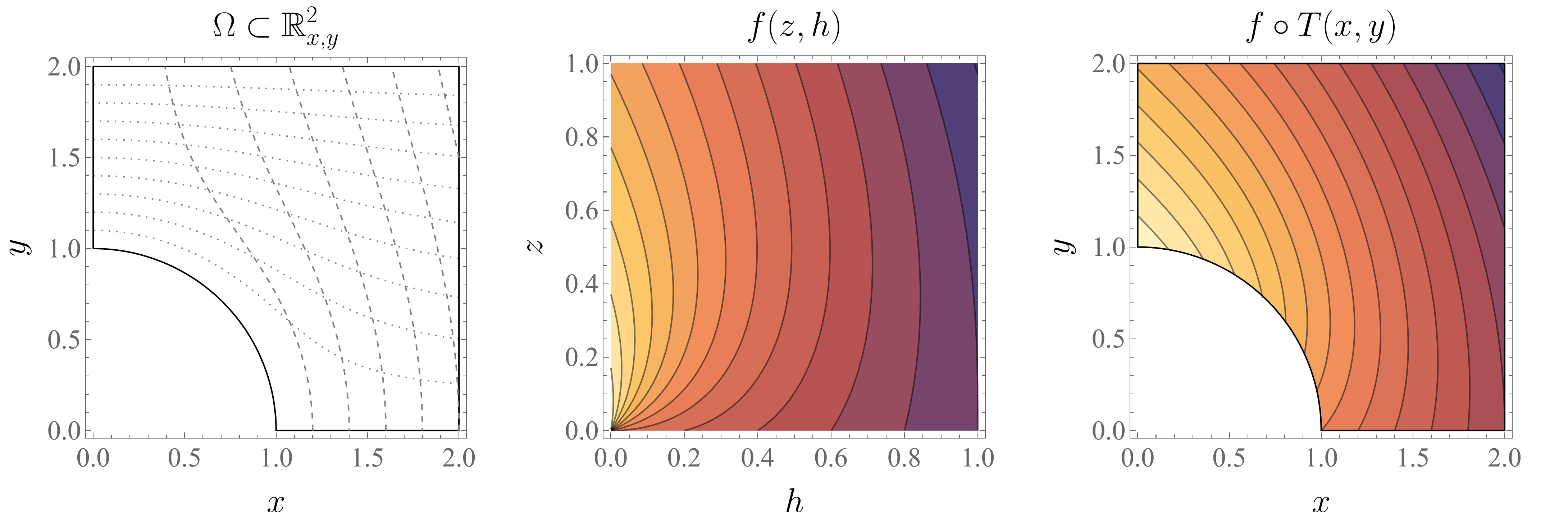}
	\caption{\textit{Left:} $\Omega\subset \bbR^{2}_{x,y}$, where $\Omega$ is as in \cref{eq:Omega}. Level curves of $h(x,y)$, $z(x,y)$ are shown. This example was computed using \cref{eq:fur_Nick}. \textit{Middle:} The function $f$ defined in \cref{eq:f_ex} plotted against $h,z$. Note how many contour lines converge at the bottom-left corner. This shows that $f$ is not continuous there. \textit{Right:} the pullback $f\circ T$ of $f$ to $\Omega$, via $T:(x,y)\mapsto (z,h)$. As is evident from the behavior of the contour lines, $f\circ T$ is continuous on $\Omega$. }
	\label{fig:fur_Nick}
\end{figure}

When we say that a function is polyhomogeneous on $M$, what we are allowing is multi-scale dependence like that seen in the previous example.

\begin{example}
	As another example of this sort of multi-scale dependence, consider 
	\begin{equation}
		f(z,h) = \cos\Big( \frac{1}{h} \Big)\Big(1 - \frac{h \log h}{z+h} \Big).
		\label{eq:misc_017}
	\end{equation}
	How might we think about the behavior of this function in the $h\to 0^+$ limit? It is tempting to identify the $h (z+h)^{-1}\log h$ term as ``lower-order,'' and indeed it is $O(h)$ in $\{z>\varepsilon\}$ for any $\varepsilon$, but if we take $h,z\to 0^+$ together, specifically while fixing the ratio $h/z>0$, then it diverges logarithmically; not only is it not $O(h)$, it is not even $O(1)$. So, in this limit, we should consider $h (z+h)^{-1} \log h$ as the \emph{main} term in \cref{eq:misc_017}, not as a subleading term.

	Now, $f$ is highly oscillatory, so it is not polyhomogeneous on $M$. However, $\cos(1/h)$ is an exponential function of $1/h$, which is polyhomogeneous, so $f(z,h)$ is of \emph{exponential}-polyhomogeneous type if
	\begin{equation} 
		f_0(z,h) = 1- h(z+h)^{-1} \log h
	\end{equation} 
	is polyhomogeneous. This is not polyhomogeneous on $[0,Z]_z\times [0,\infty)_{h^2}$, nor is it continuous on $M$, but it is polyhomogeneous on $M$ (when $\kappa=0$, as in the previous example). In order to see this, note that, in terms of the boundary-defining-functions in \cref{eq:fe_bdf}, \cref{eq:other_bdfs}, 
	\begin{equation}
		f_0(z,h) = 1 - \varrho_{\mathrm{ze}}^{1/2} \log (\varrho_{\mathrm{ze}}^{1/2} \varrho_{\mathrm{fe}} ) = 1 - \frac{\varrho_{\mathrm{ze}}^{1/2}}{2} \log \varrho_{\mathrm{ze}} - \varrho_{\mathrm{ze}}^{1/2} \log \varrho_{\mathrm{fe}}.
		\label{eq:misc_019}
	\end{equation}
	On any manifold-with-corners, bdfs, their powers, and powers of their logarithms are all polyhomogeneous, so $f_0$ is indeed polyhomogeneous on $M$. 
	
	A key feature of polyhomogeneous functions on $M$ is that they can be expanded in either of the bdfs $\varrho_{\mathrm{ze}},\varrho_{\mathrm{fe}}$, and the two expansions do not need to have much to do with each other, except that compound expansions have to agree at the corner $\mathrm{fe}\cap\mathrm{ze}$ (a basic compatibility criterion). So, polyhomogeneous functions on $M$ have \emph{two} different expansions in the $h\to 0^+$ limit, one for each scale, or equivalently for each edge $\mathrm{fe},\mathrm{ze}$ of $M$ at $\{h=0\}$. Concretely, in the case at hand, the two expansions of $f_0(z,h)$ are as follows:
	\begin{itemize}
		\item For the expansion at $\mathrm{ze}$, the coefficients should be functions on $\mathrm{ze}$, i.e.\ functions of $z$. (Really, what we are doing is using the canonical identification of a neighborhood of $\mathrm{ze}$ in $M$ with $[0,\varepsilon)_{\varrho_{\mathrm{ze}}}\times [0,Z]_z$.) Thus, we rewrite $f_0$ in terms of $\varrho_{\mathrm{ze}},z$, which gives 
		\begin{equation}
			f_0 = 1 - \varrho_{\mathrm{ze}}^{1/2} \log \bigg( \frac{\varrho_{\mathrm{ze}}^{1/2} z}{1-\varrho_{\mathrm{ze}}^{1/2}}\bigg)=1 - \frac{\varrho_{\mathrm{ze}}^{1/2}}{2}\log \varrho_{\mathrm{ze}}- \varrho_{\mathrm{ze}}^{1/2} \log z -\varrho_{\mathrm{ze}} \sum_{j=0}^\infty \frac{\varrho_{\mathrm{ze}}^{j/2}}{j+1}.
		\end{equation}
		\item For the expansion at $\mathrm{fe}$, the coefficients should be functions on $\mathrm{fe}$, which we can parameterize using $\varrho_{\mathrm{ze}}$. (Similar to above, really one should be using an identification of a neighborhood of $\mathrm{fe}$ in $M$ with $[0,\varepsilon)_{\varrho_{\mathrm{fe}}}\times [0,1]_{\varrho_{\mathrm{ze}}}$.) Thus, \cref{eq:misc_019} is already presented in the form of an expansion at $\mathrm{fe}$, we just need to rearrange:
		\begin{equation}
			f_0 = - \varrho_{\mathrm{ze}}^{1/2} \log \varrho_{\mathrm{fe}} + \Big(1 - \frac{\varrho_{\mathrm{ze}}^{1/2}}{2} \log \varrho_{\mathrm{ze}}  \Big).
		\end{equation}
	\end{itemize}
	The added flexibility due to having two different $h\to 0^+$ expansions will be crucial below.
\end{example}

\subsection{The JWKB case}
\label{subsec:JWKB}
Consider the semiclassical Schr\"odinger operator 
\begin{equation}
	P = -h^2 \frac{\partial^2}{\partial z^2} - E + V
\end{equation}
for fixed $E\in \bbR$ (this having nothing to do with what we called $E$ previously) and $V\in C^\infty(\bbR)$, and suppose that $E$ is a regular value of $V$, so that $V(z)-E$ has only simple zeros, say at $z_0<\dots<z_N$, within some interval $[-Z,+Z]$, where $Z>0$ and $-Z$ are not in $V^{-1}(\{E\})$. 
We will consider the ODE $Pu=0$ with initial data posed at $z=Z$. The operator $P$ has the form considered in \cref{eq:1} for $\kappa=1$ and $\psi=0$ on small intervals of $z$, e.g.\ $[z_j,z_j+(z_{j+1}-z_j)/2]$.

Then, $z_0,\dots,z_N$ are called (simple) \emph{turning points}, because if one considers a Newtonian particle with total energy $E$ in the presence of some potential $V$, then the $z_n$'s are the locations where the particle stops and reverses direction -- that is, where it turns around. The regions $\{V(z)<E\}\subset \bbR$ are known as ``classically allowed regions,'' because it is possible for a Newtonian particle in this region to have total energy $E$. The regions $\{V(z)>E\}$ are the ``classically forbidden'' regions, because any Newtonian particle in this region must, by the non-negativity of kinetic energy, have energy strictly greater than $V$.

Consider the mwc $X$ that results from performing a quasihomogeneous blowup of each 
\begin{equation}
	(z_0,0),\dots,(z_N,0)\in [-Z,Z]_z\times [0,\infty)_{h^2}
\end{equation}
on the $\{h=0\}$ boundary of the rectangle.
The specific quasihomogeneous blowup is the one such that $X$ is, near each blowup, constructed from attaching at the edge labeled `be' two copies of the $\kappa=1$ version of $M$. Concretely, this means that the interior of the front face that results from blowing up  the $n$th point, $(z_n,0)$, is parameterized by 
\begin{equation} 
	\lambda=h^{-2/3}(z-z_n) .
\end{equation}

Then, \Cref{thm1} says that, if we impose exponential-polyhomogeneous initial data at $z=Z$ (for example, if our initial data is just independent of $h$), then the solution is of exponential-polyhomogeneous type on $X$.
Somewhat remarkably, it appears that no proof of this fact, or one amounting to it, has yet appeared in the literature. The fact that asymptotic expansions exist and hold in the sense of Poincar\'e goes back to at least the 1930s, as does the fact that you can differentiate the expansions term-by-term in the argument $z$. However, that you can \emph{also} differentiate in the semiclassical parameter $h$, and that the resultant expansions are asymptotic expansions for $\partial u/\partial h$, seems not to have been formulated or given a proof. At least, we are not aware of the existence of such a result. Olver and the other asymptotic analysts of the surrounding generations
tended to state results in terms of Poincar\'e expandability. The pertinent theorems state that expansions hold in the sense of Poincar\'e, but this does not imply that they can be differentiated in the semiclassical parameter. We do not know whether Olver or others considered this question. It is somewhat unnatural from the perspective of semiclassical analysis. Regardless, they could have provided a proof with the tools at their disposal (for reasons we explain below). This differentiability in $h$ is not an essential novelty of this paper, but we felt it important to record.

\Cref{thm1} is qualitative. \Cref{thm2} gives more precise information. It shows that the solution is approximated by an Airy function near each turning point. 
The edges of $X$ called `ze' in this paper are where the Liouville--Green theory works (this is essentially what polyhomogeneity at $\mathrm{ze}^\circ$ says), and the edges ``fe,'' i.e.\ the front faces of the blowups, are the transitional regions where the Airy function must be used to interpolate.
This old idea of using Airy functions to patch together the Liouville--Green ansatzes in the classically allowed and classically forbidden regions goes back to the 1920s, in the works of Jeffreys--Wentzel--Kramers--Brillouin (JWKB) \cite{Jeffreys}\cite{Wentzel}\cite{Kramers}\cite{Brillouin}, after whom the whole industry of semiclassical expansions has come to be referred. We have labeled this subsection as the ``JWKB'' case, because, even though physicists often refer to the Liouville--Green approximation as the ``WKB approximation,'' the essential novelty in the works of JWKB was the treatment of turning points.

The geometric formulation of \Cref{thm1} makes clear that the Airy function patching is to occur in an $\Omega(h^{2/3-\epsilon})$ neighborhood of each turning point, for arbitrary $\epsilon\in (0,2/3)$.  Polyhomogeneity at $\mathrm{fe}^\circ$ guarantees that, within any 
$O(h^{2/3})$ neighborhood of each turning point, the approximation can be promoted to a full asymptotic expansion in ascending fractional powers of $h$, with coefficients which are smooth functions of $\lambda$. Polyhomogeneity at the corner  $\mathrm{ze}\cap \mathrm{fe}$ gives us a way of thinking about the uniformity of the approximation and the way in which it dovetails with the Liouville--Green expansion in the classically allowed/forbidden regions. 

This is all more or less well-known, and there exists, in this case (the observation we are currently making will not generalize), a better method of thinking about the uniformity of the approximation. The better method was proposed by Langer \cite{Langer31,  Langer32, Langer49} and developed rigorously by Langer and Olver \cite{OlverOriginal, OlverBook}, so we will refer to it after those two authors. This is the ``uniform'' JWKB (we prefer ``Langer--Olver'') expansion, in which the solution of interest is approximated near the turning point $z_n$ using an asymptotic expansion of the form
\begin{equation}
	(1+ h^2 \beta) A\Big( \frac{\zeta}{h^{2/3}} \Big)  + h^{2/3} \gamma  A'\Big( \frac{\zeta}{h^{2/3}} \Big)\label{eq:JWKB_uniform}
\end{equation}
for some Airy function $A$ and formal series $\beta,\gamma \in C^\infty(\bbR_z)[[h^2]]$, where $\zeta$ is given by \cref{eq:zeta}, which in this case reads 
\begin{equation}
	\zeta(z) = \Big[ \pm \frac{3}{2} \int_{z_n}^z \sqrt{E-V(\omega)} \dd \omega \Big]^{2/3},
\end{equation}
where the sign $\pm$ depends on whether $z_n$ is to the left or the right of a classically allowed region.
See \cite[Thm.\  7.1]{OlverBook} for a rigorous statement. Numerous examples of this sort of expansion exist in the literature on special functions. Many are listed in the National Institute of Standards and Technology's (NIST) online ``DLMF'' database \cite{NIST} (at the moment of writing) --- see \cite[\href{http://dlmf.nist.gov/10.20.i}{\S10.20(i)}]{NIST}\cite[\href{http://dlmf.nist.gov/12.10.vii}{\S12.10(vii)}]{NIST}\cite[\href{http://dlmf.nist.gov/12.10.viii}{\S12.10(viii)}]{NIST}\cite[\href{http://dlmf.nist.gov/13.21.iii}{\S13.21(iii)}]{NIST}\cite[\href{http://dlmf.nist.gov/18.15.E22}{\S18.15.22}]{NIST}\cite[\href{http://dlmf.nist.gov/33.12.i}{\S33.12(i)}]{NIST}.

\begin{figure}
	\begin{center}
		\begin{tikzpicture}
			\draw[dashed] (-.7,-2.1) rectangle (4.6,2.05);
			\fill[fill=lightgray!20] (3.9,1.45) -- (.1,1.45) -- (.1,-1.5) -- (3.9,-1.5);
			\fill[fill=lightgray!40] (3.9,.65) -- (.1,.65) -- (.1,-.65) -- (3.9,-.65) -- cycle;
			\draw (3.9,1.45) -- (.1,1.45) -- (.1,-1.5) -- (3.9,-1.5);
			\draw[dashed] (3.9,1.45) -- (3.9,-1.5);
			\node () at (-.3,1.7) {(a)};
			\node () at (2,-1.8) {$\{z=-Z\}$};
			\node () at (2,1.7) {$\{z=Z\}$};
			\draw[dashed] (.1,.65) -- (3.9,.65) node[right] {$z_2$};
			\draw[dashed] (.1,-.65) -- (3.9,-.65) node[right] {$z_1$};
			\draw[dashed] (.1,.65) to[out=70, in=184] (1,1.45) node[below] {$\Gamma_{2,1}$};
			\draw[dashed] (.1,-.65) to[out=-70, in=176] (1,-1.45) node[above] {$\;\Gamma_{1,-1}$};
			\fill[black] (.1,.65) circle (2pt);
			\fill[black] (.1,-.65) circle (2pt);
			\draw[->,darkred] (.2,-.4) -- (.9,-.4) node[right] {$h$};
			\draw[->,darkred] (.2,-.4) -- (.2,.4) node[right] {$z$};
		\end{tikzpicture}
		\begin{tikzpicture}
			\draw[dashed] (-.7,-2.1) rectangle (4.4,2.05);
			\fill[fill=lightgray!20] (3.9,1.45) -- (.1,1.45) -- (.1,.95) arc(90:-90:.3) -- (.1,-.35) arc(90:-90:.3) -- (.1,-1.5) -- (3.9,-1.5);
			\begin{scope}
				\clip (3.9,1.45) -- (.1,1.45) -- (.1,.95) arc(90:-90:.3) -- (.1,-.35) arc(90:-90:.3) -- (.1,-1.5) -- (3.9,-1.5) -- cycle;
				\draw[dashed] (.1,.65) -- (1,1.45) node[below right] {$\Gamma_{2,1}$};
				\draw[dashed] (.1,-.65) -- (1,-1.45) node[above right] {$\Gamma_{1,-1}$};
			\end{scope}
			\fill[fill=lightgray!40] (3.9,.65) -- (.4,.65) arc(0:-90:.3) -- (.1,-.35) arc(90:0:.3) -- (3.9,-.65) -- cycle;
			\draw (3.9,1.45) -- (.1,1.45) -- (.1,.95) arc(90:-90:.3) -- (.1,-.35) arc(90:-90:.3) -- (.1,-1.5) -- (3.9,-1.5);
			\draw[dashed] (3.9,1.45) -- (3.9,-1.5);
			\node () at (-.3,1.7) {(b)};
			\node () at (.15,.65) {fe};
			\node () at (.15,-.625) {fe};
			\node () at (-.35,0) {ze};
			\draw[dotted] (-.35,-.25) -- (-.35,-1.25) -- (0,-1.25);
			\draw[dotted] (-.35,.25) -- (-.35,1.25) -- (0,1.25);
			\draw[dotted] (-.15,0) -- (.05,0);
			\node () at (2,-1.8) {$\{z=-Z\}$};
			\node () at (2,1.7) {$\{z=Z\}$};
		\end{tikzpicture}
	\end{center}
	\caption{(a) The rectangle $[-Z,Z]_z\times [0,\infty)_{h^2}$ and (b) the mwc $X$ that results from blowing up the points $(z_n,0)$, as described in \S\ref{subsec:JWKB}. Here, $\Gamma_{n,\lambda}=\{z= z_n+\lambda h^{2/3}\}$. }
\end{figure}

The Langer--Olver expansion \cref{eq:JWKB_uniform} is the prototype for the representation of solutions in \Cref{thm2}.
That theorem states, in full generality, the existence of a basis of solutions of the desired form, except the coefficients $\beta,\gamma$ are only stated to be polyhomogeneous on $M$. 
Now, \Cref{thm2} is much more general than what Olver proves, but it is neither stronger nor weaker when applied to the JWKB case:
\begin{itemize}
	\item 
	In one way, our theorem is stronger than \cite[Thm.\  7.1]{OlverBook}, because the former gives an actual solution of the ODE while the latter only gives, for each $N\in \bbN$, an  approximation with error $O(h^N)$. This is just a matter of ``asymptotic summation'' of the Langer--Olver expansion, and we do not consider it to be an essential point.
	\item In another, more important way, it is weaker, because $\beta,\gamma$'s smoothness (and polyhomogeneity is even worse) on $M$ does not imply that they can be expanded in powers of $h$ with coefficients which are smooth functions of $z$. That would be smoothness on the \emph{rectangle} $[0,Z]_z\times [0,\infty)_{h^2}$. Smoothness on the rectangle is what is suggested by the Langer--Olver analysis (which only falls short of proving this because it does not show differentiability of the approximation's error in the semiclassical parameter). 
	Blowing up a submanifold enlarges the set of polyhomogeneous functions (e.g. any smooth function on $\bbR^2$ is a smooth function of polar coordinates $r,\theta$), so polyhomogeneity on $M$ is weaker than polyhomogeneity on the rectangle.
\end{itemize} 
 
Polyhomogeneity on $M$, rather than on the rectangle $[0,Z]_z\times [0,\infty)_{h^2}$, is the best that one can expect in general (see \S\ref{subsubsec:coulombic_high_energy}, \S\ref{subsec:parabolic}, \cite[\S12.14]{OlverBook}). But in the case we are currently considering, and in the others for which Olver proves a uniform JWKB expansion, it is desirable to prove that \cref{eq:JWKB_uniform} holds for 
\begin{equation} 
	\beta,\gamma \in C^\infty([0,Z]_z\times [0,\infty)_{h^2}).
\end{equation} 
Indeed, this can be proven by combining the Langer--Olver argument with the the error-analysis in \S\ref{sec:error}. We discuss this in \S\ref{sec:collapsing}. The resultant theorem, \Cref{thm:collapsed} also treats $\kappa=-1,0$, other cases dealt with in \cite{OlverBook}.

Note that the presentation \cref{eq:JWKB_uniform} is \emph{not} of exponential-polyhomogeneous type on $[0,Z]_z\times [0,\infty)_{h^2}$. It is on $M$, in accordance with \Cref{thm1}, but blowing up the corner is necessary. (We might say instead that \cref{eq:JWKB_uniform} is of ``Airy-polyhomogeneous''-type on $[0,Z]_z\times [0,\infty)_{h^2}$.)
It is therefore not true that \Cref{thm1} can be simplified in this case. The sharper \Cref{thm2} can, in the manner discussed above, but not \Cref{thm1}.
The miraculous feature of the Langer--Olver ansatz \cref{eq:JWKB_uniform} that holds in the JWKB case is that it takes all of the delicate $\lambda=z/h^{2/3}$ dependence which a typical polyhomogeneous function on $M$ has and organizes it into just two special functions, an Airy function and its derivative. No such miracle will hold in general, as Olver himself emphasizes in \cite[\S12.14]{OlverBook}.

\subsection{Bibliographic remarks} 
\label{subsec:history}
It is difficult to construct an adequate history of a problem as old as this one, so the following brief remarks will have to do.

We have already discussed the $\kappa=1$ (JWKB) case in 
\S\ref{subsec:JWKB} and will not say more, except to repeat that the definitive treatment of Poincar\'e expandability is due to Langer--Olver and the better part of a century old.  
It is quite satisfactory, except in that it stops at Poincar\'e expandibility. The standard account is Chp.\ 11 of Olver's \emph{Asymptotics and Special Functions} \cite{OlverBook}.
The $\kappa=-1$ case has been brought to a similarly refined level by the same authors. This can also be found in Olver's text, specifically \cite[Chp.\ 12]{OlverBook}. It is what he calls ``Case III.'' (Liouville--Green is Case I, and a simple turning point is Case II in Olver's numbering scheme.) The first work on this case is apparently \cite{Langer35}. Olver's work is \cite{OlverTransition}.

When $\kappa\geq 2$, the potential vanishes degenerately at $z=0$. 
One might call $\{z=0\}$ a \emph{trapped point}, since, in addition to being a zero of the potential, it is a point of equilibrium of the associated classical dynamics, at which the force $-V'$ on a Newtonian particle vanishes. Varied terminology has appeared in the literature. Unlike in the $\kappa\in \{-1,0,1\}$ case, only very incomplete results can be found. Olver describes the situation like so:
\begin{quote}
	\textit{In a region containing a turning point of [integer] multiplicity $\kappa$, uniform asymptotic approximations to the solutions can be constructed in terms of} [Bessel functions of order $1/(\kappa+2)$]... \textit{When $\kappa>1$, however, there is no straightforward extension from asymptotic approximations to asymptotic expansions} \cite[\S4.3]{OlverUnsolved}.  
\end{quote}
Here, ``asymptotic approximation'' refers to a statement such as $u=(1+O(h))u_0$ for explicit $u_0$ (in this case written in terms of Bessel functions), and ``asymptotic expansion'' refers to the production of a full series in powers of $h$. The precise no-go theorem that Olver is alluding to here is presented in full detail in \cite[\S12.14]{OlverBook}, based on \cite{OlverTransition}. 

Another, even clearer, statement of the problem can be found in Olver's discussion of the $\kappa\notin \bbZ$ case:
\begin{quote}
	\textit{It needs to be emphasized, however, that each of [the results in \cite{OlverFrac}] is confined to the first approximations
		to the wanted solutions. Except in the cases $\kappa = 0,\pm 1$ it is not known how to construct asymptotic expansions for the solutions in descending powers of the large parameter that involve only functions of a single variable.}  \cite{OlverFrac}
\end{quote}
Note that Olver's discussion in the quoted section is not confined to $\kappa \in (-2,2)$, as he mentions $\kappa=2$.

For the $\kappa=2$ case, Olver tried a different ansatz using the parabolic cylinder functions in \cite{OlverWell} (this is the reason for the ``functions of a single variable'' caveat in the quote above), though the idea predates his work. The hope was that, by beginning with a more accurate ansatz, the same method that works in the $\kappa\leq 1$ case would work in the $\kappa=2$ case. Though not stated outright, it is implied in \cite[\S11]{OlverWell} that the resultant asymptotic approximation can be promoted to a full expansion as long as uniformity is not demanded as $z\to\infty$. The argument does not appear to have been written down, but we believe that it works. 

For $\kappa\geq 3$, in which the trapping is degenerate, it seems that little is known beyond what Olver already states in \cite{OlverOriginal, OlverUnsolved, OlverBook}. 
In particular, I am unable to locate any asymptotic expansions including lower order terms, though it is hard to rule out the existence of folk theorems known to experts on the subject. There are several reasons why this case is less explored. One is that few natural examples exhibit it, so the motivation is lacking. (Though, at least one natural example is presented in \S\ref{subsec:anharmonic}.) Another is that, without severe restrictions on $\psi$, the Langer--Olver method does not apply --- see \cite[\S12.14, eq. 14.17]{OlverBook}. One could invent new special functions -- the cubic, quadric, etc.\ analogues of parabolic cylinder functions -- and attempt what Olver attempted in \cite{OlverWell}, but this violates one of the principles of asymptotic analysis: the approximating function must be ``less recondite than the solutions of the original equation'' \cite[p.\ 475]{OlverBook}.

For a pedagogical account of the aforementioned developments, we recommend Olver's expository works. 
Olver was not the first mathematician to work on this problem, but his works are some of the most systematic, and the fact that some of them have been incorporated into textbook form makes him the standard reference.
An encyclopedic treatment appears in \cite[Chp.\ 4]{Fedoryuk}. An extensive summary of the research literature, including the citations omitted above, can be found in \cite[\S31.2]{Wasow}. Wasow's account does not cover the last few decades, but it still seems current as far as our particular problem is concerned. 

An idea close to that pursued here is that of ``stretched/matched asymptotic expansions.'' It has been utilized extensively by Wasow \cite{Wasow59, Wasow61, Wasow62, WasowFinal}. 

\subsection{Primer on exponential-polyhomogeneity}
\label{subsec:primer}

In this subsection, we quickly review a few definitions from the geometric singular analysis literature, of which \cite{MelroseCorners, MelroseAPS, GrieserBasics, SherBessel} is a relevant sample. For the precise definition of the notion of manifolds-with-corners, we defer to these references. A key fact is that if $M$ is a mwc, then every boundary hypersurface of $M$ is a mwc. We will only work with mwc with finitely many boundary hypersurfaces.

Let $M$ denote a mwc.
A smooth function $f\in C^\infty(M^\circ)$ is said to be \emph{conormal} of order 0 on $M$ if $f\in L^\infty_{\mathrm{loc}}$ and, given any number $N\in \bbN$ of smooth vector fields $V_1,\dots,V_N$ on $M$ tangent to all of the boundary hypersurfaces of $M$ (a.k.a.\ b-vector fields), 
\begin{equation}
	V_1\cdots V_N f \in L^\infty_{\mathrm{loc}}. 
\end{equation}
The set of such functions is denoted $\calA^{0,0\cdots}(M)$, where there is one `$0$' in the superscript for each boundary hypersurface $\mathrm{f}$ of $M$. Then, for any $\alpha_1,\alpha_2\dots\in \bbR$,
\begin{equation}
	\calA^{\alpha_1,\alpha_2\cdots }(M) = \prod_n \varrho_{\mathrm{f}_n}^{\alpha_n} \calA^{0,0\cdots}(M). 
\end{equation}
If $M$ is compact, then these are Fr\'echet spaces, with the seminorms given by the $L^\infty$-norms of the various b-derivatives of $f$.
For any Fr\'echet space $\calX$, one can define the spaces $\calA^{\alpha_1,\alpha_2\dots}(M;\calX)$ of conormal $\calX$-valued  functions.

A subset $\calE\subseteq \bbC\times \bbN$ is called an \emph{index set} if 
\begin{itemize}
	\item for every $\alpha\in \bbR$, the set $\{(j,k)\in \calE: \Re j <\alpha\}$ is finite, 
	\item $(j,k)\in \calE\Rightarrow (j+1,k)\in \calE$, 
	\item $(j,k+1)\in \calE \Rightarrow (j,k)\in \calE$. 
\end{itemize}
Then, the set $\calA^{\calE}([0,1); \calX)$ of polyhomogeneous $\calX$-valued functions is defined to be the subset of $f\in C^\infty((0,1);\calX)$ such that there exist $f_{j,k}\in \calX$ for the $(j,k)\in \calE)$ such that, for any $\alpha\in \bbR$, 
\begin{equation}
	f(x) - \sum_{(j,k)\in \calE, \Re j\leq \alpha} f_{j,k} x^j (\log x)^k \in \calA^\alpha([0,1);\calX). 
\end{equation}
Then, if $M$ is a mwc, and if $\calE_{\mathrm{f}}$ are index sets, one for each boundary hypersurface $\mathrm{f}$ of $M$, $\calA^{\calE_1,\calE_2,\dots}(M)$ consists of the smooth functions on $M$ such that, for any boundary hypersurface $\mathrm{f}$, and for any point $p\in\mathrm{f}$, $f$ is locally of the form 
\begin{equation}
	f\in \calA^{\calE_{\mathrm{f}}}( [0,\varepsilon)_{\varrho_{\mathrm{f}}}; \calA^{\cdots}(\mathrm{f}) ), 
\end{equation}
where the $\cdots$ is the list of those index sets of the boundary hypersurfaces of $M$ adjacent to $\mathrm{f}$. 

One can define spaces of partially polyhomogeneous functions similarly. The notation that we use below will be defined as needed.

Polyhomogeneity is a precisification of the notion of having full asymptotic expansions in terms of powers and logarithms (the particular combinations that are allowed to appear being specified by the index set), with 
\begin{itemize}
	\item each boundary hypersurface (a.k.a. facet) of our mwc $M$ corresponding to one asymptotic regime, and 
	\item the corners corresponding to ``intermediate'' asymptotic regimes.
\end{itemize}
These expansions are differentiable term-by-term, in every direction, and the results agree with the derivatives of the function. This is useful to have, and polyhomogeneity guarantees it. So, one can think of polyhomogeneity as being a slight weakening/generalization of smoothness, namely smoothness up to the fact that one's ``Taylor series'' (polyhomogeneous expansions) have logarithms and possibly non-natural powers of the boundary-defining-functions. 

Polyhomogeneity at the corners ensures that the asymptotic expansions at the various adjacent facets match up, and consequently one has well-defined \emph{joint} asymptotic expansions, a point stressed in \cite{SherBessel}. For an application to the Bessel functions, see \cite[\S2.4]{SherBessel}.
The coefficients in the (joint) expansion at a face $\mathrm{f}$ (not necessarily a facet; it could be a corner) are polyhomogeneous functions on $\mathrm{f}$, and the expansion of the derivative along $\mathrm{f}$ is the derivative of the expansion. This is part of what it means for the expansion to be differentiable term-by-term, but the latter is stronger because it applies to normal derivatives as well. 

A function $u$ on $M$ is said to be of \emph{exponential-polyhomogeneous type} if it can be written as 
\begin{equation}
u =  \sum_{n=1}^N u_n e^{\theta_n } \label{eq:misc_032}
\end{equation}
for some $N\in \bbN$ and polyhomogeneous $\theta_n,u_n:M\to \bbC$. Say that $u$ is of \emph{nontrivial} exponential-polyhomogeneous type if, in some presentation \cref{eq:misc_032} of $u$ with $N$ minimal, the $u_n$ have nonempty index sets at each boundary hypersurface of $M$ (and similarly at each corner). 
Thus, the notion of (nontrivial) exponential-polyhomogeneous type is one particular precisification of the notion of having full, well-behaved asymptotic expansions in terms of elementary functions, i.e.\ polynomials, logarithms, \emph{and exponentials}.

One interesting aspect of the theorem above, \Cref{thm1}, is the possible presence of logarithmic terms. This stands in contrast to the situation, described in Appendix \S\ref{sec:collapsing}, when $\kappa \in \{-1,0,1\}$ and $\psi$ is sufficiently nice. 
An example, with $\kappa=2$, in which such logarithmic terms appear is presented in Appendix \S\ref{sec:second}.

One advantage of the use of the function spaces above is that we can avoid the use of the theory of special functions entirely. 
Rather than work with parabolic cylinder functions (or some alternative such as Weber functions), as Olver does in \cite{OlverWell} to handle the $\kappa=2$ case, it suffices here to work entirely in terms of elementary functions. After all, it is only in terms of elementary functions that the notion of exponential-polyhomogeneity is phrased. 
Insofar as special functions appear below, they appear as the solutions to some special differential equations --- see \S\ref{sec:examples} --- and all of the properties required of them are proven directly from the differential equations they satisfy. No integral representations are used.

One disadvantage, for instance in the $\kappa=2$ case, is that the possible logarithmic terms, which in the usual treatment are hidden in asymptotics of the parabolic cylinder functions (see Appendix \S\ref{sec:second}), become explicit and  worse organized. This is especially inconvenient when trying to relate the expansions on opposite sides of the transition point. Without care, the result is proving polyhomogeneity statements with unnecessarily large index sets.

\subsection{Analogues in PDE}
\label{subsec:PDE}
Many of the phenomena we study here exist in the PDE setting, but results are less complete. We will haphazardly list a few works that can serve as a jumping off point to the wider literature for the interested reader. As in the previous few subsections, no attempt is made to be comprehensive.

Work by Buchal--Keller \cite{BuchalKeller}, Ludwig \cite{Ludwig}, and Guillemin--Schaeffer \cite{Guillemin} addresses the case of a simple turning point, $\kappa=1$. 
(This is an incomplete sample of the relevant literature.)
The cited works essentially solve the problem for $\kappa=1$, at least if $\psi$ is non-singular, giving full asymptotics in the classically allowed region, albeit only superpolynomial (rather than exponential) decay in the classically forbidden region.
The key idea is the representation of solutions of the equation as oscillatory integrals modeled on the integral formula for the Airy functions. The cited works prove Poincar\'e-type asymptotic expansions, but it is surely the case that e.g.\ \cite[Eq. E]{Guillemin} can be sharpened to include control on all derivatives. 

Whether similar ideas apply in the $\kappa\geq 2$ case remains to be seen, one difficulty being that, in this case, the semiclassical characteristic set is not a smooth submanifold of the semiclassical cotangent bundle. Thus, this falls under the header of semiclassical asymptotics for singular Lagrangians. Some of the relevant theory is developed in \cite{CdV1,CdV2}.

The $\kappa=-1$ case gives a prototype for Lagrangian distributions for Lagrangian submanifolds which hit fiber infinity. This situation has been encountered in microlocal studies of the Klein--Gordon \cite{SussmanKGI} and time-dependent Schr\"odinger equations \cite{HassellNL}.

The $\kappa=2$ case is one of the simplest examples of a semiclassical differential operator whose associated Hamiltonian flow displays normally hyperbolic trapping, the origin of the cotangent bundle over the transition point being the trapped set. Microlocal estimates near trapped sets have been a popular theme in recent years.  We will not attempt to summarize the literature. See \cite{WunschZworski}\cite{HintzVasy}\cite{ZworskiDyatlov}\cite{HintzTrapping} for a sample; for a non-microlocal perspective, see also \cite{Schlag}. Parabolic cylinder functions/Weber functions, or various microlocal tools, have been used to construct quasimodes near trapped bicharacteristics. In the case of unstable trapping, there are global obstructions to improving quasimodes beyond the order in \cite{Eswarathasan}. In the case of stable trapping studied by Ralston \cite{Ralston71, Ralston76}, the quasimodes are exceptionally good.

The utility of a transitional regime in the analysis of the Helmholtz--Schr\"odinger equation at high energy near cone points has been noticed by Hintz \cite{HintzCone1, HintzCone2}, who has established a number of powerful estimates for the analysis of this regime. See \S\ref{subsec:Coulomb}, \S\ref{subsubsec:coulombic_high_energy} for an ODE example, namely the Schr\"odinger equation at high positive/negative energies in the presence of a Coulomb potential. We will demonstrate the necessity of the transitional regime in this case --- see \Cref{fig:cone_test}.

Hydrogen ionization, i.e.\ low-energy scattering theory for an attractive Coulomb-like potential, has been studied in \cite{SussmanACL}, which established joint $E\to 0^+,r\to\infty$ asymptotics  without the symmetry assumptions that would allow a reduction to the ODE case. Our discussion in \S\ref{subsubsec:Rydberg} of the ODE case goes further (albeit only for the exact Coulomb potential) by treating not just hydrogen ionization, but the other three cases that differ by the sign of $E,\mathsf{Z}$. So, the Rydberg series, low-energy scattering off a repulsive Coulomb potential, and low-energy elliptic estimates for a repulsive Coulomb potential are all handled. This last case can be dealt with using the tools in \cite{SussmanACL}, but the Rydberg series and low-energy scattering off a repulsive Coulomb potential are more difficult because of the appearance of a simple turning point ($\kappa=1$) in the classical dynamics. Understanding the Rydberg series at large angular momentum poses an additional challenge that we do not deal with here, namely that of coalescing turning points. 

\subsection{Concluding remarks}
\label{subsec:conc}

This work began with the goal of providing a proof of the main result of \cite{SherBessel} using only the structural features of Bessel's ODE. The results here are not sufficient for this purpose.
The missing piece is an analysis of the $\kappa=-2$ case, in which the ODE possesses a regular singularity. However, the clause of Sher's result stating the exponential-polyhomogeneity of the Bessel functions in a neighborhood of the locus of their Airy function asymptotics is a special case of \Cref{thm:collapsed}, with $\kappa=1$. See \Cref{rem:extension} for our thoughts on the $\kappa\leq -2$ case.

If the coefficients $W,\psi$ appearing in the definition of $P$, \cref{eq:1}, depend on some parameters $p \in \bbR^d$ (or more generally $p$ valued in some manifold-with-corners), then e.g.\ smoothness in $p$ implies smoothness of the solutions constructed in \Cref{thm2} in $p$ in a suitable topology. Similar results hold for other $L^\infty$-based notions of regularity. The family $\calQ = \operatorname{ker} N(P)$ of quasimodes appearing in the statement of \Cref{thm2} depends smoothly on the coefficients of $P$ with respect to some function space topologies. It is therefore possible to articulate a theorem to the effect that, given a smooth section's worth of chosen quasimodes, the function $u$ constructed in the theorem is also smoothly varying in some suitable sense.

We end this introduction with a few remarks regarding potential extensions: 
\begin{itemize}
	\item For simplicity, we have restricted attention to integral $\kappa$, but we believe that the arguments below work for all real $\kappa >-2$, except the specific index sets appearing may differ.
	\item One can handle $\psi$ polyhomogeneous on $M$ with appropriate index sets at the various faces. If $\psi$ has only partial regularity (partially polyhomogeneous, with a conormal error, or something similar), then partial asymptotic expansions can still be constructed, with the error controlled in a corresponding conormal space. In the argument, the final step, in which the remaining error is solved away via the method of variation of parameters, becomes more delicate, as the forcing is no longer $O(h^\infty)$ but instead only a decaying conormal function. However, it still seems possible to make do.
	\item The previous point also applies if $W$ is polyhomogeneous or partially polyhomogeneous at $0$, except now the Langer transformation is no longer a diffeomorphism. This issue is not critical, as our use of the Langer transformation is only to simplify the algebra. Besides this one complication, the argument works as expected to produce the desired expansions.
	\item One particularly natural extension is to allow $\psi$ to have a pole of the form $\sim 1/h$, in which case the term $h^2 \psi$ in $P$ has an $O(h)$ contribution. Unfortunately, this can dominate the potential at $\mathrm{fe}$. For instance, at $\mathrm{fe}$, the potential is essentially $z^\kappa \sim h^{2\kappa/(\kappa+2)}$,
	and the right-hand side is smaller than $h$ if $h<1$ and $\kappa\geq 3$. 
	
	For $\kappa\in \{-1,0,1,2\}$ at least, where such issues do not occur, this problem has been considered by Langer and others, and there are standard tools which apply. Thus, one can expect results in the vein of the theorems in this paper. 
	\item Complex $h$ can be considered essentially without modification if $\psi$ has appropriate analyticity. In fact, considering $\Im h=p$ as a parameter, this is a special case of smooth parameter dependence. It can be shown that the constructed solutions depend analytically on $h$ in some neighborhood $U\subset \bbC_h$ of $(0,\infty)_h\subset \bbC_h$. Of course, $U$ cannot contain the origin, because the Liouville--Green ansatzes have essential singularities there.

	If $P$ has appropriate analytic structure in $z$, then one can allow complex $z$ as well, but this requires a bit more care, and one has to restrict attention to $z$ in a sector depending on the order of vanishing of the potential. We are not experts on complex WKB, so we will refrain from speculating more.
	\item One common occurrence when considering parameters is that the transition point disappears or changes character when a parameter is varied. Consider for instance $V(r;\theta) = 1-r e^{i\theta}$, where $r$ is the independent variable and $\theta\in \bbR$ is a parameter. The semiclassical ODE $Pu=0$ for $P= - h^2 \partial_r^2 + V$ has a turning point at $r=1$ if $\theta\in 2 \pi \bbZ$ but not otherwise. A more interesting example is that of coalescing turning points, where the potential $V(z;a)$ depends on a parameter $a\geq 0$ and has $N\in \bbN^{\geq 2}$ turning points for each $a>0$, with the turning points coalescing in some way as $a\to 0^+$. The $N=2$ case is the setting of \cite{OlverWell}. 
	
	These sorts of problems cannot be readily addressed with the techniques below. 
\end{itemize}

We mention finally the recent thesis of Sobotta \cite{Sobotta}, who, in ongoing work with Grieser, has constructed $O(h^\infty)$-quasimodes for a wide class of semiclassical ODEs, including many of those considered here. While of a similar spirit to ours, their methods are significantly more general and not restricted to second-order ODEs. They include the use of iterated blowups organized via Newton polygons. When applied to the second-order case, only a single blowup is required, as is true here. It is expected that the constructed quasimodes can be upgraded to full solutions, but this has yet to be done. 

\section{Examples}
\label{sec:examples}
This section consists of a number of examples which can be read in any order or skipped entirely. Their purpose is to aid the reader in understanding the statements of \Cref{thm1}, \Cref{thm2}. We have elected to focus on cases where the solutions can be written in terms of special functions and  the term $\psi$ in \cref{eq:1} is of the form in \cref{eq:psi_init}.
\begin{itemize}
	\item \S\ref{subsec:model} discusses the ``homogeneous'' (a.k.a.\ ``model'') problem, when the ODE can be solved explicitly in terms of Bessel functions. Like that discussed in \S\ref{subsec:JWKB}, this example is important to understand, even though it is basic.
	\item \S\ref{subsec:Coulomb}, discusses the hydrogen atom, or more generally the Schr\"odinger equation in the presence of a Coulomb potential, which is allowed to be either attractive or repulsive. 
	This ODE too can be solved in terms of special functions, 
	specifically \emph{Whittaker functions} (which can in turn be written in terms of confluent hypergeometric functions). 
	Applied to this special case, our theorems give new results about the Whittaker functions; in particular, we will state the existence of an asymptotic expansion in an asymptotic regime where previously none seemed known. We will see that the reason why we are able to do this is that \emph{polyhomogeneity on $M$ is the notion needed to go beyond leading order} in connecting the asymptotics in $\{r>\varepsilon\}$ with the asymptotics in $\{r|E|^{1/2}<C\}$, where $C\gg 1 \gg \varepsilon>0$.
	
	Due to the length of this example, many of the details are deferred to the appendix \S\ref{sec:hydrogen_details}.
	\item In \S\ref{subsec:parabolic}, we discuss another example where our theorems give new results about special functions, in this case the parabolic cylinder functions: the simple harmonic oscillator. This example involves the $\kappa=0,1$ cases when initial data is provided at small $r$.  
	
	The discussion will continue in \S\ref{sec:second}, where we discuss providing initial data at large $r$. This will involve the $\kappa=2$ case.
	One feature of this example is that we can work out, algebraically, the appearance of log terms in asymptotic expansions at the edge $\mathrm{fe}\subset M$. This explains our insistence on speaking of ``polyhomogeneity'' rather than smoothness/one-step polyhomogeneity. The former permits log terms which, as this example demonstrates, can  actually appear. (Log terms will also appear in \S\ref{subsubsec:coulombic_high_energy}, but we will only probe them numerically.)
	\item \S\ref{subsec:anharmonic} discusses the small and large-anharmonicity limit of the anharmonic oscillator, an influential topic in the physics literature -- e.g.\ the famous paper \cite{BenderWu} -- because its relation to the $\phi^4_d$ quantum field theory. (The anharmonic oscillator can be thought of as the $\phi^4_1$ theory.) We will see the $\kappa=2,4$ cases arise in this example, one in the limit of small anharmonicity and one in the limit of large anharmonicity.
	\item In \S\ref{subsec:Bessel}, we discuss an example which motivated this work, the Bessel ODE at large order \emph{and} large argument. (Bessel functions also appear in \S\ref{subsec:model}, but only for one individual order that depends on $\kappa,\alpha$.) We discuss the relation between our work and the recent work of Sher \cite{SherBessel}, which proved (by completely different singular-geometric, partially special-function-theoretic means) the polyhomogeneity of the Bessel functions $J_\nu(z),Y_\nu(z)$, considered as functions of both $z,\nu$. 
	\item Finally, in \S\ref{subsec:Regge_Wheeler}, we discuss an application to black hole physics, the Regge--Wheeler equation. The solutions cannot be written in terms of special functions, as far as we are aware.
\end{itemize}

In this section, we will describe quasihomogeneous blowups without specifying a choice of smooth structure for the new mwc at the front face. The ambiguity will be whether one function or a power thereof will be a boundary-defining-function for the front face. 
Since such a choice does not affect the sets of polyhomogeneous functions (and therefore exponential-polyhomogeneous functions), no harm will come from this imprecision. Similarly, when specifying local coordinate charts, the listed local coordinates may be a power of a local boundary-defining-function.

\subsection{The model problem} \label{subsec:model}
The simplest example is when $W=1$ and $\psi(z)=(\alpha^2-1/4)/z^2$ for some $\alpha\in \bbC$. (We write $\psi$ in terms of $\alpha$ rather than what we called $\nu$ in \cref{eq:psi_init} because we will use `$\nu$' below for something else.) Then, the operator $P$ defined by \cref{eq:1} is
\begin{equation}
	P=- h^2 \frac{\partial^2 }{\partial z^2} + \varsigma z^\kappa  + \frac{h^2}{z^2} \Big(\alpha^2-\frac{1}{4} \Big) =0.
	\label{eq:misc_o12}
\end{equation}
The corresponding ODE is $Pu=0$. In physicists' parlance, this is the zero-energy, time-independent Schr\"odinger equation on the half-line in the presence of a potential 
\begin{equation} 
	V_{\mathrm{eff}}=\varsigma z^\kappa + h^2(\alpha^2-1/4) /z^2.
\end{equation} 
One way this arises is from separating variables in the zero-energy, time-independent Schr\"odinger equation on $\bbR^3$ in the presence of a central $\varsigma r^\kappa$ potential. Then, physicists usually write `$r$' in place of `$z$' and `$\hbar$' in place of $h$. The semiclassical parameter is the (reduced) Planck constant. The term $(\alpha^2-1/4)/z^2=\ell(\ell+1)/r^2$ is the angular momentum term, i.e.\ the effective centrifugal potential, which can be thought of as repelling particles with nonzero angular momentum away from the origin; $\ell\in \bbN$ is the ``azimuthal quantum number,'' related to $\alpha$ by $\ell = -2^{-1}\pm \alpha$. The reason why $(\alpha^2-1/4)/z^2=\ell(\ell+1)/r^2$ has an $h^2$ in front of it in \cref{eq:misc_o12} is because the angular Laplacian $\triangle_{\bbS^2}$ has an $h^2$ in front of it in the original 3d Schr\"odinger equation, so after separation of variables the resultant term in the effective potential $V_{\mathrm{eff}}$ continues to carry that $h^2$.

The ODE $Pu=0$ can be solved in terms of Bessel functions (as observed and utilized by Langer in a number of works):
\begin{equation}
	\ker P(h) = \bigg\{ z^{1/2} I \bigg( \frac{2 z^{(\kappa+2)/2}}{h(\kappa+2)} \bigg) : I(t)\text{ a solution to }t^2 \frac{\mathrm{d}^2 I}{\mathrm{d} t^2} + t \frac{\mathrm{d} I}{\mathrm{d} t} - (\varsigma t^2 + \nu^2) I(t) = 0 \bigg\},
\end{equation}
where now `$\nu$' denotes $\nu=2\alpha/(\kappa+2)$. The ODE satisfied by $I(t)$ is Bessel's ODE of order $\nu$, so $I$ is a (modified, if $\varsigma>0$, and unmodified otherwise) Bessel function of order $\nu$.
Two special cases with $\alpha=1/2$ are worth noting: 
\begin{itemize}
	\item (Liouville--Green) if $\kappa=0$, then $\nu=1/2$, and the Bessel functions of order $\nu=1/2$ are just trigonometric functions, $I(t) \in \operatorname{span}_\bbC\{t^{-1/2} \sin(t), t^{-1/2} \cos(t)\}$. Then, 
	\begin{equation}
		\frac{z^{1/2}}{h^{1/(\kappa+2)}} I \bigg( \frac{2 z^{(\kappa+2)/2}}{h(\kappa+2)} \bigg)\in \operatorname{span}_\bbC\big\{ e^{ \sqrt{\varsigma} z/h}, e^{- \sqrt{\varsigma} z/h} \big\}.
	\end{equation}
	Indeed, in this case, $Pu=0$ is just $h^2 u''(z)=\varsigma u(z) $, whose solutions are linear combinations of $\exp(\pm z/h)$ in the $\varsigma>0$ (classically allowed) case and $\exp(\pm i z/h)$ in the $\varsigma<0$ (classically forbidden) case.
	\item (Jeffreys--Wentzel--Brillouin--Kramers) if $\kappa=1$, then $\nu=1/3$, and the Bessel functions of order $\nu=1/3$ can be written in terms of Airy functions: 
	\begin{equation} 
		I(t) \in \operatorname{span}_\bbC \{t^{-1/3} \operatorname{Ai}( \varsigma (3z/2)^{2/3} ),t^{-1/3} \operatorname{Bi}( \varsigma (3z/2)^{2/3} ) \},
	\end{equation} 
	\begin{equation}
		\frac{z^{1/2}}{h^{1/(\kappa+2)}} I \bigg( \frac{2 z^{(\kappa+2)/2}}{h(\kappa+2)} \bigg)\in \operatorname{span}_\bbC\Big\{ \operatorname{Ai}\Big( \frac{\varsigma z}{h^{2/3}}\Big), \operatorname{Bi} \Big(\frac{\varsigma z}{h^{2/3}}\Big)\Big\}.
	\end{equation}
	Indeed, in this case, $Pu=0$ is just $h^2 u''(z) = \varsigma z u(z)$, which is Airy's ODE.

	\begin{figure}[h]
		\includegraphics[scale=.525]{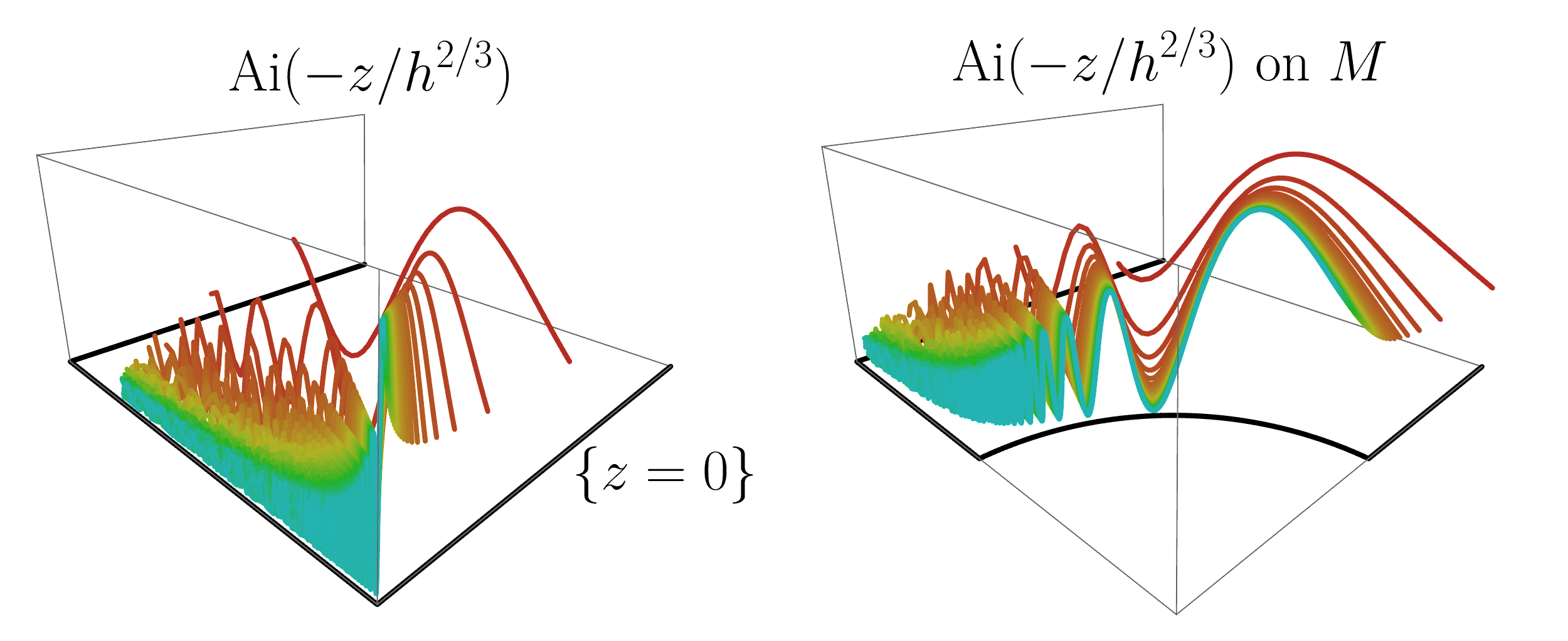}
		\caption{The Airy function $\operatorname{Ai}(-z/h^{2/3})$, plotted for several values of $h$, vs. $z$ (\textit{left}) and over $M$ (\textit{right}). Observe how the blowup of the corner $\{z=0,h=0\}$ helps to resolve the behavior of the function there. The curves for different values of $h$ are plotted with different colors.}
	\end{figure}
\end{itemize}

Recalling the rescaled variable $\lambda=z/h^{2/(\kappa+2)}$ defined above, 
\begin{equation}
	\frac{z^{1/2}}{h^{1/(\kappa+2)}} I \bigg( \frac{2 z^{(\kappa+2)/2}}{h(\kappa+2)} \bigg) = \lambda^{1/2} I\Big( \frac{2 \lambda^{(\kappa+2)/2}}{\kappa+2}\bigg)
\end{equation}
is a function of $\lambda$ alone. (This will not usually be the case and reflects the homogeneities of the various terms in \cref{eq:misc_o12}. Were $W\neq 1$, or were $\psi$ more complicated, then the elements of $\ker P(h)$ would probably not just be dilations of elements of $\ker P(1)$.) Now we recall two facts about Bessel functions:
\begin{itemize}
	\item (small argument polyhomogeneity) $I(t)$ has an asymptotic expansion as $t\to 0$ in powers of $t$ and possibly $\log (t)$ \cite[ \href{http://dlmf.nist.gov/10.8}{\S10.8}]{NIST}. (These are often given as the definitions of the various Bessel functions, because they actually converge.)
	\item (large argument exponential-polyhomogeneity) $I(t)$ has an asymptotic expansion as $t\to\infty$ in powers of $1/t$, with an overall exponential factor out front. These expansions are often called ``Hankel's expansions'' \cite[\href{http://dlmf.nist.gov/10.17.i}{\S10.17(i)}]{NIST}\cite[\href{http://dlmf.nist.gov/10.40.i}{\S10.40(i)}]{NIST}. The Hankel expansions can be proven in many ways. One traditional method is extracting them from integral representations of the Bessel functions via the method of stationary phase.
\end{itemize}
The reference \cite{NIST} describes these expansions as asymptotic expansions in the sense of Poincar\'e, but it also gives the expansions of the derivatives of the Bessel functions, and the latter result from formally differentiating the former. Using Bessel's ODE, the analogous statement holds for all higher derivatives as well. So the expansions in \cite{NIST} are actually Taylor series in $\rho=1/\lambda^{(\kappa+2)/2}$ around $\rho=0$, not just Poincar\'e.

So, the solutions of $Pu=0$ (or at least those that just depend on $\lambda$; we could always multiply by a function $c(h)$ of $h$ alone) are of exponential-polyhomogeneous type on 
\begin{equation}
	M_1=[0,\infty]_\lambda \times [0,\infty)_{h^{2/(\kappa+2)}}.
\end{equation}
This implies exponential-polyhomogeneity on the mwc $M$ in \Cref{thm1}, because any polyhomogeneous function on $M_1$ is also polyhomogeneous on $M$. The reason  is that $M$ arises as (more precisely, is equivalent as a compactification to) a sub-mwc of the polar blowup
\begin{equation}
	M_2 = [M_1; \{\lambda=\infty,h=0\};\kappa+2],
\end{equation}
where the $\kappa+2$ just means that, after the blowup, the smooth structure at the front face of the blowup is modified so that $h^2$ becomes a boundary-defining-function of its interior. Let 
\begin{equation}
	M' = \mathrm{cl}_{M_2} \{\lambda h^{2/(\kappa+2)} \leq Z\} \subseteq M_2.
\end{equation}
The map $(0,Z)_z\times (0,\infty)_h \ni (z,h) \mapsto (z/h^{2/(\kappa+2)},h )$
extends to a diffeomorphism $M\to M'$. This diffeomorphism identifies $\{\lambda=0\} \subset M'$ with $\mathrm{be}$, $\mathrm{cl}_{M'}\{\lambda<\infty,h=0\}= \mathrm{cl}_{M_2}\{\lambda<\infty,h=0\}$ with $\mathrm{fe}$, the portion within $M'$ of the front face of the blowup with $\mathrm{ze}$, and the curve 
\begin{equation} 
	\mathrm{cl}_{M_2} \{\lambda h^{2/(\kappa+2)} = Z\}
\end{equation} 
with $\mathrm{ie}$. 
See \Cref{fig_alt}.

\begin{figure}[h]
	\begin{tikzpicture}
		\fill[gray!5] (5,-1.5) -- (0,-1.5) -- (0,0)  arc(-90:0:1.5) -- (5,1.5) -- cycle;
		\draw[dashed] (5,-1.5) -- (5,1.5);
		\node[white] (ff) at (.75,.75) {ff};
		\node (zfp) at (-.35,-.75) {$\mathrm{fe}$};
		\node (pf) at (2.5,-1.8) {$\mathrm{be}$};
		\node[gray] (ie) at (3.25,.3) {$\mathrm{ie}$};
		\node (mf) at (.7,.5) {$\mathrm{ze}$};
		\filldraw[dashed, fill=darkgray!20] (0,0) arc(-90:-22:1.5)  -- (5,-1) -- (5,-1.5) -- (0,-1.5) --  cycle;
		\draw (5,-1.5) -- (0,-1.5) -- (0,0)  arc(-90:0:1.5)  -- (5,1.5);
		\node[darkgray] at (2.5,-.4) {$M'\cong M$};
		\draw[->, darkred] (.1,-.1) -- (.1,-.5) node[right] {$\lambda^{-(\kappa+2)}$};
		\draw[->, darkred] (.1,-1.4) -- (.7,-1.4) node[above right] {$h^{2/(\kappa+2)}$};
		\draw[->, darkred] (.1,-1.4) -- (.1,-1.1) node[right] {$\lambda$};
		\draw[->, darkred] (.1,-.1) to[out=0, in=-150] (1,.23) node[below] {$z$};
		\draw[->, darkred] (1.6,1.4) -- (2.2,1.4) node[below] {$h^2$};
		\draw[->, darkred] (4.9,-1.4) -- (4.2,-1.4) node[above] {$h_0-h$};
		\draw[->, darkred] (4.9,-1.4) -- (4.9,-.4) node[left] {$\lambda$};
		\draw[->, darkred] (1.6,1.4) to[out=-90, in=50] (1.195,.41) node[right] {$1/z$};
		\draw[->, darkred] (4.9,1.4) -- (4.2,1.4) node[below] {$h_0-h$};
		\draw[->, darkred] (4.9,1.4) -- (4.9,.4) node[left] {$\lambda^{-1}$};
	\end{tikzpicture}
	\caption{The mwc $M_2$, with the subset $M\cong M'\subset M_2$ labeled, with the edges labeled by the corresponding edges in $M$. A smooth atlas is depicted.}
	\label{fig_alt}
\end{figure}

So, in this example, \Cref{thm1} is true and just about the exponential-polyhomogeneity of the Bessel functions. The necessity of the transitional regime fe is made manifest by the dependence of the elements of $\ker P$ on $\lambda=z /h^{2/(\kappa+2)}$.
\begin{remark}
	\label{rem:exception}
	An exception to this last sentence is when $\nu$ is a half of an odd integer, because then the Bessel functions are a linear combinations of exponentials, and so $\smash{I(2\lambda^{(\kappa+2)/2}/(\kappa+2))}$ is of exponential-polyhomogeneous type on $[0,Z]_z\times [0,\infty)_h$ without needing to do a blowup. For instance, the Liouville--Green example $\exp(\pm i z/h)$ is of exponential-polyhomogeneous type without needing to do a blowup, because $z/h$ is polyhomogeneous without needing to do a blowup. In contrast, $J_0(\lambda)$ is only of nontrivial exponential-polyhomogeneous type \emph{after} doing the blowup. The latter case is generic, but the exceptional case includes many important examples.
\end{remark}

\subsection{The hydrogen atom}
\label{subsec:Coulomb}

Consider the (time-independent) Schr\"odinger equation for the hydrogen atom, after separation of variables:
\begin{equation}
	- \frac{\partial^2 u}{\partial r^2} +\Big( - \frac{\mathsf{Z}}{r} + \frac{\ell(\ell+1) }{r^2}  \Big) u = E u
	\label{eq:Whittaker}
\end{equation}
where $u$ is the ``wavefunction,'' $r$ is the Euclidean radial coordinate (distance from the point-like nucleus), $E\in \bbR^\times$ is the energy, $\mathsf{Z}\in \bbR^\times$ is the atomic number (we allow $\mathsf{Z}<0$, in which case this is really modeling Coulomb repulsion) and $\ell\in \bbN$ is the azimuthal mode number. This is a form of Whittaker's ODE, and we can immediately write down the solution space:
\begin{equation}
	\operatorname{span}\{  \operatorname{WhittM}_{-i\mathsf{Z}/2\sqrt{E},2^{-1}+\ell}(2i \sqrt{E} r) , \operatorname{WhittW}_{-i\mathsf{Z}/2\sqrt{E},2^{-1}+\ell}(2i \sqrt{E} r) \},
	\label{eq:misc_ak3}
\end{equation}
(fixing one branch of $\sqrt{E}$)
where $\operatorname{WhittW}_{\kappa,\mu},\operatorname{WhittM}_{\kappa,\mu}$ are the Whittaker functions \cite{NIST} (which can be written in terms of the confluent hypergeometric functions).

We are interested in all possible asymptotic regimes, $r\to 0$, $r\to \infty$, $E\to 0^\pm$ (low energy), $E \to \pm \infty$ (high energy), including regimes where these vary jointly.\footnote{The angular momentum $\ell$ will be taken as fixed. One could also vary $\mathsf{Z}$, but this is of less physical relevance, and anyways, a change in $|\mathsf{Z}|$ can be absorbed into a rescaling of $E,r$.} It turns out that the existent literature does not discuss every possible way of taking $r,E$ to their limiting values. (See \S\ref{subsubsec:coulombic_high_energy} for an example.) Recently, Black--Toprak--Vergara--Zou \cite{Toprak} discussed the $\ell=0$, $\mathsf{Z}<0$ (i.e.\ repulsive) case. Their interest was proving dispersive estimates for the time-dependent Schr\"odinger equation in the presence of the Coulomb potential. The literature did not suffice for their purposes, which required understanding all possible asymptotic regimes. Using special-function-theoretic means, they established some symbolic estimates on the Whittaker-M function and some partial polyhomogeneous expansions. They did not establish full expansions.

It turns out that, using our methods, one can analyze \cref{eq:Whittaker} on the compactification $X_1 \hookleftarrow \bbR^\times_E\times \bbR^+_r$ that comes from taking 
\begin{equation} 
	X_0=([-\infty,0]\sqcup [0,\infty])_E\times [0,\infty]_r
\end{equation} 
and performing a polar blowup of the low-energy, large-radii corners $\{E=0,r=\infty\}$. That is, $X_1 = [X_0; \{ E=0,r=\infty \}]$.
The conclusion is that the solutions of \cref{eq:Whittaker} with exponential-polyhomogeneous (say, $E$-independent) initial data at some fixed value of $r$ are polyhomogeneous on the compactification $X\hookleftarrow \bbR^\times_E\times \bbR^+_r$ that results from performing a parabolic blowup of the $\{|E|=\infty,r=0\}$ corners of $X_1$, so as to resolve the ratio $r|E|^{1/2}$, and (either before or afterwards) performing a particular quasihomogeneous blowup of one particular remaining point in the interior of the front face of the blowup $X_0\rightsquigarrow X_1$: 
\begin{equation}
	X=[X_0 ;\{|E|=\infty,r=0\},  (\partial X_1)\cap \mathrm{cl}_{X_1}\{-rE=\mathsf{Z}\} ]_{\mathrm{quasihomogeneous}}
	\label{eq:misc_033}
\end{equation}
See \Cref{fig:X}. We will discuss this last blowup in more detail below.

\begin{figure}[h!]
	\begin{tikzpicture} 
		\draw[dashed] (-5.2,-2.2) rectangle (5.2,1.7);
		\filldraw[fill=lightgray!20] (.2,-2) -- (.2,0) to[out=0,in=270] (1.7,1.5) -- (5,1.5) -- (5,-.5) to[out=180,in=90] (3.5,-2) -- cycle;
		\filldraw[fill=lightgray!20] (-.2,-2) -- (-.2,0) to[out=180,in=270] (-1.7,1.5) -- (-5,1.5) -- (-5,-.5) to[out=0,in=90] (-3.5,-2) -- cycle;
		\begin{scope}
			\clip (-.2,-2) -- (-.2,0) to[out=180,in=270] (-1.7,1.5) -- (-5,1.5) -- (-5,-.5) to[out=0,in=90] (-3.5,-2) -- cycle;
			\filldraw[fill=white] (-1.3,.6) circle (.4);
		\end{scope}
		\fill[white] (-1.3,.6) circle (.39);
		\node () at (0,1.2) {$X$};
		\draw[darkred, ->] (3.4,-1.9) --(2.7,-1.9) node[above] {$E^{-1}$};
		\draw[darkred, ->] (-3.4,-1.9) --(-2.7,-1.9) node[above] {$-E^{-1}$};
		\draw[darkred, ->] (3.4,-1.9) to[out=90, in=231] (3.7,-1.05) node[left] {$rE^{1/2}$};
		\draw[darkred, ->] (-3.4,-1.9) to[out=90, in=309] (-3.7,-1.05) node[right] {$r|E|^{1/2}$};
		\draw[darkred, ->] (-.3,-1.9) --(-1,-1.9) node[above] {$-E$};
		\draw[darkred, ->] (-.3,-1.9) --(-.3,-1.2) node[left] {$r$};
		\draw[darkred, ->] (.3,-1.9) --(1,-1.9) node[above] {$E$};
		\draw[darkred, ->] (.3,-1.9) --(.3,-1.2) node[right] {$r$};
		\draw[darkred, ->] (-4.9,-.4) --(-4.9,.3) node[right] {$r$};
		\draw[darkred, ->] (-4.9,-.4) to[out=0,in=155] (-4.2,-.6) node[above] {$\;\;\;\;\;\;\;\;\quad(r|E|^{1/2})^{-1}$};
		\draw[darkred, ->] (4.9,-.4) --(4.9,.3) node[left] {$r$};
		\draw[darkred, ->] (4.9,-.4) to[out=180,in=25] (4.2,-.6) node[above] {$(rE^{1/2})^{-1}\quad$};
		\draw[darkred, ->] (4.9,1.4) --(4.9,.7) node[left] {$r^{-1}$};
		\draw[darkred, ->] (4.9,1.4) --(4.2,1.4) node[below] {$E^{-1}$};
		\draw[darkred, ->] (1.8,1.4) --(2.5,1.4) node[below] {$E$};
		\draw[darkred, ->] (1.8,1.4) to[out=-90, in=60] (1.6,.7) node[right] {$(rE)^{-1}$};
		\draw[darkred, ->] (-4.9,1.4) --(-4.2,1.4) node[below] {$-E^{-1}$};
		\draw[darkred, ->] (-4.9,1.4) --(-4.9,.7) node[right] {$r^{-1}$};
		\draw[darkred, ->] (.3,-.1) --(.3,-.8) node[right] {$r^{-1}$};
		\draw[darkred, ->] (.3,-.1) to[out=0, in=210] (1,.1) node[right] {$rE$};
		\draw[darkred, ->] (-1.7,.9) to[out=120,in=270] (-1.8,1.3) node[left] {$\varrho$};
		\draw[darkred, ->] (-1.7,.9) to[out=210,in=90] (-1.825,.5) node[left] {$s$};
		\draw[darkred, ->] (-1.05,.11) to[out=-25,in=175] (-.5,-.1) node[below] {$\varrho$};
		\draw[darkred, ->] (-1.05,.11) to[out=200,in=-20] (-1.5,.11) node[below] {$s^{-1}$};
	\end{tikzpicture}
	\caption{The (disconnected) manifold-with-corners $X$ appearing in \cref{eq:misc_033} in the $\mathsf{Z}>0$ (attractive) case. If $\mathsf{Z}<0$ (the repulsive case), then instead the notch (the front face of the final blowup) is on the other component, so the figure should be reflected across the vertical axis (except for the signs on $\pm E$). Above, $\varrho = |rE+\mathsf{Z}|$ and $s=|E|/|rE+\mathsf{Z}|^3$. Due to lack of space, not all coordinate charts are shown. Away from the notch, the two components are identical, except for the sign of $E$, so the coordinate charts not depicted on one component can be read off the other.}
	\label{fig:X}
\end{figure}

The reason why our results apply is that we can cover $X_1$ by copies of the rectangle $M_0=[0,Z]_z\times [0,\infty)_{h^2}$ or $M$ such that, on each copy, the equation \cref{eq:Whittaker} has the form $Pu$ for $P$ as in \cref{eq:1}, for $\kappa \in \{-4,-1,0,1\}$, after a suitable rescaling and change-of-variables.
As already mentioned in \Cref{rem:extension}, we are not discussing $\kappa=-4$ here. This is an irregular singularity, and irregular singularities are actually (unlike the difficult $\kappa=-2$ case) easier to handle than the cases we consider here; indeed, the Coulomb wavefunctions do not have delicate compound asymptotics at the corners of $X$ where $\kappa=-4$ is the relevant case \cite{OlverBook, NIST, SussmanACL, Toprak}. So, we will focus on the other cases.\footnote{To see that $\kappa=-4$ is the correct description, let $z=1/r$. Then, $\partial_r^2 = (z^2\partial_z)^2 = z^4 \partial_z^2 +2 z^3\partial_z$, $\mathsf{Z}/r=\mathsf{Z}z$, $\ell(\ell+1)/r^2 = \ell(\ell+1)z^2$. After rewriting the ODE according to \Cref{rem:reduction}, $z^\kappa W(z)=\pm 1/ z^4$. This is the term coming from the energy in \cref{eq:Whittaker}. It is the most important term when $\min\{r,Er\}\gg 1$ because $-\mathsf{Z}/r+\ell(\ell+1)/r^2$ is decaying as $r\to\infty$.}

We have depicted the locations of the various copies of $M_0$ in \Cref{fig:covering} (slightly shrunken down, for visibility's sake; they can be enlarged to cover the space). 
\begin{itemize}
	\item The high-energy, low-radii regime will be discussed first, in \S\ref{subsubsec:coulombic_high_energy}. This is the regime covered by the copy of $M_0$ labeled ``$M_0^{(\mathrm{I})}$'' in \Cref{fig:covering}.  
	Here, $\kappa=0$ is the relevant case.
	\item 
	The low-energy regime will then be discussed in \S\ref{subsubsec:Rydberg}. This includes the subsets
	\begin{equation}
		M^{(\mathrm{II})},M_0^{(\mathrm{III})},M_0^{(\mathrm{IV})}\subset X_1
		\label{eq:misc_036}
	\end{equation}
	in \Cref{fig:covering}. 
	Here, we have one or two cases to consider, depending on whether $\operatorname{sign}(E)=\operatorname{sign}(\mathsf{Z})$. The more complicated case, when the signs disagree is what is depicted in \Cref{fig:covering}. Then, $\kappa=1,-1$ are relevant. When the signs of $E,\mathsf{Z}$ agree, only $M^{(\mathrm{II})}$ is needed (and can be taken to cover the other two areas in \cref{eq:misc_036}). Then, only $\kappa=-1$ is relevant.
\end{itemize}
The joint low-energy, large-radii regime is associated with the name ``Rydberg'' in the physics literature.

\begin{figure}[t!]
	\begin{center}
		\begin{tikzpicture}[xscale=-1]
			\filldraw[fill=lightgray!20] (-.2,-2) -- (-.2,0) to[out=180,in=270] (-1.7,1.5) -- (-6,1.5) -- (-6,-2) -- cycle;
			\begin{scope}
				\clip (-.2,-2) -- (-.2,0) to[out=180,in=270] (-1.7,1.5) -- (-6,1.5) -- (-6,-2) -- (-6.5,-2) -- cycle;
				\fill[darkgray!20] (0,-2) -- (-2,-2) -- (0,1) -- cycle;
				\fill[darkgray!20] (-6,0) -- (-6,-2) -- (-4,-2) -- (-4,0) -- cycle;
				\fill[darkgray!20] (-6,.15) -- (-9,2) -- (-4,2) -- (-4,.15) -- cycle;
				\fill[darkgray!20] (0,1.5) -- (-3.9,-1.9) -- (-2.1,-1.9) -- (0,1) -- cycle;
				\fill[darkgray!20] (0,1.5) -- (-3.9,-1.7) -- (-3.9,0) -- (0,1.5) -- cycle;
				\fill[darkgray!20] (-3.9,.15) -- (-3.9,2) -- (0,1.5) -- cycle;
				\node () at (-5,-.5) {$M^{(\mathrm{I})}_0$};
				\node () at (-5.125,-.9) {$\kappa=0$};
				\node () at (-5,1) {$M^{(\mathrm{I})}_0 $};
				\node () at (-5,.5) {$\kappa=-4$};
				\node () at (-.9,-1.2) {$M^{(\mathrm{II})} $};
				\node () at (-.9,-1.7) {$\kappa=-1$};
				\node () at (-2.5,-1.5) {$\kappa=1$};
				\node () at (-2.1,-1) {$M^{(\mathrm{III})}_0 $};
				\node () at (-2.6,0) {$M^{(\mathrm{IV})}_0 $};
				\node () at (-3.35,.69) {$M^{(\mathrm{V})}_0 $};
				\node () at (-3.3,-.5) {$\kappa=1$};
				\node () at (-2.5,1.2) {$\kappa=-4$};
				\draw (-.2,-2) -- (-.2,0) to[out=180,in=270] (-1.7,1.5) -- (-6,1.5) -- (-6,-2) -- cycle;
			\end{scope}
			\node () at (-6.4,1.8) {$X_1$};
			\draw[darkred, ->] (-.1,-2.1) -- (-.9,-2.1) node[below] {$|E|$};
			\draw[darkred, ->] (-.1,-2.1) -- (-.1,-1.2) node[left] {$r$};
			\draw[darkred, ->] (-.1,.1) -- (-.1,-.7) node[left] {$r^{-1}$};
			\draw[darkred, ->] (-.1,.1) to[out=180,in=-25] (-.8,.25) node[above left] {$r|E|$};
			\draw[darkred, ->] (-6.1,1.6) -- (-5.3,1.6) node[above] {$|E|^{-1}$};
			\draw[darkred, ->] (-1.6,1.6) -- (-2.4,1.6) node[above] {$|E|$};
			\draw[darkred, ->] (-1.6,1.6) to[out=-90,in=120] (-1.4,.8) node[above] {$(r|E|)^{-1}\qquad\quad$};
			\draw[darkred, ->] (-6.1,1.6) -- (-6.1,.8) node[right] {$r^{-1}$};
			\draw[darkred, ->] (-6.1,-2.1) -- (-5.3,-2.1) node[below] {$|E|^{-1}$};
			\draw[darkred, ->] (-6.1,-2.1) -- (-6.1,-1.2) node[right] {$r$};
		\end{tikzpicture}
	\end{center}
	\caption{One component of $X_1$ (stretched to aid visibility; cf. \Cref{fig:X}) where $\operatorname{sign}(E)\neq \operatorname{sign}(\mathsf{Z})$, covered by copies of $M$ and the rectangle $M_0$. The copies are labeled with Roman numerals.  In each region, the hydrogen atom falls into \cref{eq:1} with $\kappa$ as indicated. The copies of $M,M_0$ can be taken to cover all of $X_1$, but to aid the reader in distinguishing them, we have shrunk them so that they do not overlap. The situation in the $\operatorname{sign}(E)= \operatorname{sign}(\mathsf{Z})$ component of $X_1$ is simpler, because the $\kappa=1$ copies of $M_0$ can be omitted (see \S\ref{subsubsec:Rydberg}).}
	\label{fig:covering}
\end{figure}

The details are deferred to the appendix --- see \S\ref{sec:hydrogen_details}.

\subsection{The simple harmonic oscillator}
\label{subsec:parabolic} 

The next example we discuss is the simple harmonic oscillator: 
\begin{align}
	P = -  \frac{\partial^2}{\partial r^2}  +  \mathsf{k} r^2- E + \frac{ \ell(\ell+1)}{r^2} 
	\label{eq:misc_049}
\end{align}
where $\mathsf{k} \in \{-1,+1\}$, $E\in \bbR$, $\ell\in \bbN$. (If $\mathsf{k}<0$, this is more properly called the \emph{inverted} simple harmonic oscillator.) The kernel of $P$ can be solved for explicitly:
\begin{equation}
	\ker P = 
	\begin{cases}
		\operatorname{span}_\bbC\{r^{-1/2} \operatorname{WhittW}_{E/4,\mu}(r^2), r^{-1/2} \operatorname{WhittM}_{E/4,\mu}(r^2) \} & (\mathsf{k}=1), \\ 
		\operatorname{span}_\bbC\{r^{-1/2} \operatorname{WhittW}_{iE/4,\mu}(-ir^2), r^{-1/2} \operatorname{WhittM}_{iE/4,\mu}(-ir^2) \} & (\mathsf{k}=-1),
	\end{cases}
	\label{eq:SHO_soln}
\end{equation}
for $\mu = 4^{-1}(4(\ell^2+\ell)+1)^{1/2}$.

We are interested in all possible limits as $r\to 0^+, r\to\infty$ and $E\to \pm \infty$, possibly together. 

In order to study this ODE, we begin with the compactification 
\begin{equation} 
	X_0=[-\infty,\infty]_E \times [0,\infty]_{r/ \langle E \rangle^{1/2}} \hookleftarrow \bbR^+_r\times \bbR_E
\end{equation}  
depicted in \Cref{fig:parabolic_X}. Near the closure of $\{E=0\}$ in this compactification, $P$ is just a smooth family of ODEs with an irregular singularity of fixed order $\kappa=-6$.
So, it suffices to analyze the ODE near the sides $\{E=\pm \infty\}$ of $X_0$. Here, near one side, $X_0\cong [0,\infty]_{\hat{r}}\times [0,1)_{|E|^{-1}}$ canonically, where $\hat{r} = r/ |E|^{1/2}$. To study the ODE nearby, we can rewrite the ODE in terms of $\hat{r}$:
\begin{equation}
	|E|^{-1} P = -\frac{1}{|E|^2} \frac{\partial^2}{\partial \hat{r}^2} + \mathsf{k} \hat{r}^2  \mp 1 + \frac{1}{|E|^2} \frac{\ell(\ell+1)}{\hat{r}^2}.
	\label{eq:misc_052}
\end{equation}
This has the form \cref{eq:1} with $z=\hat{r}$ and $h=1/|E|$. Now, near $\hat{r}=\infty$, this is a semiclassical ODE with a fixed irregular singularity of order $-6$, which is beyond our purview (but considered well-understood), so we only need to discuss finite $\hat{r}$. 
We have at most three transition points at finite $\hat{r}$:
\begin{itemize}
	\item unless $\ell=0$, we always have a transition point with $\kappa=0$ at $\hat{r}=0$ (this gives two transition points because $E$ can be $\pm \infty$), and
	\item if $\mathsf{k}E>0$, then we have a simple turning point, $\kappa=1$, at $\hat{r}=|\mathsf{k}|$. 
\end{itemize}
Thus, we get from \Cref{thm1} that solutions of the ODE $Pu=0$ with say constant initial data at fixed $\hat{r}$ will be exponential-polyhomogeneous on the mwc $X$ constructed by blowing up the various transition points. See \Cref{fig:parabolic_X}.

This is another example where \Cref{thm:collapsed} applies to the $\kappa=1$ transition point.

\begin{figure}[h!]
	\begin{tikzpicture}
		\filldraw[fill=lightgray!20] (-2,0) -- (2,0) -- (2,3) -- (-2,3) -- cycle;
		\draw[dashed] (.5,1.5) -- (2,1.5) node[right] {$\{\hat{r} = \mathsf{k}\}$};
		\fill[black] (2,1.5) circle (1.5pt);
		\draw[darkred, ->] (-1.9,.1) -- (-1.9,.8) node[right] {$\hat{r}$};
		\draw[darkred, ->] (-1.9,.1) -- (-1.1,.1) node[above] {$|E|^{-1}$};
		\draw[darkred, ->] (-1.9,2.9) -- (-1.9,2.1) node[right] {$\hat{r}^{-1}$};
		\draw[darkred, ->] (-1.9,2.9) -- (-1.1,2.9) node[below] {$|E|^{-1}$};
		\draw[darkred, ->] (1.9,.1) -- (1.9,.8) node[above] {$\hat{r}$};
		\draw[darkred, ->] (1.9,.1) -- (1.1,.1) node[above] {$E^{-1}$};
		\draw[darkred, ->] (1.9,2.9) -- (1.9,2.1) node[left] {$\hat{r}^{-1}$};
		\draw[darkred, ->] (1.9,2.9) -- (1.1,2.9) node[below] {$E^{-1}$};
		\draw[darkred, ->] (-.2,.1) -- (-.2,.8) node[left] {$\hat{r}$};
		\draw[darkred, ->] (-.2,.1) -- (.6,.1) node[above left] {$E$};
		\draw[darkred, ->] (-.2,2.9) -- (-.2,2.1) node[left] {$\hat{r}^{-1}$};
		\draw[darkred, ->] (-.2,2.9) -- (.6,2.9) node[below left] {$E$};
		\node () at (0,1.5) {$X_0$};
	\end{tikzpicture}
	\begin{tikzpicture}
		\filldraw[fill=lightgray!20] (-2,0) -- (2,0) -- (2,3) -- (-2,3) -- cycle;
		\fill[fill=darkgray!20] (-2,0) -- (-2,1) -- (-.5,1) -- (-.5,0) -- cycle;
		\fill[fill=darkgray!20] (-2,3) -- (-2,2) -- (-.5,2) -- (-.5,3) -- cycle;
		\fill[fill=darkgray!20] (2,3) -- (2,2.25) -- (.5,2.25) -- (.5,3) -- cycle;
		\fill[fill=darkgray!20] (2,0) -- (2,.75) -- (.5,.75) -- (.5,0) -- cycle;
		\fill[fill=darkgray!20] (2,1) -- (2,2) -- (.5,2) -- (.5,1) -- cycle;
		\draw (-2,0) -- (2,0) -- (2,3) -- (-2,3) -- cycle;
		\node () at (1.25,1.5) {$\kappa=1$};
		\node () at (1.25,2.6) {$\kappa=-6$};
		\node () at (-1.25,2.5) {$\kappa=-6$};
		\node () at (1.25,.4) {$\kappa=0$};
		\node () at (-1.25,.5) {$\kappa=0$};
	\end{tikzpicture}
	\qquad\quad
	\begin{tikzpicture}
	\filldraw[fill=lightgray!20] (-1,.5) arc(90:0:.5)  -- (2.5,0) arc(180:90:.5) -- (3,1.5) arc(-90:-270:.5) -- (3,3) -- (-1,3) -- cycle;
	\node () at (.15,2) {$X_1$};
	\draw[dashed] (1.5,1.5) to[out=0,in=180] (2.5,2);
	\draw[darkred, ->] (-.9,.6) -- (-.9,1.2) node[right] {$\hat{r}$};
	\draw[darkred, ->] (-.9,.6) to[out=0,in=135] (-.55,.45) node[right] {$r^{-1}|E|^{-1/2}$};
	\draw[darkred, ->] (2.4,.1) -- (1.8,.1) node[above] {$E^{-1}$};
	\draw[darkred, ->] (2.4,.1) to[out=90,in=210] (2.7,.55) node[above] {$rE^{1/2}\;\;\;\;$};
	\draw[darkred, ->] (2.6,1.55) to[out=140,in=230] (2.57,2.45) node[left] {$(\hat{r}-\mathsf{k})E^{2/3}$};
	\end{tikzpicture}
	\caption{\textit{Left and right}: The mwcs $X_0,X$ used to study the simple harmonic oscillator in \S\ref{subsec:parabolic}. Away from the blowups, $X_1\cong X_0$, so we have not repeated coordinate charts where avoidable. (We have depicted the $\mathsf{k}>0$ case. If $\mathsf{k}<0$, the figure is flipped across the vertical axis.) \textit{Middle:} the relevant cases of $\kappa$.}
	\label{fig:parabolic_X}
\end{figure}

\begin{remark}
	The appearance of Whittaker functions in the solutions of $Pu=0$ suggests a relation to the hydrogen atom. Indeed, defining $z=r^2$, $Pu=0$ can be rewritten 
	\begin{equation}
	\Big[	  \frac{\partial^2}{\partial z^2}  - \frac{\mathsf{k}}{4} + \frac{E }{4z} - \frac{1}{4}\Big(\ell(\ell+1) - \frac{3}{4} \Big) \frac{1}{z^2} \Big]  (z^{1/4} u) = 0,
	\label{eq:misc_054}
	\end{equation}
	which is a form of Whittaker's ODE and how \cref{eq:SHO_soln} is derived.
	So, studying the $E\to\pm \infty$ behavior of the simple harmonic oscillator corresponds to studying the hydrogen atom / repulsive Coulomb potential as $\mathsf{Z}\to\pm \infty$, with a possibly non-integral angular momentum number.
	
	An interesting point is that \Cref{thm:collapsed} applies to the $\kappa=-1$ transition point of \cref{eq:misc_054} at $z=0$. It follows that the \emph{conclusion} of \Cref{thm:collapsed} actually applies to the $\kappa=0$ transition points of \cref{eq:misc_052}, even though, owing to the possibly nonzero $\ell(\ell+1)/r^2$ term, $P$ does not satisfy the hypotheses of that theorem. It turns out that \cref{eq:misc_052} is one of a few known exceptional cases for which this holds. See \Cref{rem:exsimp}.
	\label{rem:SHO_Coulomb_relation}
\end{remark}

Some further discussion of the simple harmonic oscillator can be found in \S\ref{sec:second}.

\subsection{The anharmonic oscillator}\label{subsec:anharmonic}
Next, consider the \emph{anharmonic oscillator},
\begin{equation}
	P=-\frac{\partial^2 }{\partial r^2} + \frac{\ell(\ell+1)}{r^2} + \mathsf{E}+  \mathsf{k} r^2 + \lambda r^4  ,
\end{equation}
for $\ell\in \bbN$, $\mathsf{k}\in \{-1,+1\}$, and $\mathsf{E}\in \bbR$  all fixed, and for $\lambda>0$.\footnote{Note that, in this context, we use $\lambda$ to denote a coefficient in the ODE, not $z/h^{2/(\kappa+2)}$ as elsewhere in this paper.} We are interested in all possible asymptotic regimes, including $r\to 0^+$, $r\to\infty$, $\lambda \to 0^+$ (the small anharmonicity limit), and $\lambda\to \infty$ (the large anharmonicity limit), as well as combinations thereof. 

When $\ell=0$, the ODE $Pu=0$ is a form of the tri-confluent Heun equation, and therefore elements of $\ker P$ can be written in terms of the tri-confluent Heun functions. Indeed, using  $\operatorname{HeunT}_{q,\alpha,\gamma,\delta,\epsilon}(z)$ to denote the element of the kernel of 
\begin{equation}
	\partial_{z}^2 + (\gamma+\delta z+\epsilon z^2) \partial_z + (\alpha z-q), 
\end{equation}
the tri-confluent Heun operator, 
with initial value $\operatorname{HeunT}_{q,\alpha,\gamma,\delta,\epsilon}(0)=1$ and initial derivative $\operatorname{HeunT}_{q,\alpha,\gamma,\delta,\epsilon}'(0)=0$, 
\begin{equation}
	\ker P = \operatorname{span}_{\bbC}\Big\{ \exp\Big( \pm \Big( \frac{r\mathsf{k}}{2\sqrt{\lambda}}+\frac{r^3\sqrt{\lambda}}{3} \Big)\Big) \operatorname{HeunT}_{\mathsf{E}-\frac{1}{4\lambda},\pm 2\sqrt{\lambda},\pm \mathsf{k} \lambda^{-1/2},0,\pm 2\sqrt{\lambda}}(r) \Big\},
\end{equation}
where all of the signs above are the same. (That is, one basis vector is given by replacing all of the `$\pm$' by `$+$,' and the other is given by replacing them by `$-$.')

Construct a mwc $X_0$ by beginning with the rectangle 
$ [0,\infty]_\lambda \times [0,\infty]_r$ and then performing a  quasihomogeneous blowup of the corner $\{\lambda=0,r=\infty\}$ so as to resolve the ratio $\hat{r}=\lambda^{1/2} r$. See \Cref{fig:X_anharmonic}. Then, we can cover $X_0$ by copies of the rectangle $M_0=[0,Z]_z \times [0,\infty)_{h^2}$ or $M$ such that, on each, the ODE has the form \cref{eq:1}, for the $\kappa$ depicted in \Cref{fig:X_anharmonic}, except in one corner where this is only true if $\mathsf{E}=0$ (we will comment on this below). There are two cases, depending on the sign of $\mathsf{k}$. The more complicated case is $\mathsf{k}=-1$, which is what is depicted in the figure. If $\mathsf{k}=+1$ then the $\kappa=1$ transition point is absent.

\begin{figure}[h!]
	\begin{tikzpicture}
		\filldraw[fill=lightgray!20] (0,0) -- (0,1.5) arc(-90:0:1.5) -- (4,3) -- (4,0) -- cycle;
		\draw[darkred,->] (3.9,2.9) -- (3.9,2) node[left] {$1/r$};
		\draw[darkred,->] (3.9,2.9) -- (3.2,2.9) node[below] {$1/\lambda$};
		\draw[darkred,->] (3.9,.1) -- (3.9,1) node[left] {$r$};
		\draw[darkred,->] (3.9,.1) -- (3.2,.1) node[above] {$1/\lambda$};
		\draw[darkred,->] (.1,.1) -- (.1,.5) node[right] {$r$};
		\draw[darkred,->] (.1,1.4) -- (.1,.9) node[right] {$1/r$};
		\draw[darkred,->] (.1,1.4) to[out=0,in=210] (.8,1.6) node[right] {$\hat{r}$};
		\draw[darkred,->] (.1,.1) -- (.9,.1) node[above] {$\lambda$};
		\draw[darkred,->] (1.6,2.9) -- (2.2,2.9) node[below] {$\lambda$};
		\draw[darkred,->] (1.6,2.9) to[out=-90, in=70] (1.4,2.2) node[right] {$1/\hat{r}$};
		\node () at (2,1) {$X_0$};
	\end{tikzpicture}
	\quad 
	\begin{tikzpicture}
		\filldraw[fill=lightgray!20] (0,0) -- (0,1.5) arc(-90:0:1.5) -- (5,3) -- (5,0) -- cycle;
		\begin{scope}
			\clip (0,0) -- (0,1.5) arc(-90:0:1.5) -- (5,3) -- (5,0) -- cycle;
			\fill[darkgray!20] (0,3) -- (1.5,0) -- (0,0) -- cycle;
			\fill[darkgray!20] (0,3) -- (1.6,.1) -- (3,.1) -- cycle;
			\fill[darkgray!20] (0,3) -- (3.1,.2) -- (3.1,1.6) -- cycle;
			\fill[darkgray!20] (0,3) --  (3.1,1.7) -- (3.1,3) -- cycle;
			\fill[darkgray!20] (3.2,0) -- (3.2,1.5) -- (5,1.5) -- (5,0) -- cycle;	
			\fill[darkgray!20] (3.2,3) -- (3.2,1.6) -- (5,1.6) -- (5,3) -- cycle;
		\end{scope}
		\draw (0,0) -- (0,1.5) arc(-90:0:1.5) -- (5,3) -- (5,0) -- cycle;
		\node () at (4.125,2) {$\kappa=-8$};
		\node () at (4.125,2.5) {$M_0^{(\mathrm{I})}$};
		\node () at (4.125,.5) {$\kappa=4$};
		\node () at (4.125,1) {$M_0^{(\mathrm{II})}$};
		\node () at (.55,.75) {$M^{(\mathrm{III})}$};
		\node () at (1.7,.75) {$M^{(\mathrm{IV})}_0$};
		\node () at (1.9,1.75) {$M^{(\mathrm{V})}_0$};
		\node () at (2.9,2.2) {$M^{(\mathrm{VI})}_0$};
		\node () at (.55,.25) {$\kappa=2$};
		\node () at (2.1,.3) {$\kappa=1$};
		\node () at (2.6,1.3) {$\kappa=1$};
		\node () at (2.3,2.7) {$\kappa=-8$};
	\end{tikzpicture}
	\quad
	\begin{tikzpicture}
		\filldraw[fill=lightgray!20] (0,0) -- (0,1.5) arc(-90:0:1.5) -- (4,3) -- (4,1) arc(90:180:1) -- cycle;
		\begin{scope}
			\clip (0,0) -- (0,1.505) arc(-90:0:1.495) -- (4,3) -- (4,1) arc(90:180:1) -- cycle;
			\filldraw[fill=white] (.9,2.1) circle (.5);
		\end{scope}
		\fill[fill=white] (.9,2.1) circle (.492);
		\node () at (.5,.5) {$X$};
		\draw[darkred, ->] (2.9,.1) to[out=90, in=240] (3.105,.7) node[above] {$r \lambda^{1/6}$};
		\draw[darkred, ->] (2.9,.1) -- (2.2,.1) node[above] {$\lambda^{-1/6}$};
		\draw[darkred, ->] (4.1,.9) to[out=180, in=40] (3.6,.75) node[below right] {$r^{-1} \lambda^{-1/6}$};
		\draw[darkred, ->] (4.1,.9) -- (4.1,1.5) node[right] {$r$};
		\draw[darkred, ->] (1,1.5) to[out=10,in=-100] (1.5,2) node[right] {$\lambda^{-2/3}(\hat{r}-1)$};
	\end{tikzpicture}
	\caption{\textit{Left:} the mwc $X_0$ described in \S\ref{subsec:anharmonic}. \textit{Middle:} the same mwc, covered by copies of $M_0,M$, showing the kind of transition point present in each. Only the $\mathsf{k}<0$ case is shown. \textit{Right:} the mwc that results from performing the appropriate quasihomogeneous blowup of the $\kappa=1,4$ transition points in $X_0$.}
	\label{fig:X_anharmonic}
\end{figure}

So, solutions whose initial data is supplied along a curve $\lambda\mapsto (\lambda,r(\lambda))$ not impacting a corner of $X_0$ (and of exponential-polyhomogeneous type on that curve) are of exponential-polyhomogeneous type on the mwc depicted in $X$.   

The $\kappa=1$ turning point present in the $\mathsf{k}=-1$ case is covered by \Cref{thm:collapsed} (and Olver's exposition in \cite{OlverBook}). The other two are not:
\begin{itemize}
	\item Consider the $\kappa=4$ transition point, in the $M_0^{(\mathrm{II})}$ region in \Cref{fig:X_anharmonic} (large anharmonicity). Here, we use $z=r$ and $h=\lambda^{-1/2}$, so $P$ is given by
	\begin{equation}
		\lambda^{-1} P = - h^2\frac{\partial^2}{\partial z^2}  + z^4 +h^2\Big(\frac{\ell(\ell+1)}{z^2}  + \mathsf{E} + \mathsf{k} z^2\Big) .
	\end{equation}
	This is not one of Olver's exceptional cases listed in \cite[Eq.\ 12.14.17]{OlverBook}, unless $\mathsf{E},\mathsf{k}=0$. 
	\item Consider the $\kappa=2$ transition point, in the $M^{(\mathrm{III})}$ region in \Cref{fig:X_anharmonic} (small anharmonicity). In terms of $\hat{r}=\lambda^{1/2} r$, 
	\begin{equation}
		\lambda P = - \lambda^2 \frac{\partial^2}{\partial \hat{r}^2} + \mathsf{k} \hat{r}^2 + \hat{r}^4+ \frac{\lambda^2 \ell(\ell+1)}{\hat{r}^2} +\lambda \mathsf{E}.
		\label{eq:misc_058}
	\end{equation}
	This has the form \cref{eq:1} with $h=\lambda$, except that $\lambda \mathsf{E} = h \mathsf{E}$ is not $O(h^2)$ if $\mathsf{E}\neq 0$. Consequently, to apply our various theorems to this transition point, we need to assume that $\mathsf{E}=0$. It is likely that the conclusion of \Cref{thm1} is still true, in this case, even if $\mathsf{E}\neq 0$, but \Cref{thm2} likely requires modification: near the transition point in question, the approximation 
	\begin{equation}
		u \approx \sqrt[4]{\frac{\xi^2}{W} } Q\Big( \frac{\zeta}{h^{1/2}} \Big) 
	\end{equation}
	that \cref{eq:misc_16} says holds should, in fact, not hold. The reason is that the $\lambda \mathsf{E}$ term in \cref{eq:misc_058} is leading order at $\mathrm{fe}$, but $Q$ solves an ODE that does not involve $\mathsf{E}$. Likely, a correct treatment of this case will involve a correction to the definition of $Q$ involving $\mathsf{E}$. Then, we expect \Cref{thm2} to hold, except the index sets involved may become more complicated. This is just to say that the reason why we cannot handle $\mathsf{E}\neq 0$ does not appear fundamental --- it is just because we considered $\psi$ with the simplest possible index set. See our remark in \S\ref{subsec:conc} regarding allowing $\psi$ to have an $O(1/h)$ term in it.
	
	However, if $\mathsf{E} =0$, then we can apply \Cref{thm1}, \Cref{thm2} directly.
\end{itemize}

\subsection{Bessel's ODE}\label{subsec:Bessel}
Consider Bessel's ODE  
\begin{equation}
	x^2 \frac{\partial^2 u}{\partial x^2} + x \frac{\partial u}{\partial x} + ( \pm x^2 -\epsilon \nu^2 )u = 0  
	\label{eq:Bessel} 
\end{equation}
on $\bbR^+_x$, 
in which $\nu>0$ is a parameter to be varied and $\epsilon \in \{-1,+1\}$ is fixed. Here, the sign $\pm$ governs whether this is the Bessel ODE or the modified Bessel ODE. It is the unmodified/modified Bessel ODE of order $\nu$ if $\epsilon=+1$ and of order $i\nu$ if $\epsilon =-1$.

\begin{remark}
	Because we are concerned in this paper with ODE of a real variable and one real parameter, we do not consider Bessel's ODE at complex (meaning neither purely real nor purely imaginary) argument/order.
\end{remark}

We convert \cref{eq:Bessel} into the form \cref{eq:1} in two steps. First, following \Cref{rem:reduction} to remove the first-order terms: letting $v=x^{1/2}u$, \cref{eq:Bessel} is equivalent to 
\begin{equation}
	\frac{\partial^2 v}{\partial x^2} + \frac{1}{x^2}\Big(\pm x^2-\epsilon \nu^2 + \frac{1}{4} \Big) v = 0.
	\label{eq:Bessel_2}
\end{equation}
Now let $z = x /\nu $, $h=1/\nu$. Then, \cref{eq:Bessel_2} becomes 
\begin{equation}
	h^2 \frac{\partial^2 v}{\partial z^2} + \Big(\pm 1 - \frac{\epsilon }{z^2} + \frac{h^2}{4z^2}\Big)v.
\end{equation}
This is now manifestly semiclassical, with three transition points: an irregular singularity with $\kappa=-4$ at $z=\infty$ (as revealed by using $1/z$ as a coordinate), a possible simple turning point, $\kappa=1$, at $z=1$ (this being absent in two of the four cases of signs),
and a regular singularity, $\kappa=-2$, at $z=0$.

Consider the compactification $X \hookleftarrow \bbR^+_z\times [1,\infty)_\nu$ gotten by taking $[0,\infty]_z\times [0,1]_h$ and then performing the $\kappa=1$ quasihomogeneous blowup of the simple turning point (if present) and then performing the $\kappa=0$ blowup (i.e.\ a polar blowup) of the corner $\{z=0,h=0\}$. The reason for choosing this particular last blowup is that it resolves $z/h = x $, the original variable, which thus parameterizes the interior of the front face of the blowup. 

The resulting mwc is depicted in \Cref{fig:Bessel}; see also \cite[Fig.\ 1]{SherBessel}. The expansions of $J_\nu(x),Y_\nu(x)$ at each edge are well-known:
\begin{itemize}
	\item The expansion at the pure large-argument edge are the already mentioned Hankel expansions \cite[\href{http://dlmf.nist.gov/10.17}{\S10.17}]{NIST}. (See \cite[\href{http://dlmf.nist.gov/10.40}{\S10.40}]{NIST} for the modified Bessel analogues.)
	\item Except near the curve $\{\nu=x\}$, the large-$\nu$ expansions are the Debye expansions \cite[\href{http://dlmf.nist.gov/10.19.ii}{\S10.19.ii}]{NIST}. (See \cite[\href{http://dlmf.nist.gov/10.41.ii}{\S10.41.ii}]{NIST} for the modified Bessel analogues.)
	\item Near the curve $\{\nu=x\}$, we have a turning point. Here, the ``transitional'' expansions \cite[\href{http://dlmf.nist.gov/10.19.iii}{\S10.19.iii}]{NIST}\cite[\href{http://dlmf.nist.gov/10.20.i}{\S10.20(i)}]{NIST} are required.
\end{itemize}
Our treatment here (assuming that the $\kappa=-2$ case works in the manner described in \Cref{rem:extension}) makes clear that the Bessel functions $K_{i\nu}(x), I_{i\nu}(x)$ are (at least when normalized appropriately) of nontrivial exponentially-polyhomogeneous type on the same mwc. Even at the level of Poincar\'e-type asymptotic expansions, this fact is a bit difficult to extract from the literature; NIST's coverage \cite[\href{http://dlmf.nist.gov/10.45}{\S10.45}]{NIST} of this case is a bit lacking, for example.

The cases $J_{i\nu}(x)$, $Y_{i\nu}(x)$, $K_\nu(x)$, $I_\nu(x)$ are even simpler, because of the lack of a turning point at $\nu=x$.

\begin{figure}[h!]
	\qquad
	\begin{tikzpicture}[scale=1.25]
		\filldraw[fill=lightgray!20] (0,-1) -- (4,-1) -- (4,1) arc(-90:-180:2) -- (0,3) -- cycle;
		\begin{scope}
			\clip (0,0) -- (4,0) -- (4,1) arc(-90:-180:2) -- (0,3) -- cycle;
			\filldraw[fill=white] (2.5,1.7) circle (.5);
		\end{scope}
		\draw (0,-1) -- (4,-1) -- (4,1) arc(-90:-180:2) -- (0,3) -- cycle;
		\fill[fill=white] (2.5,1.7) circle (.49);
		\draw[darkred,->] (3.9,-.9) -- (3.9,-.3) node[left] {$x-1$};
		\draw[darkred,->] (3.9,.9) -- (3.9,.2) node[left] {$1/x$};
		\draw[darkred,->] (3.9,.9) to[out=180,in=-10] (3.45,.97) node[below] {$x/\nu $};
		\draw[darkred,->] (3.9,-.9) -- (3.3,-.9) node[above] {$1/\nu$};
		\draw[darkred,->] (.1,-.9) -- (.1,-.3) node[right] {$x-1$};
		\draw[darkred,->] (.1,-.9) -- (.7,-.9) node[above] {$\nu$};
		\draw[darkred,->] (.1,2.9) -- (.7,2.9) node[below] {$\nu$};
		\draw[darkred,->] (.1,2.9) -- (.1,2.3) node[right] {$1/x$};
		\draw[darkred,->] (1.9,2.9) -- (1.3,2.9) node[below] {$\nu^{-1}$};
		\draw[darkred,->] (1.9,2.9) to[out=-90,in=105] (2,2.3) node[left] {$\nu/x$};
		\draw[darkred,->] (2.85,1.45) to[out=-30,in=160] (3.3,1.22) node[above] {\qquad$1-x/\nu$};
		\draw[darkred,->] (2.85,1.45) to[out=230,in=20] (2.6,1.28) node[above] {$s$};
		\draw[darkred,->] (2.35,2.1) to[out=210,in=70] (2.1,1.8) node[right] {$s=\nu^{1/3}|x-\nu|^{-1}$};
		\draw[darkred,->] (2.35,2.1) to[out=120,in=-70] (2.2,2.4) node[right] {$x/\nu-1$};
		\draw[darkred, ->] (2.4,1.11) to[out=170, in=-90] (1.9,1.7) node[left] {$ \frac{x-\nu}{\nu^{1/3}}$};
	\end{tikzpicture}
	\begin{tikzpicture}[scale=1.25]
		\filldraw[fill=lightgray!20] (0,-1) -- (4,-1) -- (4,1) arc(-90:-180:2) -- (0,3) -- cycle;
		\begin{scope}
			\clip (0,-1) -- (4,-1) -- (4,1) arc(-90:-180:2) -- (0,3) -- cycle;
			\fill[darkgray!20] (4,3) -- (2.1,-1) -- (4,-1) -- cycle;
			\fill[darkgray!20] (4,3) -- (2.0,-.9) -- (.1,-.9) -- cycle;
			\fill[darkgray!20] (4,3) -- (.1,-.7) -- (.1,1.4) -- cycle;
			\fill[darkgray!20] (4,3) -- (.1,1.5) -- (.1,3) -- cycle;
			\filldraw[fill=white] (2.5,1.7) circle (.5);
			\node () at (1.8,-0) {$M^{(\mathrm{II})}$};
			\node () at (1.4,-.4) {$\kappa=1$};
			\node () at (3.15,-.7) {$M^{(\mathrm{I})}[\kappa=0]$};
			\node () at (3.3,-.25) {$\kappa=-2$};
			\node () at (1.2,1.4) {$M^{(\mathrm{III})}$};
			\node () at (.8,1) {$\kappa=1$};
			\node () at (.7,2.1) {$M^{(\mathrm{IV})}_0$};
			\node () at (1.1,2.55) {$\kappa=-4$};
		\end{scope}
		\draw (0,-1) -- (4,-1) -- (4,1) arc(-90:-180:2) -- (0,3) -- cycle;
		\fill[fill=white] (2.5,1.7) circle (.49);
		\node () at (1,3.2) {Hankel: \cite[\href{http://dlmf.nist.gov/10.17}{\S10.17}]{NIST}};
		\node[right] (deb) at (2.3,2.6) {Debye: \cite[\href{http://dlmf.nist.gov/10.19.ii}{\S10.19.ii}]{NIST}};
		\draw[dotted] (deb) -- (5,1.2) -- (4.1,.5);
		\draw[dotted] (5,1.2) -- (3.4,1.2);
		\draw[dotted] (deb) -- (2.1,2.6);
		\node[right] () at (2,1.6) {\cite[\href{http://dlmf.nist.gov/10.19.iii}{\S10.19.iii}]{NIST}};
	\end{tikzpicture}
	\caption{\textit{Left:} the mwc on which Sher \cite{SherBessel} proved that the Bessel functions are exponential-polyhomogeneous. \textit{Right:} the same mwc, covered by copies of $M$ or the rectangle on which $P$ has the form \cref{eq:1}. The edges have been labeled by which sections in \cite{NIST} provide asymptotics there (for $J_\nu,Y_\nu$).
	}
	\label{fig:Bessel}
\end{figure}

In \cite{SherBessel}, Sher proved $J_\nu,Y_\nu$ that the Bessel functions (at real order) are of nontrivial exponential-polyhomogeneous type on $X$. 
Assuming that we already know that the Bessel functions have nontrivial exponential-polyhomogeneous data along some curve $\{z=z_0\}$ for $z_0\in \bbR^+\backslash \{1\}$, the part of this result stating nontrivial exponential-polyhomogeneity near the transition point at $z=1$ follows from \Cref{thm1}, \Cref{thm2}. In fact, \Cref{thm:collapsed} gives something slightly stronger, namely the term-by-term differentiability in each direction of the standard uniform expansion for the Bessel functions near the turning point \cite[\S11.10]{OlverBook}\cite[\href{http://dlmf.nist.gov/10.20.i}{\S10.20(i)}]{NIST}. Note that this applies also to the modified Bessel functions at imaginary order (the $\epsilon=-1$ case).

The assumptions in the previous paragraph are essentially regarding the normalization of the Bessel functions. Even if we did not know that the Bessel functions have exponential-polyhomogeneous initial data along a suitable curve, we could still use \Cref{thm2} to produce a basis of solutions of Bessel's ODE that do.

\begin{figure}[h!]
	\includegraphics[scale=.47]{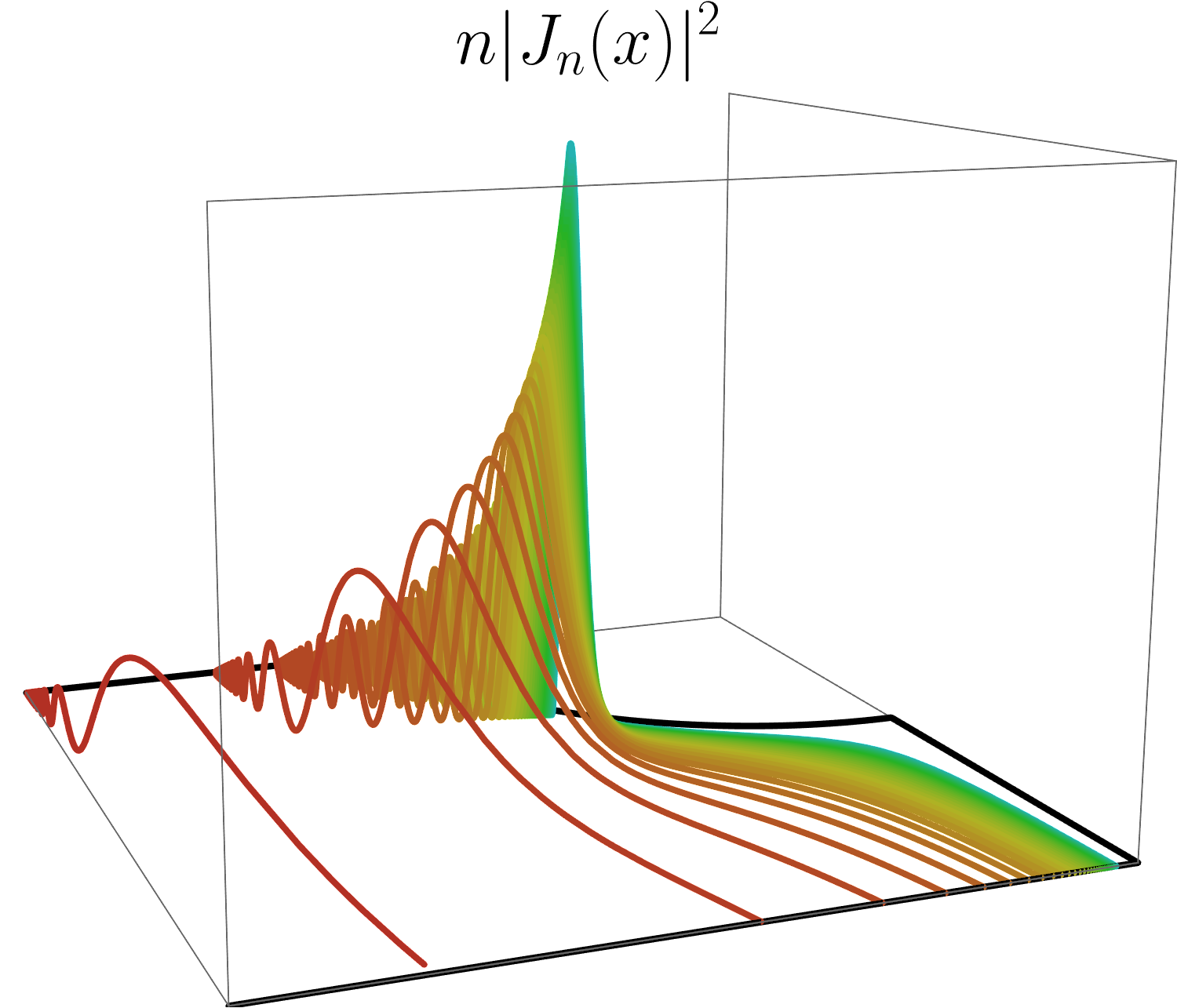}
	\caption{A plot, over $X$ sans the $\kappa=1$ blowup, of $n|J_n(x)|^2$, for $n\in \{1,\dots,35\}$. Note the difference in behavior in the regions $\{x>n\}$ and $\{x<n\}$ separated by the turning point at which $n|J_n(x)|^2$ displays a large Airy-like spike. Similar figures (with different behavior in different regions) could be made for $Y_n(x)$, $K_{in}(x)$, $I_{in}(x)$. }
\end{figure}

If \Cref{rem:extension} is correct regarding the $\kappa\leq -2$ cases, then the full exponential-polyhomogeneity result stated in \cite{SherBessel} follows, assuming that the Bessel functions can be shown to either have exponential-polyhomogeneous ``scattering data'' (i.e.\ leading coefficient in the large-argument expansion) or exponential-polyhomogeneous data at some finite $x$. This underscores the importance of understanding the $\kappa=-2$ case.

\subsection{The Regge--Wheeler equation}
\label{subsec:Regge_Wheeler}

Consider the \textit{Regge--Wheeler} equation, which describes linear (axial) perturbations of the metric of the exterior of the Schwarzschild spacetime. After some manipulations (cf.\ \Cref{rem:reduction}), the differential operator in question can be written \cite[\S1.1]{Schlag2} in the form
\begin{equation}
-\Big( 1 - \frac{r_{\mathrm{H}}}{r} \Big)^2 \frac{\partial^2 }{\partial r^2} - \sigma^2  + \Big( 1 - \frac{r_{\mathrm{H}}}{r} \Big) \frac{\ell(\ell+1)}{r^2}+ V_{\mathrm{RW}}(r)  , 
\label{eq:Regge_wheeler}
\end{equation}
where $r_{\mathrm{H}}>0$ is the location of the black hole horizon, $\sigma$ is the perturbation's temporal frequency, and $V_{\mathrm{RW}} \in (r-r_{\mathrm{H}}) C^\infty[r_{\mathrm{H}},\infty)_r$ is some ``effective potential,'' and $\ell\in \bbN$ is the azimuthal quantum number. We are interested in the large angular momentum limit, $\ell\to\infty$.

To write the operator in \cref{eq:Regge_wheeler} in the form considered above, define $z=r-r_{\mathrm{H}}$ and $h^{-2} = \ell(\ell+1)$. 
Multiplying through by $h^2 (1-r_{\mathrm{H}}/r)^{-2} = h^2 (z+r_{\mathrm{H}})^2/z^2$, the result is 
\begin{equation}
- h^2 \frac{\partial^2}{\partial z^2} + \frac{1}{z} \frac{1}{(z+r_{\mathrm{H}}) } +h^2 \frac{(z+r_{\mathrm{H}})^2}{z^2} \Big[ - \sigma^2 + V_{\mathrm{RW}}(z+r_{\mathrm{H}}) \Big]. 
\end{equation}
This has the form \cref{eq:1} studied above, with $\kappa=-1$,  $W(z)=1/(z+r_{\mathrm{H}})$, and $\psi = z^{-2}(z+r_{\mathrm{H}})^2 (-\sigma^2 + V_{\mathrm{RW}}(z+r_{\mathrm{H}}) )$. So, \Cref{thm1}, \Cref{thm2}, and \Cref{thm:collapsed} all apply to this transition point.

\section{A more precise theorem}
\label{sec:thm}

\subsection{Allowed singularities in the operator coefficients}
\label{subsec:allowed}

We describe now the allowed singularities of the term $\psi$ in \cref{eq:1}, in full generality. 
The assumptions to be placed on $\psi$ are
\begin{itemize}
	\item $\psi \in \varrho_{\mathrm{be}}^{-2}\varrho_{\mathrm{fe}}^{-2} C^\infty(M) = z^{-2} C^\infty(M)$, and
	\item $z^2 \psi |_{\mathrm{be}}:\mathrm{be}\to\bbC$ is constant. 
\end{itemize}
Allowing the singularity at $\mathrm{be}$ is mostly useful when considering the semiclassical ODE arising from partial wave analysis for semiclassical Schr\"odinger operators with spherical symmetry. The second of the two requirements is placed to avoid having to deal with regular singular differential equations with variable indicial roots. 

More interesting is the singularity at $\mathrm{fe}$. Methods such as Olver--Langer's do not seem able to handle this sort of singularity. In fact, they do not even seem to be able to handle 
\begin{equation} 
	\psi \in z^{-2} C^\infty([0,Z]_z\times [0,\infty)_{h^2} ;\bbC),
\end{equation} 
except for the $\kappa=-1$ case and the exceptional cases listed in \cite[\S12.14.4]{OlverBook}. In contrast, the singular geometric methods below are sufficiently robust.

We now define some notation used below.
Given any function $\psi$ satisfying the conditions above, choose $\alpha\in \bbC$ with $\Re \alpha\geq 0$ such that $z^2\psi |_{\mathrm{be}} = \alpha^2-1/4$, and let $\Psi = z^2 \psi|_{\mathrm{fe}} - \alpha^2+1/4$. We assume for the remainder of the paper that $\Re \alpha>0$. The arguments below all go through if $\Re \alpha = 0$, with minor modifications and some additional casework.

Notice that $\Psi \in \varrho_{\mathrm{be}} C^\infty(\mathrm{fe})$. Identifying $\mathrm{fe}\backslash \mathrm{ze}$ with $[0,\infty)_\lambda$ for $\lambda = z/h^{2/(\kappa+2)}$, the previous sentence says, more prosaically, that $\Psi(\lambda) \in \lambda C^\infty[0,\infty)_\lambda$ and satisfies $\smash{\Psi(\rho^{-2/(\kappa+2)})} \in C^\infty [0,\infty)_{\rho^2}$.
Throughout this paper, we will identify $\mathrm{fe}\backslash \mathrm{ze}$ with $[0,\infty)_\lambda$. Now define $E(z,h)$ by 
\begin{equation}
	\psi(z,h) = \frac{1}{z^2}\Big(\alpha^2-\frac{1}{4}\Big) + \frac{1}{z^2} \Psi\Big( \frac{z}{h^{2/(\kappa+2)}} \Big) + E(z,h), 
	\label{eq:psi_form}
\end{equation}
and in addition let $\phi = \psi - E$. Since $z^2 E$ vanishes at $\mathrm{be}\cup\mathrm{fe}$, it must lie in $\varrho_{\mathrm{be}}\varrho_{\mathrm{fe}} C^\infty(M)$. That is, $E \in \varrho_{\mathrm{be}}^{-1}\varrho_{\mathrm{fe}}^{-1} C^\infty(M)= z^{-1} C^\infty(M)$. As it turns out, the term $h^2 \phi$ in $P$, though subleading at $\mathrm{ze}$, is of comparable order to the main terms at $\mathrm{fe}$ and must therefore be taken into account in order to understand the $h\to 0^+$ limit. (We have already seen examples of this, with $\Psi=0$, in \S\ref{sec:examples}.) This is why we devote the symbol `$\phi$' to it; it will play a prominent role later.

Let 
\begin{equation}
	N(P) = -h^2 \frac{\partial^2}{\partial z^2} + \varsigma  z^\kappa W(z) + \frac{h^2}{z^2} \Big(\alpha^2 - \frac{1}{4} \Big) + \frac{h^2}{z^2} \Psi\Big( \frac{z}{h^{2/(\kappa+2)}} \Big). \label{eq:NP_Def}
\end{equation} 
Thus, $P-N(P) \in (h^2/z) C^\infty(M)$, so $N(P)$ arises from omitting from $P$ terms of order $O(h^2/z)$ and better. Thus, solutions $Q$ to $N(P)Q=0$ an be considered as $O(h^2/z)$-quasimodes for the original semiclassical ODE $Pu=0$, hence the use of the notation `$Q$' rather than `$u$.'
These quasimodes will be studied later.

\subsection{Applications with nonzero $\Psi$}
\label{subsec:more_examples}
All of the examples discussed in \S\ref{sec:examples} have $\Psi=0$. One reason for allowing nonzero $\Psi$ is to apply the theory to families $\{P(r;\sigma)\}_{\sigma>0}$ of ODEs on the whole real line $\bbR_r$ depending non-semiclassically on a parameter $\sigma$, in which the semiclassical regime arises as an artificial transitional regime in the joint large $r$, low $\sigma$ limit. 
The coordinate $r$ is to be related to $z$ in the following way: the blown up face $\mathrm{fe}\subset M$ corresponds to the original face $[0,\infty]_r\times \{0\}_\sigma\subset \bbR_r\times \{0\}_\sigma$. This is the reason why $\Psi$ may be nonzero; it captures the variation of $z^2 \psi$ in $r$ that cannot be absorbed into the constant $\alpha^2-1/4$. 

We saw examples in \S\ref{subsubsec:Rydberg}, \S\ref{subsec:anharmonic} where the semiclassical regime arose in this way, but in those examples the terms in the PDE were monomials with definite homogeneities, which resulted in the $\Psi$ term being unnecessary. However, as soon as we add a term to the potential with a different homogeneity, the original homogeneities no longer apply, and so $\Psi$ becomes necessary:

\begin{example}[\cite{SussmanACL}]	
	Consider a 1D Schr\"odinger operator with Coulomb-like potential, similar to \cref{eq:Whittaker}:
	\begin{equation}
		- \frac{\partial^2 u}{\partial r^2} +\Big[ - \frac{\mathsf{Z}}{r} + \frac{\ell(\ell+1) }{r^2}  + V(r) \Big] u = E u
		\label{eq:Whittaker_general}
	\end{equation}
	where $\mathsf{Z}\in \bbR^\times$, $\ell\in \bbN$, and $V\in \langle r \rangle^{-2} C^\infty([0,\infty]_r ) $ are fixed, and where we are interested in the $E\to 0^\pm$ limits. The way we studied the $V=0$ case in \S\ref{subsubsec:Rydberg} was to introduce the coordinate $\hat{r}=r|E|$ and to rewrite the ODE in terms of this rescaled coordinate:
	\begin{equation}
		- |E|\frac{\partial^2 u}{\partial \hat{r}^2} +\Big[ - \frac{\mathsf{Z}}{\hat{r}} + |E|\Big( \frac{ \ell(\ell+1) }{\hat{r}^2}  + \frac{1}{|E|^2}V\Big(\frac{\hat{r}}{E}\Big) \Big) \Big] u = \pm  u.
	\end{equation}
	This is of the form \cref{eq:1} for $z=\hat{r}$ and $h=|E|^{1/2}$, if we take 
	\begin{equation}
		\psi = \frac{\ell(\ell+1)}{\hat{r}^2} + \frac{1}{|E|^2}V\Big(\frac{\hat{r}}{E}\Big) = \frac{\ell(\ell+1)}{z^2} + \underbrace{\frac{1}{h^4} V \Big( \frac{z}{h^2} \Big)}_{\in \varrho_{\mathrm{fe}}^{-2}  C^\infty(M) }.
	\end{equation}
	We have a transition point with $\kappa=-1$ at $\hat{r}=0$. 
	Then, the definition of $M$ is such that $z/h^2$ parameterizes the interior of the front face of the blowup, and so that $\varrho_{\mathrm{fe}} = z+h^2$.
	So, $\psi$ has the form described in the previous subsection, with $z^2 \psi|_{\mathrm{be}} = \ell(\ell+1)$ and with $\Psi(\lambda) = \lambda^2 V(\lambda)$, and this is nonzero if $V$ is nonzero. 
\end{example}
For applications to massive wave propagation on black hole spacetimes, see \cite{BarrancoETAL}\cite{Moortel}, where the problem amounts asymptotics for the confluent Heun equation in some asymptotic regime. In fact, in \cite[\S IV.A]{BarrancoETAL}, Barranco et.\ al.\ comment on their inability to find, in the existing literature on special functions, asymptotics in the relevant regime. As the previous example shows, \Cref{thm2} provides such asymptotics away from the horizon (and it is then straightforward to get asymptotics near the horizon). 

\begin{example}
	Consider the anharmonic oscillator from \S\ref{subsec:anharmonic}, at zero energy, and now allow a $C_{\mathrm{c}}^\infty$ potential term:
	\begin{equation}
	P = - \frac{\partial^2}{\partial r^2} + \frac{\ell(\ell+1)}{r^2} + \mathsf{k} r^2 + \lambda r^4 + V(r), 
	\end{equation}
	where $\ell\in \bbN, \mathsf{k}\in \{-1,+1\}$ are fixed, $V\in C_{\mathrm{c}}^\infty(\bbR)$, and $\lambda>0$. We are interested here in the $\lambda \to 0^+$ limit. In \S\ref{subsec:anharmonic}, we studied this by introducing the coordinate $\hat{r} = \lambda^{1/2} r$, in terms of which 
	\begin{equation}
	\lambda P = - h^2 \frac{\partial^2}{\partial \hat{r}^2} + \mathsf{k} \hat{r}^2 + \hat{r}^4 + h^2\Big(\frac{\ell(\ell+1)}{\hat{r}^2}+ \underbrace{h^{-1}V\Big( \frac{\hat{r}}{h^{1/2}}\Big)}_{\in \varrho_{\mathrm{fe}}^{-2} C^\infty(M) }\Big) 
	\end{equation}
	where $h=\lambda$. The transition point at $\hat{r}=0$ has type $\kappa=2$; it is a double turning point. So, $\lambda P$ has the form we require of it near $\hat{r}=0$, and $\Psi(t)=t^2 V(t)$. Again, we see that $\Psi$ is nonzero if $V$ is.
\end{example}

\subsection{Statement of main theorem}
Let 
\begin{equation}
	\zeta(z) = \Big( \frac{\kappa+2}{2} \int_0^z \omega^{\kappa/2} \sqrt{W(\omega)} \dd \omega  \Big)^{2/(\kappa+2)}.
	\label{eq:zeta}
\end{equation}
This lies in $z C^\infty([0,Z]_z; \bbR^+)$. Let $\xi\in C^\infty([0,Z]_z; \bbR^+)$ be defined by $\zeta = z \xi$. 

For each $j\in \bbC$, we use $(j,0)$ as an abbreviation for the index set $\{(j+n,0):n\in \bbN\}$. In particular, $(0,0)$, which we also abbreviate ``$\bbN$,'' is the index set for which polyhomogeneity means smoothness. Recall also that ``$\infty$'' denotes the empty index set.

Letting $\calE_{\mathrm{ie}},\calE_{\mathrm{ze}},\calE_{\mathrm{fe}},\calE_{\mathrm{be}}$ denote index sets, we use $\calA^{\calE_{\mathrm{ie}},\calE_{\mathrm{ze}},\calE_{\mathrm{fe}},\calE_{\mathrm{be}}}(M) \subset C^\infty(M^\circ;\bbC)$
to denote the set of (complex-valued) polyhomogeneous functions on $M^\circ$ with index set $\calE_{\mathrm{e}}$ at each edge $\mathrm{e} \in \{\mathrm{ie},\mathrm{ze},\mathrm{fe},\mathrm{be}\}$. To avoid having to write too many index sets, we use $\calA^{\calE,\calF}(M)$ as an abbreviation for this set when $\calE_{\mathrm{ie}},\calE_{\mathrm{ze}}=(0,0)$, with $\calE = \calE_{\mathrm{fe}}$ and $\calF=\calE_{\mathrm{be}}$. When working with functions supported away from $\mathrm{be}$, or where the behavior there is unimportant, we will sometimes omit the `$\calF$' from ``$\calA^{\calE,\calF}(M)$.''

Below, we use the index set\footnote{This index set is likely not optimal. It seems in particular to overestimate the logarithmic terms when $\kappa \notin 2\bbN$. Our argument does not take into account some algebraic cancellations that seem to occur and kill the logarithmic terms in this case.}
\begin{equation} 
	\calE_0 =  
	(0,0)\cup
	\begin{cases}
		\bigcup_{j=1}^\infty ((\kappa+2)j-1,j) & (\kappa\notin 2\bbN), \\ 
		(\kappa/2,1)\cup \bigcup_{j=1}^\infty ((\kappa+2)j-1,2j) \cup ((\kappa+2)j+\kappa/2,2j+1) & (\kappa \in 2 \bbN).
	\end{cases}  
	\label{eq:calEdef}
\end{equation}

Let $\calQ\subset C^\infty(\bbR^+_\lambda)$ denote 
\begin{equation}
	\calQ  = \Big\{ v(\lambda)\in C^\infty(\bbR^+_\lambda): \frac{\partial^2 v}{\partial \lambda^2} = \Big[ \varsigma \lambda^\kappa + \frac{1}{\lambda^2} \Big( \alpha^2 - \frac{1}{4} \Big) + \frac{1}{\lambda^2} \Psi \Big( \frac{\lambda}{\sqrt[\kappa+2]{W(0)}} \Big) \Big]v(\lambda) \Big\}.
	\label{eq:quasimode}
\end{equation}
We call these $O(h^2/\zeta)$-quasimodes, for reasons discussed later.
See \S\ref{subsec:model}, \S\ref{subsec:examples} for the $\Psi=0$ case, wherein the elements of $\calQ$ end up being weighted Bessel functions; for example, when $\kappa=1$ and $\alpha=1/2$, then $\calQ$ consists of Airy functions.
\begin{theorem}
	For any $Q\in \calQ$, there exist 
	\begin{itemize} 
		\item an index set $\calG$,
		\item functions $\beta, \gamma \in \calA^{\calE_0}(M)$ with $\operatorname{supp} \beta,\operatorname{supp} \gamma$ disjoint from $\mathrm{be}$, and 
		\item $\delta \in \calA^{\calE_0,\calG}(M)$ with $\operatorname{supp} \delta \cap (\mathrm{ie}\cup \mathrm{ze})= \varnothing$ 
	\end{itemize}
	such that the function $u$ defined by
	\begin{equation}
		u = \sqrt[4]{\frac{\xi^\kappa}{W}} \, \bigg[ (1+\varrho_{\mathrm{ze}} \varrho_{\mathrm{fe}} \beta) Q\Big( \frac{\zeta}{h^{2/(\kappa+2)}} \Big) +  \varrho_{\mathrm{ze}}^{(\kappa+1)/(\kappa+2)} \varrho_{\mathrm{fe}} \gamma Q'\Big( \frac{\zeta}{h^{2/(\kappa+2)}} \Big)\bigg] + \varrho_{\mathrm{be}}^{1/2-\alpha} \varrho_{\mathrm{fe}} \delta 
		\label{eq:misc_16} 
	\end{equation}
	solves $Pu=0$ in $\{h<h_0\}$ for some $h_0>0$. 
	\label{thm2}
\end{theorem}

In \cite{NIST}, one can find numerous Poincar\'e-type expansions stated for various special functions that have the form \cref{eq:misc_16} with $Q$ essentially a Bessel function; see \S\ref{subsec:JWKB} for a discussion of the $\kappa=1$ case. For a examples with $\kappa=-1$, see \cite[\href{http://dlmf.nist.gov/33.20}{\S33.20}]{NIST}\cite[\S12.11]{OlverBook} (in which a recurrence relation is used to write $Q'$ in terms of other Bessel functions).
\Cref{thm2} is a serious generalization. In \Cref{thm:collapsed}, we state a stronger and simpler version of the previous theorem that applies to the cases in \cite{NIST}. It is, in fact, exactly these cases to which the arguments of the sort in \cite{OlverBook} apply. (However, \Cref{thm:collapsed} is a technical improvement upon the corresponding results in the existing literature, because it allows differentiation in the semiclassical parameter.)

\begin{remark}[The index set at $\mathrm{be}$]
	The index set $\calG$, describing the behavior of $\delta$ at $\mathrm{be}$ can be extracted from the argument below, but we will not be explicit. This does not substantially affect applicability, because in practice a sharp upper bound on $\calG$ is known, coming from an analysis of the ODE at the regular singular point $z=0$ for each individual $h>0$.
	
	Because $Q,Q'$ cannot vanish simultaneously without being identically zero, it is possible to absorb the $\delta$ term into the main term in \cref{eq:misc_16}, at the cost of enlarging the support of $\beta,\gamma$.
\end{remark}

Using \Cref{prop:quasimode_large_argument} to express $Q,Q'$ in exponential-polyhomogeneous form, 
\begin{multline}
	u -  \sqrt[4]{\frac{\xi^\kappa}{W}} Q\Big( \frac{\zeta}{h^{2/(\kappa+2)}} \Big) \in \exp \Big( - \frac{2\sqrt{\varsigma} \chi}{\kappa+2} \frac{\zeta^{(\kappa+2)/2}}{h}  \Big) \sqrt{\varrho_{\mathrm{ze}}}\varrho_{\mathrm{fe}} \calA^{\bbN,2^{-1} \bbN,\calE_0,\infty}(M)   \\ +  \exp \Big(  \frac{2\sqrt{\varsigma} \chi}{\kappa+2} \frac{\zeta^{(\kappa+2)/2}}{h}  \Big) \sqrt{\varrho_{\mathrm{ze}}}\varrho_{\mathrm{fe}} \calA^{\bbN,2^{-1}\bbN,\calE_0,\infty}(M) + \varrho_{\mathrm{fe}}\varrho_{\mathrm{be}}^{1/2-\alpha}\calA^{\infty,\infty,\calE_0,\calG}(M) 
	\label{eq:misc_022}
\end{multline}
for $\chi \in C^\infty(M;[0,1])$ identically $1$ near $\mathrm{ie}\cup \mathrm{ze}$ and identically vanishing near $\mathrm{be}$.

In particular, for $h$ sufficiently small, there exists a solution $u$ to the semiclassical ODE $Pu=0$ such that 
\begin{equation}
	u \approx 
	\sqrt[4]{\frac{\xi(z)^\kappa}{W(z)} } Q \Big(\frac{\zeta}{h^{2/(\kappa+2)}} \Big).
\end{equation}
This is just Langer's approximation \cite{Langer31}. If $|z|\gg h^{2/(\kappa+2)}$, then, as $h\to 0^+$, the large-argument asymptotics of the $Q\in \calQ$ allow $Q$ to be approximated by a linear combination of exponentials, in which case the approximation above becomes an instance of the Liouville--Green ansatz. If instead $|z| = O(h^{2/(\kappa+2)})$, then $\zeta \approx W(0)^{1/(\kappa+2)} z$, and so 
\begin{equation} 
	u\approx W(0)^{-(\kappa+1)/(4\kappa+8)} Q\Big(\frac{z W(0)^{1/(\kappa+2)}}{h^{2/(\kappa+2)}}\Big),
\end{equation} 
which is an ansatz generalizing that appearing in the JWKB connection analysis. (Since $Q$ can be replaced by $cQ$ for any $c\neq 0$, the multiplier out front is not important.)

If $Q_1,Q_2\in \calQ$ are linearly independent modes, then the functions $u[Q_1](-,h)$, $u[Q_2](-,h)$ produced by the previous theorem are linearly independent for $h$ sufficiently small, so any solution $u$ to $Pu=0$ can be written as $u = c_1 u[Q_1] + c_2 u[Q_2]$
for $h$ sufficiently small
for some functions $c_1,c_2 : (0,\infty)_h\to \bbC$. Given the values of $u|_{\mathrm{ie}}$ and the derivative $u'|_{\mathrm{ie}}$, the $h\to 0^+$ asymptotics of the coefficients $c_1,c_2$ can be computed straightforwardly from the asymptotics of the $Q\in \calQ$. The preceding theorem therefore contains within it the means of producing semiclassical expansions for \emph{any} such $u$ with prescribed initial data. Thus, \Cref{thm1} follows from \Cref{thm2}. For completeness, the deduction is included in the appendices. See \S\ref{sec:ivp}. 

\subsection{Sketch of proof}

The proof of \Cref{thm2} consists of four steps. All steps are carried out in the $W=1$ case, a simplification which suffices, as originally observed by Langer, for reasons discussed in \S\ref{sec:Langer}. Then, as preparation for the later sections, some properties of the quasimodes $Q\in \calQ$ are proven in \S\ref{sec:quasimodes}. 
The four steps of the main argument begin afterwards, each comprising one section:
\begin{enumerate}
	\item The first step of the main argument, carried out in \S\ref{sec:ze}, is to produce a solution to the ODE of the desired form modulo an error of the form $f Q +  g Q'$ for 
	\begin{equation} 
		f,g \in \varrho_{\mathrm{ze}}^\infty\varrho_{\mathrm{fe}} \calA^{\calE_0,\infty}(M),
	\end{equation} 
	i.e.\ accurate to infinitely many orders at $\mathrm{ze}$, and supported away from $\mathrm{be}$. The proof involves re-interpreting Langer--Olver's asymptotic series in \cite{OlverOriginal, OlverBook}, which fails to define a uniform expansion down to $\mathrm{be}^\circ$, as an asymptotic expansion at $\mathrm{ze}$, which can then be asymptotically summed in suitable function spaces. 
	\item The next step is to solve away the error from the previous step near $ \mathrm{ze}\cap \mathrm{fe}$ 
	modulo $\varrho_{\mathrm{ze}}^\infty\varrho_{\mathrm{fe}}^\infty  C^\infty(M) = h^\infty C^\infty([0,Z]_z\times [0,\infty)_h ;\bbC)$
	remainders supported away from $\mathrm{be}$. The argument  involves the inversion of the \emph{normal operator} $\smash{\hat{N}(P)}\in \operatorname{Diff}^2(\mathrm{fe}^\circ)$ in order to produce the approximate solution term-by-term. This step is in \S\ref{sec:fe}.
	\item In \S\ref{sec:error}, the remaining $h^\infty C^\infty$ error near $\mathrm{ze}$ is solved away using the standard method of variation of parameters \cite{Erdelyi}\cite[Chp. 6]{OlverBook}\cite[\S15.5]{SimonComplex}. This is the most technical part of the argument, but the error is as amenable as possible --- as long as we stay away from $\mathrm{be}$, the $O(h^\infty)$ suppression is sufficient to kill any large negative powers of $h,\zeta$ that arise in the computations. It is necessary to keep careful track of the exponential weights arising in the classically forbidden case, but this is the only delicate point. Estimating derivatives complicates the formulae but is totally straightforward.
	\item For various reasons, the previous steps allow an error supported away from $\mathrm{ze}$ (that is, ``near'' be). The final step of the argument is to solve away this error completely in $\{h<h_0\}$. Because $P$ is a nondegenerating, smooth family of regular singular differential operator on $\mathrm{fe}\backslash \mathrm{ze}$, this follows from the general theory of regular singular ODE and requires only a short argument. See \S\ref{sec:low}.
\end{enumerate}
The various parts of the argument are combined in \S\ref{sec:main}.

\section{The Langer diffeomorphism}
\label{sec:Langer}

As a first step in the proof of the main theorem, we follow Langer and Olver in the use of a variant of the ``Langer diffeomorphism'' --- see \cite{Langer31}\cite{OlverBook} ---  to reduce to $W=1$ case. This serves to simplify the computations. While not strictly necessary in the singular geometric approach, the analysis is somewhat shortened. 

Let $M[Z]$ denote the mwc constructed in the introduction and depicted in \Cref{fig}, where we are now making the dependence on $Z$ explicit. For any $Z_0>0$, $M[Z]\cong M[Z_0]$, so this is only a distinction at the level of sets. This construction was coordinate invariant in the following sense:

\begin{lemma}
	If $\zeta:[0,Z] \to [0,\zeta(Z)]$ is any diffeomorphism, then the diffeomorphism $[0,Z]_z\times [0,\infty)_{h^{2/(\kappa+2)}}\to [0,\zeta(Z)]_\zeta\times [0,\infty)_{h^{2/(\kappa+2)}}$
	given by $(z,h)\mapsto (\zeta(z),h)$ lifts to a diffeomorphism $M[Z]\to M[\zeta(Z)]$.
\end{lemma}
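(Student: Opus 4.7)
The plan is to verify smoothness of the lift in a suitable atlas of coordinate charts on $M[Z]$.

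Since $\zeta:[0,Z]\to [0,\zeta(Z)]$ is a diffeomorphism of closed intervals (and must preserve the endpoint $0$, which is the blown-up face of interest), Taylor's theorem gives a factorization
\begin{equation}
\zeta(z) = z\, \xi(z), \qquad \xi \in C^\infty([0,Z]; \bbR^+).
\end{equation}
Consequently $\xi(z)$ and $\xi(z)^{-1}$ are smooth and bounded away from zero on the closed interval $[0,Z]$.

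Away from the front face $\mathrm{fe}$, the blowup is trivial and $\Phi(z,h) = (\zeta(z),h)$ is manifestly smooth in the product coordinates $(z,h)$ since $\zeta$ itself is. The substance lies in two coordinate charts covering a neighborhood of $\mathrm{fe}$:
\begin{itemize}
	\item the chart near $\mathrm{fe}\cap \mathrm{ze}$, with coordinates $(z,\mu)$ where $\mu = h^{2/(\kappa+2)}/z$, so that $\mathrm{fe} = \{z=0\}$ and $\mathrm{ze} = \{\mu=0\}$;
	\item the chart near $\mathrm{fe}\cap \mathrm{be}$, with coordinates $(\lambda,k)$ where $\lambda = z/h^{2/(\kappa+2)}$ and $k = h^{2/(\kappa+2)}$, so that $\mathrm{fe} = \{k=0\}$ and $\mathrm{be} = \{\lambda = 0\}$.
\end{itemize}
Reading $\Phi$ off in these charts, I get $(z,\mu)\mapsto (z\xi(z),\mu/\xi(z))$ in the first chart and $(\lambda,k)\mapsto (\lambda \xi(\lambda k),k)$ in the second. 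Both expressions are smooth (and non-degenerately so at all boundary faces of the chart, since $\xi$ is smooth and positive down to $z=0$, respectively smooth at the origin of the $(\lambda k)$-variable), which establishes that $\Phi$ lifts to a smooth map $M[Z]\to M[\zeta(Z)]$.

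Applying the same reasoning to $\zeta^{-1}$—which is itself a diffeomorphism of the same type, with $\zeta^{-1}(y) = y\cdot \xi(\zeta^{-1}(y))^{-1}$—produces a smooth inverse, so the lift is a diffeomorphism. The only point requiring any care is to check that the smoothness persists at the triple-intersection corners and at the modified smooth structure of $\mathrm{fe}$ (encoded by the bdf $\varrho_{\mathrm{fe}} = z + h^{2/(\kappa+2)}$); this is clear since $\Phi^\ast(\tilde\varrho_{\mathrm{fe}}) = z\xi(z) + h^{2/(\kappa+2)}$ differs from $\varrho_{\mathrm{fe}}$ by a smooth positive factor, namely $(\lambda \xi(\lambda k) + 1)/(\lambda+1)$ in the second chart above and $(\xi(z) + \mu)/(1+\mu)$ in the first. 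No genuine obstacle arises; the lemma is a routine compatibility check between the quasihomogeneous blowup and the Langer change of variable.
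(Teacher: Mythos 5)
Your overall strategy matches the paper's in spirit — you use the factorization $\zeta(z)=z\,\xi(z)$ and check smoothness in boundary charts — but there is a genuine error in your first chart, and it lands precisely on the one nontrivial point of the lemma.

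Near $\mathrm{fe}\cap\mathrm{ze}$ the variable $\mu=h^{2/(\kappa+2)}/z$ is \emph{not} a coordinate on $M[Z]$. The paper defines the smooth structure of $M$ so that the boundary-defining-function of $\mathrm{ze}$ is $\varrho_{\mathrm{ze}}=h^2\varrho_{\mathrm{fe}}^{-(\kappa+2)}$, and away from $\mathrm{be}$ the local bdf for $\mathrm{ze}$ is $h^2/z^{\kappa+2}=\mu^{\kappa+2}$, not $\mu$. Since $\mu=(\mu^{\kappa+2})^{1/(\kappa+2)}$ is not a smooth function of $\mu^{\kappa+2}$ at the boundary (for $\kappa+2\geq 2$), your chart $(z,\mu)$ belongs to the \emph{naive} polar blowup of $[0,Z]_z\times[0,\infty)_{h^{2/(\kappa+2)}}$, not to $M[Z]$. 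Smoothness of the map in $(z,\mu)$ therefore does not yet establish smoothness on $M$; indeed, the content of the lemma beyond the trivial blowup lift is exactly the passage from the polar-blowup smooth structure to the modified one, and that modification lives at $\mathrm{ze}$. Your closing paragraph acknowledges a modification at $\mathrm{fe}$ — which you have in fact already handled, since both of your charts carry the bdf $\varrho_{\mathrm{fe}}$ — but misses the modification at $\mathrm{ze}$ entirely (and there are no ``triple-intersection corners'' on $M$).

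The fix is short: in the correct chart $(z,\nu)$ with $\nu=h^2/z^{\kappa+2}=\mu^{\kappa+2}$, the lift reads
\begin{equation}
(z,\nu)\;\longmapsto\;\big(z\,\xi(z),\ \nu\,\xi(z)^{-(\kappa+2)}\big),
\end{equation}
which is smooth (with smooth inverse of the same form) because $\xi$ is smooth and positive on $[0,Z]$. Note the exponent $\kappa+2$ on $\xi$ — this is precisely what does not appear if one reasons only in the $(z,\mu)$ chart, and it mirrors the factor $(\varrho_{\mathrm{be}}(\xi-1)+1)^{-(\kappa+2)}$ in the paper's own computation of $\varrho_{\mathrm{ze}}(\zeta(z),h)$. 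Once you replace $\mu$ by $\nu$ your argument goes through.
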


\begin{proof}
	The polar blowup of a corner is a coordinate invariant notion, so $\iota:(z,h)\mapsto (\zeta(z),h)$ lifts to a diffeomorphism 
	\begin{equation}
	[[0,Z]_z\times [0,\infty)_{h^{2/(\kappa+2)}} ; \{0\}_z\times \{0\}_h] \to [[0,\zeta(Z)]_\zeta\times [0,\infty)_{h^{2/(\kappa+2)}} ; \{0\}_\zeta\times \{0\}_h].
	\label{eq:misc_031}
	\end{equation}
	This is the desired diffeomorphism $M[Z]\to M[\zeta(Z)]$, but we do not yet know that it is a diffeomorphism, because, although the domain and codomain in \cref{eq:misc_031} agree with $M[Z],M[\zeta(Z)]$ at the level of sets, they differ in terms of smooth structure at $\mathrm{ze}$. 
	
	It therefore suffices to verify that the map remains a diffeomorphism after the changes of smooth structures at $\mathrm{ze}$. 
	We only check smoothness, as smoothness of the inverse map is proven analogously.
	
	Concretely, it suffices to check that the functions
	\begin{align}
	\begin{split}
	\varrho_{\mathrm{ze}}(\zeta(z),h) &= h^2(\zeta(z) + h^{2/(\kappa+2)})^{-(\kappa+2)}, \\ 
	\varrho_{\mathrm{fe}}(\zeta(z),h) &= \zeta(z) + h^{2/(\kappa+2)}
	\end{split} 
	\end{align}
	are smooth functions on $M[Z]$. Indeed, writing $\zeta(z) = z \xi(z)$, we have $\xi \in C^\infty([0,Z]_z;\bbR^+)$, and then the identities 
	\begin{align}
	\begin{split}
	\varrho_{\mathrm{ze}}(\zeta(z),h) &=
	\varrho_{\mathrm{ze}}(z,h)(\varrho_{\mathrm{be}}(z,h)  (\xi(z) - 1)+1)^{-(\kappa+2)}, \\
	\varrho_{\mathrm{fe}}(\zeta(z),h) &= \varrho_{\mathrm{fe}}(z,h) \varrho_{\mathrm{be}}(z,h) (\xi(z)-1) + \varrho_{\mathrm{fe}}(z,h)
	\end{split}
	\end{align}
	hold. The second of these is manifestly smooth on $M[Z]$. The first is a smooth function of $\varrho_{\mathrm{ze}},\varrho_{\mathrm{be}},z$ away from the set $\{\varrho_{\mathrm{be}} (\xi-1)= -1\}$, but this set is avoided on $M[Z]$, because $\xi(z)>0$ and $\varrho_{\mathrm{be}}(z,h) \in [0,1]$.
\end{proof}

We apply this lemma to the map defined by \cref{eq:zeta}. That this is a diffeomorphism $[0,Z]\to [0,\zeta(Z)]$ follows from 
\begin{equation}
	\frac{\mathrm{d} \zeta}{\mathrm{d} z} =  \xi^{-\kappa/2} \sqrt{W(z)} \in C^\infty([0,Z];\bbR^+). 
\end{equation}
Thus, the map $(z,h)\mapsto (\zeta(z),h)$ lifts to a diffeomorphism $M[Z]\to M[\zeta(Z)]$. This lift is what we refer to as the \emph{Langer diffeomorphism}. It will be used implicitly below.

Let $P(h)$ be as in the introduction; that is, $P(h) = - h^2 \partial_z^2 + \varsigma z^\kappa W(z) + h^2 \psi(z,h)$ for $\varsigma \in \{-1,+1\}$, $\kappa \in (-2,\infty)$, $W\in C^\infty([0,Z]_z;\bbR^+)$, and $\psi \in \varrho_{\mathrm{be}}^{-2}\varrho_{\mathrm{fe}}^{-2}C^\infty(M[Z])$ of the form specified in \cref{eq:psi_form}.
In terms of $\zeta$, the operator $P(h)$ can be written as 
\begin{align}
	\begin{split} 
	\frac{\xi^\kappa}{ W} 	P(h) &= -h^2 \frac{\partial^2}{\partial \zeta^2} + \frac{h^2}{2} \Big( \frac{\kappa}{\xi} \frac{\partial \xi}{\partial \zeta}-\frac{1}{W} \frac{\partial W}{\partial \zeta}\Big)\frac{\partial}{\partial \zeta} + \varsigma \zeta^\kappa   + \frac{h^2}{\zeta^2}\frac{\xi^{\kappa+2}}{W} \Big[\Big(\alpha^2-\frac{1}{4}\Big) + \Psi\Big( \frac{\zeta}{ \xi h^{2/(\kappa+2)}} \Big)\Big] + h^2 \tilde{E}  \\ 
	&\overset{\mathrm{def}}{=} P_0(h),
	\end{split}  
\end{align}
where $\alpha,\Psi$ are as in \cref{eq:psi_form} and $\tilde{E} =  \xi^\kappa E /W \in \varrho_{\mathrm{be}}^{-1} \varrho_{\mathrm{fe}}^{-1} C^\infty(M[\zeta(Z)])$. 
Viewing $P_0(h)$ as a differential operator on $[0,Z]_z$, $Pu=0$ is equivalent to $P_0 u=0$. 

In order to simplify the expression, and to facilitate comparison of $P_0$ with $P$, we can rearrange some terms; defining
\begin{multline}
E_0 = \tilde{E} + \frac{1}{\zeta^2} \Big( \frac{\xi^{\kappa+2}}{W} - 1 \Big)\Big[\Big(\alpha^2-\frac{1}{4}\Big) + \Psi\Big( \frac{\zeta}{ \xi h^{2/(\kappa+2)}} \Big)\Big] \\ + \frac{h^2}{\zeta^2} \Big[ \Psi\Big( \frac{\zeta}{ \xi h^{2/(\kappa+2)}} \Big) -\Psi\Big( \frac{\zeta}{  \sqrt[\kappa+2]{W(0) h^2} } \Big) \Big],
\label{eq:misc_lok}
\end{multline}
we have, owing to the observation that $\xi(0)^{\kappa+2} = W(0)$, that $E_0 \in \varrho_{\mathrm{be}}^{-1} \varrho_{\mathrm{fe}}^{-1} C^\infty(M[\zeta(Z)])$. Indeed, 
\begin{equation} 
(\xi^{\kappa+2}W^{-1}-1 ) \in z C^\infty([0,Z]_z;\bbC) \subset \varrho_{\mathrm{be}}\varrho_{\mathrm{fe}} C^\infty(M[\zeta(Z)]).
\end{equation} 
Since $\zeta^{-2} \in \varrho_{\mathrm{be}}^{-2} \varrho_{\mathrm{fe}}^{-2}C^\infty(M[\zeta(Z)])$ and 
\begin{equation} 
\Psi(\zeta \xi^{-1} h^{-2/(\kappa+2)}) \in \varrho_{\mathrm{be}}C^\infty(M[\zeta(Z)]),
\end{equation} 
overall the first line on the right-hand side of \cref{eq:misc_lok} is in $\varrho_{\mathrm{be}}^{-1} \varrho_{\mathrm{fe}}^{-1} C^\infty(M[\zeta(Z)])$. Similarly, 
\begin{equation}
 \Psi\Big( \frac{\zeta}{ \xi h^{2/(\kappa+2)}} \Big) -\Psi\Big( \frac{\zeta}{  \sqrt[\kappa+2]{W(0) h^2} } \Big)  \in \varrho_{\mathrm{be}} \varrho_{\mathrm{fe}} C^\infty(M[\zeta(Z)]), 
\end{equation}
so the second line is in $\varrho_{\mathrm{be}}^{-1} \varrho_{\mathrm{fe}}^{-1} C^\infty(M[\zeta(Z)])$ as well.

In terms of $E_0$, the operator $P_0$ can be written 
\begin{equation}
P_0 =  -h^2 \frac{\partial^2}{\partial \zeta^2} + \frac{h^2}{2} \Big( \frac{\kappa}{\xi} \frac{\partial \xi}{\partial \zeta}-\frac{1}{W} \frac{\partial W}{\partial \zeta}\Big)\frac{\partial}{\partial \zeta} + \varsigma \zeta^\kappa   + \frac{h^2}{\zeta^2} \Big[\Big(\alpha^2-\frac{1}{4}\Big) +\Psi\Big( \frac{\zeta}{  \sqrt[\kappa+2]{W(0) h^2} } \Big)\Big] + h^2 E_0 .
\end{equation}
As $P_0$ has a first-order term, unlike $P$, it is useful to consider the conjugation 
\begin{equation} 
P_1 = M_{W^{1/4} \xi^{-\kappa/4}} P_0 M_{W^{-1/4} \xi^{\kappa/4}},
\end{equation} 
where $M_\bullet$ denotes the multiplication operator $u\mapsto \bullet u$. A computation yields
\begin{equation}
P_1  = - h^2 \frac{\partial^2}{\partial \zeta^2} + \varsigma \zeta^\kappa   +  \frac{h^2}{\zeta^2}\Big(\alpha^2-\frac{1}{4}\Big) + \frac{h^2}{\zeta^2} \Psi\Big( \frac{\zeta}{  \sqrt[\kappa+2]{W(0) h^2} } \Big)  +  h^2 E_1,
\end{equation}
where
\begin{equation}
	E_1= E_0 + \frac{1}{4} \frac{\partial^2}{\partial \zeta^2} \log \Big(\frac{W}{\xi^\kappa}\Big) + \frac{1}{16} \Big( \frac{\partial}{\partial \zeta} \log \Big(\frac{W}{\xi^\kappa}\Big)\Big)^2 \in \varrho_{\mathrm{be}}^{-1}\varrho_{\mathrm{fe}}^{-1} C^\infty(M[\zeta(Z)];\bbC).
	\label{eq:misc_039}
\end{equation} 
The operator $P_1$ is therefore of the same form as $P$, in that it also satisfies the hypotheses of \Cref{thm2}.

So, if we know the result in the $W=1$ case, then we can apply it to $P_1$. 
Let $\calQ$ 
be defined by \cref{eq:quasimode}.  
Given any $Q\in \calQ$, \Cref{thm2} applied to $P_1$ gives a solution $u_1$ to $P_1 u_1=0$ of the form
\begin{equation}
u_1(\zeta,h) = (1+\varrho_{\mathrm{ze}} \varrho_{\mathrm{fe}} \beta) Q\Big( \frac{\zeta}{h^{2/(\kappa+2)}} \Big) +  \varrho_{\mathrm{ze}}^{(\kappa+1)/(\kappa+2)} \varrho_{\mathrm{fe}} \gamma Q'\Big( \frac{\zeta}{h^{2/(\kappa+2)}} \Big) + \varrho_{\mathrm{be}}^{1/2-\alpha} \varrho_{\mathrm{fe}} \delta
\end{equation}
for $\beta,\gamma,\delta$ as in the theorem. 
Define $u(z,h)$ by $u(z,h) = W^{-1/4} \xi^{\kappa/4} u_1(\zeta(z),h)$.
This satisfies $0=P_1u_1 = W^{1/4} \xi^{-\kappa/4} P_0 u $, so $P_0u=0$, and therefore $Pu=0$. 
The form of $u$ specified in \Cref{thm2} follows from the form of $u_1$. Thus, if we know the result in the $W=1$ case, then we can deduce the result in general. 

Below, we restrict attention to the case $W=1$, in which case $\zeta=z$, and we will mostly write $\zeta$ in place of $z$.

\section{$O(h^2/\zeta)$-quasimodes and their properties}
\label{sec:quasimodes}

Consider 
\begin{equation}
P = - h^2 \frac{\partial^2}{\partial \zeta^2} + \varsigma \zeta^\kappa + h^2 \psi  
\end{equation}
for $\psi$ as in \cref{eq:psi_form}, i.e.\ $\psi(\zeta,h) = \zeta^{-2}(\alpha^2-1/4) + \zeta^{-2} \Psi(\zeta/h^{2/(\kappa+2)})+ E$ for $\alpha\in \bbC$ with $\Re \alpha > 0$, $\Psi \in \varrho_{\mathrm{be}}|_{\mathrm{fe}}C^\infty(\mathrm{fe})$, and $E\in \zeta^{-1} C^\infty(M)$. Now, \cref{eq:NP_Def} reads
\begin{equation}
N(P) = -h^2 \frac{\partial^2}{\partial \zeta^2} + \varsigma \zeta^\kappa + \frac{h^2}{\zeta^2} \Big(\alpha^2 - \frac{1}{4} \Big) + \frac{h^2}{\zeta^2} \Psi\Big( \frac{\zeta}{h^{2/(\kappa+2)}} \Big) = P-h^2 E.
\end{equation} 
As discussed in the introduction, solutions $Q$ to $N(P)Q=0$ can be considered as $O(h^2/\zeta)$-quasimodes for the original semiclassical ODE $Pu=0$. This section is devoted to studying the properties of these quasimodes. 

Conversely, given $\alpha \in \bbC$ with $\Re \alpha\geq 0$ and $\Psi \in \varrho_{\mathrm{be}}|_{\mathrm{fe}}C^\infty(\mathrm{fe})$, we can consider the semiclassical ordinary differential operator defined by
\begin{equation}
	N_0 = -h^2 \frac{\partial^2}{\partial \zeta^2} + \varsigma \zeta^\kappa + \frac{h^2}{\zeta^2} \Big(\alpha^2 - \frac{1}{4} \Big) + \frac{h^2}{\zeta^2} \Psi\Big( \frac{\zeta}{h^{2/(\kappa+2)}} \Big).
\end{equation} 
Setting $P=N_0$, the operator $P$ satisfies the hypotheses of our setup, and $N(P)=N$. So, the set of all $N_0$ of the stipulated form is the set of all $N(P)$ arising as above. Forgetting $P$, the ODE under investigation is therefore $N_0Q=0$.
Per the preceding discussion, this can be thought of either as a first step in studying the original semiclassical problem or as a special case of the general problem. 

So, though this section is required preparation for the proofs in later sections, it can also be read as providing examples.

The key observation in studying $N_0$ is that it can be considered as a homogeneous family of operators on $\mathrm{fe}$. 
Changing coordinates from $\zeta$ to $\lambda = \zeta / \smash{h^{2/(\kappa+2)}}$, we have $N_0\propto N$ for $N$ the ordinary differential operator defined by
\begin{equation}
	N= - \frac{\partial^2}{\partial \lambda^2} + \varsigma \lambda^\kappa + \frac{1}{\lambda^2} \Big( \alpha^2 - \frac{1}{4} \Big)  + \frac{\Psi(\lambda)}{\lambda^2},
	\label{eq:barN}
\end{equation}
where `$\propto$' means proportionality up to a nonvanishing function of $h$. 
This computation shows that $N_0$, considered as a family of ordinary differential operators on the positive real axis, has kernel $\ker N_0=
\ker N$ independent of $h$.

Let $\calQ = \{Q\in C^\infty (\bbR^+_\lambda): N Q = 0 \}$ 
denote the kernel of $N$, thought of as a subset of $C^\infty (\bbR^+_\lambda)$ via $h$-independence. Cf.\ \cref{eq:quasimode}.

\subsection{Asymptotics of quasimodes}

Asymptotics for small argument:
\begin{proposition}
	If $Q\in \calQ$, then $Q(\lambda)\in \lambda^{1/2-\alpha} C^\infty[0,\infty)_\lambda +  \lambda^{1/2+\alpha} C^\infty[0,\infty)_\lambda$ if $\alpha\notin 2^{-1}\bbZ$.
	Otherwise, $Q(\lambda) \in \lambda^{1/2-\alpha} C^\infty [0,\infty)_\lambda + \lambda^{1/2+\alpha} \log(\lambda) C^\infty [0,\infty)_\lambda$. 
	In either case, there exists a nonzero 
	\begin{equation} 
	Q_0 \in \calQ\cap \lambda^{1/2+\alpha} C^\infty[0,\infty)_\lambda,
	\end{equation} 
	unique up to multiplicative constants. If $Q\in \calQ$ is such that $\lim_{\lambda\to 0^+}  \lambda^{-1/2+\alpha} Q = 0$, then $Q \in \operatorname{span}_\bbC Q_0$, and $Q = (\lim_{\lambda\to 0^+} \lambda^{-1/2-\alpha} Q(\lambda) ) Q_0$. 
	\label{prop:quasimode_small_argument}
\end{proposition}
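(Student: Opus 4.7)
The equation $NQ=0$, after multiplication by $\lambda^2$, reads
\begin{equation*}
\lambda^2 Q'' = \bigl[(\alpha^2 - \tfrac{1}{4}) + \varsigma \lambda^{\kappa+2} + \Psi(\lambda)\bigr] Q.
\end{equation*}
Since $\kappa+2\geq 1$ and $\Psi\in \lambda C^\infty[0,\infty)_\lambda$, this is a Fuchsian ODE at $\lambda=0$ with $C^\infty[0,\infty)_\lambda$ coefficients. Its indicial polynomial $s(s-1) - (\alpha^2-1/4) = (s-1/2-\alpha)(s-1/2+\alpha)$ has roots $s_\pm = 1/2\pm\alpha$, which differ by a positive integer precisely when $2\alpha\in\bbN$ (given $\Re\alpha>0$), equivalently $\alpha\in 2^{-1}\bbZ$. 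Two-dimensionality of $\calQ$ follows from standard ODE theory on $(0,\infty)_\lambda$, so the substance of the proposition is a description of the behavior at $\lambda=0$ of a basis.

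To construct the recessive solution, I substitute $Q=\lambda^{1/2+\alpha}F(\lambda)$; the $\lambda^{1/2+\alpha}F$ terms cancel by the indicial identity $(1/2+\alpha)(-1/2+\alpha)=\alpha^2-1/4$, and dividing what remains by $\lambda^{3/2+\alpha}$ yields
\begin{equation*}
\lambda F'' + (2\alpha+1)F' = \bigl[\varsigma\lambda^{\kappa+1} + \lambda^{-1}\Psi(\lambda)\bigr] F,
\end{equation*}
whose right-hand coefficient lies in $C^\infty[0,\infty)_\lambda$. The Frobenius recursion starting from $F(0)=1$ expresses $(n+1)(n+1+2\alpha)a_{n+1}$ in terms of $a_0,\dots,a_n$; the factor $n+1+2\alpha$ never vanishes when $\Re\alpha>0$ and $n\geq 0$, so a unique formal solution $F\sim 1+\sum_{n\geq 1}a_n\lambda^n$ exists. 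I then Borel-sum to produce $F_\infty\in C^\infty[0,\infty)_\lambda$ solving the ODE modulo $\lambda^\infty$, and correct by a term in $\lambda^\infty C^\infty[0,\infty)_\lambda$ via a contraction-mapping argument on a Volterra integral equation in a weighted smooth space. Setting $Q_0=\lambda^{1/2+\alpha}F$ gives the recessive solution, with $F(0)=1$.

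A second solution arises by reduction of order: $Q_1(\lambda)=Q_0(\lambda)\int_c^\lambda Q_0(\mu)^{-2}\dd\mu$ for any fixed $c>0$. Writing $Q_0^{-2}=\lambda^{-1-2\alpha}(1+c_1\lambda+c_2\lambda^2+\cdots)$ near $0$, with coefficients $c_n$ determined by $F$, term-by-term integration yields
\begin{equation*}
\int_c^\lambda Q_0(\mu)^{-2}\dd\mu = \varepsilon \log(\lambda) + R(\lambda), \qquad R \in \lambda^{-2\alpha}C^\infty[0,\infty)_\lambda + \bbC,
\end{equation*}
where $\varepsilon$ is the coefficient of $\lambda^{-1}$ in $Q_0^{-2}$; this vanishes unless $2\alpha\in\bbN$, in which case $\varepsilon=c_{2\alpha}$. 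Multiplying by $Q_0$ gives
\begin{equation*}
Q_1 \in \lambda^{1/2-\alpha}C^\infty[0,\infty)_\lambda + \varepsilon\cdot \lambda^{1/2+\alpha}\log(\lambda)\, C^\infty[0,\infty)_\lambda,
\end{equation*}
and when $2\alpha\in\bbN$ the identity $\lambda^{1/2+\alpha}=\lambda^{1/2-\alpha}\cdot\lambda^{2\alpha}$ (with $\lambda^{2\alpha}\in C^\infty$) lets any non-logarithmic $\lambda^{1/2+\alpha}$ residue be absorbed into $\lambda^{1/2-\alpha}C^\infty$, giving the two stated forms. Writing a general $Q\in\calQ$ as $aQ_0+bQ_1$ yields the containment for $Q$. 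If in addition $\lim_{\lambda\to 0^+}\lambda^{-1/2+\alpha}Q=0$, then the leading $\lambda^{1/2-\alpha}$ component of $bQ_1$ forces $b=0$, and $F(0)=1$ gives $a=\lim_{\lambda\to 0^+}\lambda^{-1/2-\alpha}Q(\lambda)$.

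The main obstacle is the smooth (rather than analytic) Frobenius construction: without power-series convergence at one's disposal, one must rely on Borel summation together with a contraction-mapping correction in $\lambda^\infty C^\infty$ to promote the formal solution to a genuine one. The log/no-log dichotomy for $Q_1$ is also sensitive and requires identifying the residue $\varepsilon$, which is a finite polynomial in the Frobenius coefficients of $F$ and may in principle vanish even when $2\alpha\in\bbN$; the proposition's containment is phrased so as to accommodate either possibility.
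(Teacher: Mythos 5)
Your proposal is correct, but it takes a genuinely different route from the paper. The paper's proof is a one-paragraph citation: it rewrites $\lambda^2 N$ in terms of $\lambda\partial_\lambda$, reads off the indicial roots $\gamma_\pm = 1/2\pm\alpha$, and then invokes, as a black box, the standard structure theory for second-order regular singular ODEs with smooth coefficients (kernel contained in $\lambda^{\gamma_-}C^\infty + \lambda^{\gamma_+}C^\infty$, with a log when the roots differ by an integer, and a basis element in $\lambda^{\gamma_+}C^\infty$). You instead reconstruct that black box from scratch: the Frobenius substitution $Q = \lambda^{1/2+\alpha}F$ and the resulting recursion with the nonvanishing factor $n+1+2\alpha$; asymptotic (Borel) summation plus a Volterra/contraction correction to handle the merely smooth, non-analytic coefficients; and reduction of order, with explicit identification of the logarithmic residue $\varepsilon = c_{2\alpha}$, to produce the second solution $Q_1$ and its leading $-\tfrac{1}{2\alpha}\lambda^{1/2-\alpha}$ term (which is what makes the final ``$b=0$'' deduction go). What your approach buys is self-containment, an explicit source for the possible logarithm, and the explicit normalization $F(0)=1$ used in the final formula $Q = (\lim_{\lambda\to 0^+}\lambda^{-1/2-\alpha}Q)\,Q_0$; what the paper's approach buys is brevity, at the cost of treating the regular-singular structure theorem as known. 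Both arguments are sound and prove the same statement; you have essentially supplied the proof of the lemma the paper cites without proof.
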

\begin{proof}
	Recall\footnote{This is a special case of the theory of regular singular PDE developed in \cite[Chp.\ 5]{MelroseAPS}. The case of regular singular ODE is classical, especially for analytic coefficients --- see e.g.\ \cite[Chp.\ 10.3]{WhittakerWatson}\cite[\S5.4-5]{OlverBook}. The proof consists of two steps. First, a formal series solution is constructed, algebraically. Then, it is asymptotically summed, which produces a solution to the ODE modulo a forcing which decays faster than any power of $\lambda$ as $\lambda\to 0^+$, i.e.\ an $O(\lambda^\infty)$-quasimode. Then, standard methods which rely only on very weak properties of solutions (for instance an explicit Schwartz kernel) produce an $O(\lambda^\infty)$-solution to the forced ODE, which, when added to the original summed formal series, produces a true solution.} that a second-order regular singular ordinary differential operator $(\lambda \partial_\lambda)^2 + a(\lambda) \lambda \partial_\lambda + b(\lambda)$  has kernel contained in
	\begin{equation}
	\begin{cases}
	\lambda^{\gamma_-} C^\infty [0,\infty)_\lambda + \lambda^{\gamma_+} C^\infty [0,\infty)_\lambda = \calA^{(\gamma_-,0)\cup (\gamma_+,0)}[0,\infty)_\lambda & (\gamma_+-\gamma_-\notin \bbZ), \\
	\lambda^{\gamma_-} C^\infty [0,\infty)_\lambda + \lambda^{\gamma_+} \log(\lambda) C^\infty [0,\infty)_\lambda = \calA^{(\gamma_-,0)\cup (\gamma_+,1)}[0,\infty)_\lambda & (\text{otherwise}),
	\end{cases}
	\end{equation}
	where $\gamma_\pm$ are the indicial roots of the equation, i.e.\ the roots of the polynomial $\gamma^2 + a(0)\gamma + b(0)$, and we have chosen them such that $\Re \gamma_- \leq \Re \gamma_+$. Moreover, one of the members of a basis of solutions can be chosen to be in $\lambda^{\gamma_+} C^\infty [0,\infty)_\lambda$. 
	
	The operator $N$ defined by \cref{eq:barN} is regular singular at $\lambda=0$ (up to a factor of $\lambda^2$), which becomes clearer upon writing 
	\begin{equation}
	\lambda^2 N = - \Big( \lambda \frac{\partial}{\partial \lambda} \Big)^2 + \lambda \frac{\partial}{\partial \lambda} + \varsigma \lambda^{\kappa+2} + \alpha^2 - \frac{1}{4} + \Psi(\lambda).
	\label{eq:misc_048}
	\end{equation}
	Since $\Psi(0)=0$, the indicial polynomial is $\gamma^2 - \gamma - \alpha^2+1/4$, which has roots $\gamma_\pm = \pm \alpha + 1/2$. The difference $\gamma_+ - \gamma_- = 2\alpha$ is integral precisely when $\alpha\in 2^{-1} \bbZ$. Thus, the proposition is a corollary of this general theory. 
\end{proof}

So, every $Q(\lambda)\in \calQ$ is polyhomogeneous at $\lambda=0$, with index set $\calF(\alpha) = (1/2-\alpha,0) \cup (1/2+\alpha,0)$ if $\alpha\notin 2^{-1}\bbZ$ and $\calF(\alpha) = (1/2-\alpha,0) \cup (1/2+\alpha,1)$ otherwise.

Any nonzero element of $\operatorname{span}_\bbC\{Q_0\}$ is called the \emph{recessive} solution of the ODE. 

\begin{remark} 
As seen in the examples below, if $\Psi(\lambda) \in C^\infty[0,\infty)_{\lambda^{\kappa+2}}$, then the argument above can be sharpened, via the coordinate change $\Lambda=\lambda^{\kappa+2}$, to yield the absence of log terms as long as $\alpha \notin 2^{-1} (\kappa+2) \bbZ$.
\label{rem:logsharp}
\end{remark} 

Asymptotics for large argument:
\begin{proposition}
	If $Q\in \calQ$ and $\varsigma >0$, then $Q(\rho^{-2/(\kappa+2)})\in \exp(2(\kappa+2)^{-1} \rho^{-1}) \rho^{\kappa/(2\kappa+4)} C^\infty[0,\infty)_{\rho}$, and there exists a nonzero $Q_\infty \in \calQ$ such that 
	\begin{equation} 
	Q_\infty(\rho^{-2/(\kappa+2)})\in \exp(-2  (\kappa+2)^{-1} \rho^{-1}) \rho^{\kappa/(2\kappa+4)}C^\infty[0,\infty)_{\rho}.
	\label{eq:misc_kz1}
	\end{equation}
	If $\varsigma <0$, then there exist some $Q_\pm (\rho^{-1/(\kappa+2)})\in \exp(\pm 2i(\kappa+2)^{-1}\rho^{-1}) \rho^{\kappa/(2\kappa+4)}C^\infty[0,\infty)_{\rho}$
	such that $\calQ = \operatorname{span}_\bbC\{Q_-,Q_+\}$. For all three of $Q_-,Q_+,Q_\infty$, the leading order terms in the $C^\infty [0,\infty)_\rho$ factors at $\rho=0$ (referring to \cref{eq:misc_kz1} for $Q_\infty$) are all nonzero. If $\varsigma>0$ and 
	\begin{equation}
		 \lim_{\rho\to 0^+}  \rho^{-\kappa/(2\kappa+4)}\exp(-2(\kappa+2)^{-1} \rho^{-1})Q(\rho^{-2/(\kappa+2)})=0, 
	\end{equation}
	then $Q\in \operatorname{span}_\bbC Q_\infty$. 
	\label{prop:quasimode_large_argument}
\end{proposition}
\begin{proof}
	Rewriting $N$ in terms of $\rho=1/\lambda^{(\kappa+2)/2}$, the result, which is most easily derived from substituting $\lambda \partial_\lambda =  -2^{-1}(\kappa+2) \rho \partial_\rho$ into \cref{eq:misc_048}, is 
	\begin{equation}
	-\frac{4\lambda^2 N}{(\kappa+2)^2} = \Big( \rho \frac{\partial}{\partial \rho} \Big)^2 +\frac{2\rho}{\kappa+2} \frac{\partial}{\partial \rho} - \frac{4}{(\kappa+2)^2} \Big[ \frac{\varsigma}{\rho^2} + \alpha^2 - \frac{1}{4} + \Phi(\rho^2) \Big], 
	\end{equation} 
	where $\Phi(\varrho) = \Psi(\varrho^{-1/(\kappa+2)}) \in \smash{C^\infty[0,\infty)_{\varrho}}$. Note that this is \emph{not} a regular singular ODE at $\rho=0$, because of the $\varsigma/\rho^2$ term in the brackets. Removing this term, the remainder of the operator is regular singular. Consequently, we can appeal to Liouville--Green theory in the form it is presented in \cite[Chp. 7]{OlverBook} to conclude the proposition. 
	
	To wit, to convert the operator above into the form considered by Olver, let $\Lambda=1/\rho = \lambda^{(\kappa+2)/2}$, in terms of which 
	\begin{equation}
	-\frac{4 \lambda^2 N}{\Lambda^2 (\kappa+2)^2} = \frac{\partial^2}{\partial \Lambda^2} +\frac{\kappa }{\kappa+2}\frac{1}{\Lambda} \frac{\partial}{\partial \Lambda} - \frac{4}{(\kappa+2)^2} \Big[ \varsigma  + \frac{1}{\Lambda^2}\Big(\alpha^2 - \frac{1}{4} + \Phi\Big(\frac{1}{\Lambda^2}\Big)\Big) \Big],
	\end{equation}
	and the kernel of this operator is also $\calQ$. 
	So, the Liouville--Green expansion applies. Note that the coefficient of the first-order term $\kappa(\kappa+2)^{-1} \Lambda^{-1} \partial_\Lambda$ is $O(1/\Lambda)$ and so does not contribute to the phase. The phase $\varphi$ (chosen to be positive in the $\varsigma>0$ case) is therefore $\varphi = 2 (\kappa+2)^{-1} \Lambda$ (up to an arbitrary additive constant) and the decay rate is $\sim \Lambda^{-\nu}=\rho^\nu$ with $\nu = \kappa / (2\kappa+4)$. The conclusion of \cite[Chp. 7- Thm. 2.1]{OlverBook} applies with these parameters, and the statement of the proposition may be read off of it.
	A minor bibliographic note is that, in \cite[Chp. 7- Thm. 2.1]{OlverBook}, Olver assumes what in our context is the analyticity of $\Phi$, whereas we only assume smoothness. This assumption is absent in \cite[Chp. 7- \S1]{OlverBook} so is unnecessary for producing solutions to the ODE modulo Schwartz errors (relative to the desired exponential growth or decay in the $\varsigma>0$ case), and such errors may be solved away via the method of variation of parameters. A more involved variant of the same standard argument appears below, so we do not belabor the details. 
\end{proof}

If the coefficients of $N$ are real, then, in the $\varsigma<0$ case, $Q_0\neq Q_\pm$. However, in the $\varsigma>0$ case, $Q_0 = Q_\infty$ (in which case $N$ could be said to have a ``bound state'') is possible.

Combining the preceding two propositions, and identifying $\mathrm{fe}= [0,\infty)_\lambda \cup [0,\infty)_{1/\lambda^{(\kappa+2)/2}}$:
\begin{corollary}
	Let $Q\in \calQ$. 
	Letting $\chi \in C_{\mathrm{c}}^\infty(\bbR;\bbR)$ be identically $1$ near the origin, 
	\begin{itemize}
		\item if $\varsigma>0$, then $Q(\lambda) \in \exp( 2 (\kappa+2)^{-1}  \chi(1/\lambda) \lambda^{(\kappa+2)/2}) \calA^{(-\kappa/(2\kappa+4),0),\calF(\alpha)}(\mathrm{fe})$, where $(-\kappa/(2\kappa+4),0)$ is the index set at $\lambda=\infty$ and $\calF(\alpha)$ is the index set at $\lambda=0$. Also, 
		\begin{equation} 
		Q_\infty(\lambda) \in \exp( -2(\kappa+2)^{-1} \chi(1/\lambda)  \lambda^{(\kappa+2)/2}) \calA^{(-\kappa/(2\kappa+4),0),\calF(\alpha)}(\mathrm{fe}).
		\end{equation}
		\item If $\varsigma<0$, then instead $Q_\pm(\lambda) \in \exp( \pm 2 i (\kappa+2)^{-1}   \chi(1/\lambda) \lambda^{(\kappa+2)/2}) \calA^{(-\kappa/(2\kappa+4),0),\calF(\alpha)}(\mathrm{fe})$. 
	\end{itemize}
	So, $Q$ is of exponential-polyhomogeneous type on $\mathrm{fe}$.
\end{corollary}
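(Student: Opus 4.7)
The plan is to combine the two preceding propositions, which describe $Q$ at the two boundary points of $\mathrm{fe}$, into a single exponential-polyhomogeneous statement on $\mathrm{fe}$. The cutoff $\chi$ plays only a technical role, smoothly interpolating between the trivial exponential prefactor appropriate near $\lambda=0$ --- where $Q$ is already polyhomogeneous on the nose --- and the genuine exponential prefactor dictated by the large-$\lambda$ asymptotics.

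I would set $E(\lambda) = \exp(2(\kappa+2)^{-1}\chi(1/\lambda)\lambda^{(\kappa+2)/2})$ and $\tilde Q = Q/E$, then observe that $E$ defines a smooth, everywhere positive function on $\mathrm{fe}^\circ$ which restricts to $1$ identically near $\lambda=0$ (since $\chi \equiv 1$ near the origin forces $\chi(1/\lambda) \equiv 0$ for $\lambda$ small) and which restricts to $\exp(2(\kappa+2)^{-1}\lambda^{(\kappa+2)/2})$ identically near $\lambda=\infty$ (since $\chi$ is compactly supported, $\chi(1/\lambda) \equiv 1$ for $\lambda$ large). Establishing the corollary therefore reduces to verifying polyhomogeneity of $\tilde Q$ at each boundary point of $\mathrm{fe}$ separately, with the claimed index sets.

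Near $\lambda=0$, $\tilde Q \equiv Q$, and \Cref{prop:quasimode_small_argument} yields polyhomogeneity with index set $\calF(\alpha)$. Near $\lambda=\infty$, after substituting $\rho = \lambda^{-(\kappa+2)/2}$, \Cref{prop:quasimode_large_argument} writes $Q$ as $\exp(2(\kappa+2)^{-1}\rho^{-1})$ times an element of $\rho^{\kappa/(2\kappa+4)} C^\infty[0,\infty)_\rho$; since $E$ equals the exponential factor in this region, dividing leaves precisely a polyhomogeneous remainder with the stated index set at $\lambda=\infty$. The transition region where $\chi(1/\lambda)$ is neither $0$ nor $1$ is a compact subset of $\mathrm{fe}^\circ$, on which nothing needs to be checked because $Q$ and $E^{-1}$ are both manifestly smooth.

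The $\varsigma<0$ case is formally identical upon substituting the solutions $Q_\pm$ for $Q$ and replacing the argument of the exponential by $\pm 2i(\kappa+2)^{-1}\chi(1/\lambda)\lambda^{(\kappa+2)/2}$. I do not anticipate any real difficulty: the substantive asymptotic content is entirely packaged into the two preceding propositions, and what remains is the essentially formal observation that multiplying by an exponential whose argument is the product of a smooth cutoff with a smooth power of $\lambda$ preserves polyhomogeneity. The final claim, that $Q$ is of exponential-polyhomogeneous type on $\mathrm{fe}$, then follows immediately by writing $Q = e^{\log E}\tilde Q$ and noting that $\log E$ is polyhomogeneous on $\mathrm{fe}$ (vanishing near $\lambda=0$ and having a first-order pole in $\rho$ at $\lambda=\infty$).
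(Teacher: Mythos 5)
Your argument is correct and matches the paper's intended proof, which is left implicit: the corollary is stated as an immediate consequence of \Cref{prop:quasimode_small_argument} and \Cref{prop:quasimode_large_argument}, combined via precisely the cutoff decomposition you describe. One caveat worth noting: your derivation actually yields the index set $(\kappa/(2\kappa+4),0)$ at $\lambda=\infty$ --- observe the factor $\rho^{+\kappa/(2\kappa+4)}$ in \Cref{prop:quasimode_large_argument} --- so the sign in the corollary's stated index set $(-\kappa/(2\kappa+4),0)$ appears to be a typo that your proof silently corrects rather than reproduces.
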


As a reminder, $[0,\infty]_\lambda = [0,\infty)_\lambda \cup (0,\infty]_{1/\lambda}$.

\begin{proposition}
	Suppose that $f\in \calA^{\calE,\calF}[0,\infty]_\lambda$, where $\calE$ is the index set at $\lambda=\infty$ and $\calF$ is the index set at $\lambda=0$.
	Then, $f(\zeta/h^{2/(\kappa+2)}) \in \calA^{(0,0), (\kappa+2)^{-1}\calE ,(0,0),\calF}(M)$.
\end{proposition}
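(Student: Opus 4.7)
The plan is to prove this by a direct coordinate calculation in two local charts covering the corners of $M$ where $\mathrm{fe}$ meets the other edges, using the fact that $\lambda = \zeta/h^{2/(\kappa+2)}$ can be expressed purely in terms of one of the local bdfs, with no dependence on $\varrho_{\mathrm{fe}}$. Throughout I assume $W=1$ so $\zeta=z$.

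First I would record the two key identities. Outside any neighborhood of $\mathrm{be}$, one can use $\varrho_{\mathrm{ze}} = h^2/z^{\kappa+2}$ and $\varrho_{\mathrm{fe}} = z$ as local bdfs. From $\lambda^{\kappa+2} = z^{\kappa+2}/h^2 = 1/\varrho_{\mathrm{ze}}$ one immediately gets
\begin{equation}
\lambda = \varrho_{\mathrm{ze}}^{-1/(\kappa+2)}, \qquad \log \lambda = -\frac{1}{\kappa+2}\log \varrho_{\mathrm{ze}}.
\end{equation}
In particular $\lambda$ does not depend on $\varrho_{\mathrm{fe}}$. Outside any neighborhood of $\mathrm{ze}$, using $\varrho_{\mathrm{fe}} = h^{2/(\kappa+2)}$ and $\varrho_{\mathrm{be}} = z/h^{2/(\kappa+2)}$ as local bdfs, one sees $\lambda = \varrho_{\mathrm{be}}$, again independent of $\varrho_{\mathrm{fe}}$. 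These cover everything except the interior of $\mathrm{ie}$, where $\lambda$ is smooth and bounded away from $0$ and $\infty$.

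Next I would transfer the two asymptotic expansions of $f$ to $M$. Since $f \in \calA^{\calE,\calF}[0,\infty]_\lambda$, for each $N$ we can write
\begin{equation}
f(\lambda) = \sum_{\substack{(s,p)\in\calE \\ \Re s \leq N}} c_{s,p}\, \lambda^{-s}(\log \lambda)^p + R^\infty_N(\lambda),
\end{equation}
with $R^\infty_N$ conormal to order $N$ at $\lambda=\infty$, and similarly at $\lambda=0$ with an index set $\calF$. Substituting the identity $\lambda = \varrho_{\mathrm{ze}}^{-1/(\kappa+2)}$ into the first expansion gives
\begin{equation}
f(\lambda) = \sum_{(s,p)\in\calE} c_{s,p}\Bigl(-\tfrac{1}{\kappa+2}\Bigr)^{\!p} \varrho_{\mathrm{ze}}^{s/(\kappa+2)}(\log \varrho_{\mathrm{ze}})^p + \tilde R_N(\varrho_{\mathrm{ze}}),
\end{equation}
where the coefficients are \emph{constant}, hence trivially polyhomogeneous as functions on $\mathrm{ze}$, and the remainder satisfies $\tilde R_N = O(\varrho_{\mathrm{ze}}^{N/(\kappa+2)})$ with all derivatives of matching size. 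This is precisely the statement of polyhomogeneity at $\mathrm{ze}$ with index set $(\kappa+2)^{-1}\calE$. Because the coefficients contain no $z$-dependence, joint polyhomogeneity at the corners $\mathrm{ze}\cap\mathrm{fe}$ (with $(0,0)$ at $\mathrm{fe}$) and $\mathrm{ze}\cap\mathrm{ie}$ (with $(0,0)$ at $\mathrm{ie}$) is automatic. Analogously, substituting $\lambda = \varrho_{\mathrm{be}}$ into the $\lambda\to 0$ expansion yields polyhomogeneity at $\mathrm{be}$ with index set $\calF$ and, since the substitution is independent of $\varrho_{\mathrm{fe}}$, trivial index set $(0,0)$ at $\mathrm{fe}$ near $\mathrm{fe}\cap\mathrm{be}$.

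Finally I would patch the two local results using a partition of unity subordinate to the two charts (extending smoothly across $\mathrm{fe}^\circ$, where both descriptions apply and give the same smooth function $f(\lambda)$), noting that at $\mathrm{ie}^\circ$ the function $\lambda$ is smooth and positive so $f(\lambda)$ is smooth. The only mild subtlety is the cancellation $\log \lambda = \log z - (\kappa+2)^{-1}\log(h^2) = -(\kappa+2)^{-1}\log \varrho_{\mathrm{ze}}$ in the near-$\mathrm{ze}$ chart, which is what ensures the coefficients of the expansion at $\mathrm{ze}$ are genuinely constants rather than polynomials in $\log z$; this cancellation is the only step whose failure could have degraded the conclusion, and with it in hand the result follows.
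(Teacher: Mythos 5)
Your proof is correct and takes essentially the same route as the paper: express $\lambda = \zeta/h^{2/(\kappa+2)}$ in terms of boundary-defining functions of $M$ and transfer the two asymptotic expansions of $f$ (at $\lambda=\infty$ and $\lambda=0$) across that substitution, patching via a cover of $M$ by neighborhoods of $\mathrm{ze}$ and $\mathrm{be}$. The only stylistic difference is that you work with local bdfs chosen so that $\lambda$ is exactly a power of a single bdf in each chart, whereas the paper uses a global bdf system giving $\lambda = \varrho_{\mathrm{be}}\,\varrho_{\mathrm{ze}}^{-1/(\kappa+2)}$ and invokes the nonvanishing of one factor near each boundary hypersurface; the two formulations are equivalent.
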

Here, $(\kappa+2)^{-1}\calE$ denotes the smallest index set containing $\{((\kappa+2)^{-1}j,k) : (j,k) \in \calE\}$. If $\kappa$ is an integer, then this is $\{((\kappa+2)^{-1}j,k) : (j,k) \in \calE\}$ itself.
\begin{proof}
	In terms of $\varrho_{\mathrm{ze}} = h^2 / (\zeta+h^{2/(\kappa+2)})^{\kappa+2}$, $\varrho_{\mathrm{fe}} = \zeta+h^{2/(\kappa+2)}$, and $\varrho_{\mathrm{be}} = \zeta /(\zeta+h^{2/(\kappa+2)})$, we have $\zeta/h^{2/(\kappa+2)} = \varrho_{\mathrm{be}} \varrho_{\mathrm{ze}}^{-1/(\kappa+2)}$, so  
	\begin{equation} 
	f(\zeta/h^{2/(\kappa+2)}) = f(\varrho_{\mathrm{be}} \varrho_{\mathrm{ze}}^{-1/(\kappa+2)}). 
	\end{equation} 
	As $\varrho_{\mathrm{ze}}$ is nonvanishing near $\mathrm{be}$, this suffices to imply that $f$ is polyhomogeneous with the claimed index sets $\calF$ at $\mathrm{be}$ and $(0,0)$ at $\mathrm{fe}$ away from $\mathrm{ie}\cup \mathrm{ze}$. To study the situation near $\mathrm{ie}\cup \mathrm{ze}$, consider $g(t) = f(1/t)$, which lies in $\calA^\calE[0,\infty)_t$. Thus,
	\begin{equation}
	f(\zeta/h^{2/(\kappa+2)}) = g(\varrho_{\mathrm{ze}}^{1/(\kappa+2)} \varrho_{\mathrm{be}}^{-1}).
	\end{equation}
	Since $\varrho_{\mathrm{be}}$ is nonvanishing near $\mathrm{ze}$, this suffices to imply that $f$ is polyhomogeneous with the claimed index sets $(\kappa+2)^{-1} \calE$ at $\mathrm{ze}$ and $(0,0)$ at $\mathrm{ie}$ and $\mathrm{fe}$ away from $\mathrm{be}$. 
\end{proof}

Cf.\ Melrose's pullback theorem.

There is a geometric interpretation of the previous proposition, which generalizes the discussion at the end of \S\ref{subsec:model}. We start with $M_1 = [0,\infty]_\lambda \times [0,\infty)_{h^{2/(\kappa+2)}}$ on which $f$, viewed as a function independent of $h$, satisfies $f\in \calA^{\calE,(0,0),\calF}(M_1)$, where the middle index set is that at $\{h=0\}$. Performing a polar blowup of the upper corner $\{\lambda=\infty,h=0\}$, we denote the result, after modifying the smooth structure at the front face of the blowup so that $h^2$ becomes a boundary-defining-function of its interior, 
\begin{equation}
M_2 = [M_1 ;\{\lambda=\infty,h=0\}  ; \kappa+2].
\end{equation}
We have $f\in \calA^{\calE,(\kappa+2)^{-1}\calE,(0,0),\calF}(M_2)$, where the index sets are at $\mathrm{cl}_{M_2} \{\lambda=\infty,h>0\}$, then the front face of the blowup, $\mathrm{cl}_{M_2} \{\lambda<\infty,h=0\}$, and $\{\lambda=0\}$, respectively. Let 
\begin{equation} 
M' = \mathrm{cl}_{M_2} \{\lambda h^{2/(\kappa+2)} \leq Z\} \subseteq M_2,
\end{equation} 
as in \S\ref{subsec:model}.
From $f\in \calA^{\calE,(\kappa+2)^{-1}\calE,(0,0),\calF}(M_2)$ follows immediately $f\in \calA^{(0,0),(\kappa+2)^{-1}\calE,(0,0),\calF}(M')$, where the first index set is at the curve $\mathrm{cl}_{M_2} \{\lambda h^{2/(\kappa+2)} = Z\}$.
Since the map 
\begin{equation} 
(0,Z)_\zeta\times (0,\infty)_h \ni (\zeta,h) \mapsto (\zeta/h^{2/(\kappa+2)},h )
\end{equation} 
extends to a diffeomorphism $M\to M'$ identifying $\{\lambda=0\} \subset M'$ with $\mathrm{be}$, $\mathrm{cl}_{M'}\{\lambda<\infty,h=0\}= \mathrm{cl}_{M_2}\{\lambda<\infty,h=0\}$ with $\mathrm{fe}$, the front face of the blowup with $\mathrm{ze}$, and the curve 
\begin{equation} 
\mathrm{cl}_{M_2} \{\lambda h^{2/(\kappa+2)} = Z\}
\end{equation} 
with $\mathrm{ie}$ --- refer to \Cref{fig_alt} --- the already deduced $f\in \calA^{(0,0),(\kappa+2)^{-1}\calE,(0,0),\calF}(M')$ is equivalent to the desired result.

Combining the propositions above:
\begin{corollary}
	If $Q\in \calQ$, then $Q(\zeta / h^{2/(\kappa+2)})$ is of nontrivial exponential-polyhomogeneous type on $M$.
\end{corollary}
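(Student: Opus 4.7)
The plan is to assemble the two immediately preceding results: the corollary displaying $Q(\lambda)$ as an exponential-polyhomogeneous function on $\mathrm{fe}\cong[0,\infty]_\lambda$, and the proposition on the pullback of polyhomogeneous functions under $\lambda\mapsto\zeta/h^{2/(\kappa+2)}$. The result is essentially a mechanical combination of these two statements.

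First I would apply the previous corollary to write $Q(\lambda)=\sum_n e^{\theta_n(\lambda)}p_n(\lambda)$ as a finite sum, where each $p_n$ is polyhomogeneous on $\mathrm{fe}$ with index set $(-\kappa/(2\kappa+4),0)$ at $\lambda=\infty$ and $\calF(\alpha)$ at $\lambda=0$, and each phase is of the form $\theta_n(\lambda)=c_n\chi(1/\lambda)\lambda^{(\kappa+2)/2}$ for some $c_n\in\bbC$ (real when $\varsigma>0$, imaginary when $\varsigma<0$, and possibly $c_n=0$). Applying the pullback proposition to each $p_n$ gives that $p_n(\zeta/h^{2/(\kappa+2)})$ is polyhomogeneous on $M$, with index sets as dictated by the proposition.

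The main (and only) step of substance would be to argue that the same proposition also applies to each phase $\theta_n$. The point is that $\theta_n$ is itself polyhomogeneous on $[0,\infty]_\lambda$: thanks to the cutoff $\chi$, it vanishes identically near $\lambda=0$ (so carries the empty index set there), while near $\lambda=\infty$ it equals $c_n\lambda^{(\kappa+2)/2}=c_n t^{-(\kappa+2)/2}$ in the smooth coordinate $t=1/\lambda$ used by the pullback proposition, which is trivially polyhomogeneous with index set $(-(\kappa+2)/2,0)$. The proposition then yields that the pullback $\tilde\theta_n(\zeta,h)=c_n\chi(h^{2/(\kappa+2)}/\zeta)\,\zeta^{(\kappa+2)/2}/h$ is polyhomogeneous on $M$, with index $(-1/2,0)$ at $\mathrm{ze}$ — consistent with the factor $(\kappa+2)^{-1}$ in the proposition, since $(\kappa+2)^{-1}\cdot(-(\kappa+2)/2)=-1/2$ — trivial index sets at $\mathrm{ie}$ and $\mathrm{fe}$, and empty index set at $\mathrm{be}$ (where the cutoff forces $\tilde\theta_n\equiv 0$).

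Combining, I would conclude $Q(\zeta/h^{2/(\kappa+2)})=\sum_n e^{\tilde\theta_n}\tilde p_n$ with $\tilde\theta_n,\tilde p_n$ polyhomogeneous on $M$, matching the definition of exponential-polyhomogeneous type. There is no real obstacle: the only point that requires thought is that the phases $\theta_n$ fall within the hypotheses of the pullback proposition, which is precisely what the cutoff $\chi$ secures — without it, the fractional power $\lambda^{(\kappa+2)/2}$ would fail to be polyhomogeneous at $\lambda=0$ when $\kappa$ is odd.
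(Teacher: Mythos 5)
Your proof is correct and takes essentially the same approach the paper intends: the paper leaves this as an unproved corollary prefaced by ``Combining the propositions above,'' i.e.\ precisely the pairing of the preceding corollary (exponential-polyhomogeneity of $Q$ on $\mathrm{fe}$) with the pullback proposition, which you carry out. Your observation that the cutoff $\chi$ in the phases is exactly what makes them polyhomogeneous on $[0,\infty]_\lambda$ (vanishing near $\lambda=0$, so the pullback proposition applies to the phase as well as the amplitude, with index set $(-1/2,0)$ at $\mathrm{ze}$) is the only genuinely nonobvious point, and you handle it correctly.
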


\subsection{Examples}
\label{subsec:examples}

\begin{example}[Liouville--Green] 
	If $\kappa=0$, $\alpha=1/2$, and $\Psi=0$, then $N = - \partial_\lambda^2 + \varsigma$, so  $\calQ = \operatorname{span}_\bbC\{ \calQ_-=e^{- i \lambda}, \calQ_+=e^{+ i \lambda} \}$ if $\varsigma<0$ and $\calQ = \operatorname{span}_\bbC\{ \calQ_\infty= e^{- \lambda}, e^{ \lambda} \}$ if $\varsigma>0$. 
\end{example}
The next most classical case is:
\begin{example}[JWKB]
	If $\kappa=1$, $\alpha=1/2$, and $\Psi=0$, then $N = - \partial_\lambda^2 + \varsigma \lambda$, so $NQ=0$ is Airy's ODE (or its reflection across the origin, depending on the value of $\varsigma$). So, if $\varsigma>0$, then 
	\begin{equation} 
		\calQ = \{c_1 \operatorname{Ai}(\lambda)+c_2 \operatorname{Bi}(\lambda):c_1,c_2\in \bbC\}
	\end{equation} 
	is precisely the set of Airy functions on the positive real axis. If $\varsigma<0$, then similarly $\calQ = \{c_1 \operatorname{Ai}(-\lambda)+c_2 \operatorname{Bi}(-\lambda):c_1,c_2\in \bbC\}$. \Cref{prop:quasimode_large_argument}, applied to these cases, states the qualitative form of the large-argument asymptotic expansions  \cite[\href{http://dlmf.nist.gov/9.7}{\S9.7}]{NIST} of the Airy functions and their derivatives. In particular, the large-argument expansions of $\operatorname{Ai}(\lambda),\operatorname{Bi}(\lambda)$ are in integral powers of $\rho=\smash{1/\lambda^{3/2}}$.
\end{example}
Generalizing the previous two examples:
\begin{example}[Cf.\ \S\ref{subsec:model}]
	If $\Psi=0$, then
	\begin{equation}
	N =
	- \frac{\partial^2}{\partial \lambda^2} + \varsigma \lambda^\kappa + \frac{1}{\lambda^2} \Big( \alpha^2 - \frac{1}{4} \Big).
	\end{equation}
	As observed by Langer, the elements of $\calQ = \ker N$ can be written in terms of Bessel functions. (For comparison with the previous two examples, recall that, up to a polynomial weight, the trigonometric functions are Bessel functions of order $1/2$, and the Airy functions are Bessel functions of order $1/3$.) More precisely,
	\begin{equation}
	\calQ = \bigg\{\lambda^{1/2} I\bigg(\frac{2\lambda^{(\kappa+2)/2}}{\kappa+2} \bigg) : I(t)\text{ a solution to } t^2 \frac{\mathrm{d}^2 I}{\mathrm{d} t^2} + t \frac{\mathrm{d} I}{\mathrm{d} t} - ( \varsigma t^2 + \nu^2) I(t) = 0\bigg\}.
	\end{equation}
	The ODE satisfied by $I(t)$ is Bessel's ODE of order $\nu = 2\alpha/(\kappa+2)$, so $I$ is a (modified, if $\varsigma>0$, and unmodified otherwise) Bessel function.
	For this special the case, the conclusions of the previous propositions can be checked explicitly using the small- or large- argument expansions of the Bessel functions. The small argument expansions are in \cite[ \href{http://dlmf.nist.gov/10.8}{\S10.8}]{NIST}. The large argument expansions are \emph{Hankel's expansions} \cite[\href{http://dlmf.nist.gov/10.17.i}{\S10.17(i)}]{NIST}\cite[\href{http://dlmf.nist.gov/10.40.i}{\S10.40(i)}]{NIST}, as mentioned already in \S\ref{subsec:Bessel}. The expansions given in \cite{NIST} are Poincar\'e-type expansions of the Bessel functions and their first derivatives. Taken together, these suffice to imply smoothness in terms of $\rho=1/\lambda^{(\kappa+2)/2}$ at $\rho=0$, with control of higher derivatives coming from the ODE. Besides the argument used above, Hankel's expansions can be proven in many ways, e.g. extracted from the integral representations of the Bessel functions via the method of stationary phase.
\end{example} 
	
\subsection{The inhomogeneous model problem}
\label{subsec:inh}
	
We now study the forced ODE $Nu = f Q + g Q'$ for $f,g \in \calS(\bbR)\cap C_{\mathrm{c}}^\infty(0,\infty]$. That is, $f,g$ are Schwartz functions vanishing near (and past) the origin.

The solutions can be produced using the standard Schwartz kernel construction, which we now recall. Let $Q_1,Q_2 \in \calQ$ denote linearly independent elements of $\calQ$, in which case their Wronskian $\frakW \in \bbC$, which we define with the sign convention 
\begin{equation} 
\frakW = Q_1Q_2' - Q_1'Q_2,
\end{equation} 
is nonzero. Then, for each $\lambda'>0$, the function $K(\lambda,\lambda')=K[Q_1,Q_2](\lambda,\lambda') \in C^\infty(\bbR^+_\lambda\setminus\{\lambda'\})$ defined by 
\begin{equation} 
K(\lambda,\lambda') = \frac{1}{\frakW} 
\begin{cases}
Q_1(\lambda)Q_2(\lambda') & (\lambda>\lambda'), \\ 
Q_2(\lambda)Q_1(\lambda') & (\lambda<\lambda'), 
\end{cases}
\end{equation} 
solves $N K(\lambda,\lambda') = \delta(\lambda-\lambda')$, where $\delta \in \calD'(\bbR)$ denotes a Dirac $\delta$-function. Thus, for nice functions $F \in C^\infty[0,\infty)_\lambda$, including all $F\in C_{\mathrm{c}}^\infty(0,\infty)_\lambda$, the function $N^{-1}[Q_1,Q_2] F$ defined by 
\begin{equation} 
N^{-1}[Q_1,Q_2] F=K(\lambda,F) = \int_0^\infty K(\lambda,\lambda') F(\lambda') \dd \lambda'
\label{eq:misc_070}
\end{equation} 
solves $N(N^{-1}[Q_1,Q_2] F)=F$.
In this way, each choice of $Q_1,Q_2$ leads to a right inverse
$N^{-1}[Q_1,Q_2]: C_{\mathrm{c}}^\infty(0,\infty)_\lambda\to C^\infty(0,\infty)_\lambda$ of $N$. 

Note that $N^{-1}[Q_1,Q_2]$ depends on $Q_1,Q_2$ only through $\bbC^\times Q_1,\bbC^\times Q_2$.

\begin{proposition}
Let $Q\in \calQ$, and fix independent $\hat{Q},\tilde{Q}\in \calQ$; if $\varsigma>0$, then choose $\tilde{Q}=\calQ_\infty$ to be the exponentially decaying mode. Then, $N^{-1}[\tilde{Q},\hat{Q}]$ extends to a map
\begin{equation} 
\{f Q + gQ'+h  :f,g,h  \in \calS(\bbR)\cap C_{\mathrm{c}}^\infty(0,\infty]_\lambda \} \to C^\infty(0,\infty)_\lambda
\end{equation} 
such that \cref{eq:misc_070} holds for all $F$ in the codomain and such that the extension is a right inverse of $N$.
\end{proposition}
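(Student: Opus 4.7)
The plan is to verify that the integral in \cref{eq:misc_070} converges absolutely for every $F$ of the stated form, and then to carry out the standard variation-of-parameters computation, differentiating under the integral to check that $u=N^{-1}[\tilde Q,Q]F$ is smooth on $(0,\infty)$ and satisfies $Nu=F$.

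For absolute convergence, since $f, g, h$ all vanish in a neighborhood of $\lambda = 0$, no difficulty arises near the origin. For the tail $\lambda' \to \infty$ with $\lambda$ fixed, the kernel reduces to $K(\lambda, \lambda') = \frakW^{-1} Q(\lambda) \tilde{Q}(\lambda')$, so it suffices to show that $\tilde{Q}(\lambda')(f Q + g Q' + h)(\lambda')$ is integrable at infinity. By \Cref{prop:quasimode_large_argument}, in the $\varsigma<0$ case all of $Q, Q', \tilde{Q}$ grow at most polynomially (in fact with rate $\lambda^{-\kappa/4}$), so each term is the product of a polynomially bounded function and a Schwartz function, hence integrable. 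In the $\varsigma > 0$ case with $Q \neq Q_\infty$, the prescribed choice $\tilde{Q} = Q_\infty$ forces $\tilde{Q} Q$ and $\tilde{Q} Q'$ to be polynomially bounded, by exact cancellation of the exponential factors $\exp(\pm 2(\kappa+2)^{-1}\lambda^{(\kappa+2)/2})$ from \Cref{prop:quasimode_large_argument}; meanwhile $\tilde{Q} = Q_\infty$ itself decays exponentially, so $\tilde{Q} h$ is even more integrable.

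Once the integral is defined, write $u(\lambda) = \frakW^{-1}[\tilde{Q}(\lambda) I_1(\lambda) + Q(\lambda) I_2(\lambda)]$, with $I_1(\lambda) = \int_0^\lambda Q F \dd \lambda'$ and $I_2(\lambda) = \int_\lambda^\infty \tilde{Q} F \dd \lambda'$. Dominated convergence, using the same pointwise bounds uniformly on compact $\lambda$-intervals, justifies differentiation under the integral sign. After cancellation of the products $\tilde Q(\lambda)Q(\lambda)F(\lambda) - Q(\lambda)\tilde Q(\lambda)F(\lambda)$ arising in $u'$, and after applying the Wronskian identity in $u''$, one obtains
\begin{equation*}
u' = \frakW^{-1}[\tilde Q' I_1 + Q' I_2], \qquad u'' = \frakW^{-1}[\tilde Q'' I_1 + Q'' I_2] - F,
\end{equation*}
and hence $Nu = \frakW^{-1}[(N\tilde Q) I_1 + (NQ)I_2] + F = F$. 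Smoothness on $(0,\infty)$ follows by iterating the same differentiation argument, since $Q,\tilde Q$ and $F$ are themselves smooth on $(0,\infty)$.

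The main obstacle, and the reason for the specific stipulation on $\tilde Q$, is the exponential bookkeeping in the $\varsigma > 0$ case: pairing an exponentially growing $Q$ with the exponentially decaying $\tilde Q = Q_\infty$ is what produces the polynomial cancellation in $\tilde Q \cdot Q$ and $\tilde Q \cdot Q'$. Without this choice, $\int_\lambda^\infty \tilde Q f Q$ would involve the product of two exponentially growing functions and would diverge no matter how rapidly $f$ decays polynomially. Everything else in the argument is routine.
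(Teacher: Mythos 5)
Your proof is correct and follows essentially the same route as the paper: both rely on the observation that the pairing of $\tilde Q$ with $Q$ (and hence with $Q'$) cancels the opposing exponential factors from \Cref{prop:quasimode_large_argument} in the $\varsigma>0$ case, so that $\tilde Q F$ is Schwartz at $\lambda=\infty$ and the tail integral converges absolutely, after which the verification that $Nu=F$ is the standard variation-of-parameters computation. The paper states the convergence as a one-line assertion and simply notes that differentiation ``works as before,'' whereas you spell out the exponential bookkeeping and the cancellation of the boundary terms in $u'$ and $u''$; the underlying argument is identical.
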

\begin{proof}
	Given the hypotheses, the function $K(\lambda,F)$ given by
	\begin{equation}
	K(\lambda,F) = \frac{\tilde{Q}(\lambda)}{\frakW}\int_0^\lambda \hat{Q}(s) F(s) \dd s + \frac{\hat{Q}(\lambda)}{\frakW}\int_\lambda^\infty \tilde{Q}(s) F(s) \dd s
	\label{eq:misc_079}
	\end{equation}
	is well-defined
	for $F = f Q + gQ' +h$ with $f,g,h$ as above, the integrals on the right-hand side being absolutely convergent. Let \begin{equation} 
		N^{-1}[\tilde{Q},\hat{Q}] F(\lambda)= K(\lambda,F).
	\end{equation} 
	This is a linear extension of $N^{-1}[\tilde{Q},\hat{Q}]$ to $\{f Q + gQ' +h :f,g,h  \in \calS(\bbR)\cap C_{\mathrm{c}}^\infty(0,\infty]_\lambda \}$. Differentiating $K(\lambda,F)$ in $\lambda$ works as before, so $N K(\lambda,F) = F(\lambda)$. 
\end{proof}

Denote the extension by $N^{-1}[\tilde{Q},\hat{Q}]$ as well.

One annoyance is that, even if $F\in C_{\mathrm{c}}^\infty(0,\infty)_\lambda$, then $v=N^{-1}[\tilde{Q},\hat{Q}]F$ need not be $O( \langle \lambda \rangle^{-\infty} \tilde{Q})$ as $\lambda\to\infty$, as, in this case,
\begin{equation}
\lim_{\lambda\to\infty} \tilde{Q} (\lambda)^{-1} v(\lambda) = \frac{1}{\frakW} \int_0^\infty  \hat{Q}(\lambda) F(\lambda) \dd \lambda, 
\label{eq:obs}
\end{equation}
the right-hand side having no good reason to vanish in general. But this is the only obstruction, so:

\begin{proposition}
	Fix $\Lambda_0>\lambda_0>0$. 
	Given $F$ of the form $F = fQ + g Q'$ for $f,g \in \calS(\bbR)\cap C_{\mathrm{c}}^\infty (\lambda_0,\infty]$, there exist $\beta, \gamma \in \calS(\bbR)\cap C_{\mathrm{c}}^\infty (0,\infty]$ such that the function $v \in C^\infty(0,\infty)_\lambda$ defined by 
	\begin{equation} 
	v(\lambda) = \beta(\lambda) Q(\lambda) + \gamma(\lambda) Q'(\lambda)
	\end{equation} 
	solves $Nv =  F + R$ for some $R\in C_{\mathrm{c}}^\infty(0,\infty)_\lambda$ with $\operatorname{supp} R \subseteq [\lambda_0,\Lambda_0]$.   
	\label{prop:normal_mapping}
\end{proposition}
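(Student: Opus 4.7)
The plan is to construct a particular solution $v_1$ of $Nv_1 = F$ whose asymptotics at $\infty$ admit a Schwartz decomposition in $Q$ and $Q'$, localize it via a smooth cutoff away from $\lambda = 0$, and read off the coefficients from an explicit formula. Begin by setting $v_0 = N^{-1}[\tilde{Q},Q]F$, which is well-defined by the preceding proposition and is given by \cref{eq:misc_079}. Since $F$ vanishes on $(0,\lambda_0]$, this formula collapses to $v_0(\lambda) = c_0 Q(\lambda)$ there, with $c_0 = \frakW^{-1}\int_0^\infty \tilde{Q}F\,\dd\lambda$. The behavior as $\lambda\to\infty$ depends on $\varsigma$: when $\varsigma<0$, both $Q$ and $\tilde{Q}$ are bounded and oscillatory, $\int_\lambda^\infty \tilde{Q}F\,\dd s$ is a Schwartz tail, and $v_1 := v_0 - A_\infty \tilde{Q}$ with $A_\infty = \frakW^{-1}\int_0^\infty QF\,\dd s$ is genuinely Schwartz at $\infty$; when $\varsigma>0$ with $\tilde{Q} = Q_\infty$ exponentially decaying, the WKB asymptotics of \Cref{prop:quasimode_large_argument} show that $v_0$ itself is of the form $(\text{Schwartz})\cdot Q$ modulo Schwartz-decaying tails, so I take $v_1 = v_0$. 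In either case $Nv_1 = F$ on $(0,\infty)$.

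Next pick a cutoff $\chi \in C^\infty(0,\infty)$ with $\chi\equiv 0$ on $(0,\lambda_0]$ and $\chi\equiv 1$ on $[\Lambda_0,\infty)$, and define
\begin{equation*}
\beta(\lambda) = \chi(\lambda)\frac{\overline{Q(\lambda)}\,v_1(\lambda)}{\rho(\lambda)}, \qquad \gamma(\lambda) = \chi(\lambda)\frac{\overline{Q'(\lambda)}\,v_1(\lambda)}{\rho(\lambda)}, \qquad \rho := |Q|^2 + |Q'|^2.
\end{equation*}
These are smooth on $(0,\infty)$ because $\rho > 0$ everywhere (otherwise $Q$ and $Q'$ would share a zero, forcing $Q \equiv 0$ by uniqueness), they are supported in $[\lambda_0,\infty)$, and they satisfy $\beta Q + \gamma Q' = \chi v_1$. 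Hence $v := \beta Q + \gamma Q' = \chi v_1$ obeys
\begin{equation*}
Nv = N(\chi v_1) = \chi F + [N,\chi]v_1 = F + R,\qquad R := (\chi-1)F + [N,\chi]v_1,
\end{equation*}
and $R$ is supported in $[\lambda_0,\Lambda_0]$: the first summand vanishes outside $\operatorname{supp}(\chi - 1)\cap \operatorname{supp} F \subseteq [\lambda_0,\Lambda_0]$, and $[N,\chi] = -\chi'' - 2\chi'\partial_\lambda$ is supported in $\operatorname{supp}\chi' \subseteq [\lambda_0,\Lambda_0]$.

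The main technical obstacle is verifying the Schwartz decay of $\beta$, $\gamma$ and all their derivatives at $\infty$. In the oscillatory case one has $\rho \sim \phi'(\lambda) \sim \lambda^{\kappa/2}$ by \Cref{prop:quasimode_large_argument}, so $\overline{Q}/\rho$ and $\overline{Q'}/\rho$ are bounded and $\beta,\gamma$ inherit Schwartz decay directly from $v_1$. In the exponentially growing case, $\rho \sim \phi'(\lambda)^2 \exp(2\phi(\lambda))$ and $v_1 \sim (\text{Schwartz})\cdot \exp(\phi(\lambda))$, so the exponential weights cancel precisely, leaving only polynomial prefactors against a Schwartz function. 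Control of $\beta^{(k)},\gamma^{(k)}$ for all $k\geq 1$ follows by using the ODE $Nv_1 = F$ recursively to express higher derivatives of $v_1$ in terms of lower derivatives and the coefficient $V$; this introduces only polynomially growing factors which are absorbed by the Schwartz content of $v_1$ and $F$.
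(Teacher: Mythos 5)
Your overall strategy is a genuine alternative to the paper's: rather than choosing the compactly supported correction $R$ in advance so as to annihilate the two moments $\int_0^\infty \tilde Q(F+R)$ and $\int_0^\infty Q(F+R)$ (which makes $v = N^{-1}[\tilde Q, Q](F+R)$ vanish identically on $(0,\lambda_0)$), you apply $N^{-1}$ directly to $F$, repair the tail at $\infty$ by subtracting a homogeneous solution, and then localize by a cutoff, with $R$ emerging as the commutator error $(\chi-1)F + [N,\chi]v_1$. That is a clean and defensible route and the cutoff bookkeeping is correct. In rough terms, the paper pays for the correct support of $v$ up front (via a linear-algebra choice of $R$), whereas you pay afterward (via the commutator), and neither payment is intrinsically more expensive.

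However, your case analysis of the large-$\lambda$ behavior of $v_0 = N^{-1}[\tilde Q, Q]F$ is incomplete, and the omission is a genuine gap. You treat two cases: $\varsigma<0$, where you subtract $A_\infty\tilde Q$; and $\varsigma>0$ with $\tilde Q = Q_\infty$, i.e.\ $Q\notin\operatorname{span}_\bbC Q_\infty$, where you argue $v_0$ is already of the form $(\text{Schwartz})\cdot Q$ at $\infty$. But the normalization in the preceding proposition has a third alternative: $\varsigma>0$ \emph{and} $Q = Q_\infty$ (the recessive, exponentially decaying mode). There $\tilde Q$ is forced to be exponentially \emph{growing}, and
\begin{equation*}
v_0(\lambda) = \frac{\tilde Q(\lambda)}{\frakW}\int_0^\lambda Q F\,\dd s + \frac{Q(\lambda)}{\frakW}\int_\lambda^\infty \tilde Q F\,\dd s
\end{equation*}
has a first term that tends, as $\lambda\to\infty$, to $\frakW^{-1}\tilde Q(\lambda)\int_0^\infty QF$. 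If the convergent constant $\int_0^\infty QF$ is nonzero --- which is the generic situation --- this piece is of order $\tilde Q \sim e^{+2(\kappa+2)^{-1}\lambda^{(\kappa+2)/2}}$, hence $\sim e^{+4(\kappa+2)^{-1}\lambda^{(\kappa+2)/2}}$ relative to $Q$. Dividing by $\rho = |Q|^2+|Q'|^2 \sim |Q|^2\lambda^\kappa$ does not absorb this: $\overline{Q}v_0/\rho \sim e^{+4(\kappa+2)^{-1}\lambda^{(\kappa+2)/2}}/\lambda^\kappa$, which blows up, so your $\beta$ is not Schwartz. The fix is exactly the one you already use when $\varsigma<0$: subtract $A_\infty \tilde Q$ with $A_\infty = \frakW^{-1}\int_0^\infty QF$ (this integral converges precisely because $Q=Q_\infty$ is decaying), so that the first term becomes $-\frakW^{-1}\tilde Q(\lambda)\int_\lambda^\infty QF$, whose Schwartz-times-$e^{-2\phi}$ tail cancels the growth of $\tilde Q$. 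Once you add this third sub-case the argument goes through. (You may also wish to make the ``$(\text{Schwartz})\cdot Q$ modulo tails'' claim in the remaining $\varsigma>0$ sub-case precise; it follows by the same Laplace-type estimate on $\int_0^\lambda f Q^2$ that underlies the paper's \Cref{prop:Mod1}, which is the tool the paper invokes at the corresponding point.)
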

\begin{proof}  
	Take $\hat{Q},\tilde{Q}$ as in the previous proposition. Since these are linearly independent (which implies linear independence when restricted to any nonempty interval), there exists a $R\in C_{\mathrm{c}}^\infty(0,\infty)_\lambda$ with $\operatorname{supp} R \subseteq [\lambda_0,\Lambda_0]$ such that: 
	\begin{align}
	\int_0^\infty  \tilde{Q}(\lambda) R(\lambda) \dd \lambda &= -\int_0^\infty  \tilde{Q}(\lambda) F(\lambda) \dd \lambda, 
	\intertext{and, if $Q$ is not exponentially growing, then} 
	\int_0^\infty  \hat{Q}(\lambda) R(\lambda) \dd \lambda &= -\int_0^\infty  \hat{Q}(\lambda) F(\lambda) \dd \lambda,
	\label{eq:misc_124}
	\end{align}
	the integrals on the right-hand side being absolutely convergent.

	Now define $v = N^{-1}[\tilde{Q},\hat{Q}] (F+R)$. Then, $Nv = F+R$. Observe that $v$ vanishes identically near $\lambda=0$. Indeed, for $\lambda<\lambda_0$,  we have
	\begin{equation}
	v(\lambda) = \frac{\hat{Q}(\lambda)}{\frakW} \int_0^\infty \tilde{Q}(s) (F(s)+R(s)) \dd s = 0. 
	\end{equation}
	
	By \Cref{prop:Mod1}, 
	\begin{multline}
		\Big\{\tilde{Q}(\lambda) \int_0^\lambda \hat{Q}(s) (F(s)+R(s)) \dd s,\;
		\hat{Q}(\lambda) \int_\lambda^\infty \tilde{Q}(s) (F(s)+R(s)) \dd s \Big\}\\ \subset (|Q(\lambda)|^2 + |Q'(\lambda)|^2)^{1/2}\calS(\bbR).
		\label{eq:misc_126}
	\end{multline}
	Indeed, 
	\begin{equation}
		\hat{Q}(\lambda) \int_\lambda^\infty \tilde{Q}(s) (F(s)+R(s)) \dd s \in (|Q(\lambda)|^2 + |Q'(\lambda)|^2)^{1/2}\calS(\bbR), 
	\end{equation}
	and, if $Q$ is not exponentially growing, then, if $\lambda$ is large enough,
	\begin{equation}
		\tilde{Q}(\lambda) \int_0^\lambda \hat{Q}(s) (F(s)+R(s)) \dd s = -\tilde{Q}(\lambda) \int_\lambda^\infty \hat{Q}(s) F(s) \dd s \in (|Q(\lambda)|^2 + |Q'(\lambda)|^2)^{1/2}\calS(\bbR),
	\end{equation}
	owing to \cref{eq:misc_124},  
	while if $Q$ \emph{is} exponentially growing, then, owing to the exponential decay of $\tilde{Q}$, we still have 
	\begin{equation}
		\tilde{Q}(\lambda) \int_I \hat{Q}(s) (F(s)+R(s)) \dd s \in (|Q(\lambda)|^2 + |Q'(\lambda)|^2)^{1/2}\calS(\bbR) 
		\label{eq:misc_129}
	\end{equation}
	for 
	\begin{itemize}
		\item $I=[0,\lambda/2]$, using that $\tilde{Q}(\lambda)|\hat{Q}(\lambda/2)|^2 \in (|Q(\lambda)|^2 + |Q'(\lambda)|^2)^{1/2}\calS(\bbR)$ (using the exponential decay of $\tilde{Q}$, the exponential growth of $Q$, and not using anything about $f,g$ other than the fact that they lie in $L^\infty$),
		\item $I=[\lambda/2,\lambda]$, using the Schwartz decay of $f,g$ (and a polynomial bound on $\tilde{Q} \hat{Q}$). 
	\end{itemize}
	Combining these cases, we get \cref{eq:misc_129} for $I=[0,\lambda]$. So, \cref{eq:misc_126} holds.
	
	From \cref{eq:misc_126}, we conclude that the functions $\beta = v Q^*(\lambda)/(|Q(\lambda)|^2 + |Q'(\lambda)|^2)$ and $\gamma = v Q'(\lambda)^*/(|Q(\lambda)|^2 + |Q'(\lambda)|^2)$ lie in $\calS(\bbR)\cap C_{\mathrm{c}}^\infty (0,\infty]$.  
	These are defined such that $v = \beta Q + \gamma Q'$. 
\end{proof}

\section{Quasimodes at $\mathrm{ze}$}
\label{sec:ze}

The basic strategy of Langer--Olver is to look for a solution $u$ to the ODE 
\begin{equation} 
	Pu = - h^2 \frac{\partial^2 u}{\partial \zeta^2} + \varsigma \zeta^\kappa u +  h^2 \psi u=0,
\end{equation} 
of the form 
\begin{equation}
	u = \beta Q\Big( \frac{\zeta}{h^{2/(\kappa+2)}}\Big) + h^{\kappa/(\kappa+2)} \gamma Q'\Big( \frac{\zeta}{h^{2/(\kappa+2)}}\Big) 
	\label{eq:uform}
\end{equation}
for $\beta,\gamma \in C^\infty((0,Z]_\zeta\times [0,\infty)_{h^2};\bbC)$ that are well-behaved as $\zeta\to 0^+$, where $Q$ is an arbitrary member of $\calQ$. In Langer's original work, well-behaved meant smooth down to $\zeta=0$, but something weaker, which ends up being polyhomogeneity on $M$, is required here. 

\begin{remark*} 
The power of $h$ in front of the $ Q'$ term in \cref{eq:uform} has been chosen so that, for  each fixed $\zeta>0$, both terms on the right-hand side  have the same order of magnitude as $h\to 0^+$. This makes the semiclassical structure of the argument below more apparent.
\end{remark*} 

Specifically, we will prove that, for some specific index set $\calE$ (not too large --- see \cref{eq:calEdef_good}):
\begin{proposition}
 	There exist functions $\beta_0,\gamma_0\in\calA^{\calE-1}(M)$ supported away from $\mathrm{be}$ such that, defining $u$ by 
 	\begin{equation}
 		u = (1+ \varrho_{\mathrm{ze}} \varrho_{\mathrm{fe}} \beta_0) Q\Big( \frac{\zeta}{h^{2/(\kappa+2)}}\Big)+ \varrho_{\mathrm{ze}}^{(\kappa+1)/(\kappa+2)} \varrho_{\mathrm{fe}} \gamma_0 Q'\Big( \frac{\zeta}{h^{2/(\kappa+2)}}\Big).
 	\end{equation} 
 	we have $Pu = f Q + g Q'$ for $f,g \in \varrho_{\mathrm{be}}^{-1} \varrho_{\mathrm{ze}}^\infty \varrho_{\mathrm{fe}}^\kappa \calA^{\calE}(M)$ and $g$ supported away from $\mathrm{be}$. 
 	\label{prop:ze_upshot}
\end{proposition}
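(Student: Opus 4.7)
The plan is to extract $\beta_0$ and $\gamma_0$ from the asymptotically summed functions $\beta,\gamma$ constructed just before Proposition~\ref{prop:LU}, and then to read off $Pu$ via the matrix identity \cref{eq:misc_80}. Recall that $\beta-1\in\varrho_{\mathrm{ze}}\calA^{\calE}(M)$ and $\gamma\in\varrho_{\mathrm{ze}}^{1/2}\varrho_{\mathrm{fe}}^{1+\kappa/2}\calA^{\calF}(M)$, both supported away from $\mathrm{be}$, and that $U=(\beta,\gamma)^\intercal$ satisfies the conclusion of Proposition~\ref{prop:LU}.

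The first step is to match the two presentations of $u$. Away from $\mathrm{be}$, the identity $h^{2/(\kappa+2)}=\varrho_{\mathrm{ze}}^{1/(\kappa+2)}\varrho_{\mathrm{fe}}$ gives $h^{\kappa/(\kappa+2)}=\varrho_{\mathrm{ze}}^{\kappa/(2\kappa+4)}\varrho_{\mathrm{fe}}^{\kappa/2}$, so I set
\[
\beta_0=\frac{\beta-1}{\varrho_{\mathrm{ze}}\varrho_{\mathrm{fe}}},\qquad \gamma_0=\varrho_{\mathrm{ze}}^{-1/2}\varrho_{\mathrm{fe}}^{\kappa/2-1}\gamma,
\]
so that the $u$ built from these $\beta,\gamma$ via \cref{eq:uform} agrees with the formula in the statement; the exponent bookkeeping here reduces to the identity $\kappa/(2\kappa+4)-(\kappa+1)/(\kappa+2)=-1/2$, which is precisely the reason the factor $h^{\kappa/(\kappa+2)}$ was inserted into \cref{eq:uform}. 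Because the leading index of $\calE$ at $\mathrm{fe}$ is $(1,0)$, the inverse factor of $\varrho_{\mathrm{fe}}$ in $\beta_0$ is absorbed, giving $\beta_0\in\calA^{\calE-1}(M)$; likewise, the relation $\calF=\calE-\kappa-1$ yields $\gamma_0\in\varrho_{\mathrm{fe}}^{\kappa}\calA^{\calF}(M)=\calA^{\calE-1}(M)$, and both inherit the support condition away from $\mathrm{be}$.

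Plugging $U=(\beta,\gamma)^\intercal$ into \cref{eq:misc_80} gives $Pu=(LU)_1\,Q+h^{\kappa/(\kappa+2)}(LU)_2\,Q'$. Setting $f=(LU)_1$ and $g=h^{\kappa/(\kappa+2)}(LU)_2$, Proposition~\ref{prop:LU} immediately delivers $f\in\varrho_{\mathrm{be}}^{-1}\varrho_{\mathrm{ze}}^\infty\varrho_{\mathrm{fe}}^{\kappa}\calA^{\calE}(M)$; multiplying the bound $(LU)_2\in\varrho_{\mathrm{be}}^\infty\varrho_{\mathrm{ze}}^\infty\varrho_{\mathrm{fe}}^{\kappa/2}\calA^{\calE}(M)$ by $\varrho_{\mathrm{ze}}^{\kappa/(2\kappa+4)}\varrho_{\mathrm{fe}}^{\kappa/2}$ and absorbing the finite $\varrho_{\mathrm{ze}}$ power into the $\varrho_{\mathrm{ze}}^\infty$ factor places $g$ in $\varrho_{\mathrm{ze}}^\infty\varrho_{\mathrm{fe}}^{\kappa}\calA^{\calE}(M)$ with support disjoint from $\mathrm{be}$, hence also in $\varrho_{\mathrm{be}}^{-1}\varrho_{\mathrm{ze}}^\infty\varrho_{\mathrm{fe}}^{\kappa}\calA^{\calE}(M)$ thanks to the support condition. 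Since all the analytic work --- the recursive construction of $\beta_k,\gamma_k$, the index-set bookkeeping, and the proof that $LU$ is Schwartz at $\mathrm{ze}$ --- has already been carried out in the preceding subsections, I expect no substantive obstacle here: the present step is essentially a bookkeeping translation between the matrix-ODE formulation $LU=0$ and the scalar-solution formulation $Pu=0$.
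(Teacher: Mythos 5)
Your proposal is correct and follows essentially the same route as the paper: apply \cref{eq:misc_80} with $U=(\beta,\gamma)$, invoke Proposition~\ref{prop:LU} for the error, and repackage $\beta,\gamma$ into $\beta_0,\gamma_0$ via the identity $h^{\kappa/(\kappa+2)}=\varrho_{\mathrm{ze}}^{\kappa/(2\kappa+4)}\varrho_{\mathrm{fe}}^{\kappa/2}$. The exponent and index-set bookkeeping (in particular $\varrho_{\mathrm{fe}}^{\kappa}\calA^{\calF}(M)=\calA^{\calE-1}(M)$ via $\calF=\calE-\kappa-1$) all checks out, and the support conditions are inherited exactly as you say.
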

The proof will take up the entirety of this section.

In the rest of the body of the paper, we will abbreviate $Q=Q(\zeta/h^{2/(\kappa+2)})$ and $Q'=Q'(\zeta/h^{2/(\kappa+2)})$, as the $\zeta/h^{2/(\kappa+2)}$ argument will be clear from context.

Applying $P$ to $u$ of the form in \cref{eq:uform}, the result is again a function of a similar form: 
\begin{equation}
P u = \begin{bmatrix}
Q \\ h^{\kappa/(\kappa+2)} Q'
\end{bmatrix}^\intercal
\bigg( - h^2 \frac{\partial^2}{\partial \zeta^2}  - 2
\begin{bmatrix}
0 & \varsigma \zeta^\kappa  \\ 
1 & 0 
\end{bmatrix} h \frac{\partial}{\partial \zeta}
-
\begin{bmatrix}
0 & 1 \\ 
0 & 0
\end{bmatrix}
\Big( 2\phi h^3 \frac{\partial}{\partial \zeta} + h \varsigma \kappa \zeta^{\kappa-1} +  h^3 \phi'  \Big)
+ h^2 E \bigg) \begin{bmatrix}
\beta \\ \gamma 
\end{bmatrix},
\label{eq:misc_80}
\end{equation}
where, as earlier, $\phi(z,h) = \psi(z,h) - E(z,h)$ (for $E$ as in \cref{eq:psi_form}), and where $\phi'(z,h)=\partial_z \phi(z,h)$.

Because $\phi \in \varrho_{\mathrm{be}}^{-2}\varrho_{\mathrm{fe}}^{-2} C^\infty(M)$, we have $\phi' \in \varrho_{\mathrm{be}}^{-3}\varrho_{\mathrm{fe}}^{-3} C^\infty(M)$.

This suggests attempting to choose $\beta$ and $\gamma$ so as to satisfy the system of ODEs $LU =0$, where $U = (\beta,\gamma)$ and $L\in\operatorname{Diff}_\hbar^2((0,Z]_\zeta;\bbC^2)$ is the matrix-valued semiclassical operator 
\begin{equation} 
L=- h^2 \frac{\partial^2}{\partial \zeta^2}  - 2
\begin{bmatrix}
0 & \varsigma \zeta^\kappa  \\ 
1 & 0 
\end{bmatrix} h \frac{\partial}{\partial \zeta}
-
\begin{bmatrix}
0 & 1 \\ 
0 & 0
\end{bmatrix}
\Big( 2\phi h^3 \frac{\partial}{\partial \zeta} + h \varsigma \kappa \zeta^{\kappa-1} +  h^3 \phi'  \Big)
+ h^2 E
\label{eq:misc_021}
\end{equation}
appearing in \cref{eq:misc_80}.

\begin{remark} 
Given $\beta,\gamma\in C^\infty((0,Z)_z\times (0,\infty)_h)$ such that \cref{eq:uform} holds, it is not true that $Pu=0 \Rightarrow LU =0$ for $U=(\beta,\gamma)$, at least without specifying more about $\beta,\gamma$. 

For each $h>0$, the kernel of $L(h)$ is $4$-dimensional. The map $U\mapsto u$, where $u$ is defined in terms of $\beta,\gamma$ by \cref{eq:uform}, sends $\operatorname{ker} L(h)$ to $\operatorname{ker} P(h)$, and it is easily seen that this map has full rank. So, for every individual $h>0$, any $u(-,h)\in \operatorname{ker} P(h)$
can be decomposed as in \cref{eq:uform} for some functions $\beta(-,h),\gamma(-,h) \in C^\infty (0,Z]$ such that the function $U(-,h)$ defined by $U(-,h)=(\beta(-,h),\gamma(-,h))$
satisfies $L(h)U(-,h)=0$.
Thus, passage to the vector-valued ODE $LU=0$ is without loss of generality as far as constructing elements of $\ker P$ is concerned. 

Curiously, $L$ is independent of $Q$; it is only the map $U\mapsto u$ that is $Q$-dependent.  
This is related to the semiclassical structure of $L$, which we examine in \S\ref{subsec:semiclassical}. This subsection can be skipped on first reading. 
\end{remark}

If $U$ satisfies the ODE $LU=0$ on the nose, then the function $u$ defined by \cref{eq:uform} satisfies $Pu=0$ on the nose, but this is too much to ask for right away, so instead we try to solve the equation up to small errors.
We might like an $O(h^\infty)$ error, which means that the error should be Schwartz at both edges $\mathrm{ze}$ and $\mathrm{fe}$ comprising the $h\to 0^+$ regime, in which case it is simply Schwartz at $\{h=0\}$ in $[0,Z]_\zeta\times[0,\infty)_{h^2}$, but this is still too much to ask.
In this section, we aim only for an $O(\varrho_{\mathrm{ze}}^\infty)$ error, meaning in 
\begin{equation}
\varrho_{\mathrm{ze}}^\infty \varrho_{\mathrm{fe}}^\kappa \calA^\calE(M)\oplus \varrho_{\mathrm{ze}}^\infty \varrho_{\mathrm{fe}}^{\kappa/2}  \calA^\calF(M) = \bigcap_{k\in \bbN }(\varrho_{\mathrm{ze}}^k \varrho_{\mathrm{fe}}^\kappa \calA^\calE(M) \oplus \varrho_{\mathrm{ze}}^k \varrho_{\mathrm{fe}}^{\kappa/2} \calA^\calF(M))
\label{eq:misc_540}
\end{equation}
for some to-be-determined index sets $\calE,\calF$. (The powers of $\varrho_{\mathrm{fe}}$ in \cref{eq:misc_540} can be absorbed into a redefinition of $\calE,\calF$. The convention above is used to match later notation.)
That is, the error in our quasimode construction will be Schwartz only at $\mathrm{ze}$, but uniformly so all the way up to $\mathrm{fe}$, in a precise sense.

First, we work on the ray $\mathrm{ze}\backslash \mathrm{fe}$, meaning that we will not worry about uniformity up to $\mathrm{fe}$ (but uniformity at the other endpoint $\mathrm{ze}\cap\mathrm{ie}$ is handled trivially). This ``formal'' part of the argument is essentially found in \cite[\S12.14]{OlverBook}, but we present a full exposition here, in the relevant level of generality.  The formal part is contained in \S\ref{subsec:quasimode_ze/fe}. The situation at the other endpoint, $\mathrm{ze}\cap \mathrm{fe}$, will be analyzed second, with uniformity up to $\mathrm{fe}$ a consequence. The argument is outlined in \S\ref{subsec:quasimodes_zefe_sketch}, and the details are in \S\ref{subsec:quasimodes_zefe}.

\subsection{Semiclassical structure}
\label{subsec:semiclassical}
We refer to \cite{Zworski} for unfamiliar terminology.

Though $L(h)$ is elliptic for each individual $h>0$, $L$ is not elliptic \emph{as a semiclassical operator}. This is true even in the ``classically forbidden case'' $\varsigma>0$, in which case $P$ itself \emph{is} semiclassically elliptic. The semiclassically principal part $L_0$ of $L$ consists of the first two terms in \cref{eq:misc_021}, 
\begin{equation}
L_0=- h^2 \frac{\partial^2}{\partial \zeta^2}  - 2
\begin{bmatrix}
0 & \varsigma \zeta^\kappa  \\ 
1 & 0 
\end{bmatrix} h \frac{\partial}{\partial \zeta}.
\end{equation}
(Here we are ignoring behavior as $\zeta\to 0^+$. If $\phi=0$, then $L\in S \operatorname{Diff}^2_\hbar([0,Z]_\zeta;\bbC^2)$, and the principal part is principal in this stronger sense.)
Let ${}^\hbar T^* [0,Z]$ denote the semiclassical cotangent bundle over $[0,Z]$, which we parameterize by the map $(h,\zeta,\mu)\mapsto h^{-1} \mu \dd \zeta$, so $\mu$ is the frequency coordinate dual to the sole variable $\zeta$. Then, 
\begin{equation} 
{}^\hbar T^* [0,Z]\cong [0,\infty)_h\times [0,Z]_\zeta\times \bbR_\mu.
\end{equation}
In terms of these coordinates, the semiclassical principal symbol $\sigma_\hbar^2(L) : {}^\hbar T^* [0,Z] \to \bbC$ is 
\begin{equation}
\sigma_\hbar^2(L) =  \mu^2 \operatorname{Id} - 2 i \mu  \begin{bmatrix}
0 & \varsigma \zeta^\kappa \\ 
1 & 0 
\end{bmatrix},
\end{equation}
gotten by replacing $h\partial_\zeta$ with $i\mu$ in $L_0$. 
The semiclassical characteristic set $\operatorname{char}^2_\hbar(L) \subset {}^\hbar T^* [0,Z]$ is by definition the set of points $(h=0,\zeta,\mu)$ at which $\sigma_\hbar^2(L)(0,\zeta,\mu)\in \bbC^{2\times 2}$ fails to be invertible. This occurs when the determinant 
\begin{equation} 
\det \sigma_\hbar^2(L) = \mu^4 + 4 \varsigma \mu^2 \zeta^\kappa = \mu^2(\mu^2 + 4 \varsigma \zeta^\kappa)
\end{equation}
vanishes. 
This includes the zero section $\{\mu=0\}$, and, if $\varsigma<0$, then it also includes the sets $\{\mu = \pm 2 \zeta^{\kappa/2}\}$. 
The interpretation of $\sigma_\hbar^2(L)$ is that it describes the possible oscillations $\exp(i h^{-1} \varphi(\zeta) )$ present in the $h\to 0^+$ asymptotics of solutions to $LU = 0$, with the allowed $\varphi$ (only defined locally) being such that $(h=0,\zeta, \varphi'(\zeta)) \in \sigma^2_\hbar(L)$ for each $\zeta>0$.  
The key feature of $L$ that allows it to have polyhomogeneous solutions is therefore that its characteristic set includes the zero section $\{\mu=0\}$, since this corresponds to \emph{non}-oscillatory (non-exponential) asymptotics.

See \Cref{fig:characteristic} for an illustration of the characteristic set in the $\kappa=1,3$ cases, and see \Cref{fig:trapping} for the $\kappa=-1,2$ cases.

The additional characteristic set in the classically allowed case $\varsigma<0$ signals the existence of highly oscillatory solutions to $LU=0$ with an initial value that is non-oscillatory as $h\to 0$.
Consider solutions $u[Q_\pm]$ to $Pu[Q_\pm]=0$ with the  form \Cref{thm2} for $Q_\pm$ in place of $Q$, so that, for $h$ sufficiently small (so as to avoid any zeroes of $Q_\pm(Z/h^{2/(\kappa+2)})$),  
\begin{equation}
u[Q_\pm](Z,h) =(1+o(1)) Q_\pm\Big(\frac{Z}{h^{2/(\kappa+2)}}\Big) 
\end{equation}
as $h\to 0^+$.
The surjectivity of the $Q$-dependent map $\ker L\ni U \mapsto u\in \operatorname{ker} P$ means that we can write $u[Q_\mp]$ in the form \cref{eq:uform} for $Q=Q_\pm$ (not $Q=Q_\mp$!) and nonzero $U_\mp\in \operatorname{ker} L$:
\begin{equation}
u[Q_\mp] = 
\begin{bmatrix}
Q_\pm \\ h^{\kappa/(\kappa+2)} Q'_\pm 
\end{bmatrix}^\intercal U_\mp.
\end{equation}
By assumption, $u[Q_\mp]$ is highly oscillatory as $h\to 0^+$, with roughly the same phase as $Q_\mp$ and therefore with the \emph{opposite} phase as $Q_\pm$. It follows that $U_\mp$ must be oscillating with twice the phase with which $Q_\mp$ is oscillating. 
Thus, there exist highly oscillatory solutions $U$ to $LU=0$, namely $U_\pm$, but these are not the solutions constructed via asymptotic series below, which are smooth at $h=0$, away from the transition point.

The situation is similar to the \emph{conjugated} perspective of Vasy \cite{VasyLA,VasyN0L} in microlocal scattering theory (where the setting was the sc-calculus of Parenti--Shubin--Melrose rather than the semiclassical calculus, but these are similar in many respects), but there the characteristic set has only one nonzero ``branch'' present, whereas $\smash{\sigma_\hbar^2(L)}$ has both branches $\{\mu =\pm 2 |\zeta|^{\kappa/2}\}$ present, as depicted in the figures above. This difference is unsurprising, as $L$ (unlike Vasy's conjugated operator) does not depend on the choice of $Q\in \calQ$, so no branch can be singled out.

\begin{figure}[t]
	\begin{tikzpicture}
	\begin{axis}[xlabel=$\zeta$, ylabel=$\mu$, grid=both, grid style = {dotted},
	minor tick num=2,
	width = .5\textwidth,
	height = .35\textwidth,
	xmin = -1, 
	xmax=1,
	scaled ticks=true,
	clip=true,
	]
	\addplot[color=darkred, mark=none, thick, domain=-2:2, samples=100] ({-x^2/4},{x});
	\addplot[color=darkred, mark=none, thick] ({x},{0});
	\addlegendentry{\raisebox{5pt}{$\Big\{ \operatorname{det}  
			\Big[ 
			\begin{smallmatrix}
			\mu^2 & -2 i \zeta \mu \\
			-2i \mu & \mu^2
			\end{smallmatrix}\Big] = 0
			\Big\}$}};
	\node[right] at (axis cs: -.975,.75) {$\color{darkred}\{\mu = 2 \sqrt{-\zeta}\}$};
	\node[right] at (axis cs: -.3,-1.34) {$\color{darkred}\{\mu = -2 \sqrt{-\zeta}\}$};
	\node[below] at (axis cs: .5,0) {$\color{darkred}\{\mu=0\}$};
	\filldraw[darkred] (axis cs: 0,0) circle (1.5pt);
	\end{axis} 
	\end{tikzpicture}
	\begin{tikzpicture}
	\begin{axis}[xlabel=$\zeta$, grid=both, grid style = {dotted},
	minor tick num=2,
	width = .5\textwidth,
	height = .35\textwidth,
	xmin = -1, 
	xmax=1,
	scaled ticks=true,
	clip=true,
	]
	\addplot[color=darkred, mark=none, thick, domain=-2:2, samples=100] ({-x^2/4},{x^3});
	\addplot[color=darkred, mark=none, thick] ({x},{0});
	\addlegendentry{\raisebox{5pt}{$\Big\{ \operatorname{det}  
			\Big[ 
			\begin{smallmatrix}
			\mu^2 & -2 i \zeta^3 \mu \\
			-2i \mu & \mu^2
			\end{smallmatrix}\Big] = 0
			\Big\}$}};
	\node[right] at (axis cs: -.4,3) {$\color{darkred}\{\mu = 2 |\zeta|^{3/2}\}$};
	\node[right] at (axis cs: -.75,-6) {$\color{darkred}\{\mu = -2 |\zeta|^{3/2}\}$};
	\node[below] at (axis cs: .5,0) {$\color{darkred}\{\mu=0\}$};
	\filldraw[darkred] (axis cs: 0,0) circle (1.5pt);
	\end{axis} 
	\end{tikzpicture}
	\caption{
		The semiclassical characteristic set ${\color{darkred}\operatorname{char}_h^2(L) }= \{\mu=0\text{ or }\mu^2 + 4\varsigma\zeta^\kappa=0\} \subset \{h=0\} \cap {}^\hbar T^* (-1,+1)$ of $L$, in the $\kappa=1,3$ cases, with $\varsigma=1$; we have extended $L$ to $[-Z,+Z]$ so as to include both cases of $\varsigma$ in one figure. Various subsets of the characteristic set have been labeled. When $\kappa\geq 3$, the characteristic set has a cusp singularity. When $\kappa=1$, it has a fold singularity (i.e.\ \emph{caustic}) with respect to the projection onto the base.
	}
	\label{fig:characteristic}
\end{figure}

\subsection{Construction away from $\mathrm{fe}$: formalities}
\label{subsec:quasimode_ze/fe}
Since $\phi$ is smooth at $\mathrm{ze}$, and since $h^2$ serves as a defining function for $\mathrm{ze}\backslash \mathrm{fe}$ in $M\backslash \mathrm{fe}$, we can Taylor expand 
\begin{align}
\phi &\sim \sum_{k=0}^\infty h^{2k} \phi_k(\zeta) \in C^\infty(\mathrm{ze}\backslash \mathrm{fe})[[h^2]]
\intertext{at $\mathrm{ze}\backslash \mathrm{fe}$. For later use, note that, because $\phi \in \varrho_{\mathrm{be}}^{-2}\varrho_{\mathrm{fe}}^{-2} C^\infty(M)$, we have $\zeta^{2+k(\kappa+2)} \phi_k \in C^\infty(\mathrm{ze})$. Since $E$ is smooth at $\mathrm{ze}$, we can similarly expand}
E &\sim \sum_{k=0}^\infty h^{2k} E_k(\zeta) \in C^\infty(\mathrm{ze}\backslash \mathrm{fe})[[h^2]].
\end{align}
Since $E\in \varrho_{\mathrm{be}}^{-1}\varrho_{\mathrm{fe}}^{-1} C^\infty(M)$, we have $\zeta^{1+k(\kappa+2)} E_k \in C^\infty(\mathrm{ze})$.

Consider the formal version of $L$,
\begin{multline}
\mathsf{L} = - h^2 \frac{\partial^2}{\partial \zeta^2}  - 2 
\begin{bmatrix}
0 & \varsigma \zeta^\kappa \\ 
1 & 0 
\end{bmatrix} h \frac{\partial}{\partial \zeta} 
- h 
\begin{bmatrix}
0 & \varsigma \kappa  \zeta^{\kappa-1} \\
0 & 0
\end{bmatrix}
- h^3 
\begin{bmatrix}
0 & 1 \\ 0 & 0
\end{bmatrix}\sum_{k=0}^\infty h^{2k} \Big( 2 \phi_k \frac{\partial}{\partial \zeta} + \phi_k'  \Big) 
\\  + h^2 \sum_{k=0}^\infty h^{2k}  E_k(\zeta). 
\end{multline} 
This is an element of $\operatorname{Diff}^2(\mathrm{ze}\backslash \mathrm{fe};\bbC^2)[[h]]$. 
The task before us is to construct $\mathsf{U} \in C^\infty(\mathrm{ze}\backslash \mathrm{fe};\bbC^2)[[h]]$ satisfying $\mathsf{L}\mathsf{U} = 0$.

Consider the following ansatz for $\mathsf{U}$,:
\begin{equation}
\mathsf{U} = 
\sum_{k=0}^\infty h^{2k} \begin{bmatrix}
\beta_{k} \\ 0
\end{bmatrix}
+
\sum_{k=0}^\infty h^{2k+1} \begin{bmatrix}
0 \\ \gamma_{k}
\end{bmatrix}, \qquad \beta_{k},\gamma_{k} \in C^\infty(\mathrm{ze}^\circ).
\label{eq:Uform}
\end{equation}
This is the Langer--Olver ansatz. 

Given $\mathsf{U}$ of this form, the formal ODE $\mathsf{L}\mathsf{U}$ is equivalent to the conjunction of 
\begin{equation}
\frac{\mathrm{d}\beta_0}{\dd \zeta} = 0
\end{equation}
and
\begin{equation}
2 \frac{\mathrm{d}\beta_{k+1}}{\mathrm{d}\zeta} = - \frac{\mathrm{d}^2\gamma_{k}}{\mathrm{d}\zeta^2} + \sum_{j=0}^k  E_j \gamma_{k-j}
\end{equation}
and 
\begin{equation}
2 \varsigma \zeta^\kappa\frac{\mathrm{d}\gamma_{k}}{\mathrm{d} \zeta} +\varsigma \kappa \zeta^{\kappa-1} \gamma_{k}    = - \frac{\mathrm{d}^2\beta_{k}}{\mathrm{d}\zeta^2} + \sum_{j=0}^k  E_j \beta_{k-j} -  \sum_{j=0}^{k-1}  \Big( 2 \phi_j \frac{\mathrm{d} \gamma_{k-j-1}}{\mathrm{d} \zeta} + \phi'_j \gamma_{k-j - 1} \Big)
\end{equation}
holding for all $k\in \bbN$. 
Integrating these recursion relations yields the equivalent integral formulas: 
\begin{equation} 
\beta_0=1+c_0,
\end{equation} 
\begin{equation}
	\beta_{k+1}(\zeta) =  \frac{1}{2} \int_\zeta^Z \Big(\frac{\dd^2 \gamma_{k}(\omega) }{\dd \omega^2} - \sum_{j=0}^kE_j(\omega) \gamma_{k-j}(\omega) \Big) \dd \omega + c_{k},
	\label{eq:rec1} 
\end{equation}
\begin{multline}
\gamma_{k}(\zeta) = \frac{\varsigma}{2 \zeta^{\kappa/2}}\bigg[\int_\zeta^Z \bigg(  \frac{\mathrm{d}^2\beta_{k}(\omega)  }{\mathrm{d} \omega^2}- \sum_{j=0}^k E_j(\omega) \beta_{k-j}(\omega)  \\ 
+ \sum_{j=0}^{k-1} \Big( 2 \phi_j(\omega) \frac{\mathrm{d} \gamma_{k-j-1}(\omega)}{\mathrm{d} \omega} + \phi'_j(\omega) \gamma_{k-j - 1}(\omega) \Big) \bigg) \frac{\dd \omega}{\omega^{\kappa/2}} + C_{k}\bigg], 
\label{eq:rec2}
\end{multline}
where $c_{k},C_{k}\in \bbC$ are arbitrary constants of integration. These constants of integration have to do with the fact that if  $U=\{U(-,h)\}_{h>0}$ solves the semiclassical ODE $LU=0$, then so does $(1+c(h)) U$ for any function $c:[0,\infty)_h\to \bbC$. Analogously, if $\mathsf{U}$ solves $\mathsf{L}\mathsf{U}=0$, then so does $\mathsf{c}\mathsf{U}$ for any $\mathsf{c}\in \bbC[[h^2]]$. If $\mathsf{U}$ is constructed as above, then it can be checked that $\mathsf{c}\mathsf{U}$ arises from a different choice of $c_k$. Below, we take $c_k=0$ for all $k$.

The interpretation of $C_k$ is similar but more complicated. We will choose $C_k$ so as to minimize the index sets appearing in the $\zeta\to 0^+$ expansions of the $\gamma_k(\zeta)$. In other words, $C_k$ is to be chosen such that 
\begin{equation}
\int_\zeta^Z \bigg(  \frac{\mathrm{d}^2\beta_{k}(\omega)  }{\mathrm{d} \omega^2}- \sum_{j=0}^k E_j(\omega) \beta_{k-j}(\omega)  + \sum_{j=0}^{k-1} \Big( 2 \phi_j \frac{\mathrm{d} \gamma_{k-j-1}}{\mathrm{d} \omega} + \phi'_j \gamma_{k-j - 1} \Big) \bigg) \frac{\dd \omega}{\omega^{\kappa/2}} +C_k
\end{equation} 
has no constant term in its polyhomogeneous expansion in $\zeta$ at $\zeta=0$.

The equations above give a recursive definition for $\mathsf{U} \in C^\infty(\mathrm{ze}^\circ)[[h]]$. By construction, $\mathsf{L}\mathsf{U}=0$.

\begin{figure}[t]
	\begin{tikzpicture}
	\begin{axis}[xlabel=$\zeta$, ylabel=$\mu$, grid=both, grid style = {dotted},
	minor tick num=2,
	width = .5\textwidth,
	height = .35\textwidth,
	xmin = -3, 
	xmax=3,
	ymin = -10,
	ymax = 10,
	scaled ticks=true,
	clip=true,
	]
	\addplot[color=darkred, mark=none, thick, domain=1:10, samples=100] ({-4/(\x*\x)},{\x});
	\addplot[color=darkred, mark=none, thick, domain=1:10, samples=100] ({-4/(\x*\x)},{-\x});
	\addplot[color=darkred, mark=none, thick] ({x},{0});
	\addlegendentry{\raisebox{5pt}{$\Big\{ \operatorname{det}  
			\Big[ 
			\begin{smallmatrix}
			\mu^2 & -2 i \zeta^{-1} \mu \\
			-2i \mu & \mu^2
			\end{smallmatrix}\Big] = 0
			\Big\}$}};
	\node[right] at (axis cs: -.2,3.4) {$\color{darkred}\{\mu = 2 |\zeta|^{-1}\}$};
	\node[below] at (axis cs: .75,-.1) {$\color{darkred}\{\mu=0\}$};
	\node[below] at (axis cs: -1.5,-3) {$\color{darkred}\{\mu=-2|\zeta|^{-1}\}$};
	\end{axis} 
	\end{tikzpicture}
	\begin{tikzpicture}
	\begin{axis}[xlabel=$\zeta$, grid=both, grid style = {dotted},
	minor tick num=2,
	width = .5\textwidth,
	height = .35\textwidth,
	xmin = -1, 
	xmax=1,
	scaled ticks=true,
	clip=true,
	]
	\addplot[color=darkred, mark=none, thick, domain=-2:2, samples=100] ({x},{2*x});
	\addplot[color=darkred, mark=none, thick, domain=-2:2, samples=100] ({x},{-2*x});
	\addplot[color=darkred, mark=none, thick] ({x},{0});
	\addlegendentry{\raisebox{5pt}{$\Big\{ \operatorname{det}  
			\Big[ 
			\begin{smallmatrix}
			\mu^2 & -2 i \zeta^2 \mu \\
			-2i \mu & \mu^2
			\end{smallmatrix}\Big] = 0
			\Big\}$}};
	\node[right] at (axis cs: -.975,.7) {$\color{darkred}\{\mu = 2 \zeta\}$};
	\node[below] at (axis cs: .75,-.1) {$\color{darkred}\{\mu=0\}$};
	\node[below] at (axis cs: .22,-1) {$\color{darkred}\{\mu=-2\zeta\}$};
	\filldraw[darkred] (axis cs: 0,0) circle (1.5pt);
	\end{axis} 
	\end{tikzpicture}
	\caption{
		The semiclassical characteristic set ${\color{darkred}\operatorname{char}_h^2(L) }= \{\mu=0\text{ or }\mu^2 - 4\varsigma\zeta^\kappa=0\} \subset {}^\hbar T^* (-1,+1)$ of $L$, in the $\kappa=-1$ and $\kappa=2$ cases, the latter with $\varsigma<0$. In the former, the characteristic set hits fiber infinity at $\zeta=0$.
	}
	\label{fig:trapping}
\end{figure}

\subsection{Behavior at $\mathrm{fe}$: sketch}
\label{subsec:quasimodes_zefe_sketch}

We now analyze the behavior of the coefficients $\beta_{k}(\zeta),\gamma_{k}(\zeta)$ as $\zeta \to 0^+$. Since these Taylor coefficients are thought of as functions on $\mathrm{ze}$, the $\zeta \to 0^+$ limit corresponds to the corner $\mathrm{ze}\cap \mathrm{fe}$. Before addressing the situation in full generality, let us suppose that $\phi=0$ and 
\begin{equation} 
E \in C^\infty([0,Z]_z\times [0,\infty)_{h^2};\bbC).
\label{eq:little_e_ass}
\end{equation}  
The obstruction encountered by Olver in \cite[Chp. 12- \S14]{OlverBook} already comes into view, namely the fact that, if $\kappa>1$, the right-hand side of \cref{eq:rec2} will typically be singular as $\zeta \to 0^+$ (except for special $E$), and this cannot be fixed by judiciously choosing the $C_k$'s (unlike in the $\kappa\leq 1$ case). Since $\beta_{0}=1$, $\gamma_{0}$ is already singular, with 
\begin{equation}
	\gamma_{0}(\zeta) = \frac{\varsigma E_0(0)(1+o(1))}{(\kappa-2) \zeta^{\kappa-1}}  
	\label{eq:misc_r31}
\end{equation} 
if $\kappa \neq 2$, and with an extra $\log \zeta$ if $\kappa=2$.

At first glance, the existence of such a singularity may seem paradoxical ---
since the ODE is regular singular at $\zeta=0$ with indicial roots $0,1$, solutions cannot be singular at $\{\zeta = 0, h>0\}$ except for possible logarithmic terms. So, given that they are singular, how can $\beta_{k},\gamma_{k}$ be regarded as Taylor coefficients in $h$?
With the appropriate goemetric perspective, there is no problem; the formal series $\mathsf{U}$ should only be taken seriously as a possible asymptotic expansion near $\mathrm{ze}$, which is disjoint from the closure $\mathrm{be} = \mathrm{cl}_M\{\zeta = 0, h>0\}$
of $\{\zeta = 0, h>0\} = \mathrm{be}^\circ$ in $M$. In short, irregularity at the edge $\mathrm{fe}$ is consistent with regularity at the edge $\mathrm{be}$.

But this is merely a justification for performing \emph{some} blowup of the relevant corner of $[0,Z]_z\times [0,\infty)_{h^2}$. Why is the blowup defining $M$ the correct one for the problem at hand? 
Certainly, it is natural to hope that the blowup is the same one on which the quasimodes  
\begin{equation} 
Q(\zeta/h^{2/(\kappa+2)}),Q'(\zeta/h^{2/(\kappa+2)})
\end{equation} 
are exponential-polyhomogeneous, but it is not obvious that this should be the case. These are quasimodes for $P$, and here we are constructing quasimodes for $L$.
So, this fails to answer the question definitively.

In order to answer it definitively, we need to work in local coordinates. Let $\rho = \smash{h / \zeta^{(\kappa+2)/2}}$
and $x=\zeta$. Then $(x,\rho^2)$ is a valid coordinate chart near $\mathrm{ze}$. (We give $\zeta$ a new name to avoid conflating the partial derivative $\partial/\partial \zeta$ in the original local coordinate system $(\zeta,h^2)$ with the partial derivative $\partial/\partial x$ in the new one.) More precisely, the map 
\begin{equation} 
	(x,\rho^2)\mapsto (\zeta,h^2)=\big(x, \rho^2 x^{\kappa+2} \big)
	\label{eq:cornercoords}
\end{equation} 
defines a diffeomorphism $[0,Z]_x\times [0,\infty)_{\rho^2} \to M\backslash \mathrm{be} \supset \mathrm{ze}$. In terms of these new coordinates, the vector field $\partial/\partial \zeta$ can be written
\begin{equation}
	\frac{\partial}{\partial \zeta} = \frac{\partial}{\partial x}  - \frac{\kappa+2}{2} \frac{\rho}{x} \frac{\partial}{\partial \rho}. 
	\label{eq:misc_025}
\end{equation} 

The map
\begin{equation}
 C^\infty(0,Z)_\zeta[[h]]\ni \mathsf{s}=\sum_{k=0}^\infty h^k s_k(\zeta) \mapsto  \sum_{k=0}^\infty \rho^k x^{k(\kappa+2)/2} s_k(x) \overset{\mathrm{def}}= \{\mathsf{s}\}
\end{equation} 
defines an isomorphism 
\begin{equation} 
\{\cdots\}:C^\infty(0,Z]_\zeta[[h]] \to  C^\infty(0,Z]_x[[\rho]] \subset C^\infty(\mathrm{ze}^\circ)[[\rho]]
\end{equation} 
of $\operatorname{Diff}[0,Z]_\zeta$-modules; the result of applying the right-hand side of \cref{eq:misc_025} to $\rho x^{(\kappa+2)/2}$ gives $0$. (This is just the statement that $\partial h/\partial \zeta=0$, written in the new coordinate system.) 
Thus, 
\begin{equation}
	\frac{\partial \mathsf{s}}{\partial \zeta}  = \sum_{k=0}^\infty \rho^k x^{k(\kappa+2)/2} \frac{\partial s_k(x)}{\partial x} =   \Big\{\sum_{k=0}^\infty h^k \frac{\partial s_k(\zeta)}{\partial \zeta}\Big\}. 
	\label{eq:misc_ny3}
\end{equation}
So, $\{\cdots\}$ commutes with differential operators, replacing derivatives in $\zeta$ with derivatives in $x$. We may therefore drop the ``$\{\cdots\}$'' in \cref{eq:misc_ny3} without risk of confusion, identifying elements of $C^\infty(0,Z)_\zeta[[h]]$ with the corresponding elements of $C^\infty(\mathrm{ze}^\circ)[[\rho]]$.

\begin{figure}[t!]
	\begin{tikzpicture}[decoration={
			markings,
			mark=at position .5 with {\arrow[scale=1.5,>=latex]{>}}}]
		\fill[lightgray!20] (0,0) -- (4,0) -- (4,3) -- (0,3) -- cycle;
		\draw (4,0) -- (0,0) -- (0,3) -- (4,3);
		\draw[dashed] (4,0) -- (4,3);
		\draw[gray, ->] (.1,.1) -- (.7,.1) node[above] {$h^2$};
		\draw[gray, ->] (.1,.1) -- (.1,.7) node[right] {$\zeta$};
		\draw[darkred, postaction={decorate}] (0,3) -- (0,0); 
		\draw[darkred, postaction={decorate}] (1,3) -- (1,0); 
	\end{tikzpicture}
	\qquad
	\begin{tikzpicture}[decoration={
			markings,
			mark=at position .5 with {\arrow[scale=1.5,>=latex]{>}}}]
		\fill[lightgray!20] (1.5,0) -- (4,0) -- (4,3) -- (0,3) -- (0,1.5) arc(90:0:1.5);
		\draw (4,0) -- (1.5,0) arc(0:90:1.5) (0,1.5) -- (0,3) -- (4,3);
		\draw[dashed] (4,0) -- (4,3);
		\draw[darkred, postaction={decorate}] (0,3) -- (0,1.5); 
		\draw[darkred, postaction={decorate}] (1,3) to[out=-90, in=90] (2,0); 
		\node[gray] () at (2.5,2) {$M$};
		\node () at (.8,.8) {fe};
		\node () at (2.75,.3) {be};
	\end{tikzpicture}
	\caption{\textit{Left:} two paths along level sets of $h$, $\{h=\varepsilon\}$ and $\{h=0\}$, in the rectangle $[0,Z]_\zeta\times [0,\infty)_{h^2}$. Following the solutions of the ODE along these paths, different behavior is seen as $\zeta\to 0^+$: smoothness for $h=\varepsilon$ (assuming $\phi=0$, $E$ is smooth), singularity for $h=0$ (for $\kappa\geq 2$). This is despite the fact that $\varepsilon$ can be taken arbitrarily small. \textit{Right:} the resolution of the paradox via a blowup, which separates the paths along which different behavior is seen. One path ends at fe, one ends at be. }
\end{figure}

At a heuristic level, we can answer the question asked above by examining the orders of the singularities of $\beta_{k}(\zeta),\gamma_{k}(\zeta)$ at $\zeta=0$. We continue to assume $\phi = 0$ and \cref{eq:little_e_ass} holds.
Since differentiation in $\zeta$ can worsen the singularity by at most one order, and since integration mollifies the singularity by at least one order, the recurrence relations \cref{eq:rec1}, \cref{eq:rec2} tell us that, when comparing $\beta_{k+1},\gamma_{k+1}$ to $\beta_k,\gamma_{k}$, the order of the singularity has increased by no more than $\kappa+2$ orders. 
But, the former each appear in \cref{eq:Uform} with an extra power of $\smash{h^2 = \smash{\rho^2 x^{\kappa+2}}}$ relative to the previous. So, in the formal series $\mathsf{U}$, the worsening singularity of the coefficients is exactly canceled by the extra factors of $x$ present in the multiplier $h^2$. This is what allows it to be interpreted as an asymptotic expansion at $\mathrm{ze} = \{\rho=0\}$, including the endpoint $\mathrm{ze}\cap\mathrm{fe}$, in powers of $\rho$. 
The fortuitous numerology needed to make this cancellation happen would not work with a different blowup.

One notable exception to the above heuristic is in going from $k=0$ to $k=1$, because 
\begin{equation}
\frac{\partial^2 \beta_{0}}{\partial^2 \zeta} = 0.
\end{equation}
This means that going from $\beta_{0}$ to $\beta_{1}$ increases the order of the singularity by no more than $\kappa$ orders (this becomes $\kappa+1$ if $E$ is allowed to have a pole), which is an improvement relative to the other $k$.
(Depending on the behavior of $E$ at $\zeta=0$, there may be similar improvements for more $k$, or greater improvements.) Consequently, every subleading term in the formal series for $\beta$ in $\rho$ is $O(\zeta^2)$ (or $O(\zeta)$ if $E$ has a pole) relative to the leading term as $\zeta \to 0^+$, and so are subleading in that sense too. 
This does not imply that the expansion is a \emph{joint} expansion, which would require that subsequent terms have further increasing decay at $\zeta=0$; the improvement noted typically stops at $O(\zeta^2)$.  
Nevertheless, even this improvement aids in understanding leading behavior at $\mathrm{fe}$.

\subsection{Behavior at $\mathrm{fe}$: details}
\label{subsec:quasimodes_zefe}

A precise version of the argument in the previous section requires keeping track of logarithmic terms (which we avoided in \cref{eq:misc_r31}, in the main term, by assuming $\kappa\geq 3$), but these complicate matters only slightly. 
Allowing general $\phi\neq 0$ and $E$ does not modify the conclusions, but only because the assumptions placed on these functions in the introduction were precisely those needed for the argument here to go through.

We lay out some notation. If $\calF \subset \bbC\times \bbN$ is an index set, then let $\partial \calF$, $\calF_+$ denote the index sets
\begin{equation}
	\partial\calF = \{ (j-1+n,k) : (j,k)\in \calF \text{ and } j\neq 0, n\in \bbN \} \cup \{ (j-1,k-1) : (j,k)\in \calF \text{ and } k\geq 1\} 
\end{equation}
and
\begin{equation}
	\calF_+ =   \{(j+1,k)\in \bbC\times \bbN: (j,k) \in \calF\} \cup \{(n,k+1) \in \bbN\times \bbN : (-1,k)\in \calF\}.
\end{equation}
Let $\calF_{+\mathrm{c}} = \{(n,0) : n\in \bbN\} \cup \calF_+$, which is also an index set. 
For any $F\in \calA^{\calF,(0,0)}[0,Z]_\zeta$, where $\calF$ denotes the index set at $\zeta=0$, we have $\partial F \in \calA^{\partial \calF,(0,0)} [0,Z]_\zeta$, hence the notation. 
Also, 
\begin{equation}
	\int_\zeta^Z F(\omega) \dd \omega \in  \calA^{\calF_{+\mathrm{c}},(1,0) }[0,Z]_\zeta, 
\end{equation}
and, for some choice of constant of integration $C$, 
\begin{equation} 
C+\int_\zeta^Z F(\omega) \dd \omega \in \calA^{\calF_+,(0,0)}[0,Z]_\zeta.
\end{equation} 
These claims can be verified by expanding $F(\zeta)$ around $\zeta = 0$, differentiating or integrating each term in the expansion by hand, and then locating the error in the appropriate conormal space. 

Also, recall, for each $\lambda \in \bbC$, the notation $\calF+\lambda  = \{(j+\lambda,k):(j,k)\in \calF\} = \lambda + \calF$. 

\begin{proposition}
	Except for $\beta_{0}$, the functions $\beta_{k},\gamma_{k}\in C^\infty(\mathrm{ze}^\circ)$ recursively defined above satisfy $x^{k(\kappa+2)}\beta_{k}\in \calA^{\calE,(0,0)}(\mathrm{ze})$ and $x^{k(\kappa+2)}\gamma_{k}\in \calA^{\calF,(0,0)}(\mathrm{ze})$ for index sets $\calE\subseteq \bbN\times \bbN$ and $\calF\subseteq \bbZ\times \bbN$ defined by 
	\begin{equation} 
	\calE =  (1,0)\cup
	\begin{cases}
	\bigcup_{j=1}^\infty ((\kappa+2)j,j) & (\kappa\notin 2\bbN), \\ 
	(\kappa/2+1,1)\cup \bigcup_{j=1}^\infty ((\kappa+2)j,2j) \cup ((\kappa+2)j+\kappa/2+1,2j+1) & (\kappa \in 2 \bbN),
	\end{cases} 
	\label{eq:calEdef_good}
	\end{equation}
	and $\calF = \calE - \kappa - 1$. 
\end{proposition}
As above, in $\calA^{\calI,(0,0)}(\mathrm{ze})$, the index set at $\mathrm{ze}\cap \mathrm{fe}$ is $\calI$ and the index set at $\mathrm{ze}\cap \mathrm{ie}$ is $(0,0)$. 
\begin{proof}
	Via a straightforward inductive argument using \cref{eq:rec1}, \cref{eq:rec2}, we have, for all $k\in \bbN$, $x^{k(\kappa+2)}\smash{\beta_{k}}\in \calA^{\calE_{k},(0,0)}(\mathrm{ze})$ and $x^{k(\kappa+2)}\smash{\gamma_{k}}\in \calA^{\calF_{k},(0,0)}(\mathrm{ze})$ for index sets $\calE_k,\calF_k$ defined recursively as follows. 
	The index sets $\calE_0,\calF_0$, are defined by $\calE_0 = (0,0)$ and $\calF_0 =(\calE_0 -1 - \kappa/2)_+  - \kappa/2$. That is, 
	\begin{equation}
	\calF_0 = 
	\begin{cases}
	(-\kappa,0) & (\kappa\notin 2\bbN), \\
	(-\kappa,0) \cup (-\kappa/2,1) & (\kappa \in 2\bbN) . 
	\end{cases}
	\label{eq:F0_index_set}
	\end{equation}
	Then, $\calE_1$ is defined by
	\begin{equation}
	\calE_1 = 	((\partial^2\calF_0) \cup (\calF_0-1 )  )_{+\mathrm{c}}+ \kappa+2 = 
	\begin{cases}
	(1,0) & (\kappa=-1), \\
	(1,0) \cup (2,2) & (\kappa=0), \\
	(1,0) \cup (\kappa+2,1) & (\kappa \in 2\bbN+1), \\
	(1,0) \cup (\kappa/2+1,1) \cup (\kappa+2,2) & (\kappa \in 2\bbN+2).
	\end{cases}
	\end{equation}
	(The $+\kappa+2$ on the left-hand side comes from the fact that we want $x^{\kappa+2}\beta_1\in \calA^{\calE_1,(0,0)}$, not $\beta_1\in \calA^{\calE_1,(0,0)}$. It therefore comes ``for free.'') 
	Note that $\calE_1$ starts at $(1,0)$, in accordance with the discussion at the end of the previous subsection. (If $\phi$ were $0$ and $E$ were smooth, then this would be $(2,k)$ for some $k$, because the definition of $\calF_0$ would read $\calF_0=(\calE_0-\kappa/2)_+-\kappa/2$.)
	
	In order to state the definitions of $\calF_1$ and $\calE_k,\calF_k$ for $k\in \bbN^{\geq 2}$, let $\calE_k' = \calE_k - k(\kappa+2)$ and $\calF_k' = \calF_k - k(\kappa+2)$. (These are supposed to be the index sets describing $\beta_k,\gamma_k$ directly, not the weighted versions $x^{k(\kappa+2)}\beta_k,x^{k(\kappa+2)}\gamma_k$.) Then, for $k\in \bbN^{\geq 2}$,  
	\begin{equation} 
	\calE_k = (((\partial^2\calF_{k-1}') \cup (\calF_{k-1}'-1 )  )_{+\mathrm{c}}+ k(\kappa+2))\cup \calE_{k-1}, 
	\label{eq:misc_170}
	\end{equation} 
	and, for all $k\in \bbN^+$,
	\begin{equation}
	\calF_k = (((\partial^2 \calE_k') \cup (\calE_k'- 1) \cup (\calE_0-1 - k(\kappa+2)) \cup (\calF_{k-1}'-3 ) - \kappa/2)_+ -\kappa/2 + k(\kappa+2))\cup \calF_{k-1}.
	\label{eq:misc_171}
	\end{equation}
	Then, induction using \cref{eq:rec1}, \cref{eq:rec2} suffices to prove that $x^{k(\kappa+2)}\smash{\beta_{k}}\in \calA^{\calE_{k},(0,0)}(\mathrm{ze})$ and $x^{k(\kappa+2)}\smash{\gamma_{k}}\in \calA^{\calF_{k},(0,0)}(\mathrm{ze})$ for all $k$.\footnote{The only non-obvious part is that the contributions to \cref{eq:rec1}, \cref{eq:rec2} involving $E_1,E_2,\dots$, $\phi_1,\phi_2,\dots$, and $\phi'_1,\phi'_2,\dots$ are already included, but this is done using (i) that the singularities at the origin of $E_j(x),\phi_j(x),\phi_j'(x)$ are worsening by at most $1/x^{\kappa+2}$ every time $j$ increments and (ii) the inclusions $\calF_{j-1}\subseteq \calF_{j}$ and $\calE_j\subseteq \calE_{j+1}$, which hold for all $j\in \bbN^+$, by construction. (The reason why we have the $\calE_0-1 - k(\kappa+2)$ term in \cref{eq:misc_171} is because $\calE_0\not\subset \calE_k$ for $k\geq 1$. The $-1-k(\kappa+2)$ is coming from the singularity of that order in $E_{k}$.)}

	The rest of the argument involves upper-bounding $\calE_k,\calF_k$.

	If $\kappa\notin 2\bbN$, let, for each $K\in \bbN$, $\bar{\calE}_K = \{(j,k)\in \calE: k\leq K\} \subseteq \calE$ and $\bar{\calF}_K = \{(j,k)\in \calF: k\leq K\} \subseteq \calF$. Otherwise, if $\kappa\in 2\bbN$, let $\bar{\calE}_K = \{(j,k)\in \calE: k\leq 2K\} \subseteq \calE$ and $\bar{\calF}_K = \{(j,k)\in \calF: k\leq 2K+1\} \subseteq \calF$.

	We now check via induction that $\calE_k\subseteq \bar{\calE}_k$ for all $k\in \bbN^+$ and $\calF_k\subseteq \bar{\calF}_k$ for all $k\in \bbN$.  Once demonstrated, this proves the proposition.

	As the base case, we first observe that $\calF_0 = \bar{\calF}_0$, which follows immediately from the definition.

	For the inductive step, let $k\in \bbN^+$, and suppose that $\calE_{j}\subseteq\bar{\calE}_{k-1}$ and $\calF_{j}\subseteq \bar{\calF}_{k-1}$ for all $j\leq k-1$. Then the recursive formula for $\calE_k$ yields 
	\begin{equation}
		\calE_k \subseteq  ( (\bar{\calF}_{k-1} - (k-1) (\kappa+2)-2 )_{+\mathrm{c}} + k(\kappa+2) )\cup \bar{\calE}_{k-1}.
	\end{equation}
	The set $ (\bar{\calF}_{k-1} - (k-1) (\kappa+2)-2 )_{+\mathrm{c}} + k(\kappa+2)$ is, by definition, a union $\calI\cup \calJ\cup \calK$ of three terms, $\calI = (0,0) + k(\kappa+2) = (k(\kappa+2),0)\subseteq \bar{\calE}_0\subseteq \bar{\calE}_k$, 
	\begin{equation}
		\calJ=   (\bar{\calF}_{k-1} - (k-1) (\kappa+2)-2 ) + 1 + k(\kappa+2) = \bar{\calF}_{k-1} + \kappa +1 \subseteq \bar{\calE}_k, 
	\end{equation}
	and 
	\begin{align}
		\begin{split}
		\calK 
		&= \{(k(\kappa+2)+n,j+1) :n\in \bbN, (-1,j)\in \bar{\calF}_{k-1} -(k-1)(\kappa+2)-2 \}  \\ 
		&= \{(k(\kappa+2)+n,j+1) : n\in \bbN,  (k(\kappa+2)-\kappa-1,j) \in \bar{\calF}_{k-1} \}.
		\end{split}
	\end{align}
	By the definition of $\bar{\calF}_{k-1}$, the condition $(k(\kappa+2)-\kappa-1,j) \in \bar{\calF}_{k-1}$ holds if and only if both of
	\begin{itemize}
		\item $j\leq k-1$ if $\kappa\notin 2\bbN$ and $j\leq 2k-1$ if $\kappa \in 2\bbN$,
		\item $(k(\kappa+2),j) \in \calE$
	\end{itemize}
	hold. The second condition is actually redundant, because $(k(\kappa+2),k-1) \in \calE$ if $\kappa \notin 2\bbN$ and $(k(\kappa+2),2k-1) \in \calE$ otherwise.
	So,  
	\begin{equation}
	\calK = 
	\begin{cases}
	(k(\kappa+2),k) & (\kappa\notin 2\bbN), \\ 
	(k(\kappa+2),2k) & (\kappa\in 2\bbN).
	\end{cases}
	\end{equation} 
	So, $\calK\subseteq \bar{\calE}_k$. 
	So, altogether, $\calE_k \subseteq \bar{\calE}_k$. 
	
	For the remainder of the inductive step, suppose that $k\in \bbN^+$, and suppose that $\calE_k \subseteq \bar{\calE}_k$ and $\calF_{k-1}\subseteq \bar{\calF}_{k-1}$. Then, the recursive formula for $\calF_k$ yields 
	\begin{equation}
		\calF_k \subseteq (((\bar{\calE}_k-k(\kappa+2)-2)\cup ( \bar{\calF}_{k-1} - 3 - (k-1)(\kappa+2)) -\kappa/2)_+-\kappa/2+k(\kappa+2))\cup \bar{\calF}_{k-1}  . 
	\end{equation}
	The right-hand side is the union of $\bar{\calF}_{k-1}$, $(\bar{\calE}_{k} - k(\kappa+2) -2 - \kappa/2)_+-\kappa/2+k(\kappa+2)$,
	and $(\bar{\calF}_{k-1} - 3 - (k-1)(\kappa+2) -\kappa/2)_+-\kappa/2+k(\kappa+2)$. 
	Because $\bar{\calF}_{k-1} \subseteq \bar{\calE}_k - \kappa -1$, the last of these satisfies
	\begin{equation}
		(\bar{\calF}_{k-1} - 3 - (k-1)(\kappa+2) -\kappa/2)_+-\kappa/2+k(\kappa+2) \subseteq (\bar{\calE}_{k}  - k(\kappa+2) -2 -\kappa/2)_+-\kappa/2+k(\kappa+2),
	\end{equation}
	which is the other set we listed. So, in order to show that $\calF_k \subseteq \bar{\calF}_k$, it suffices to show that $(\bar{\calE}_{k}  - k(\kappa+2)-2-\kappa/2 )_+-\kappa/2+k(\kappa+2)\subseteq \bar{\calF}_k$.
	This set is a union $\calI\cup \calJ$ of two terms, 
	\begin{equation}
	\calI=(\bar{\calE}_{k}  - k(\kappa+2)-2-\kappa/2 +1)-\kappa/2+k(\kappa+2) = \bar{\calE}_k -\kappa-1 \subset \bar{\calF}_k
	\end{equation}
	and 
	\begin{align}
	\begin{split}
	\calJ &= \{(-\kappa/2 +k(\kappa+2) +n,j+1) : n\in \bbN, (-1,j) \in \bar{\calE}_{k} - 2 - k(\kappa+2)-\kappa/2\} \\
	&= \{(-\kappa/2 +k(\kappa+2) +n,j+1) : n\in \bbN, (1+\kappa/2+k(\kappa+2),j) \in \bar{\calE}_{k} \}.
	\end{split}
	\end{align}
	If $\kappa\notin 2\bbN$, then $\calJ = \varnothing$, so $\calJ\subseteq \bar{\calF}_k$ trivially. Otherwise, the condition $(1+\kappa/2+k(\kappa+2),j) \in \bar{\calE}_{k}$ is satisfied if and only if both of 
	\begin{itemize}
		\item $j \leq 2k$,
		\item  
		\begin{equation}
			1+\kappa/2+k(\kappa+2) \geq 
			\begin{cases}
				1 & (j=0), \\
				\kappa/2+1 & (j=1), \\
				(\kappa+2)(j/2) & (j \in 2\bbN^+), \\ 
				(\kappa+2)2^{-1}(j-1)+\kappa/2+1 & (\text{otherwise}),
			\end{cases}
		\end{equation}
	\end{itemize}
	hold. The second condition is actually redundant, as, if we plug in $j=2k$, then $1+\kappa/2+k(\kappa+2)\geq k(\kappa+2)$ holds. So, in the $\kappa\in 2\bbN$ case, 
	\begin{equation}
		\calJ = (-\kappa/2+k(\kappa+2),2k+1) \subseteq \bar{\calF}_k.
	\end{equation}
\end{proof}

\begin{figure}[t]
\begin{tikzpicture}
	\begin{axis}[xlabel=$j$, ylabel=$k$, grid=both, grid style = {dotted},
		minor tick num=1,
		width = .3\textwidth,
		height = .3\textwidth,
		xmin = -.5, 
		xmax= 6,
		ymin = -.5,
		ymax = 5,
		scaled ticks=true,
		clip=true,
		title = {$\calE$\text{ for }$\kappa\in\{-1,1\}$},
 		]
		\filldraw[gray] (axis cs: 1,0) circle (3pt);
		\filldraw[darkred, pattern color = darkred, pattern=crosshatch] (axis cs: 1,0) circle (3pt);
		\filldraw[gray] (axis cs: 2,0) circle (3pt);
		\filldraw[darkred, pattern color = darkred, pattern=crosshatch] (axis cs: 2,0) circle (3pt);
		\filldraw[gray] (axis cs: 1,1) circle (3pt);
		\filldraw[gray] (axis cs: 3,0) circle (3pt);
		\filldraw[darkred, pattern color = darkred, pattern=crosshatch] (axis cs: 3,0) circle (3pt);
		\filldraw[gray] (axis cs: 2,1) circle (3pt);
		\filldraw[gray] (axis cs: 2,2) circle (3pt);
		\filldraw[gray] (axis cs: 4,0) circle (3pt);
		\filldraw[gray] (axis cs: 3,1) circle (3pt);
		\filldraw[darkred, pattern color = darkred, pattern=crosshatch] (axis cs: 4,0) circle (3pt);
		\filldraw[darkred, pattern color = darkred, pattern=crosshatch] (axis cs: 4,1) circle (3pt);
		\filldraw[gray] (axis cs: 3,2) circle (3pt);
		\filldraw[gray] (axis cs: 3,3) circle (3pt);
		\filldraw[gray] (axis cs: 5,0) circle (3pt);
		\filldraw[gray] (axis cs: 4,1) circle (3pt);
		\filldraw[darkred, pattern color = darkred, pattern=crosshatch] (axis cs: 5,0) circle (3pt);
		\filldraw[darkred, pattern color = darkred, pattern=crosshatch] (axis cs: 4,1) circle (3pt);
		\filldraw[gray] (axis cs: 4,2) circle (3pt);
		\filldraw[gray] (axis cs: 4,3) circle (3pt);
		\filldraw[gray] (axis cs: 4,4) circle (3pt);
		\filldraw[gray] (axis cs: 6,0) circle (3pt);
		\filldraw[gray] (axis cs: 5,1) circle (3pt);
		\filldraw[darkred, pattern color = darkred, pattern=crosshatch] (axis cs: 6,0) circle (3pt);
		\filldraw[darkred, pattern color = darkred, pattern=crosshatch] (axis cs: 5,1) circle (3pt);
		\filldraw[gray] (axis cs: 5,2) circle (3pt);
		\filldraw[gray] (axis cs: 5,3) circle (3pt);
		\filldraw[gray] (axis cs: 5,4) circle (3pt);
		\filldraw[gray] (axis cs: 5,5) circle (3pt);
		\filldraw[gray] (axis cs: 7,0) circle (3pt);
		\filldraw[gray] (axis cs: 6,1) circle (3pt);
		\filldraw[gray] (axis cs: 6,2) circle (3pt);
		\filldraw[darkred, pattern color = darkred, pattern=crosshatch] (axis cs: 7,0) circle (3pt);
		\filldraw[darkred, pattern color = darkred, pattern=crosshatch] (axis cs: 6,1) circle (3pt);
		\filldraw[darkred, pattern color = darkred, pattern=crosshatch] (axis cs: 6,2) circle (3pt);
		\filldraw[darkred, pattern color = darkred, pattern=crosshatch] (axis cs: 3,1) circle (3pt);
		\filldraw[gray] (axis cs: 6,3) circle (3pt);
		\filldraw[gray] (axis cs: 6,4) circle (3pt);
		\filldraw[gray] (axis cs: 6,5) circle (3pt);
	\end{axis} 
\end{tikzpicture}
\begin{tikzpicture}
	\begin{axis}[xlabel=$j$, grid=both, grid style = {dotted},
		minor tick num=1,
		width = .6\textwidth,
		height = .3\textwidth,
		xmin = -.5, 
		xmax= 9,
		ymin = -.5,
		ymax = 5,
		scaled ticks=true,
		clip=true,
		title = {$\calE$\text{ for }$\kappa\in\{0,2\}$},
		]
		\filldraw[gray] (axis cs: 1,0) circle (3pt);
		\filldraw[gray] (axis cs: 1,1) circle (3pt);
		\filldraw[darkred, pattern color = darkred, pattern=crosshatch] (axis cs: 1,0) circle (3pt);
		\filldraw[gray] (axis cs: 2,0) circle (3pt);
		\filldraw[darkred, pattern color = darkred, pattern=crosshatch] (axis cs: 2,0) circle (3pt);
		\filldraw[gray] (axis cs: 2,1) circle (3pt);
		\filldraw[darkred, pattern color = darkred, pattern=crosshatch] (axis cs: 2,1) circle (3pt);
		\filldraw[gray] (axis cs: 3,0) circle (3pt);
		\filldraw[darkred, pattern color = darkred, pattern=crosshatch] (axis cs: 3,0) circle (3pt);
		\filldraw[gray] (axis cs: 3,1) circle (3pt);
		\filldraw[darkred, pattern color = darkred, pattern=crosshatch] (axis cs: 3,1) circle (3pt);
		\filldraw[gray] (axis cs: 2,2) circle (3pt);
		\filldraw[gray] (axis cs: 3,2) circle (3pt);
		\filldraw[gray] (axis cs: 4,0) circle (3pt);
		\filldraw[gray] (axis cs: 3,1) circle (3pt);
		\filldraw[darkred, pattern color = darkred, pattern=crosshatch] (axis cs: 4,0) circle (3pt);
		\filldraw[darkred, pattern color = darkred, pattern=crosshatch] (axis cs: 4,1) circle (3pt);
		\filldraw[gray] (axis cs: 3,2) circle (3pt);
		\filldraw[gray] (axis cs: 3,3) circle (3pt);
		\filldraw[gray] (axis cs: 5,0) circle (3pt);
		\filldraw[gray] (axis cs: 4,1) circle (3pt);
		\filldraw[darkred, pattern color = darkred, pattern=crosshatch] (axis cs: 5,0) circle (3pt);
		\filldraw[darkred, pattern color = darkred, pattern=crosshatch] (axis cs: 5,1) circle (3pt);
		\filldraw[gray] (axis cs: 4,2) circle (3pt);
		\filldraw[darkred, pattern color = darkred, pattern=crosshatch] (axis cs: 4,2) circle (3pt);
		\filldraw[gray] (axis cs: 4,3) circle (3pt);
		\filldraw[gray] (axis cs: 4,4) circle (3pt);
		\filldraw[gray] (axis cs: 6,0) circle (3pt);
		\filldraw[gray] (axis cs: 6,1) circle (3pt);
		\filldraw[darkred, pattern color = darkred, pattern=crosshatch] (axis cs: 6,0) circle (3pt);
		\filldraw[darkred, pattern color = darkred, pattern=crosshatch] (axis cs: 6,1) circle (3pt);
		\filldraw[gray] (axis cs: 5,2) circle (3pt);
		\filldraw[gray] (axis cs: 5,3) circle (3pt);
		\filldraw[darkred, pattern color = darkred, pattern=crosshatch] (axis cs: 5,2) circle (3pt);
		\filldraw[gray] (axis cs: 5,4) circle (3pt);
		\filldraw[gray] (axis cs: 5,5) circle (3pt);
		\filldraw[gray] (axis cs: 7,0) circle (3pt);
		\filldraw[gray] (axis cs: 7,1) circle (3pt);
		\filldraw[gray] (axis cs: 6,2) circle (3pt);
		\filldraw[darkred, pattern color = darkred, pattern=crosshatch] (axis cs: 7,0) circle (3pt);
		\filldraw[darkred, pattern color = darkred, pattern=crosshatch] (axis cs: 7,1) circle (3pt);
		\filldraw[darkred, pattern color = darkred, pattern=crosshatch] (axis cs: 6,2) circle (3pt);
		\filldraw[gray] (axis cs: 6,3) circle (3pt);
		\filldraw[darkred, pattern color = darkred, pattern=crosshatch] (axis cs: 7,3) circle (3pt);
		\filldraw[gray] (axis cs: 6,4) circle (3pt);
		\filldraw[gray] (axis cs: 6,5) circle (3pt);
		\filldraw[gray] (axis cs: 8,0) circle (3pt);
		\filldraw[gray] (axis cs: 8,1) circle (3pt);
		\filldraw[gray] (axis cs: 7,2) circle (3pt);
		\filldraw[gray] (axis cs: 7,3) circle (3pt);
		\filldraw[gray] (axis cs: 7,4) circle (3pt);
		\filldraw[gray] (axis cs: 7,5) circle (3pt);
		\filldraw[gray] (axis cs: 9,0) circle (3pt);
		\filldraw[gray] (axis cs: 9,1) circle (3pt);
		\filldraw[gray] (axis cs: 8,2) circle (3pt);
		\filldraw[gray] (axis cs: 8,3) circle (3pt);
		\filldraw[gray] (axis cs: 8,4) circle (3pt);
		\filldraw[gray] (axis cs: 8,5) circle (3pt);
		\filldraw[gray] (axis cs: 9,2) circle (3pt);
		\filldraw[gray] (axis cs: 9,3) circle (3pt);
		\filldraw[gray] (axis cs: 9,4) circle (3pt);
		\filldraw[gray] (axis cs: 9,5) circle (3pt);
		\filldraw[darkred, pattern color = darkred, pattern=crosshatch] (axis cs: 8,0) circle (3pt);
		\filldraw[darkred, pattern color = darkred, pattern=crosshatch] (axis cs: 8,1) circle (3pt);
		\filldraw[darkred, pattern color = darkred, pattern=crosshatch] (axis cs: 7,2) circle (3pt);
		\filldraw[darkred, pattern color = darkred, pattern=crosshatch] (axis cs: 7,3) circle (3pt);
		\filldraw[darkred, pattern color = darkred, pattern=crosshatch] (axis cs: 9,0) circle (3pt);
		\filldraw[darkred, pattern color = darkred, pattern=crosshatch] (axis cs: 9,1) circle (3pt);
		\filldraw[darkred, pattern color = darkred, pattern=crosshatch] (axis cs: 8,2) circle (3pt);
		\filldraw[darkred, pattern color = darkred, pattern=crosshatch] (axis cs: 8,3) circle (3pt);
		\filldraw[darkred, pattern color = darkred, pattern=crosshatch] (axis cs: 8,4) circle (3pt);
		\filldraw[darkred, pattern color = darkred, pattern=crosshatch] (axis cs: 3,1) circle (3pt);
		\filldraw[darkred, pattern color = darkred, pattern=crosshatch] (axis cs: 4,1) circle (3pt);
		\filldraw[gray] (axis cs: 5,1) circle (3pt);
		\filldraw[darkred, pattern color = darkred, pattern=crosshatch] (axis cs: 5,1) circle (3pt);
		\filldraw[darkred, pattern color = darkred, pattern=crosshatch] (axis cs: 6,3) circle (3pt);
		\filldraw[darkred, pattern color = darkred, pattern=crosshatch] (axis cs: 9,2) circle (3pt);
		\filldraw[darkred, pattern color = darkred, pattern=crosshatch] (axis cs: 9,3) circle (3pt);
		\filldraw[darkred, pattern color = darkred, pattern=crosshatch] (axis cs: 9,4) circle (3pt);
	\end{axis} 
\end{tikzpicture}
\caption{The first few points $(j,k)$ in $\calE$, for $\kappa=-1,0,1,2$. The cases $\kappa=-1,0$ are in dark gray, and $\kappa=1,2$ in hatched {\color{darkred}red}.}
\end{figure}

Via the standard asymptotic summation construction, there exist functions $\beta,\gamma$ such that, for any $K\in \bbN$, 
\begin{equation}
\beta-1 - \sum_{k=1}^{K-1} h^{2k} \beta_{k} \in \varrho^{K}_{\mathrm{ze}}\calA^{\calE}(M), \qquad 
\gamma - \sum_{k=0}^{K-1} h^{2k+1} \gamma_{k} \in \varrho^{K+1/2}_{\mathrm{ze}} \varrho^{1+\kappa/2}_{\mathrm{fe}} \calA^{\calF}(M).
\end{equation}
We may choose $\beta,\gamma$ such that $\beta-1,\gamma$ vanish identically near $\mathrm{be}$.

Let $U=(\beta,\gamma)$. This might not solve $LU=0$, but:
\begin{proposition}
	$LU\in \varrho_{\mathrm{be}}^{-1} \varrho_{\mathrm{ze}}^\infty \varrho_{\mathrm{fe}}^\kappa \calA^{\calE}(M) \oplus  \varrho_{\mathrm{be}}^{\infty} \varrho_{\mathrm{ze}}^\infty \varrho_{\mathrm{fe}}^{\kappa/2} \calA^{\calE}(M)$, with the second component supported away from $\mathrm{be}$.
	\label{prop:LU}
\end{proposition}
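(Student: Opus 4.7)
The argument naturally splits according to two regions of $M$: a neighborhood of $\mathrm{be}$ and its complement.

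\emph{Near $\mathrm{be}$.} Since $\beta-1$ and $\gamma$ have been chosen to vanish identically on some neighborhood $\calN\subset M$ of $\mathrm{be}$, we have $U\equiv(1,0)^\intercal$ on $\calN$. Direct substitution in the formula \cref{eq:misc_021} for $L$ gives $LU=(h^2E,0)^\intercal$, since all differential and off-diagonal matrix actions annihilate the constant vector $(1,0)^\intercal$. Using $h^2=\varrho_{\mathrm{ze}}\varrho_{\mathrm{fe}}^{\kappa+2}$, the hypothesis $E\in \varrho_{\mathrm{be}}^{-1}\varrho_{\mathrm{fe}}^{-1}C^\infty(M)$, and the fact that $\mathrm{ze}\cap\mathrm{be}=\varnothing$ in $M$ (so $\varrho_{\mathrm{ze}}$ is bounded away from $0$ on $\calN$), the first component lies in $\varrho_{\mathrm{be}}^{-1}\varrho_{\mathrm{ze}}^\infty\varrho_{\mathrm{fe}}^\kappa\calA^\calE(M)$ on $\calN$, while the second vanishes and is consistent with the $\varrho_{\mathrm{be}}^\infty$ suppression claim.

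\emph{Away from $\mathrm{be}$.} Here the content is $\varrho_{\mathrm{ze}}^\infty$ vanishing of $LU$ with the specified $\mathrm{fe}$-weights. For each $K\in\bbN$ set
\begin{equation*}
U_K = \Big(1+\sum_{k=1}^{K-1}h^{2k}\beta_k,\ \sum_{k=0}^{K-1}h^{2k+1}\gamma_k\Big),
\end{equation*}
so that, by the asymptotic summation construction of $\beta,\gamma$, the tail satisfies $U-U_K\in \varrho_{\mathrm{ze}}^{K}\calA^\calE(M)\oplus \varrho_{\mathrm{ze}}^{K+1/2}\varrho_{\mathrm{fe}}^{1+\kappa/2}\calA^\calF(M)$. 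Decompose $LU=LU_K+L(U-U_K)$. The recursion relations \cref{eq:rec1}--\cref{eq:rec2} were chosen precisely so that $\mathsf{L}\mathsf{U}=0$ in $\operatorname{Diff}^2(\mathrm{ze}\backslash\mathrm{fe};\bbC^2)[[h]]$; matching the coefficients of $h^{2j}$ in the first component and $h^{2j+1}$ in the second, this formal identity forces the $h$-coefficients of $\mathsf{L}\mathsf{U}_K$ to vanish through order $h^{2K+1}$ (first component) and $h^{2K-1}$ (second), with the residual terms carrying an overall factor $h^{2K+2}=\varrho_{\mathrm{ze}}^{K+1}\varrho_{\mathrm{fe}}^{(K+1)(\kappa+2)}$ (respectively $h^{2K+1}$) times coefficients of $L$ evaluated on finite truncations. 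After the $\mathrm{fe}$-weight bookkeeping these residuals land in the desired $\varrho_{\mathrm{ze}}^{K+1}\varrho_{\mathrm{fe}}^\kappa\calA^\calE(M)\oplus \varrho_{\mathrm{ze}}^{K+1/2}\varrho_{\mathrm{fe}}^{\kappa/2}\calA^\calE(M)$ space.

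\emph{Tail estimate.} For $L(U-U_K)$, work in the $(x,\rho)$-chart \cref{eq:cornercoords}. Each monomial $h^i\zeta^j\partial_\zeta^\ell$ occurring in $L$ expresses in terms of the b-vector fields $x\partial_x,\rho\partial_\rho$ (which preserve the $\calA^\bullet$-spaces) with a smooth polyhomogeneous coefficient, the total loss of $\varrho_{\mathrm{ze}}$- and $\varrho_{\mathrm{fe}}$-powers being bounded by a fixed constant $c$ independent of $K$. Hence $L(U-U_K)$ lies in the same space as $LU_K$ with $K$ replaced by $K-c$; intersecting over $K$ yields the $\varrho_{\mathrm{ze}}^\infty$ vanishing. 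The polyhomogeneity at $\mathrm{fe}$ with index set $\calE$ is inherited because $\calE$ is by its definition \cref{eq:calEdef} closed under the shifts induced by the operations appearing in $L$.

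\emph{Expected main obstacle.} The delicate bookkeeping is matching the half-integer $\varrho_{\mathrm{ze}}^{1/2}$ shift inherent to $h=\varrho_{\mathrm{ze}}^{1/2}\varrho_{\mathrm{fe}}^{(\kappa+2)/2}$ against the half-integer $\varrho_{\mathrm{ze}}^{K+1/2}$ weight carried by $\gamma-\gamma_{K-1}$, so that all combined $\varrho_{\mathrm{ze}}$-weights in $LU$ organize into an $\calE$-polyhomogeneous expansion at $\mathrm{fe}$. The distinction between the $\varrho_{\mathrm{fe}}^\kappa$ weight in the first component and the $\varrho_{\mathrm{fe}}^{\kappa/2}$ weight in the second traces back to which entries of the matrix $L$ contribute the leading order — the $\zeta^\kappa h\partial_\zeta$ and $\zeta^{\kappa-1}h$ entries for the first, and the $h\partial_\zeta$ entry acting on $\beta$ for the second — mirroring the same $Q$ versus $h^{\kappa/(\kappa+2)}Q'$ dichotomy from the original ansatz \cref{eq:uform}.
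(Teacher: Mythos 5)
Your proof is correct and rests on the same two ingredients as the paper's: the weighted b-structure of $L$ in the $(x,\rho)$-chart \cref{eq:cornercoords}, coming from $h\partial_\zeta \in \rho x^{\kappa/2}\operatorname{Diff}^1_{\mathrm{b}}$, and the formal identity $\mathsf{L}\mathsf{U}=0$. The organization is different. The paper applies the b-structure directly to $U$ itself, not to a tail, to conclude in one step that
\begin{equation*}
LU\in \varrho_{\mathrm{ze}}\varrho_{\mathrm{fe}}^{\kappa}\calA^{\calE}(M\backslash\mathrm{be})\oplus \varrho_{\mathrm{ze}}^{3/2}\varrho_{\mathrm{fe}}^{\kappa/2}\calA^{\calE}(M\backslash\mathrm{be}),
\end{equation*}
establishing polyhomogeneity of $LU$ with finite order at $\mathrm{ze}$, and then observes that the Taylor coefficients of this expansion at $\mathrm{ze}$ coincide with those of $\mathsf{L}\mathsf{U}$, which vanish by the recursion; a polyhomogeneous function with vanishing asymptotic expansion at a face is automatically $O(\varrho^\infty)$ there, which closes the argument. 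Your route of decomposing $U=U_K+(U-U_K)$ and intersecting over $K$ is logically equivalent but entangles the two ingredients that the paper keeps separate, and it requires you to verify $\mathrm{fe}$-polyhomogeneity for $LU_K$ as well as for the tail, whereas the paper gets the $\mathrm{fe}$-index set once and for all before touching the $\mathrm{ze}$-expansion. One small imprecision in your tail estimate: the entries of $L$ in the $(x,\rho)$ coordinates carry a strictly positive $\rho$-weight (they lie in $\rho x^{\kappa/2}\operatorname{Diff}^1_{\mathrm{b}}$ or $\rho^2 x^{\kappa}\operatorname{Diff}^2_{\mathrm{b}}$), so applying $L$ never loses $\varrho_{\mathrm{ze}}$-orders; only the $x$-powers, hence $\varrho_{\mathrm{fe}}$-orders, can drop, and only when $\kappa<0$. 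Your phrase about a ``total loss of $\varrho_{\mathrm{ze}}$- and $\varrho_{\mathrm{fe}}$-powers bounded by a fixed constant $c$'' is therefore pessimistic in the $\varrho_{\mathrm{ze}}$ direction, though this does not affect the validity of the intersection over $K$.
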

\begin{proof}
	 Since $\beta-1,\gamma$ are supported away from $\mathrm{be}$, 
	\begin{equation}
	LU= 
	L \begin{bmatrix}
	1 \\ 0
	\end{bmatrix}
	= h^2 E \begin{bmatrix}
	1 \\ 0
	\end{bmatrix}
	\end{equation}
	near $\mathrm{be}$. The right-hand side is in $\varrho_{\mathrm{be}}^{-1}\varrho_{\mathrm{fe}}^{\kappa+1} C^\infty(M)\oplus \{0\}\subset \varrho_{\mathrm{be}}^{-1} \varrho_{\mathrm{fe}}^\kappa \calA^{\calE}(M)\oplus \{0\}$ near $\mathrm{be}$. 
	
	It remains to understand the situation away from $\mathrm{be}$, for which we can work in the local coordinate system $(x,\rho^2)$ discussed in \cref{eq:cornercoords}. 
	
	Consider the components of $L$ as weighted elements of $\operatorname{Diff}_{\mathrm{b}}=\operatorname{Diff}_{\mathrm{b}}([0,Z]_x\times [0,\infty)_{\rho^2})$, i.e.\ the unital algebra of differential operators generated over $C^\infty([0,Z]_x\times [0,\infty)_{\rho^2};\bbC)$ by the vector fields $\rho^2\partial_{\rho^2} \propto \rho \partial_\rho$ and $x \partial_x$.
	Let $\smash{\operatorname{Diff}^k_{\mathrm{b}}}$ denote the subset of $k$th order elements of $\operatorname{Diff}_{\mathrm{b}}$. 
	\Cref{eq:misc_025} shows that 
	\begin{equation}
		h \frac{\partial}{\partial \zeta} \in \rho x^{\kappa/2} \operatorname{Diff}^1_{\mathrm{b}}. 
	\end{equation}
	Consequently, 
	\begin{equation}
		L \in 
		\begin{pmatrix}
			\rho^2 x^{\kappa}  \operatorname{Diff}^2_{\mathrm{b}} & \rho x^{3\kappa/2} \operatorname{Diff}^1_{\mathrm{b}} \\
			\rho x^{\kappa/2} \operatorname{Diff}^1_{\mathrm{b}}  & \rho^2  x^{\kappa}  \operatorname{Diff}^2_{\mathrm{b}} 
		\end{pmatrix},
	\end{equation} 
	meaning that the entries $L_{00}$, $L_{01}$, $L_{10}$, $L_{11}$ of $L$ are in the corresponding sets on the right-hand side.

	It follows that, since $U\in (1,0)+ (\varrho_{\mathrm{ze}} \calA^{\calE}(M))\oplus \varrho_{\mathrm{ze}}^{1/2} \varrho_{\mathrm{fe}}^{(\kappa+2)/2} \calA^{\calF}(M)$, $LU$ is polyhomogeneous on $M$ away from $\mathrm{be}$, and more specifically 
	\begin{equation} 
		LU \in \varrho_{\mathrm{ze}} \varrho_{\mathrm{fe}}^\kappa \calA^{\calE}(M\backslash \mathrm{be}) \oplus \varrho_{\mathrm{ze}}^{3/2} \varrho_{\mathrm{fe}}^{\kappa/2} \calA^{\calE}(M \backslash \mathrm{be}).
	\end{equation}  
	Thus, in order to conclude the proposition, it suffices to show that the terms in the Taylor expansion of $LU$ at $\mathrm{ze}$ vanish identically. The coefficients of this asymptotic expansion are polyhomogeneous functions on $\mathrm{ze}$ and can therefore be identified with their restrictions to $\mathrm{ze}^\circ$. 
	
	Consequently, it suffices to check that, for each $\zeta>0$, the Taylor expansion of $U(\zeta,h)$ at $h=0$ vanishes. By construction, this expansion is precisely $\mathsf{L}\mathsf{U}=0$. 
\end{proof}

Finally, we can prove the main result of this section:

\begin{proof}[Proof of  \Cref{prop:ze_upshot}]
	Consider the function $u$ defined by \cref{eq:uform}, $u=\beta Q + h^{\kappa/(\kappa+2)} \gamma Q'$,
	with $\beta,\gamma$ as above. 
	By \Cref{prop:LU}, this satisfies 
	\begin{equation}
	Pu = \begin{bmatrix}
	Q \\ h^{\kappa/(\kappa+2)} Q'
	\end{bmatrix}^\intercal L U \in \varrho_{\mathrm{be}}^{-1} \varrho_{\mathrm{ze}}^\infty \varrho_{\mathrm{fe}}^{\kappa/2} \begin{bmatrix}
	Q \\ h^{\kappa/(\kappa+2)} Q'
	\end{bmatrix}^\intercal
	\begin{bmatrix}\varrho_{\mathrm{fe}}^{\kappa/2} \calA^{\calE}(M)\;\;\;\;\\  \calA^{\calE}(M)
	\end{bmatrix}. 
	\end{equation}
	Writing out the inner product,
	\begin{equation}
	\begin{bmatrix}
	Q \\ h^{\kappa/(\kappa+2)} Q'
	\end{bmatrix}^\intercal
	\begin{bmatrix}\varrho_{\mathrm{fe}}^{\kappa/2} \calA^{\calE}(M)\;\;\;\;\\  \calA^{\calE}(M)
	\end{bmatrix} = Q \varrho_{\mathrm{fe}}^{\kappa/2} \calA^{\calE}(M) + h^{\kappa/(\kappa+2)} Q' \calA^{\calE}(M),
	\end{equation}
	and $h^{\kappa/(\kappa+2)} \calA^{\calE}(M) = \varrho_{\mathrm{ze}}^{\kappa/(2\kappa+4)}\varrho_{\mathrm{fe}}^{\kappa/2} \calA^{\calE}(M)$. 
	So $Pu = f Q +  g Q'$ for some $f,g \in \varrho_{\mathrm{be}}^{-1} \varrho_{\mathrm{ze}}^\infty \varrho_{\mathrm{fe}}^\kappa \calA^{\calE}(M)$. 
	
	We just need to verify that $\beta,\gamma$ can be written in terms of $\beta_0,\gamma_0$ of the desired form.
	Indeed, the function $\beta_0$ defined by $\beta_0 = \varrho_{\mathrm{ze}}^{-1}\varrho_{\mathrm{fe}}^{-1}( \beta-1)$ is in $\calA^{\calE-1}(M)$ by construction, and a short calculation gives 
	\begin{equation} 
	h^{\kappa/(\kappa+2)} \gamma \in h^{(2\kappa+2)/(\kappa+2) } \calA^{\calF}(M) = \varrho_{\mathrm{ze}}^{(\kappa+1)/(\kappa+2)} \varrho_{\mathrm{fe}} \calA^{\calE-1}(M).
	\end{equation}  
	
	Since \Cref{prop:LU} says that the second component of $LU$ is supported away from $\mathrm{be}$, the same holds for $g$. Also, since $\beta-1$ and $\gamma$ are supported away from $\mathrm{be}$, the same applies to $\beta_0,\gamma_0$. 
\end{proof}

\section{Analysis at the high corner of $\mathrm{fe}$}
\label{sec:fe}

We now turn to solving, modulo errors which are small relative to $Q$, the forced ODE $Pu=f Q + g Q'$ near the high corner $\mathrm{ze} \cap\mathrm{fe}$. 
Our assumption on $f,g$, coming from the previous section, is that 
\begin{equation} 
f,g \in \varrho_{\mathrm{ze}}^\infty \varrho_{\mathrm{fe}}^\kappa \calA^{\calE}(M) = 
\bigcap_{k=0}^\infty \varrho_{\mathrm{ze}}^k \varrho_{\mathrm{fe}}^\kappa \calA^{\calE}(M)  .
\end{equation} 
(Behavior near $\mathrm{be}$ is irrelevant.)
The key idea is that $P$ is well approximated by $N(P)$ near $\mathrm{fe}$, so an approximate solution can be constructed by asymptotically summing a formal polyhomogeneous series in the bdf of $\mathrm{fe}$. This yields $O(h^\infty)$ errors relative to $Q$. In this section, we will only do this near $\mathrm{ze}$.

It will be most convenient to use the ``coordinates'' $\lambda = \zeta/h^{2/(\kappa+2)}$ and $\varrho = \smash{h^{2/(\kappa+2)}}$. 
For any $h_0>0,\lambda_0>0$, let 
\begin{align} 
V_{h_0,\lambda_0} &= \{(\zeta,h) \in (0,Z)\times (0,h_0) : \zeta >\lambda_0 h^{2/(\kappa+2)}\}, \\ 
U_{h_0,\lambda_0} &= \{(\zeta,h) \in (0,Z)\times (0,h_0) : \zeta < \lambda_0 h^{2/(\kappa+2)}\} .
\end{align} 
These are open subsets of $[0,Z]_\zeta\times [0,\infty)_h$. The subsets $\smash{\bar{V}_{h_0,\lambda_0}}=(\mathrm{cl}_M V_{h_0,\lambda_0})^\circ\subset M$ and $\smash{\bar{U}_{h_0,\lambda_0}}=(\mathrm{cl}_M U_{h_0,\lambda_0})^\circ \subset M$ are relatively open subsets of $M$ containing some points of the boundary. (Here, $\circ$ denotes the topological interior as taken in $M$.)

\begin{remark}
	The reader should note that $(\lambda,\varrho)$ is not a valid coordinate chart near the corner $\mathrm{ze}\cap\mathrm{fe}$. Rather, $(\zeta,\lambda^{-(\kappa+2)})$ are; see \Cref{fig}, \Cref{fig:low}. However, because the inhomogeneities we work with in this section are all Schwartz at ze, this will not cause a problem. 
\end{remark}
The set $\bar{U}_{h_0,\lambda_0}$ is a neighborhood of the ``low corner'' $\mathrm{fe}\cap \mathrm{be}$, and $\bar{V}_{h_0,\lambda_0}$ is a neighborhood of the ``high corner'' $\mathrm{ze}\cap \mathrm{fe}$. If $\lambda_1>\lambda_0$, then $V_{h_0,\lambda_0} \cup U_{h_0,\lambda_1}\supset \mathrm{fe}$.

In this section our concern is the situation in $V_{h_0,\lambda_0}$, and $U_{h_0,\lambda_0}$ is considered later. 

\begin{figure}[ht]
	\begin{tikzpicture}
	\fill[gray!5] (5,1.5) -- (0,1.5) -- (0,0)  arc(90:0:1.5) -- (5,-1.5) -- cycle;
	\draw[dashed] (5,1.5) -- (5,-1.5);
	\node (ff) at (.75,-.75) {fe};
	\node (zfp) at (-.35,.75) {$\mathrm{ze}$};
	\node (pf) at (2.5,1.8) {$\mathrm{ie}$};
	\node (mf) at (3.25,-1.8) {$\mathrm{be}$};
	\filldraw[darkblue, opacity =.25] (0,0) arc(90:40:1.5) to[out=40, in=190] (4.15, .8) -- (4.15,1.5) -- (0,1.5) -- cycle;
	\filldraw[darkred, opacity =.25] (1.5,-1.5) arc(0:50:1.5) to[out=40, in=190] (4.15, 1) -- (4.15,-1.5) -- cycle;
	\filldraw[darkred, opacity =.35, pattern=dots, pattern color=darkred] (1.5,-1.5) arc(0:50:1.5) to[out=40, in=190] (4.15, 1) -- (4.15,-1.5) -- cycle;
	\node[darkred] (?) at (3,-.25) {$U$};
	\node[darkblue] (?) at (2,1) {$V$};
	\draw[->, darkred] (1.6,-1.4) -- (2.2,-1.4) node[above] {$\qquad\qquad\varrho=h^{2/(\kappa+2)}$};
	\draw[->, darkred] (1.6,-1.4) to[out=90, in=-60] (1.375,-.65) node[right] {$\lambda$};
	\draw[->, darkblue] (.1,.1) -- (.1,.75) node[right] {$\zeta$};
	\draw[->, darkblue] (.1,.1) to[out=0, in=150] (.6,-.01) node[above right] {$\!\!\!\!\!\lambda^{-(\kappa+2)}$};
	\draw[->, darkred] (1.6,-1.4) to[out=90, in=-60] (1.375,-.65) node[right] {$\lambda$};
	\draw[dashed] (0,0) arc(90:50:1.5) to[out=40, in=187] (5, 1.1) node[right]{$\Gamma_{\lambda_1}$};
	\draw[dashed] (0,0) arc(90:40:1.5) to[out=40, in=187] (5, .9) node[below right]{$\Gamma_{\lambda_0}$};
	\draw (5,1.5) -- (0,1.5) -- (0,0)  arc(90:0:1.5)  -- (5,-1.5);
	\end{tikzpicture}
	\caption{The overlapping neighborhoods $\color{darkred}U=U_{h_0,\lambda_1}$ and $\color{darkblue}V=V_{h_0,\lambda_0}$ for some $h_0>0$ and $\lambda_1>\lambda_0>0$. Note that $U,V$ contain points of $\partial M$.}
	\label{fig:low}
\end{figure}

In the coordinate system $(\lambda,\varrho)$, $P$ can be considered as a family of ordinary differential operators on $\mathrm{fe}^\circ = \bbR^+_\lambda$, and $\varrho^{-\kappa} P, \varrho^{-\kappa} N(P)$ have smooth coefficients all the way down to $\varrho=0$. 
Let 
\begin{equation} 
\hat{N}(P) = \varrho^{-\kappa} N(P) \in \operatorname{Diff}^2(\mathrm{fe}^\circ).
\end{equation} 
To leading order at $\mathrm{fe}$, $P$ is indeed given by $N(P)$, as $\varrho^{-\kappa} P - \hat{N}(P) = \varrho^2 E$
is lower order, both with respect to the usual differential sense and with respect to decay as $\varrho \to 0^+$. The latter of these is not immediately obvious, as $E\in \zeta^{-1} C^\infty(M;\bbC)$ can be singular at $\mathrm{fe}$ and $\mathrm{be}$, but this just means that $E = \varrho^{-1} \smash{\hat{E}}$ for some
\begin{equation} 
\hat{E} \in \lambda^{-1} C^\infty( [0,\infty)_\lambda \times [0,\infty)_\varrho ;\bbC ).
\end{equation}    
So, $\varrho^2 E = \varrho \hat{E} =  O(\varrho)$ is still decaying as $\varrho \to 0^+$, at least for $0 \ll \lambda \ll \infty$. 

Consider the Taylor series 
\begin{equation} 
E \sim \sum_{k=-1}^\infty \varrho^k E_k(\lambda) = \sum_{k=-1}^\infty \zeta^k \lambda^{-k} E_k(\lambda)
\label{eq:misc_120}
\end{equation} 
of $E$ at $\mathrm{fe}$. 
Accordingly, we have the following formal version of $\varrho^{-\kappa} P$:
\begin{equation}
\mathsf{P} = \hat{N}(P) + \sum_{k=-1}^\infty \varrho^{k+2} E_k(\lambda) \in \operatorname{Diff}^2(\mathrm{fe}^\circ)[[\varrho]],
\end{equation}
the coefficients of which are elements of $\operatorname{Diff}^2(\mathrm{fe}^\circ)$.

From $E\in \zeta^{-1} C^\infty(M;\bbC)$ and the fact that the second series in \cref{eq:misc_120} is the Taylor series of $E$ at $\mathrm{fe}$ (near $\mathrm{ze}\cap \mathrm{fe}$) using the smooth coordinates $\zeta,\lambda^{-(\kappa+2)}$ valid near $\mathrm{ze}\cap \mathrm{fe}$, it follows that 
\begin{equation} 
E_k(\rho^{-2/(\kappa+2)}) \in \rho^{-2k/(\kappa+2)} C^\infty[0,\infty)_{\rho^2}.
\end{equation}
That is, each $\lambda^{-k} E_k(\lambda)$ can be interpreted as a smooth function on $\mathrm{fe} \backslash \mathrm{be}$.

The main result of this subsection is:
\begin{proposition}
	Let $\calE$ denote an index set, and let $h_0,\lambda_0>0$, thus supported away from $\mathrm{be}$, $\lambda_1>\lambda_0$, and $\Lambda>\lambda_1$. 
	Given $Q\in \calQ$ and $f,g \in \varrho_{\mathrm{ze}}^\infty \varrho_{\mathrm{fe}}^\kappa \calA^{\calE}(M)$ satisfying $\operatorname{supp} f,\operatorname{supp}g \Subset \bar{V}_{h_0,\lambda_1}$, there exist 
	\begin{itemize}
		\item $\beta,\gamma\in \varrho_{\mathrm{ze}}^\infty \calA^{\calE}(M)$ satisfying $\operatorname{supp} \beta,\operatorname{supp} \gamma \Subset \bar{V}_{h_0,\lambda_0} \backslash \mathrm{ie}$, thus supported away from $\mathrm{ie}\cup \mathrm{be}$,
		\item $R\in \varrho_{\mathrm{fe}}^\kappa \calA^{\calE}(M)$ satisfying 
		\begin{equation} 
		\operatorname{supp} R \Subset \bar{V}_{h_0,\lambda_0} \cap \bar{U}_{h_0,\Lambda}
		\label{eq:Rsuppcond},
		\end{equation}  
		thus supported away from $\mathrm{ie}\cup \mathrm{ze}\cup \mathrm{be}$, and 
		\item $f_0,g_0 \in \varrho_{\mathrm{ze}}^\infty \varrho_{\mathrm{fe}}^\infty C^\infty(M;\bbC)= h^\infty C^\infty(M;\bbC)$ satisfying $\operatorname{supp} f_0,\operatorname{supp} g_0 \Subset \bar{V}_{h_0,\lambda_1}$, thus supported away from $\mathrm{be}$, 
	\end{itemize}
	such that the function $u= \beta Q + \gamma Q'$ solves $Pu =(f+f_0) Q + (g+g_0) Q'+R$. 
	\label{prop:high_corner}
\end{proposition}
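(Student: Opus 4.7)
The plan is to work in the local coordinates $(\lambda,\varrho) = (\zeta/h^{2/(\kappa+2)}, h^{2/(\kappa+2)})$ on $\bar{V}_{h_0,\lambda_0}$, where $\mathrm{fe} = \{\varrho = 0\}$, and to invert the normal operator $\hat{N}(P) = N$ from \eqref{eq:barN} formally at $\mathrm{fe}$ order-by-order in $\varrho$. Since $\partial_\zeta = \varrho^{-1}\partial_\lambda$ at fixed $h$, $\varrho^{-\kappa}P$ has the formal Taylor series $\mathsf{P} = N + \sum_{k\ge -1} \varrho^{k+2} E_k(\lambda)$ already introduced. Taylor-expanding the forcing at $\mathrm{fe}$ using the $\calE$ index set gives
\[
\varrho^{-\kappa}f \sim \sum_{(j,k)\in\calE} \varrho^j (\log\varrho)^k f_{j,k}(\lambda), \qquad \varrho^{-\kappa}g \sim \sum_{(j,k)\in\calE} \varrho^j (\log\varrho)^k g_{j,k}(\lambda),
\]
with every coefficient in $\calS([\lambda_1,\infty)_\lambda)$ and vanishing on $[0,\lambda_1)$: the $\varrho_{\mathrm{ze}}^\infty$ weight gives Schwartz decay as $\lambda\to\infty$, while $\operatorname{supp} f, g \subset \bar{V}_{h_0,\lambda_1}$ gives the left-vanishing. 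I then look for a formal solution of the matching form
\[
u_{\text{formal}} = \sum_{(j,k)\in\calE} \varrho^j(\log\varrho)^k \bigl(\beta_{j,k}(\lambda) Q + \gamma_{j,k}(\lambda) Q'\bigr).
\]

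Ordering $\calE$ by the real part of its first component, I build $u_{\text{formal}}$ inductively. Collecting coefficients of $\varrho^j(\log\varrho)^k$ in $\varrho^{-\kappa}\mathsf{P}u_{\text{formal}} = \varrho^{-\kappa}(fQ+gQ')$ produces, at level $(j,k)$, an equation of the form
\[
N\bigl(\beta_{j,k}Q + \gamma_{j,k}Q'\bigr) = f_{j,k}Q + g_{j,k}Q' + \Phi_{j,k},
\]
where $\Phi_{j,k}$ is a Schwartz linear combination of $Q$ and $Q'$ on $[\lambda_1,\infty)$ assembled from $\{\beta_{j',k'},\gamma_{j',k'}\}_{(j',k')\prec(j,k)}$ through the formal expansion of $\mathsf{P}-N$ together with the Leibniz identities $N(\beta Q) = -\beta''Q - 2\beta'Q'$ and $N(\gamma Q') = -\gamma''Q' - 2V\gamma' Q$. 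Since $\partial_\lambda$ does not produce new log factors, the $\calE$-type polyhomogeneous structure is stable under this step. I now invoke Proposition~\ref{prop:normal_mapping} (with $\lambda_0\leadsto\lambda_1$, $\Lambda_0\leadsto\Lambda$) to produce $\beta_{j,k},\gamma_{j,k}\in\calS([\lambda_1,\infty)_\lambda)\cap C_{\mathrm{c}}^\infty(0,\infty]$ solving this equation modulo a remainder $R_{j,k}\in C_{\mathrm{c}}^\infty(0,\infty)_\lambda$ with $\operatorname{supp} R_{j,k}\subset[\lambda_1,\Lambda]$.

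To finish, I use the standard Borel summation construction with cutoffs $\chi(\varrho/\varrho_n)$ and $\varrho_n\to 0$ fast enough to convert the formal series for $\beta$, $\gamma$, and for $R = \varrho^{\kappa}\sum_{(j,k)\in\calE}\varrho^j(\log\varrho)^k R_{j,k}(\lambda)$ into honest polyhomogeneous representatives in the stated spaces. The $\lambda$-supports of the individual coefficients force $\operatorname{supp} \beta, \operatorname{supp} \gamma \subset\{\lambda\ge\lambda_1\}\subset\bar{V}_{h_0,\lambda_0}$ and $\operatorname{supp} R\subset\bar{V}_{h_0,\lambda_0}\cap\bar{U}_{h_0,\Lambda}$, while a cutoff in $h$ enforces $\{h<h_0\}$. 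Setting $u=\beta Q + \gamma Q'$ and applying $P$ matches the formal expansion exactly, with a Borel-summation defect in $h^\infty C^\infty(M;\bbC)$ supported in $\bar{V}_{h_0,\lambda_1}$. Because the subspace $\{\tilde f Q + \tilde g Q' : \tilde f, \tilde g\in C^\infty\}$ is preserved by $P$ (an instance of the identity \eqref{eq:misc_80}), this defect takes the required form $f_0 Q + g_0 Q'$ with $f_0, g_0\in h^\infty C^\infty(M;\bbC)$. The main obstacle is the second step: cleanly verifying that each inhomogeneity $\Phi_{j,k}$ really remains a Schwartz-in-$\lambda$ combination of $Q$ and $Q'$ sitting at the expected place in $\calE$, i.e.\ the index-set bookkeeping that keeps the iteration self-contained, particularly in the presence of log factors coming from $\calE$ when $\kappa\in 2\bbN$.
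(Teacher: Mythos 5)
Your approach is essentially the same as the paper's: expand the forcing at $\mathrm{fe}$ in the index set $\calE$, solve order-by-order via the right inverse $\hat{N}(P)^{-1}[Q,Q_0]$ using \Cref{prop:normal_mapping}, and asymptotically sum. There is, however, a genuine gap in the final step, and the ``main obstacle'' you flag is not actually where the difficulty lies.

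The gap is in handling the Borel-summation defect. You assert the defect lies in $h^\infty C^\infty(M;\bbC)$ and then say the required decomposition $f_0 Q + g_0 Q'$ follows because the span of $\{Q,Q'\}$ is preserved by $P$. Both parts of this are off. First, in the classically forbidden case $\varsigma>0$ the coefficients of the formal series carry explicit factors of $Q,Q'$ with exponential weights, so the defect does \emph{not} a priori lie in $h^\infty C^\infty$; it lies in the weighted space $\varrho_{\mathrm{ze}}^\infty\varrho_{\mathrm{fe}}^\infty\operatorname{Mod}_Q^{1/2}C_{\mathrm{c}}^\infty(\bar{V}_{h_0,\lambda_0})$, where $\operatorname{Mod}_Q$ is the modulus function of the appendix (\cref{eq:modulus_def}) tracking the exponential size of $(Q,Q')$. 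Second, the subspace-preservation argument does not produce the decomposition with the required bounds: $R$ is merely a compactly supported function, not manifestly in $\operatorname{span}\{Q,Q'\}$ with Schwartz coefficients, so $Pu - fQ - gQ' - R$ is not ``automatically'' of the form $f_0Q + g_0Q'$ with $f_0,g_0\in h^\infty C^\infty$. The paper instead defines $f_0 = R_0Q^*/(|Q|^2+|Q'|^2)$ and $g_0 = R_0(Q')^*/(|Q|^2+|Q'|^2)$ pointwise (valid since $Q,Q'$ never vanish simultaneously, by \Cref{lem:nonvanishing}), and shows $f_0,g_0\in h^\infty C^\infty$ precisely because the factor $Q^*/(|Q|^2+|Q'|^2) \sim \operatorname{Mod}_Q^{-1/2}$ cancels the modulus weight carried by $R_0$. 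Without this bookkeeping, your claimed membership of $f_0,g_0$ in $h^\infty C^\infty(M;\bbC)$ is unproved.

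Two smaller points. Your identity $N(\gamma Q') = -\gamma''Q' - 2V\gamma'Q$ drops a term: using $Q''=VQ$ and $Q'''=V'Q+VQ'$ one gets $N(\gamma Q') = -\gamma''Q' - 2V\gamma'Q - V'\gamma Q$, though this does not affect the structure of the iteration. Also, the index-set bookkeeping for the $\Phi_{j,k}$, which you identify as the chief worry, is in fact unproblematic: \Cref{prop:normal_mapping} shows $\hat{N}(P)^{-1}[Q,Q_0]$ maps Schwartz combinations $fQ + gQ' + h$ to $\beta Q + \gamma Q'$ with Schwartz coefficients supported on the appropriate half-line, and multiplication by $\varrho^{k+2}E_k(\lambda)$ (which grows only polynomially in $\lambda$) stays within the Schwartz class and within the index set $\calE$. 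No new logarithms are generated at this stage.
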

Thus, given $O(\varrho_{\mathrm{ze}}^\infty)$ forcing relative to $Q$, we can solve the ODE modulo $O(h^\infty)$ errors (relative to $Q$) near $\mathrm{ze}\cap \mathrm{fe}$. 
\begin{proof}
	If $Q=0$, then the claim holds trivially, so assume otherwise.  
	We construct $u$ order-by-order. 
	By linearity, we may assume without loss of generality that one of $f,g$ is identically $0$. Both cases are similar, so we focus on the case where $g= 0$.

	Expand $f$ at $\mathrm{fe}$ as 
	\begin{equation} 
		\varrho^{-\kappa}f \sim \sum_{(j,k)\in \calE}^\infty f_{j,k} \varrho^j \log^k \varrho 
		\label{eq:misc_146}
	\end{equation}
	where $f_{j,k} \in \varrho_{ \mathrm{ze}}^\infty C^\infty_{\mathrm{c}}(\mathrm{fe}\backslash \mathrm{be}) \subset \calS(\bbR_\lambda)$. Necessarily, $\operatorname{supp} f_{j,k} \cap [0,\lambda_1)=\varnothing$. Let $\mathsf{f}$ denote the formal series on the right-hand side of \cref{eq:misc_146}. 
	
	Fix $\lambda_{1/2} \in (\lambda_0,\lambda_1)$, $\lambda_2 \in (\lambda_1,\Lambda)$, and $\lambda_3 \in (\lambda_2,\Lambda)$. 
	Suppose that we have formal polyhomogeneous series 
	\begin{equation} 
	\mathsf{b} = \sum_{(j,k)\in \calE} b_{j,k} \varrho^j \log^k \varrho, \qquad \mathsf{c} = \sum_{(j,k)\in \calE} c_{j,k} \varrho^j \log^k \varrho
	\end{equation} 
	with coefficients $b_{j,k},c_{j,k} \in \calS(\bbR_\lambda)\cap C_{\mathrm{c}}^\infty (\lambda_2,\infty]$ such that, setting $\mathsf{u} = \mathsf{b} Q(\lambda) + \mathsf{c}Q'(\lambda)$, there exists some formal series
	\begin{equation}
	\mathsf{R} = \sum_{(j,k)\in \calE} R_{j,k}(\lambda) \varrho^j \log^k \varrho
	\end{equation}
	with coefficients $R_{j,k}\in C_{\mathrm{c}}^\infty(\lambda_{1/2},\lambda_3)$, such that
	\begin{equation} 
	\mathsf{P}(\mathsf{b} Q(\lambda) + \mathsf{c} Q'(\lambda)) = \mathsf{R}+ \mathsf{f} Q(\lambda)
	\end{equation}
	holds as an identity of formal polyhomogeneous series. Then, $\mathsf{b},\mathsf{c}$ may be asymptotically summed to yield $\beta\in \varrho_{\mathrm{ze}}^\infty \calA^{\calE}(M)$ and $\gamma\in\varrho_{\mathrm{ze}}^\infty \calA^{\calE}(M)$ whose polyhomogeneous expansions at $\mathrm{fe}$ are given by $\mathsf{b},\mathsf{c}$ respectively, and 
	these can be chosen to be supported in $\bar{V}_{h_0,\lambda_0}\backslash \mathrm{ie}$. Indeed, we necessarily have that $\beta,\gamma$ are supported there modulo an $O(h^\infty)$ error, which can be subtracted off.
	Likewise, $\mathsf{R}$ may be asymptotically summed to yield $R\in \varrho_{\mathrm{fe}}^\kappa \calA^{\calE}(M)$ whose expansion at $\mathrm{fe}$ is given by $\varrho^{\kappa}\mathsf{R}$, and $R$ can be chosen so as to satisfy the support condition \cref{eq:Rsuppcond}.

	Given $\beta,\gamma$ as in the previous paragraph, let $u= \beta Q+\gamma Q'$. By construction, the polyhomogeneous expansion of $\varrho^{-\kappa} Pu$ at $\mathrm{fe}^\circ$ is given by $\mathsf{R}+ \mathsf{f}Q(\lambda)$, and the error $R_0 = Pu - f Q - R$ satisfies 
	\begin{equation} 
	R_0\in \varrho_{\mathrm{ze}}^\infty\varrho_{\mathrm{fe}}^\infty \operatorname{Mod}_Q^{1/2} C^\infty_{\mathrm{c}}(\bar{V}_{h_0,\lambda_0};\bbC) \subset \varrho_{\mathrm{ze}}^\infty\varrho_{\mathrm{fe}}^\infty\varrho_{\mathrm{be}}^\infty  \operatorname{Mod}_Q^{1/2} C^\infty(M;\bbC),
	\end{equation}
	where $\operatorname{Mod}_Q(\lambda)$ is the \emph{modulus function} defined in the appendix, by \cref{eq:modulus_def}.
	
	Letting $f_0 = R_0 Q^*/ (|Q|^2+|Q'|^2)$ and $g_0 = R_0 (Q')^*/ (|Q|^2+|Q'|^2)$, $f_0,g_0 \in \varrho_{\mathrm{ze}}^\infty \varrho_{\mathrm{fe}}^\infty C^\infty_{\mathrm{c}}(\bar{V}_{h_0,\lambda_0};\bbC)$
	holds, as does the desired identity $Pu =(f+f_0) Q + (g+g_0) Q'+R$.

	To summarize, we have shown that it suffices to construct formal series $\mathsf{b},\mathsf{c},\mathsf{R}$ with the desired properties. Such formal series can be constructed recursively. Indeed, 
	\begin{multline}
	\mathsf{P}(\mathsf{b} Q(\lambda) + \mathsf{c} Q'(\lambda)) = \sum_{(j,k) \in\calE} \varrho^j \log \varrho^k \Big[ \hat{N}(P) (b_{j,k} Q + c_{j,k} Q') \\ + \sum_{(j',k) \in \calE\text{ and } j' < j} E_{j-j'-2}(\lambda) (b_{j',k} Q + c_{j',k} Q') \Big]. 
	\end{multline}
	Having already defined $b_{j',k},c_{j',k}$ for $j'<j$, set
	\begin{equation}
	v_{j,k} = \hat{N}(P)^{-1}[\tilde{Q},\hat{Q}]\Big[ f_{j,k} Q -\sum_{(j',k) \in \calE\text{ and } j' < j} E_{j-j'-2}(\lambda) (b_{j',k} Q + c_{j',k} Q')\Big], 
	\end{equation}
	where, $\tilde{Q},\hat{Q}$, $\hat{N}(P)^{-1}[\tilde{Q},\hat{Q}]$ are as in \S\ref{subsec:inh}. By \Cref{prop:normal_mapping}, $v_{j,k}$ can be modified in a compact subset so that it can be written as
	$b_{j,k} Q + c_{j,k} Q'$ for $b_{j,k},c_{j,k} \in \calS(\bbR_\lambda)\cap C_{\mathrm{c}}^\infty (\lambda_2,\infty]$ satisfying  
	\begin{equation} 
	\hat{N}(P) v_{j,k} = \hat{N}(P) (b_{j,k} Q + c_{j,k} Q')  =  R_{j,k}+f_{j,k} Q -\sum_{(j',k) \in \calE\text{ and } j' < j} E_{j-j'-2}(\lambda) (b_{j',k} Q + c_{j',k} Q')
	\end{equation}
	for $R_{j,k}\in C_{\mathrm{c}}^\infty(\lambda_{1/2},\lambda_3)$ (whose support actually has to be contained in $[\lambda_1,\lambda_3)$). 
	By construction, $\mathsf{P}(\mathsf{b} Q(\lambda) + \mathsf{c} Q'(\lambda)) = \mathsf{R} + \mathsf{f} Q(\lambda)$ is satisfied.
\end{proof}

\begin{figure}
	\begin{tikzpicture}
		\draw[->] (-1,0) -- (5,0) node[right] {$\lambda$};
		\fill[black] (-.5,0) circle (1.5pt) node[above] {$\lambda_0$};
		\fill[black] (.25,0) circle (1.5pt) node[below] {$\lambda_{1/2}$};
		\fill[black] (1,0) circle (1.5pt) node[below] {$\lambda_1$};
		\fill[black] (2,0) circle (1.5pt) node[below] {$\lambda_2$};
		\fill[black] (3,0) circle (1.5pt) node[below] {$\lambda_3$};
		\fill[black] (4,0) circle (1.5pt) node[below] {$\Lambda$};
		\draw[->] (1,0) to[out=0,in=190] (2.7,1) to[out=10, in=160] (5,.5) node[above] {$f_{j,k}$};
		\draw[->] (2.4,0) to[out=20, in=250] (3.5,2) to[out=70,in=155] (5,1.5) node[above right] {$b_{j,k},c_{j,k}$};
		\draw[darkred] (1,0) to[out=0, in=180] (1.92,1.5) node[above left] {$R_{j,k}$} to[out=0,in=180]  (2.8,0);
	\end{tikzpicture}
	\caption{The supports of the functions appearing in the proof of \Cref{prop:high_corner}. (Functions may not be to scale.)}
\end{figure}

\section{$O(h^\infty)$ error analysis}
\label{sec:error}

In this section, we study, away from $\mathrm{be}$, the forced ODE $Pu = f Q + g Q'$ for $
f,g \in  \varrho_{\mathrm{fe}}^\infty\varrho_{\mathrm{ze}}^\infty C^\infty(M) =  h^\infty C^\infty([0,Z]_\zeta\times [0,\infty)_{h};\bbC)$, which we take to be
supported away from $\mathrm{be}$. 
The goal is to construct an \emph{exact} solution which is $O(h^\infty)$ relative to the (possibly exponentially growing) quasimode $Q$.
Since we are concerned with the situation away from $\mathrm{be}$, it is acceptable to produce errors away from $\mathrm{ie}\cup \mathrm{ze}$. 
We restrict attention to $h\in [0,h_0)$ for some $h_0>0$ which can be taken as small as desired.

If $\varsigma<0$, i.e.\ we are in the classically allowed case, then the forcing $F=fQ + g Q'$ satisfies $F\in h^\infty C^\infty([0,Z]_\zeta\times [0,\infty)_{h};\bbC)$ as well. 
However, when $\varsigma>0$, i.e.\ we are in the classically forbidden case, then it is critical to keep track of the exponential weights at $\mathrm{ze}$ present in the quasimodes and their derivatives, as these can outweigh any $O(h^\infty)$ multiplier. One way of accomplishing this is to work with the modulus function
\begin{equation}
\operatorname{Mod}_Q(\zeta,h) = \operatorname{Mod}_Q(\zeta/h^{2/(\kappa+2)}), 
\end{equation}  
where $\operatorname{Mod}_Q(\lambda)$ is defined by \cref{eq:modulus_def}, except, in order to avoid conflicting notation, replace $\chi$ in \cref{eq:modulus_def} by $\chi_0$, where $\chi_0 \in C_{\mathrm{c}}^\infty(\bbR;[0,1])$ is identically $1$ in some neighborhood of the origin:
\begin{equation}
	\operatorname{Mod}_Q(\lambda) = |Q(\lambda)|^2 + \lambda^2  \langle \chi_0(1/\lambda)\lambda^{(\kappa+2)/2} \rangle^{-2} |Q'(\lambda)|^2.
\end{equation} 
We utilize the weighted Banach spaces
\begin{equation} 
\operatorname{Mod}_Q^\nu L^\infty =
\operatorname{Mod}_Q(\zeta,h)^\nu L^\infty (V_{h_0,\lambda_0};\bbC),
\end{equation}
$\nu\in \bbR$, 
In addition, let 
\begin{equation}
h^\infty \operatorname{Mod}_Q^\nu L^\infty= \bigcap_{k\in \bbN} h^k \operatorname{Mod}_Q^\nu L^\infty.
\end{equation}
When $\nu=0$, the ``$\smash{\operatorname{Mod}_Q^0}$'' is omitted from all notation.    

The two main properties of $\operatorname{Mod}_Q$ used (mostly without further comment) here are \Cref{prop:Mod1} and \Cref{prop:Mod2}. 

The main result of this section is the following:
\begin{proposition} 
Fix $Q\in \calQ$. Given $f,g \in h^\infty C^\infty([0,Z]_\zeta\times [0,\infty)_{h};\bbC)$ satisfying $(\operatorname{supp} f \cup \operatorname{supp} g)\cap \mathrm{be}=\varnothing$, there exist $\beta,\gamma$ with the following four properties:
\begin{enumerate}
	\item $\beta,\gamma \in h^\infty C^\infty([0,Z]_\zeta\times [0,\infty)_{h};\bbC)$ 
	\item $\operatorname{supp}_M \beta, \operatorname{supp}_M \gamma$ are disjoint from $\mathrm{be}$, and
	\item the function $u= \beta Q +  \gamma Q'$ solves $Pu = fQ + gQ' +  w$ for $w\in h^\infty C^\infty([0,Z]_\zeta\times [0,\infty)_{h};\bbC)$ such that $\operatorname{supp}_M w$ is disjoint from $\mathrm{ze} \cup \mathrm{be}$.
\end{enumerate}
Here, $\operatorname{supp}_M F$ is the closure in $M$ of $\{p\in M^\circ : F(p)\neq 0\}$.
\label{prop:error}
\end{proposition}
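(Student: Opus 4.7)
The plan is to solve $Pu = fQ + gQ'$ by variation of parameters applied to the normal operator $N(P)$, followed by Neumann-type iteration to absorb the error $P - N(P) = h^2 E$. Fix linearly independent $Q, \tilde Q \in \calQ$, choosing $\tilde Q = Q_\infty$ when $\varsigma > 0$ and $Q \neq Q_\infty$. The $h$-dependent rescaling $\lambda = \zeta/h^{2/(\kappa+2)}$ transforms the Green's kernel $N^{-1}[\tilde Q, Q]$ from \S\ref{subsec:inh} into an integral operator $N(P)^{-1}$ acting on functions of $\zeta$, with Wronskian factor $\mathfrak{W}/h^{2/(\kappa+2)}$. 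Applied to $F_0 := fQ + gQ'$, the construction in \Cref{prop:normal_mapping} (rescaled) produces a function $u_0 = \beta_0 Q + \gamma_0 Q'$ solving $N(P) u_0 = F_0 + R_0$, where $R_0$ is a compactly supported correction needed to enforce integrability in the upper integration limit. By construction $R_0$ is supported in a compact subset of $(0, Z)_\zeta \times [0, h_0)_h$ bounded away from $\mathrm{ze} \cup \mathrm{be}$.

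The key mapping property is that $\beta_0, \gamma_0 \in h^\infty C^\infty$ genuinely (not weighted by $\operatorname{Mod}_Q$) and are supported away from $\mathrm{be}$. This will follow from \Cref{prop:Mod1}, which bounds the relevant integrals of $|Q||\tilde Q|$ against the support of $F_0$, combined with the choice $\tilde Q = Q_\infty$ in the classically forbidden case so that the exponential growth of $Q$ at $\mathrm{ze}$ is offset by the decay of $\tilde Q$ in the integrand. Support away from $\mathrm{be}$ is immediate: on the set where $\zeta$ is small, both $\int_\zeta^Z \tilde Q F_0 \dd \zeta'$ and $\int_0^\zeta Q F_0 \dd \zeta'$ vanish because $f,g$ do. Crucially, because $f, g \in h^\infty C^\infty$ (with no modulus weight), the resulting $\beta_0, \gamma_0$ are also of genuine $h^\infty$ type, while the apparent exponential factors stay in the multiplicative $Q$, $Q'$ part of the ansatz. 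Derivatives are controlled analogously using repeated application of \Cref{prop:Mod1} and \Cref{prop:Mod2}.

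Passing to $P$, we have $Pu_0 = F_0 + R_0 + h^2 E u_0$, so we set $F_1 := -h^2 E u_0$ and repeat. Since $h^2 E \in h^2 \varrho_{\mathrm{be}}^{-1}\varrho_{\mathrm{fe}}^{-1} C^\infty(M)$ and $u_0$ has $h^\infty$ multipliers supported away from $\mathrm{be}$, $F_1$ again takes the form $f_1 Q + g_1 Q'$ with $f_1, g_1 \in h^\infty C^\infty$ supported away from $\mathrm{be}$; moreover, the scaling $h^2 / h^{2\kappa/(\kappa+2)} = h^{4/(\kappa+2)}$ between the $h^2$ prefactor and the natural loss in $N(P)^{-1}$ means that, at each fixed seminorm, we gain a positive power of $h$ per iteration. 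Producing $u_k$ inductively and then Borel-summing $\beta \sim \sum \beta_k$, $\gamma \sim \sum \gamma_k$ yields $\beta, \gamma \in h^\infty C^\infty$ supported away from $\mathrm{be}$, with residual $w = \sum R_k + (\text{tail})$ in $h^\infty C^\infty$ supported away from $\mathrm{ze} \cup \mathrm{be}$.

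The main obstacle is the bookkeeping for the classically forbidden case: one must verify that after each iteration the new multipliers $f_k, g_k$ remain of genuine $h^\infty C^\infty$ type (with no residual $\operatorname{Mod}_Q$ weight) and that the support stays clear of $\mathrm{be}$, all while ensuring the compactly supported remainders $R_k$ accumulate in a region uniformly bounded away from $\mathrm{ze}$ as $k \to \infty$. Both are delicate because $u_0$ may itself grow exponentially at $\mathrm{ze}$; the rescue is that $h^2 E$ multiplies $u_0$ pointwise, so the $h^\infty$ character of the multipliers $\beta_0, \gamma_0$ is preserved, while the exponential weights remain packaged inside $Q, Q'$ where they do not interfere with the iteration.
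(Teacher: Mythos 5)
The proposal takes a genuinely different route from the paper. The paper does not iterate $N(P)^{-1}$; it applies variation of parameters (\Cref{prop:var}) directly to the \emph{full} operator $P$ using a fixed basis $A,B$ of $\calQ$, reduces (2) and the conormal regularity (1) to weighted $L^\infty$ estimates on the resulting coefficient pair $(\aleph,\beth)$ via \Cref{lemma:tech1} and \Cref{lemma:tech2}, and establishes those estimates by a contraction argument in the exponentially-weighted space $\calX_{h_0,\lambda_0}$ (\Cref{lem:Phi_boundedness}, \Cref{lem:Xi_boundedness}, \Cref{prop:est1}, \Cref{prop:est2}), with $\zeta$-derivative control obtained from $h$-derivative control through \Cref{prop:reduction}. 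You instead propose a Neumann iteration of the rescaled model inverse $N(P)^{-1}$, absorbing the perturbation $h^2E$ term by term and Borel-summing. The two schemes differ structurally: the paper has a single exact fixed point for $P$; you have an infinite iteration of model solves.

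The central gap is the claim, asserted but not justified, that the rescaled version of \Cref{prop:normal_mapping} produces $\beta_0,\gamma_0 \in h^\infty C^\infty([0,Z]_\zeta\times[0,\infty)_h;\bbC)$. \Cref{prop:normal_mapping} is an $h$-independent statement about the $\lambda$-problem: given Schwartz forcing it produces Schwartz $\beta,\gamma$ on $\bbR^+_\lambda$, for each individual $h$. It says nothing about how the construction behaves as $h\to 0^+$, and in particular nothing about $h$-derivatives. Differentiating in $h$ hits the composition $Q(\zeta/h^{2/(\kappa+2)})$ and the rescaled kernel, producing singular chain-rule factors; controlling these is exactly what \Cref{prop:reduction}, \Cref{prop:est1}, and \Cref{prop:est2} are for, and your proposal contains no analogue. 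Writing ``derivatives are controlled analogously using repeated application of \Cref{prop:Mod1} and \Cref{prop:Mod2}'' does not address this, because those propositions control $\lambda$-derivatives of the fixed quasimodes $Q, Q_\infty$, not $(\zeta,h)$-derivatives of the $h$-parametrized output of an $h$-dependent Green's kernel.

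Two secondary points. First, the claim that exponential weights ``remain packaged inside $Q,Q'$'' across iterations is a correct intuition but needs to be made precise; it is precisely the content of the weighted Banach space $\calX_{h_0,\lambda_0}$ defined in \cref{eq:misc_189}, which carries asymmetric exponential weights on the two components, and the cases in it (whether $Q\in\operatorname{span}_\bbC Q_\infty$ or not) dictate on which side of $\operatorname{supp} F$ one is allowed to prescribe vanishing data. Your invocation of \Cref{prop:normal_mapping} implicitly makes the right choice (integrating from $0$ and from $\infty$ with $\tilde Q = Q_\infty$), but you would need to verify explicitly that after one iteration the new coefficients $f_1, g_1$ fall back into the unweighted $h^\infty C^\infty$ class rather than picking up a $\operatorname{Mod}_Q$ weight. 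Second, the ``gain of $h^{4/(\kappa+2)}$ per iteration'' controls only a single seminorm; to Borel-sum $\sum\beta_k$ in $h^\infty C^\infty$ one needs uniform bounds on a full family of $C^k$-seminorms with explicit constants, and the fact that the compactly supported remainders $R_k$ all live in a region uniformly bounded away from $\mathrm{ze}$ needs to be checked against the $h$-dependent rescaling of the interval $[\lambda_0,\Lambda_0]$ in \Cref{prop:normal_mapping}. None of this is impossible, but as written it is the hardest part of the argument and is where the proposal stops.
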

\begin{proof}
	The claim is trivially true if $Q=0$, so assume otherwise.

	We choose a basis $A,B$ for $\calQ$. If $\varsigma>0$, choose $A = Q_{\infty}$ to be the exponentially decaying mode (one should imagine $A$ as Airy-like) and $B$ to be any other element of $\calQ$ such that $\operatorname{span}_\bbC\{A,B\}=\calQ$. If $\varsigma<0$, choose instead $A=Q_-$ and $B=Q_+$.

	Fix $\chi \in C^\infty_{\mathrm{c}}(M;[0,1])$ identically $1$ near $\mathrm{ze}$ and vanishing near $\mathrm{be}$, and suppose that $\bar{\chi}\in C_{\mathrm{c}}^\infty(M;[0,1])$ is identically $1$ near $\mathrm{ze}$ and identically vanishing on $\operatorname{supp}(1-\chi)$.

	As an ansatz for $u$, take 
	\begin{equation}
	u = \aleph \bar{\chi} A\Big( \frac{\zeta}{h^{2/(\kappa+2)}}\Big) + \beth \bar{\chi}  B\Big( \frac{\zeta}{h^{2/(\kappa+2)}}\Big)
	\end{equation}
	for to-be-constructed
	$\aleph,\beth \in  C^\infty((0,Z]_\zeta\times (0,\infty)_{h};\bbC)$. 
	Below, we will abbreviate $A(\zeta/h^{2/(\kappa+2)})$ and 
	$B(\zeta/h^{2/(\kappa+2)})$ as $A,B$. 
	
	We first address sufficient conditions on $(\aleph,\beth)$ for such a $u$ to be expressable as $\beta Q + \gamma Q'$ for $\beta,\gamma$ satisfying properties (1) and (2). Define
	\begin{align}
	\beta &= \operatorname{Mod}_Q(\zeta,h)^{-1} Q(\zeta/h^{2/(\kappa+2)})^* u, \\
	\gamma &= \operatorname{Mod}_Q(\zeta,h)^{-1} \zeta^2 h^{-(\kappa+2)}  \langle h^{-1}\chi_0 (h^{2/(\kappa+2)}/\zeta)\zeta^{(\kappa+2)/2} \rangle^{-2} Q'(\zeta/h^{2/(\kappa+2)})^*  u,
	\end{align}
	where $\chi_0$ is as above.
	Then, $\beta,\gamma \in C^\infty((0,Z)_\zeta\times (0,\infty)_h;\bbC )$ and $u = \beta Q + \gamma Q'$. 
	The presence of the cutoff $\bar{\chi}$ in $u$ means that property (2) is satisfied automatically.
	
	In order for property (1) to be satisfied, the $\aleph,\beth$ are to be constructed satisfying the following estimates:
	\begin{itemize}
		\item if $\varsigma<0$ or $Q\notin \operatorname{span}_\bbC Q_\infty$, then, for every $j,k \in \bbN$, 
		\begin{equation}
		\frac{\partial^j\partial^k (\chi \aleph) }{\partial h^j \partial \zeta^k} \in h^\infty \operatorname{Mod}_B L^\infty =  h^\infty \operatorname{Mod}_Q L^\infty, \qquad 
		\frac{\partial^j \partial^k (\chi \beth)}{\partial h^j \partial \zeta^k} \in h^\infty L^\infty.
		\label{eq:misc_135}
		\end{equation}
		\item If $\varsigma>0$ and  $Q\in \operatorname{span}_\bbC Q_\infty \backslash \{0\}$, then 
		\begin{equation}
		\frac{\partial^j\partial^k (\chi \aleph) }{\partial h^j \partial \zeta^k} \in h^\infty  L^\infty, 
		\qquad 
		\frac{\partial^j \partial^k (\chi \beth)}{\partial h^j \partial \zeta^k} \in h^\infty \operatorname{Mod}_{A} L^\infty = h^\infty \operatorname{Mod}_{Q_\infty} L^\infty.
		\label{eq:misc_136}
		\end{equation}
	\end{itemize}
	We now check that these suffice to conclude property (1). If $\varsigma<0$ or $Q\notin \operatorname{span}_\bbC Q_\infty$, then this computation is recorded in \Cref{lemma:tech1} below. Indeed, $\beta,\gamma$ are a linear combination of functions of the form discussed in the lemma with polyhomogeneous coefficients on $M$ that are smooth at $\mathrm{ie}$. Since the product of a polyhomogeneous function on $M$ smooth at $\mathrm{ie}$ with an element of 
	\begin{equation} 
	h^\infty C^\infty([0,Z]_\zeta\times [0,\infty)_{h};\bbC) = \varrho_{\mathrm{fe}}^\infty \varrho_{\mathrm{ze}}^\infty C^\infty(M)
	\end{equation} 
	supported away from $\mathrm{be}$ results in an element of $h^\infty C^\infty([0,Z]_\zeta\times [0,\infty)_{h};\bbC)$, it can be concluded that (1) holds. 
	If $\varsigma>0$ and $Q\in \operatorname{span}_\bbC Q_\infty$, then we instead appeal to \Cref{lemma:tech2}, and the desired conclusion follows exactly as before. 
	
	In order to satisfy property (3), it suffices to arrange that the function $u_0 = \aleph A + \beth B$ satisfies $Pu_0 = f Q + g Q'$. The property then holds with 
	\begin{equation} 
	w = -(1-\bar{\chi}) (fQ +gQ') + [P,\bar{\chi}] u_0.
	\end{equation} 
	As follows from writing $P$ in the coordinate system $(\lambda,\varrho)$, the result of applying $[P,\bar{\chi}]$ to to $u_0$ lies in $h^\infty  C^\infty([0,Z]_\zeta\times [0,\infty)_{h};\bbC)$, and since $[P,\bar{\chi}]$ is vanishing near $\mathrm{ze}\cup \mathrm{be}$, the support of $w$ is disjoint from $\mathrm{ze}\cup\mathrm{be}$ as well. 
	
	To summarize, in order to conclude the proposition it suffices to construct $\aleph,\beth \in  C^\infty((0,Z]_\zeta\times (0,\infty)_{h};\bbC)$ with the following two properties:
	\begin{itemize}
		\item $\aleph,\beth$ satisfy the estimates above (\cref{eq:misc_135}, \cref{eq:misc_136}) and
		\item  $u_0 = \aleph A + \beth B$ satisfies $Pu = f Q + g Q'$.
	\end{itemize}
	The $\aleph,\beth$ are constructed by \Cref{prop:var} below. The proposition states that the second item holds. So, we only need to confirm that $\aleph,\beth$ satisfy the desired estimates.
	This is given by the conjunction of the lemmas below. Indeed, \Cref{prop:reduction} shows that it suffices to prove the $k=0$ cases of the desired estimates (with a slightly enlarged cutoff), deducing the others.  The propositions \Cref{prop:est1} and \Cref{prop:est2} provide the required $k=0$ result, at least if $\chi$ is supported in $\mathrm{cl}_M\{ \zeta \geq h^{2/(\kappa+2)} \lambda_0 \}$ for sufficiently large $\lambda_0$, which can be arranged.
\end{proof}

We now fill in the required lemmas.

\begin{lemma}
	Suppose that $\aleph,\beth \in C^\infty((0,Z)_\zeta\times (0,\infty)_h;\bbC)$ satisfy \cref{eq:misc_135} for each $j,k\in \bbN$. 
	Then, unless $\varsigma>0$ and $Q\in \operatorname{span}_\bbC Q_\infty$, each
	\begin{equation}
	\calR \in \Big\{ \frac{ \aleph \bar{\chi} Q^* A}{\operatorname{Mod}_Q(\zeta,h)}, \frac{ \beth \bar{\chi} Q^* B}{\operatorname{Mod}_Q(\zeta,h)}, \frac{ \aleph \bar{\chi} (Q')^* A}{\operatorname{Mod}_Q(\zeta,h)}, \frac{ \beth \bar{\chi} (Q')^* B}{\operatorname{Mod}_Q(\zeta,h)} \Big\}
	\label{eq:misc_13a}
	\end{equation}
	satisfies $\calR \in h^\infty  C^\infty([0,Z]_\zeta\times [0,\infty)_{h};\bbC)$. 
	\label{lemma:tech1}
\end{lemma}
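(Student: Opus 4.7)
The plan is to reduce to an estimate on $\operatorname{supp} \bar\chi$, where $\chi\equiv 1$ by the definition of $\bar\chi$ (which vanishes on $\operatorname{supp}(1-\chi)$). Consequently $\aleph\bar\chi = (\chi\aleph)\bar\chi$ and $\beth\bar\chi = (\chi\beth)\bar\chi$, so the derivative bounds in \cref{eq:misc_135} apply directly where $\bar\chi$ is nonzero. Since $\bar\chi \in C_{\mathrm{c}}^\infty(M;[0,1])$ and $\bar\chi$ vanishes near $\mathrm{be}$, the claim that $\calR\in h^\infty C^\infty([0,Z]_\zeta \times [0,\infty)_h; \bbC)$ reduces to showing that on $\operatorname{supp}\bar\chi$, each of the four multipliers $Q^*A/\operatorname{Mod}_Q$, $Q^*B/\operatorname{Mod}_Q$, $(Q')^*A/\operatorname{Mod}_Q$, $(Q')^*B/\operatorname{Mod}_Q$, paired with the appropriate $\operatorname{Mod}$-weighted $\aleph$ or $\beth$, produces something whose joint derivatives in $(\zeta,h)$ are bounded by every power of $h$.

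The structural step is to decompose $Q = c_A A + c_B B$ with $c_A,c_B\in \bbC$, and to compare $\operatorname{Mod}_Q$ with $\operatorname{Mod}_A$ and $\operatorname{Mod}_B$ via \Cref{prop:Mod1} and \Cref{prop:Mod2}. In the case $\varsigma<0$, all three moduli are comparable polynomial envelopes of oscillatory functions, and $|Q^*A|$, $|Q^*B|$ and the primed variants are pointwise bounded by $C(\operatorname{Mod}_A^2 + \operatorname{Mod}_A\operatorname{Mod}_B + \operatorname{Mod}_B^2) \lesssim \operatorname{Mod}_Q^2 \sim \operatorname{Mod}_Q \operatorname{Mod}_B$, so dividing by $\operatorname{Mod}_Q$ leaves $O(\operatorname{Mod}_B)$, exactly cancelling the $\operatorname{Mod}_B$ weight in the hypothesis on $\aleph$ (and similarly for $\beth$). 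In the case $\varsigma>0$ with $Q\notin\operatorname{span}_{\bbC}Q_\infty$, we must have $c_B\neq 0$, so $\operatorname{Mod}_Q\sim \operatorname{Mod}_B$ is the exponentially growing envelope and $\operatorname{Mod}_A$ is the exponentially decaying one; the key cancellation is the classical fact that $\operatorname{Mod}_A\operatorname{Mod}_B$ is polynomially bounded, which when combined with $\operatorname{Mod}_Q^{-1}\sim \operatorname{Mod}_B^{-1}$ makes every ratio $Q^*A/\operatorname{Mod}_Q$, $(Q')^*A/\operatorname{Mod}_Q$ etc.\ bounded, after which the hypothesis that $\chi\aleph\in h^\infty \operatorname{Mod}_B L^\infty$ and $\chi\beth\in h^\infty L^\infty$ gives $L^\infty$ control of $\calR$ with $h^\infty$ decay.

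For higher derivatives I would apply Leibniz and distribute $\partial_\zeta^k\partial_h^j$ over the three factors $\chi\aleph$ (or $\chi\beth$), $\bar\chi$, and the modulus-ratio $Q^*A/\operatorname{Mod}_Q$. The first factor contributes terms in $h^\infty \operatorname{Mod}_B L^\infty$ (or $h^\infty L^\infty$) by hypothesis, and $\bar\chi$ is smooth with compact support in $M$. Derivatives of $Q, Q', A, B$ in $\zeta$ are controlled by the modulus-derivative estimates of \Cref{prop:Mod1}, while derivatives in $h$ are handled by the chain rule $\partial_h = -\tfrac{2}{\kappa+2}(\lambda/h)\partial_\lambda$, each application producing a factor of $h^{-1}$ that is absorbed by the $h^\infty$ prefactor on $\aleph,\beth$. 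The derivative of $\operatorname{Mod}_Q^{-1}$ is estimated using \Cref{prop:Mod2}, and again the same exponential cancellation between $A$ and $B$ (when $\varsigma>0$) keeps every term polynomially bounded in $\lambda$, uniformly on $\operatorname{supp}\bar\chi$.

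The main obstacle is purely bookkeeping of these weights in the classically forbidden case: one must verify that every time the Leibniz rule produces a pairing of an $A$-factor with a $B$-factor, their exponential weights truly combine to a polynomial, and that any unpaired $\operatorname{Mod}_A$ or $\operatorname{Mod}_B$ is compensated by the corresponding weight on $\aleph$ or by $\operatorname{Mod}_Q^{-1}$. Once this is checked term-by-term using the explicit large-$\lambda$ exponents from \Cref{prop:quasimode_large_argument}, the $h^\infty$ factor defeats any polynomial-in-$h^{-1}$ growth and yields the claimed $h^\infty C^\infty$ membership; the smoothness down to $\zeta=0$ is not an issue since $\operatorname{supp}\bar\chi\cap \mathrm{be}=\varnothing$, and smoothness down to $h=0$ for $\zeta>0$ follows because $\lambda\to\infty$ there and all factors are smooth functions of $\lambda$ with the modulus bounds above.
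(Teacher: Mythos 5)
Your outline follows the paper's own strategy — Leibniz-expand $\partial_h^j\partial_\zeta^k\calR$ over the factors, invoke \Cref{prop:Mod1} and \Cref{prop:Mod2} for the modulus estimates, use that $\operatorname{Mod}_A\operatorname{Mod}_B$ is polynomially bounded in the forbidden case, and let the $h^\infty$ prefactor kill polynomial growth. But the weight bookkeeping in the $\varsigma>0$ case has a genuine misstep. You claim that the ratio $Q^*A/\operatorname{Mod}_Q$ is ``bounded,'' and then that the hypothesis $\chi\aleph\in h^\infty\operatorname{Mod}_B L^\infty$ gives $L^\infty$ control of $\calR$. That inference fails: $h^\infty\operatorname{Mod}_B L^\infty$ is \emph{not} contained in $L^\infty$, since along $\lambda=\zeta/h^{2/(\kappa+2)}$ at fixed $\zeta>0$ the weight
\begin{equation*}
\operatorname{Mod}_B(\zeta,h) \sim \exp\bigl(4(\kappa+2)^{-1}\zeta^{(\kappa+2)/2}/h\bigr)
\end{equation*}
is exponentially large in $1/h$ and beats every power of $h$. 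So a merely bounded multiplier would not suffice to produce $h^\infty L^\infty$. (The earlier line ``dividing by $\operatorname{Mod}_Q$ leaves $O(\operatorname{Mod}_B)$'' has the weight on the wrong side of $1$ as well; in the allowed case it is harmless since all moduli are polynomial, but it compounds the confusion.)

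The correct accounting is the one the paper carries out factor by factor: $|Q|\leq\operatorname{Mod}_Q^{1/2}\sim\operatorname{Mod}_B^{1/2}$, $|A|\leq\operatorname{Mod}_A^{1/2}$, and $\operatorname{Mod}_Q^{-1}\lesssim\operatorname{Mod}_B^{-1}$ (up to polynomial factors, by \Cref{prop:Mod2}), so that
\begin{equation*}
\Bigl|\frac{Q^*A}{\operatorname{Mod}_Q}\Bigr| \lesssim \operatorname{Mod}_B^{1/2}\operatorname{Mod}_A^{1/2}\operatorname{Mod}_B^{-1}\cdot\mathrm{poly} \lesssim \operatorname{Mod}_B^{-1}\cdot\mathrm{poly},
\end{equation*}
using $\operatorname{Mod}_A^{1/2}\operatorname{Mod}_B^{1/2}\lesssim\mathrm{poly}$. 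It is this \emph{exponential} smallness of the ratio — not boundedness — that cancels the exponential $\operatorname{Mod}_B$ weight allowed on $\aleph$, after which $h^\infty$ absorbs the remaining polynomial growth. (The $\beth$ ratios $Q^*B/\operatorname{Mod}_Q$, $(Q')^*B/\operatorname{Mod}_Q$ \emph{are} merely polynomially bounded, which is exactly what the unweighted hypothesis $\chi\beth\in h^\infty L^\infty$ requires; the two components are not symmetric.) Once you track the weights this way, the rest of your argument — including the treatment of derivatives via $\partial_h = -\tfrac{2}{\kappa+2}(\lambda/h)\partial_\lambda$ and the $\varrho_{\mathrm{be}}h$-polynomial factors from \Cref{prop:Mod1} — goes through as in the paper.
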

\begin{proof}
	We first demonstrate the proof for $\calR = \operatorname{Mod}_Q(\zeta,h)^{-1} \aleph \bar{\chi} Q^* A$. For any $j,k\in \bbN$, $\smash{\partial_h^j\partial_\zeta^k \calR}$ is a linear combination of 
	\begin{equation}
	\frac{\partial^{j_0}\partial^{k_0}(\chi\aleph)}{\partial h^{j_0}\partial \zeta^{k_0}}
	\frac{\partial^{j_1}\partial^{k_1}\bar{\chi}}{\partial h^{j_1}\partial \zeta^{k_1}}
	\frac{\partial^{j_2}\partial^{k_2}Q^*}{\partial h^{j_2}\partial \zeta^{k_2}}
	\frac{\partial^{j_3}\partial^{k_3}A}{\partial h^{j_3}\partial \zeta^{k_3}} 
	\frac{\partial^{j_4}\partial^{k_4}}{\partial h^{j_4}\partial \zeta^{k_4}} \frac{1}{\operatorname{Mod}_Q}
	\label{eq:misc_147}
	\end{equation}
	for $j_0,j_1,j_2,j_3,j_4,k_0,k_1,k_2,k_3,k_4\in \bbN$ with $j_0+j_1+j_2+j_3+j_4 = j$ and $k_0+k_1+k_2+k_3+k_4=k$. By assumption, the first term in \cref{eq:misc_147} is in $h^\infty \operatorname{Mod}_B L^\infty$. Multiplying the second term by some large power of $h$, the result is in $\varrho_{\mathrm{be}}^\infty L^\infty$. (The reason is that $\partial_h,\partial_\zeta$ are powers of boundary-defining-functions times smooth vector fields on $M$.) Multiplying the third term by some large power of $\varrho_{\mathrm{be}}h$, the third term is in $\smash{\operatorname{Mod}_B^{1/2} L^\infty}$, per \Cref{prop:Mod1}. Likewise, multiplying the fourth term by some large power of $\varrho_{\mathrm{be}}h$, it is in $\smash{\operatorname{Mod}_A^{1/2} L^\infty}$. Finally, using \Cref{prop:Mod1} again, computing the higher derivatives of $\operatorname{Mod}_Q(\lambda)^{-1}$ yields
	\begin{equation}
	\varrho_{\mathrm{be}}^Kh^K\frac{\partial^{j_4}\partial^{k_4}}{\partial h^{j_4}\partial \zeta^{k_4}} \frac{1}{\operatorname{Mod}_Q} \in \operatorname{Mod}_B^{-1} L^\infty
	\end{equation}
	for some $K\in \bbN$. 
	So, all in all, $\partial_h^j\partial_\zeta^k \calR \in h^\infty L^\infty$. We can then conclude  $\calR \in h^\infty  C^\infty([0,Z]_\zeta\times [0,\infty)_{h};\bbC)$.
	
	Consider now the case $\calR = \operatorname{Mod}_Q(\zeta,h)^{-1} \beth \bar{\chi} Q^* B$. For any $j,k\in \bbN$, $\smash{\partial_h^j\partial_\zeta^k \calR}$ is a linear combination of 
	\begin{equation}
	\frac{\partial^{j_0}\partial^{k_0}(\chi\beth)}{\partial h^{j_0}\partial \zeta^{k_0}}
	\frac{\partial^{j_1}\partial^{k_1}\bar{\chi}}{\partial h^{j_1}\partial \zeta^{k_1}}
	\frac{\partial^{j_2}\partial^{k_2}Q^*}{\partial h^{j_2}\partial \zeta^{k_2}}
	\frac{\partial^{j_3}\partial^{k_3}B}{\partial h^{j_3}\partial \zeta^{k_3}} 
	\frac{\partial^{j_4}\partial^{k_4}}{\partial h^{j_4}\partial \zeta^{k_4}} \frac{1}{\operatorname{Mod}_Q}
	\label{eq:misc_149}
	\end{equation}
	for $j_0,j_1,j_2,j_3,j_4,k_0,k_1,k_2,k_3,k_4\in \bbN$ as above. By assumption, the first term in \cref{eq:misc_149} is in $h^\infty L^\infty$. The second and third were understood above. Multiplying the fourth term by some large power of $\varrho_{\mathrm{be}}h$, the fourth term is in $\smash{\operatorname{Mod}_B^{1/2} L^\infty}$. The fifth term was also understood above. 
	So, all in all, $\smash{\partial_h^j\partial_\zeta^k \calR }\in h^\infty L^\infty$. We can then conclude  $\calR \in h^\infty  C^\infty([0,Z]_\zeta\times [0,\infty)_{h};\bbC)$.

	Each of the remaining two cases are similar to one of the previous two. 
\end{proof}

\begin{lemmap}
	Suppose that $\varsigma>0$ and $Q\in \operatorname{span}_\bbC Q_\infty \backslash \{0\}$, and suppose that $\aleph,\beth \in C^\infty((0,Z)_\zeta\times (0,\infty)_h;\bbC)$ satisfy \cref{eq:misc_136} for each $j,k\in \bbN$. Then, the $\calR$ as in \cref{eq:misc_13a} 
	all satisfy $\calR \in h^\infty  C^\infty([0,Z]_\zeta\times [0,\infty)_{h};\bbC)$. 
	\label{lemma:tech2}
\end{lemmap}
The proof is completely analogous to that of \Cref{lemma:tech1}, so it is omitted. 

\subsection{Construction of $\aleph,\beth$}

\begin{proposition}[Variation of parameters]
	Let $F\in C^\infty( (0,Z]_\zeta \times (0,\infty)_h;\bbC)$, $\zeta_0 \in C^\infty( \bbR^+_h ; (0,Z])$. 
	There exist unique $\aleph,\beth \in C^\infty( (0,Z]_\zeta \times (0,\infty)_h;\bbC)$ vanishing at the graph $\Gamma(\zeta_0)=\{(\zeta_0(h),h) : h\in \bbR^+\} \subset (0,Z]_\zeta \times (0,\infty)_h$ such that, firstly,
	\begin{equation}
	\frac{\partial \aleph}{\partial \zeta} A 
	+\frac{\partial \beth}{\partial \zeta} B  = 0
	\label{eq:var1}
	\end{equation}
	and, secondly, the function $u$ defined by $u= \aleph A + \beth B$ satisfies the forced ODE $Pu=h^2 F$. These $\aleph,\beth$ solve 
	\begin{equation}
	\frac{\partial}{\partial \zeta}
	\begin{bmatrix}
	\aleph \\ \beth 
	\end{bmatrix}
	= \frac{h^{2/(\kappa+2)}}{\frakW} \bigg( E
	\begin{bmatrix}
	- A B & -B^2 \\ A^2 & AB
	\end{bmatrix}\begin{bmatrix}
	\aleph \\ \beth 
	\end{bmatrix}
	+ F \begin{bmatrix}
	B \\ - A
	\end{bmatrix}
	\bigg), 
	\label{eq:var}
	\end{equation}
	where $\frakW$ is the Wronskian $\frakW = A(\lambda) B'(\lambda) - A'(\lambda) B(\lambda)$, which is independent of $\lambda$ and therefore just a constant. 
	Moreover, $\aleph,\beth$ depend linearly on $F$. 
	\label{prop:var}
\end{proposition}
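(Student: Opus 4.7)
The plan is to verify uniqueness and existence by reducing to a standard first-order linear ODE system in $\zeta$ with parameter $h$. The derivation proceeds by direct substitution of the ansatz $u = \aleph A + \beth B$ into $Pu = h^2 F$, exploiting the key fact that $A(\zeta/h^{2/(\kappa+2)})$ and $B(\zeta/h^{2/(\kappa+2)})$ are exact kernel elements of $N(P)$, hence satisfy $PA = h^2 E A$ and $PB = h^2 E B$ on the nose (not merely modulo small errors), because $P - N(P) = h^2 E$ acts as a zeroth-order multiplication operator.

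First I would compute $P(\aleph A) = \aleph(PA) + [P,\aleph]A = h^2 E \aleph A - h^2\bigl(2(\partial_\zeta \aleph)(\partial_\zeta A) + (\partial_\zeta^2 \aleph) A\bigr)$, and similarly for $\beth B$. Adding, the total is
\begin{equation}
Pu = h^2 E u - h^2 \bigl[ (\partial_\zeta \aleph)(\partial_\zeta A) + (\partial_\zeta \beth)(\partial_\zeta B) \bigr] - h^2 \bigl[ (\partial_\zeta^2 \aleph)A + (\partial_\zeta^2 \beth)B + (\partial_\zeta \aleph)(\partial_\zeta A) + (\partial_\zeta \beth)(\partial_\zeta B) \bigr].
\end{equation}
Imposing \cref{eq:var1} and differentiating it in $\zeta$ shows that the bracket on the far right vanishes, so $Pu = h^2 F$ reduces to the single scalar equation $E u - (\partial_\zeta \aleph)(\partial_\zeta A) - (\partial_\zeta \beth)(\partial_\zeta B) = F$. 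Paired with \cref{eq:var1}, this gives a $2 \times 2$ linear system for $(\partial_\zeta \aleph, \partial_\zeta \beth)$ whose coefficient matrix has determinant $A \partial_\zeta B - B \partial_\zeta A = h^{-2/(\kappa+2)}\frakW \neq 0$, since passing from $\lambda$- to $\zeta$-derivatives only inserts an overall factor of $h^{-2/(\kappa+2)}$ and the Wronskian $\frakW$ in $\lambda$ is a nonzero constant (by Abel's identity applied to $N$, which has no first-order term). Solving by Cramer's rule yields \cref{eq:var} after elementary simplification.

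Given \cref{eq:var}, the claim reduces to solving, for each fixed $h \in (0,\infty)$, a linear first-order ODE system on the interval $(0,Z]_\zeta$ with smooth coefficients (the coefficients are smooth in $\zeta$ on $(0,Z]$ because $A,B \in C^\infty(\bbR^+_\lambda)$ and $E\in \zeta^{-1} C^\infty(M)$ is smooth away from $\mathrm{be}\cup\mathrm{fe}$, hence in particular smooth on each slice $\{h = h_0\}$ for $h_0 > 0$) and initial condition $(\aleph,\beth) = (0,0)$ at $\zeta = \zeta_0(h)$. The Picard--Lindelöf theorem gives existence and uniqueness of a smooth solution in $\zeta$. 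Joint smoothness in $(\zeta,h)$ on $(0,Z]_\zeta \times (0,\infty)_h$ follows from smooth dependence of ODE solutions on parameters and initial data, using smoothness of $\zeta_0$. Linearity in $F$ is manifest from the affine structure of \cref{eq:var} (the $F$-dependent term is the only inhomogeneity, and the homogeneous part is independent of $F$).

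There is no real obstacle here; the only point requiring a moment's attention is the algebraic reduction to \cref{eq:var}, in particular the observation that the condition \cref{eq:var1}, differentiated once, exactly annihilates the second-derivative terms in the computation of $Pu$. Everything else is standard first-order linear ODE theory with parameter.
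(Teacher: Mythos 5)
Your proposal is correct and follows essentially the same route as the paper: impose the auxiliary constraint \cref{eq:var1}, use $PA = h^2 E A$ and $PB = h^2 E B$ (since $A,B\in\ker N(P)$ and $P-N(P)=h^2E$) to reduce $Pu=h^2F$ to a first-order algebraic relation on $(\partial_\zeta\aleph,\partial_\zeta\beth)$, invert the Wronskian matrix, and then invoke existence, uniqueness, and smooth parameter dependence for the resulting linear first-order ODE system. The only cosmetic differences are that you organize the algebra via $[P,\aleph]$ and differentiate \cref{eq:var1} to cancel the second-derivative terms rather than substituting the constraint before differentiating, and you work with $\partial_\zeta$ derivatives with the factor $h^{-2/(\kappa+2)}$ carried in the determinant rather than writing the system in terms of $A'(\lambda),B'(\lambda)$ as the paper does; both produce \cref{eq:var} after the same Cramer-style inversion.
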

\begin{proof}
	Assuming that $\aleph,\beth \in C^\infty( (0,Z]_\zeta \times (0,\infty)_h;\bbC)$ satisfy \cref{eq:var1}, the function $u$ defined by $u = \aleph A + \beth B$ satisfies $Pu=h^2 F$ if and only if 
	\begin{equation}
	\frac{1}{h^{2/(\kappa+2)}} \Big( \frac{\partial \aleph}{\partial \zeta} A' + \frac{\partial \beth}{\partial \zeta} B'\Big) =  (\aleph A + \beth B) E  - F .
	\label{eq:var2}
	\end{equation} 
	Combining \cref{eq:var1}, \cref{eq:var2} into a single system of ODEs, the result is
	\begin{equation}
	\begin{bmatrix}
	A & B \\ 
	A' & B' 
	\end{bmatrix}
	\frac{\partial}{\partial \zeta}
	\begin{bmatrix}
	\aleph \\ \beth 
	\end{bmatrix} 
	= h^{2/(\kappa+2)} \Big( E \begin{bmatrix}
	0 & 0 \\ A & B
	\end{bmatrix}\begin{bmatrix}
	\aleph \\ \beth 
	\end{bmatrix} 
	- F \begin{bmatrix}
	0 \\ 1 
	\end{bmatrix} \Big).
	\label{eq:var_comb}
	\end{equation}
	Inverting the matrix on the left-hand side, 
	\begin{equation}
	\begin{bmatrix}
	A & B \\ A' & B'
	\end{bmatrix}^{-1} 
	= \frac{1}{\frakW} \begin{bmatrix}
	B' & - B \\ - A' & A
	\end{bmatrix}.
	\end{equation}
	Thus, \cref{eq:var_comb} is equivalent to \cref{eq:var}. For each $h>0$, this forced ODE has a unique solution $(\aleph(-,h),\beth(-,h)) \in C^\infty( (0,Z]_\zeta;\bbC^2)$ vanishing at $\zeta_0(h)$, as follows from the theory of linear ODE with smooth dependence on parameters. This solution depends linearly on $F$. 
\end{proof}

\begin{proposition}
	Suppose that $\aleph,\beth \in C^\infty( (0,Z]_\zeta \times (0,\infty)_h;\bbC)$ satisfy \cref{eq:var}. Let $\chi_0 \in C^\infty(M)$ be identically $1$ on $\operatorname{supp} \chi$ and vanishing near $\mathrm{be}$. If $\varsigma<0$ or $Q\notin \operatorname{span}_\bbC Q_\infty$ and
	\begin{equation}
	 \frac{\partial^j (\chi_0 \aleph) }{\partial h^j} \in h^\infty \operatorname{Mod}_Q L^\infty,\qquad
	\frac{\partial^j (\chi_0 \beth)}{\partial h^j} \in h^\infty L^\infty.
	\end{equation} 
	holds for all $j\in \bbN$, then \cref{eq:misc_135} holds for each $j,k\in \bbN$. Similarly, if $\varsigma>0$ and $Q\in \operatorname{span}_\bbC Q_\infty \backslash \{0\}$ and
	\begin{equation}
	\frac{\partial^j(\chi_0 \aleph) }{\partial h^j} \in h^\infty  L^\infty, \qquad
	\frac{\partial^j (\chi_0 \beth)}{\partial h^j} \in h^\infty \operatorname{Mod}_{Q_\infty} L^\infty.
	\end{equation}
	holds for all $j\in \bbN$, then \cref{eq:misc_136} holds for each $j,k\in \bbN$. 
	\label{prop:reduction}
\end{proposition}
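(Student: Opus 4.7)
The plan is to induct on $k$, using \cref{eq:var} to trade each $\zeta$-derivative for undifferentiated quantities. Because $\chi_0 \equiv 1$ on $\operatorname{supp}\chi$, we have $\chi\aleph = \chi \chi_0 \aleph$ and $\chi\beth = \chi\chi_0\beth$, so the hypothesis on $(\chi_0\aleph, \chi_0\beth)$ can be used freely in estimating $(\chi\aleph,\chi\beth)$, and the $k=0$ case is immediate.

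For the inductive step, differentiate once in $\zeta$ and substitute via \cref{eq:var}: modulo terms supported in $\{\partial\chi\neq 0\}\subset\operatorname{supp}(1-\chi_0)$ (which vanish after multiplication by $\chi$),
\begin{equation*}
\chi\,\partial_\zeta \aleph = \tfrac{\varrho}{\frakW}\,\chi\bigl(-EAB\aleph - EB^2\beth + FB\bigr), \qquad
\chi\,\partial_\zeta \beth = \tfrac{\varrho}{\frakW}\,\chi\bigl(EA^2\aleph + EAB\beth - FA\bigr).
\end{equation*}
To go from $k$ to $k+1$, apply $\partial_\zeta^{k}\partial_h^j$ to these identities. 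Every time an interior $\partial_\zeta$ lands on $\aleph$ or $\beth$, resubstitute via \cref{eq:var}; since $\partial_h$ commutes with $\partial_\zeta$ and with the substitution, the result is, by repeated Leibniz expansion, a finite sum of products of $\partial_h^{j'}\partial_\zeta^{k'}(\chi_0\aleph),\partial_h^{j'}\partial_\zeta^{k'}(\chi_0\beth)$ with $k'\le k$ (controlled by the inductive hypothesis) against explicitly computable coefficients built from $\varrho$, $E$, $F$, $A$, $B$, and their derivatives.

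Closure then reduces to verifying that those coefficients lie in the correct weighted $L^\infty$ classes. Three inputs suffice. (i) On $\operatorname{supp}\chi$, $\varrho E$ is uniformly bounded, since $E\in\zeta^{-1}C^\infty(M)$, $\operatorname{supp}\chi$ is bounded away from $\mathrm{be}$, and $\operatorname{supp}\chi\subseteq\{\zeta\ge\lambda_0 h^{2/(\kappa+2)}\}$ gives $\varrho E = \lambda^{-1}(\zeta E)$, with $\partial_h,\partial_\zeta$ derivatives costing only finitely many powers of $h^{-1}$. (ii) Each $\partial_\zeta$ on $A$ or $B$ yields $\varrho^{-1}$ times $A'$ or $B'$, and the quasimode ODEs $NA = NB = 0$ let us reduce every $\partial_\zeta^m A$ to a $C^\infty(\mathrm{fe}^\circ)$-combination of $A$ and $A'$ (and similarly for $B$); the $\varrho^{-1}=h^{-2/(\kappa+2)}$ and $\lambda\lesssim h^{-2/(\kappa+2)}$ losses are again finitely many powers of $h^{-1}$, absorbed by the $h^\infty$ smallness. (iii) The large-$\lambda$ asymptotics of \Cref{prop:quasimode_large_argument}, together with constancy of the Wronskian $AB'-A'B=\frakW$, yield the algebraic identities $|AB|,|A'B|,|AB'|,|A'B'|\lesssim \lambda^{O(1)}$ and, crucially in the $\varsigma>0$ case, $|B|^2\operatorname{Mod}_A\lesssim \lambda^{O(1)}$ and $|A|^2\lesssim \operatorname{Mod}_A$. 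These make the problematic-looking product $EB^2\beth$ land in $\operatorname{Mod}_A^{-1}\cdot\operatorname{Mod}_A\cdot h^\infty L^\infty = h^\infty L^\infty$ as required by \cref{eq:misc_136}, and matching checks handle the remaining terms in both branches of the dichotomy.

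The main obstacle is the weight bookkeeping in the classically forbidden case $\varsigma>0$: the exponentially growing $B$ and decaying $A=Q_\infty$ combine into bounded products only when paired exactly as prescribed by \cref{eq:var}, which is the reason the proposition is stated with the specific dual weight assignments ($\aleph\in L^\infty$, $\beth\in \operatorname{Mod}_A L^\infty$, and the swapped version in the other case). Once the algebraic matching in (iii) above is catalogued using \Cref{prop:Mod1} and \Cref{prop:Mod2}, the induction closes by straightforward (if notation-heavy) accounting of powers of $h$ and $\lambda$.
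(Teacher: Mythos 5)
Your approach is correct and essentially identical to the paper's: an induction on $k$ that uses \cref{eq:var} to trade $\zeta$-derivatives of $\aleph,\beth$ for lower-order ones and explicit coefficients built from $\varrho,E,F,A,B$, with closure via the weight bookkeeping (the modulus-function bounds from \Cref{prop:Mod1,prop:Mod2} and the compensation $|B|^2\operatorname{Mod}_A\lesssim\lambda^{O(1)}$, $|A|^2\lesssim\operatorname{Mod}_A$ in the $\varsigma>0$ case). The only cosmetic difference is that the paper writes out the fully Leibniz-expanded formula \cref{eq:misc_139} in one shot, whereas you resubstitute incrementally; the ingredients and conclusion match.
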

\begin{proof} 
Using the ODE \cref{eq:var}, one concludes that, for each $k\in \bbN$,
	\begin{multline}
	\frac{\partial^j}{\partial h^j} \frac{\partial^{k}}{\partial \zeta^{k}}
	\begin{bmatrix}
	\chi \aleph \\ \chi \beth 
	\end{bmatrix} =   \sum_{j_0=0}^j\binom{j}{j_0}\Bigg[ \frac{\partial^{j-j_0}\partial^{k} \chi}{\partial h^{j-j_0} \partial \zeta^{k}} \frac{\partial^{j_0} }{\partial h^{j_0}} 
	\begin{bmatrix} \chi_0 \aleph \\ \chi_0 \beth \end{bmatrix} 
	+  \frac{1}{\frakW}\sum_{k_0=1}^{k}\binom{k}{k_0} \frac{\partial^{j-j_0}\partial^{k-k_0} \chi}{\partial h^{j-j_0} \partial \zeta^{k-k_0}} \\
	\times\sum_{j_1=0}^{j_0} \binom{j_0}{j_1} \Big( \frac{\partial^{j_0-j_1} h^{2/(\kappa+2)} }{\partial h^{j_0-j_1}} \Big) \Bigg[  \sum_{j_2=0}^{j_1} \sum_{k_1=0}^{k_0-1} \binom{j_1}{j_2}\binom{k_0-1}{k_1} \frac{ \partial^{j_1-j_2}\partial^{k_0-1-k_1} F}{  \partial h^{j_1-j_2} \partial \zeta^{k_0-1-k_1}}  \frac{\partial^{j_2} \partial^{k_1}}{\partial h^{j_2} \partial \zeta^{k_1}} 
	\begin{bmatrix}
	B \\ - A
	\end{bmatrix}    \\ 
	+\sum_{j_2=0}^{j_1} \sum_{k_1=0}^{k_0-1}\sum_{j_3=0}^{j_2} \sum_{k_2=0}^{k_1} \binom{j_1}{j_2}\binom{j_2}{j_3} \binom{k_0-1}{k_1}\binom{k_1}{k_2} \Big( \frac{\partial^{j_1-j_2} \partial^{k_0-1-k_1} E }{\partial h^{j_1-j_2} \partial \zeta^{k_0-1-k_1} } \Big) \calM_{j_3,k_2} \frac{\partial^{j_2-j_3} \partial^{k_1-k_2} }{\partial h^{j_2-j_3} \partial \zeta^{k_1-k_2} } \begin{bmatrix}
	\chi_0 \aleph \\ \chi_0 \beth
	\end{bmatrix}  
	\Bigg]\Bigg]
	\label{eq:misc_139}
	\end{multline}
	for any $j\in \bbN$, where 
	\begin{equation}
	\calM_{j,k} =  \frac{\partial^j \partial^k}{\partial h^j \partial \zeta^k} \begin{bmatrix}
	- AB & - B^2 \\ A^2 & A B
	\end{bmatrix}.
	\end{equation}
	(The factors of $\chi_0$ in \cref{eq:misc_139} are not required --- they can be dropped without changing the right-hand side, owing to the support conditions on $\chi,\chi_0$ --- but we include them for use below.)

	Each derivative of $\chi$ is (one-step) polyhomogeneous on $M$ and supported away from $\mathrm{be}$, so, using the assumptions,
	\begin{equation}
	 \frac{\partial^{j-j_0}\partial^{k} \chi}{\partial h^{j-j_0} \partial \zeta^{k}} \frac{\partial^{j_0} }{\partial h^{j_0}} 
	\begin{bmatrix} \chi_0 \aleph \\ \chi_0 \beth \end{bmatrix} 
	\in 
	\begin{cases}
	h^\infty \operatorname{Mod}_Q L^\infty\oplus h^\infty L^\infty  & (Q\notin \operatorname{span}_\bbC Q_\infty),\\ 
	h^\infty L^\infty \oplus h^\infty \operatorname{Mod}_{Q_\infty} L^\infty  & (\text{otherwise}),
	\end{cases}
	\end{equation}
	so this contribution lies in the desired space. 
	(Here, by ``$Q\notin \operatorname{span}_\bbC Q_\infty$'' we mean that $\varsigma<0$ or $Q\notin \operatorname{span}_\bbC Q_\infty$.)
	If $k=0$, then this term is the only term in \cref{eq:misc_139}, so the claim has been proven in this case.

	Applying the ODE $NQ=0$ to rewrite second and higher derivatives of $Q$ in terms of zeroth and first derivatives, the terms
	\begin{equation}
	\frac{\partial^{j-j_0}\partial^{k-k_0} \chi}{\partial h^{j-j_0} \partial \zeta^{k-k_0}} 
	\Big( \frac{\partial^{j_0-j_1} h^{2/(\kappa+2)} }{\partial h^{j_0-j_1}} \Big)  \frac{ \partial^{j_1-j_2}\partial^{k_0-1-k_1} F}{ \partial h^{k_0-1-k_2} \partial \zeta^{j-j_1}}  \frac{\partial^{j_2} \partial^{k_1}}{\partial h^{j_2} \partial \zeta^{k_1}} 
	\begin{bmatrix}
	B \\ - A
	\end{bmatrix} 
	\end{equation}
	in \cref{eq:misc_139}
	lie in the same space. If $\varsigma<0$, this is just a matter of multiplying elements of $h^\infty L^\infty$. If $\varsigma>0$, then we need to keep track of the exponential factors, with the result being that, if  $Q\notin \operatorname{span}_\bbC Q_\infty$, then the entries lie in the entries of 
	\begin{equation}
	h^\infty \operatorname{Mod}_Q^{1/2}  \begin{bmatrix}
	h^\infty B L^\infty + h^\infty B' L^\infty\\
	h^\infty A L^\infty+ h^\infty A'L^\infty
	\end{bmatrix}  \subseteq 
	\begin{cases}
	(h^\infty \operatorname{Mod}_Q L^\infty) \oplus 
	h^\infty L^\infty & (Q\notin \operatorname{span}_\bbC Q_\infty), \\ 
	h^\infty L^\infty \oplus h^\infty \operatorname{Mod}_{Q_\infty} L^\infty & (\text{otherwise}).
	\end{cases}
	\end{equation}

	The remaining terms in \cref{eq:misc_139} only involve fewer $\partial_\zeta$ falling on $\aleph,\beth$ than the left-hand side and so can be controlled inductively. Computing $\calM_{j,k}$, the result is that 
	\begin{equation}
	(\zeta^\infty h^\infty L^\infty) \calM_{j,k} 
	\subseteq \zeta^\infty h^\infty 
	\begin{bmatrix}
	L^\infty & \operatorname{Mod}_B L^\infty \\
	\operatorname{Mod}_{A} L^\infty & L^\infty 
	\end{bmatrix}. 
	\end{equation}
	So, if $\varsigma<0$ or $Q\notin \operatorname{span}_\bbC Q_\infty$, the corresponding terms in \cref{eq:misc_139} above are, once the result has been proven for all smaller $k$, known to be in 
	\begin{equation}
	 \zeta^\infty h^\infty 
	\begin{bmatrix}
	L^\infty & \operatorname{Mod}_B L^\infty \\
	\operatorname{Mod}_{A} L^\infty & L^\infty 
	\end{bmatrix} \begin{bmatrix}
	\operatorname{Mod}_Q L^\infty \\ 
	L^\infty
	\end{bmatrix}
	\subseteq  \zeta^\infty h^\infty \begin{bmatrix}
	\operatorname{Mod}_Q L^\infty \\ 
	L^\infty
	\end{bmatrix}.
	\end{equation}
	If $Q\in \operatorname{span}_\bbC Q_\infty \backslash \{0\}$, then the computation is instead 
	\begin{equation}
	\zeta^\infty h^\infty 
	\begin{bmatrix}
	L^\infty & \operatorname{Mod}_B L^\infty \\
	\operatorname{Mod}_{A} L^\infty & L^\infty 
	\end{bmatrix} \begin{bmatrix}
	L^\infty \\ 
	\operatorname{Mod}_{Q_\infty}  L^\infty
	\end{bmatrix}
	\subseteq  \zeta^\infty h^\infty \begin{bmatrix}
	L^\infty \\ 
	\operatorname{Mod}_{Q_\infty}  L^\infty
	\end{bmatrix}.
\end{equation} 
\end{proof}

The ODE \cref{eq:var}, together with vanishing initial conditions along the graph of a curve $\zeta_0:\smash{\bbR^+_h}\to [0,Z]$, can be combined into an integral equation. Let $G = -h^{2/(\kappa+2)} \frakW^{-1} F$  and $\smash{\hat{E}} = - \frakW^{-1} \zeta E \in C^\infty(M)$. 
Integrating \cref{eq:var}, the solution of the initial value problem is
\begin{equation}
\begin{bmatrix}
\aleph(\zeta,h) \\ \beth(\zeta,h) 
\end{bmatrix} 
=  \!\int_{\zeta}^{\zeta_0}\! \bigg( \frac{\hat{E}(\omega,h)}{\lambda(\omega,h)}
\begin{bmatrix}
-A(\omega,h)B(\omega,h)\!\!\! & -B(\omega,h)^2 \\ 
A(\omega,h)^2 & A(\omega,h)B(\omega,h)
\end{bmatrix} 
\begin{bmatrix} 
\aleph(\omega,h) \\ \beth(\omega,h) 
\end{bmatrix}+ G(\omega,h)
\begin{bmatrix}
B(\omega,h) \\ - A(\omega,h)
\end{bmatrix}
\bigg)\dd \omega,
\label{eq:var_integrated}
\end{equation}
where $\lambda(\zeta,h) = \zeta/h^{2/(\kappa+2)}$ as usual. Thus, $(\aleph,\beth)$ is a fixed point of an affine map whose linear part is given by
\begin{equation}
\begin{bmatrix}
\gimel(\zeta,h) \\ \daleth(\zeta,h) 
\end{bmatrix}
\mapsto \int_\zeta^{\zeta_0} \frac{\hat{E}(\omega,h)}{\lambda(\omega,h)} \begin{bmatrix}
-A(\omega,h)B(\omega,h) & -B(\omega,h)^2 \\ 
A(\omega,h)^2 & A(\omega,h)B(\omega,h)
\end{bmatrix}
\begin{bmatrix} 
\gimel(\omega,h) \\ \daleth(\omega,h) 
\end{bmatrix} \dd \omega. 
\end{equation}
We call this linear map $\Phi$ if $\zeta_0=Z$ and $\Xi$ if $\zeta_0(h) = \lambda_0 h^{2/(\kappa+2)}$.
We can consider $\Phi,\Xi$ as functions $L^\infty_{\mathrm{loc}}(V_{h_0,\lambda_0};\bbC)^2\to L^\infty_{\mathrm{loc}}(V_{h_0,\lambda_0};\bbC)^2$.

In the classically forbidden case, $\varsigma>0$, in order to get a solution with the desired properties, the initial data cannot be prescribed at an arbitrary location.  
At the level of technical details, the difficulty is that, for each $\zeta_0 \in [0,Z]\backslash \{\zeta\}$, the bound 
\begin{equation}
\Big|\int_{\zeta_0}^\zeta \exp\bigg( \pm \frac{4}{\kappa+2} \frac{\omega^{(\kappa+2)/2}}{h} \bigg) F(\omega,h) \dd \omega  \Big| \leq |\zeta-\zeta_0| \exp\bigg(\pm \frac{4}{\kappa+2} \frac{\zeta^{(\kappa+2)/2}}{h} \bigg) \lVert F \rVert_{L^\infty} 
\end{equation}
only holds for one choice of sign $\pm$, namely that matching the sign of $\zeta-\zeta_0$. It is only in this case that the exponential has the correct monotone behavior. Estimates of this form are key in the analysis below. 

The difficulty is not an artifact of the method. 
The fundamental issue is that, according to the Liouville--Green expansion, if one specifies initial conditions somewhere and supplies a forcing $F$ with $\operatorname{supp} F \Subset (0,Z)$, then it should be expected that a solution to $Pu=F$ will, for some $\zeta \in (0,Z)$, grow exponentially fast as $h\to 0^+$ relative to $F$.
If the forcing $F$ is $F=f Q+ g Q'$ for $O(h^\infty)$ coefficients $f,g$, then $u$ is growing exponentially faster than $Q,Q'$. Since the overshoot is exponentially bad, even the $O(h^\infty)$ terms in $f,g$ are not sufficient to restore decay. This is why, unless $Q\in \operatorname{span}_\bbC Q_\infty$, we instead supply initial conditions for \emph{small} $\zeta$, say at $\zeta=0$, or, in what ends up being necessary if $\alpha\neq 1/2$, along $\Gamma_{\lambda_0} = \{\zeta = \lambda_0 h^{(\kappa+2)/2}\}$ for some $\lambda_0>0$.
However, if $Q\in \operatorname{span}_\bbC Q_\infty \backslash \{0\}$, then instead one has to supply the initial conditions at $\zeta = Z$. In the classically allowed case, all choices work equally well.  

So, we split the remaining lemmas among two subsections:
\begin{itemize}
	\item In \S\ref{subsec:good}, we handle the case where $\varsigma<0$ or $Q\in \operatorname{span}_\bbC Q_\infty$, i.e.\ the case where $Q$ is not exponentially growing.
	\item In \S\ref{subsec:bad}, we handle the remaining cases, i.e.\ the classically forbidden case with $Q$ generic. 
\end{itemize}
The classically allowed case, $\varsigma<0$, is actually handled twice, as the discussion in \S\ref{subsec:bad} applies to it.

See \cite[\S15.5]{SimonComplex} for an exposition of the general method in a simpler setting.

Let $\Theta = (1+\varsigma)/2$. 
Let $\calX_{h_0,\lambda_0}\subset L^\infty_{\mathrm{loc}}(V_{h_0,\lambda_0};\bbC)^2$ denote the Banach space
\begin{equation}
\calX_{h_0,\lambda_0} =
\begin{cases}
( \exp(4\Theta (\kappa+2)^{-1} \zeta^{(\kappa+2)/2}/h ) L^\infty (V_{h_0,\lambda_0};\bbC) ) \oplus  L^\infty(V_{h_0,\lambda_0} ;\bbC) & (Q\notin \operatorname{span}_\bbC Q_\infty), \\ 
L^\infty (V_{h_0,\lambda_0};\bbC) ) \oplus  ( \exp(-4 \Theta(\kappa+2)^{-1} \zeta^{(\kappa+2)/2}/h ) L^\infty (V_{h_0,\lambda_0};\bbC) ) & (\text{otherwise}).
\end{cases} 
\label{eq:misc_189}
\end{equation}

\subsection{Case without exponential growth}
\label{subsec:good}

\begin{lemma} 
	Suppose that either $\varsigma<0$ or that $Q\in \operatorname{span}_\bbC Q_\infty\backslash \{0\}$, so that the second case of \cref{eq:misc_189} holds. 
	For each $h_0,\lambda_0>0$, $\Phi$ is a bounded endomorphism of $\calX_{h_0,\lambda_0}$. Moreover, for fixed $h_0>0$, the operator norm $\lVert \Phi \rVert_{\calX_{h_0,\lambda_0}\to \calX_{h_0,\lambda_0}}$ satisfies 
	\begin{equation} 
	\lVert \Phi \rVert_{\calX_{h_0,\lambda_0}\to \calX_{h_0,\lambda_0}} =
	O(\lambda_0^{-(\kappa+2)/2})
	\end{equation} 
	as $\lambda_0\to\infty$ (meaning that nothing is implied as $\lambda_0\to 0^+$).
	\label{lem:Phi_boundedness}
\end{lemma}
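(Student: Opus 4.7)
The plan is to prove boundedness by making the rank-one structure of $M$ explicit and then estimating $\Phi$ componentwise using the large-$\lambda$ asymptotics of $A, B$ from \Cref{prop:quasimode_large_argument}. Writing $M = \begin{bmatrix}-B \\ A\end{bmatrix}[A\; B]$, we have
\begin{equation*}
\Phi\begin{bmatrix}\gimel \\ \daleth\end{bmatrix}(\zeta,h) = \int_\zeta^Z \frac{\hat{E}(\omega,h)}{\lambda(\omega,h)}\,\bigl(A(\omega,h)\gimel(\omega,h) + B(\omega,h)\daleth(\omega,h)\bigr)\begin{bmatrix}-B(\omega,h) \\ A(\omega,h)\end{bmatrix}d\omega,
\end{equation*}
and it suffices to bound the pointwise magnitude of the integrand for each case and then integrate in $\omega$.

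The key pointwise bound is that, in the first case of \cref{eq:misc_189}, both $|A\gimel|$ and $|B\daleth|$ are controlled at each $\omega$ by the same exponentially weighted polynomial factor: using $|A(\omega,h)|,|B(\omega,h)| \le C\lambda(\omega,h)^{-\kappa/4}\cdot |e^{\pm 2(\kappa+2)^{-1}\omega^{(\kappa+2)/2}/h}|$ (with the sign depending on the mode, and $|e^{\pm i\ldots}|=1$ when $\varsigma<0$), the exponential weight of $\gimel$ exactly cancels the opposite-sign exponential in $A$, and similarly for $B\daleth$. Multiplying by the remaining $B$ or $A$ in the outer column vector reproduces precisely the weights in the two components of $\calX_{h_0,\lambda_0}$. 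Finally, $|\hat{E}/\lambda| \le C/\lambda(\omega,h)$ because $\hat{E} \in C^\infty(M)$ is bounded and $1/\lambda = h^{2/(\kappa+2)}/\omega$, and the exponential weight $e^{\pm 2(\kappa+2)^{-1}\omega^{(\kappa+2)/2}/h}$ relevant to the target space is monotone in $\omega$ (growing in the first component with $\Theta=1$, constant when $\Theta=0$), so it can be bounded by its value at $\omega=\zeta$, pulling the weight outside the integral.

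This reduces everything to estimating a polynomially weighted integral of the form $\int_\zeta^Z \lambda(\omega,h)^{-(\kappa+2)/2}\,d\omega$. Changing variables to $\lambda$ via $d\omega = h^{2/(\kappa+2)}\,d\lambda$ converts it to $h^{2/(\kappa+2)}\int_{\lambda_\zeta}^{\lambda_Z}\lambda^{-(\kappa+2)/2}\,d\lambda$, which, on the domain $\lambda_\zeta \ge \lambda_0$, yields the desired $\lambda_0^{-(\kappa+2)/2}$ scaling as $\lambda_0\to\infty$ once the $(\kappa+2)/2$-power decay is integrated against the inverse polynomial weight. For the exceptional exponents ($\kappa \in (-2,0]$) the integral is handled instead via integration by parts against the rapid oscillation (in the $\varsigma<0$ subcase) or against the exponential decay (in the $\varsigma>0$ subcase), which extracts the analogous decay from the boundary term at $\omega=\zeta$, noting the boundary term at $\omega=Z$ is harmless in both sub-cases because either $\daleth$ carries the right decaying weight or the oscillating boundary term is $O(h)$-small.

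The main obstacle will be the bookkeeping of the four exponential/oscillatory combinations across the four entries of $M$: in particular, verifying that the growing $|B|^2$ entry does not produce a boundary term at $\omega=Z$ that is exponentially large relative to the target-space weight $E_0(\zeta,h)$. This is where the compatibility between the choice $\zeta_0 = Z$ of boundary condition and the chosen Banach space is used: in the sub-case where the Banach weight is built from the exponentially decaying mode, $E_0^{-1}(\omega)$ is monotonically decreasing in $\omega$, so integrating it against a bounded function from $\zeta$ to $Z$ contributes only its value at $\omega=\zeta$, exactly what is needed to close the estimate.
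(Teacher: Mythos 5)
Your rank-one observation $M = \begin{bmatrix}-B\\A\end{bmatrix}[A\;B]$ is valid, and the pointwise bound on the scalar $A\gimel + B\daleth$ followed by multiplication by the outer column is essentially a repackaging of the paper's entry-by-entry estimates; the paper instead splits $-AB\gimel-B^2\daleth$ and $A^2\gimel+AB\daleth$ directly, but the content is the same. Where the paper is cleaner is in the final step: it never evaluates the $\omega$-integral, but instead uses $|\int_\zeta^Z F\,\dd\omega|\le Z\,\sup_{[\lambda_0,\infty)}|F|$, and the sup of the (exponentially-neutralized) integrand is $O(\lambda_0^{-(\kappa+2)/2})$ uniformly in $\kappa>-2$. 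This is all that is needed.

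Your last paragraph contains two genuine problems. First, the claim that $\kappa\in(-2,0]$ requires special treatment is wrong: the $\omega$-integral poses no difficulty because it is taken over an interval of bounded length, and the crude sup-times-length bound works for every $\kappa>-2$. Your change of variables $h^{2/(\kappa+2)}\int_{\lambda_\zeta}^{\lambda_Z}\lambda^{-(\kappa+2)/2}\,\dd\lambda$ also closes for all $\kappa>-2$, but only once you use that $\lambda_0 h^{2/(\kappa+2)}<Z$ on $V_{h_0,\lambda_0}$ to convert the stray $h^{2/(\kappa+2)}$ into $Z/\lambda_0$ — a step you have not written out, and which is bypassed entirely by the sup-times-length bound.

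Second, and more importantly, the integration-by-parts fallback you propose cannot work. The inputs $\gimel,\daleth$ are arbitrary elements of the weighted $L^\infty$ spaces; they are not differentiable, so you cannot move a derivative off the oscillatory/decaying factor $A^2,B^2$ onto $\hat{E}\gimel/\lambda$ or $\hat{E}\daleth/\lambda$. Moreover, for $\varsigma<0$ the diagonal entry $\pm AB$ carries no oscillation at all, since the phases of $Q_-,Q_+$ cancel in the product, so there is nothing to integrate against even when $\gimel,\daleth$ are smooth. Since this fallback is also unnecessary, you should simply delete it and finish the argument with the direct $L^\infty$ bound; that repairs the proof.

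One stylistic note: your phrase that the weight ``is monotone in $\omega$ (growing in the first component with $\Theta=1$\dots) so it can be bounded by its value at $\omega=\zeta$'' is garbled. The first component of the target has trivial weight; the second component's weight $\exp(-4\Theta(\kappa+2)^{-1}\zeta^{(\kappa+2)/2}/h)$ is decreasing in $\zeta$, which is precisely what lets you pull it out of $\int_\zeta^Z$ evaluated at the left endpoint. State that explicitly.
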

\begin{proof}
	Let $\lVert - \rVert_{L^\infty}$ stand for the $L^\infty(V_{h_0,\lambda_0} ;\bbC)$ norm. Then, 
	\begin{equation} 
	\lVert \hat{E} \rVert_{L^\infty} = \lVert \hat{E} \rVert_{C^0(\mathrm{cl}_M V_{h_0,\lambda_0})}
	\end{equation} 
	is finite, since $\mathrm{cl}_M V_{h_0,\lambda_0}$ is a compact subset of $M$, on which $\hat{E}$ is continuous. 
	
	Furthermore,
	\begin{equation} 
		\lVert \lambda^{-1} AB \rVert_{L^\infty} = \lVert \lambda^{-1} A(\lambda) B(\lambda) \rVert_{L^\infty[\lambda_0,\infty)_\lambda} = O(\lambda_0^{-(\kappa+2)/2}),
	\end{equation} 
	and, letting $\lVert  \exp(\cdots) \lambda^{-1} B^2 \rVert_{L^\infty} = 
	\lVert  \exp(-4\Theta (\kappa+2)^{-1} \omega^{(\kappa+2)/2}/h ) \lambda(\omega,h)^{-1} B(\omega/h^{2/(\kappa+2)}) ^2 \rVert_{L^\infty}$, 
	\begin{equation} 
	\lVert  \exp(\cdots) \lambda^{-1} B^2 \rVert_{L^\infty} = \lVert \exp(-4 \Theta (\kappa+2)^{-1} \lambda^{(\kappa+2)/2} ) \lambda^{-1}B(\lambda)^2 \rVert_{L^\infty [\lambda_0,\infty)_\lambda} = O(\lambda_0^{-(\kappa+2)/2}) 
	\end{equation}
	as $\lambda_0\to \infty$. These estimates are immediate corollaries of \Cref{prop:quasimode_large_argument}. 
	
	Similarly, the quantity 
	\begin{equation} 
		\lVert  \exp(\cdots) \lambda^{-1} A^2 \rVert_{L^\infty} = 
	\lVert  \exp(4\Theta (\kappa+2)^{-1} \omega^{(\kappa+2)/2}/h ) \lambda(\omega,h)^{-1} A(\omega/h^{2/(\kappa+2)})^2 \rVert_{L^\infty}
	\end{equation} 
	satisfies 
	\begin{equation} 
	\lVert  \exp(\cdots) \lambda^{-1} A^2 \rVert_{L^\infty} = \lVert \exp(4 \Theta (\kappa+2)^{-1} \lambda^{(\kappa+2)/2} ) \lambda^{-1} A(\lambda)^2 \rVert_{L^\infty [\lambda_0,\infty)_\lambda} = O(\lambda_0^{-(\kappa+2)/2}).
	\end{equation}

	So, in order to prove the proposition, it suffices to bound $\smash{\lVert \Phi(\gimel,\daleth) \rVert_{\calX_{h_0,\lambda_0}}}$ by a product of $\lVert \hat{E} \rVert_{L^\infty}$, $\lVert (\gimel,\daleth) \rVert_{\calX_{h_0,\lambda_0}}$, and some linear combination of the three norms 
	\begin{equation} 
	\lVert \lambda^{-1} AB\rVert_{L^\infty}, \lVert \exp(\cdots)\lambda^{-1} B^2 \rVert_{L^\infty}, \lVert \exp(\cdots)\lambda^{-1} A^2 \rVert_{L^\infty} = O(\lambda_0^{(\kappa+2)/2}).
	\end{equation}  
	
	Let $\gimel \Phi$ denote the first component of $\Phi (\gimel,\daleth)$, and let $\daleth \Phi$ denote the second. 
	We want to bound 
	\begin{equation}
	\lVert \Phi (\gimel,\daleth)
	\rVert_{\calX_{h_0,\lambda_0}} = \lVert \gimel \Phi(\omega,h) \rVert_{L^\infty} + \lVert \exp(4\Theta (\kappa+2)^{-1} \omega^{(\kappa+2)/2}/h ) \daleth \Phi(\omega,h) \rVert_{L^\infty}
	\label{eq:misc_16f}
	\end{equation}
	for $\gimel \in  L^\infty(V_{h_0,\lambda_0};\bbC)$ and $\daleth(\zeta,h) \in \exp(-4\Theta (\kappa+2)^{-1} \zeta^{(\kappa+2)/2}/h ) L^\infty(V_{h_0,\lambda_0};\bbC)$. 
	The first term can be bounded as follows:
	\begin{multline}
	| \gimel \Phi(\zeta,h) | \leq Z \lVert \hat{E} \rVert_{L^\infty} ( \lVert\lambda^{-1} AB \rVert_{L^\infty} \lVert \gimel \rVert_{L^\infty} \\ +
	\lVert \exp(\cdots) \lambda^{-1} B^2 \rVert_{L^\infty} \lVert \exp(4\Theta (\kappa+2)^{-1} \omega^{(\kappa+2)/2}/h ) \daleth(\omega,h) \rVert_{L^\infty}  ) .
	\end{multline}
	In order to bound $\daleth\Phi$, the inequality 
	\begin{multline}
	|\daleth \Phi (\zeta,h)| \leq Z \lVert \hat{E} \rVert_{L^\infty}( \sup\{ \lambda^{-1}|A(\omega/h^{(\kappa+2)/2} )|^2 : \omega\geq \zeta \} \lVert \gimel \rVert_{L^\infty}  \\ +  \lVert \lambda^{-1} AB \rVert_{L^\infty}  \sup\{ |\daleth(\omega,h)| : \omega\geq \zeta\})
	\label{eq:misc_199}
	\end{multline}
	can be used. Because $\exp(-4\Theta(\kappa+2)^{-1} \zeta^{(\kappa+2)/2}/h)$ is decreasing on $(0,Z)$ for each individual $h$,
	\begin{align}
	&\operatorname{sup}\{\lambda^{-1} |A(\omega/h^{2/(\kappa+2)})|^2: \omega\geq \zeta\} \leq \exp(-4\Theta (\kappa+2)^{-1} \zeta^{(\kappa+2)/2}/h )  \lVert \exp(\cdots) \lambda^{-1} A^2 \rVert_{L^\infty}, 
	\intertext{and}
	&\operatorname{sup}\{|\daleth(\omega,h)| :\omega\geq \zeta \}  \leq \exp(-4\Theta (\kappa+2)^{-1} \zeta^{(\kappa+2)/2}/h ) \lVert  \exp(4\Theta (\kappa+2)^{-1} \omega^{(\kappa+2)/2}/h ) \daleth(\omega,h) \rVert_{L^\infty}.
	\end{align}
	So, \cref{eq:misc_199} yields
	\begin{multline} 
	\lVert \exp(4\Theta (\kappa+2)^{-1} \zeta^{(\kappa+2)/2}/h ) \daleth \Phi(\zeta,h) \rVert_{L^\infty} \\ \leq Z \lVert \hat{E} \rVert_{L^\infty} (\lVert \exp(\cdots) \lambda^{-1} A^2 \rVert_{L^\infty}+ \lVert \lambda^{-1} A B \rVert_{L^\infty} ) \lVert (\gimel,\daleth) \rVert_{\calX_{h_0,\lambda_0}}.
	\end{multline}
	Altogether, $\lVert \Phi (\gimel,\daleth) \rVert_{\calX_{h_0,\lambda_0}} \leq \Sigma Z \lVert \hat{E} \rVert_{L^\infty} \lVert (\gimel,\daleth) \rVert_{\calX_{h_0,\lambda_0}}$ for $\Sigma = 	\lVert \lambda^{-1} AB\rVert_{L^\infty}+ \lVert \exp(\cdots)\lambda^{-1} B^2 \rVert_{L^\infty}+ \lVert \exp(\cdots)\lambda^{-1} A^2 \rVert_{L^\infty}$. 
\end{proof}

\begin{proposition}
	There exists some $\lambda_{00}>0$ such that, if $\lambda_0\geq \lambda_{00}$, then for any $\aleph,\beth \in C^\infty((0,Z]_\zeta \times (0,\infty)_h;\bbC) $ satisfying \cref{eq:var_integrated} with $\zeta_0 = Z$, then, if $\varsigma<0$ or $Q\in \operatorname{span}_\bbC Q_\infty \backslash \{0\}$, then
	\begin{equation}
	\frac{\partial^j \aleph }{\partial h^j} \in h^\infty  L^\infty (V_{h_0,\lambda_0};\bbC), \qquad 
	\frac{\partial^j \beth }{\partial h^j} \in h^\infty  \operatorname{Mod}_{A} L^\infty (V_{h_0,\lambda_0};\bbC) 
	\end{equation} 
	hold for all $j\in \bbN$.
	\label{prop:est1}
\end{proposition}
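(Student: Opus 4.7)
The plan is to obtain $(\aleph, \beth)$ as the fixed point of the affine map
\[
T(\gimel, \daleth) = \calJ + \Phi(\gimel, \daleth), \qquad \calJ(\zeta, h) := \int_\zeta^Z G(\omega, h) \begin{bmatrix} B(\omega, h) \\ -A(\omega, h) \end{bmatrix} \dd \omega
\]
on the space $\calX_{h_0, \lambda_0}$. By \Cref{lem:Phi_boundedness}, I would choose $\lambda_{00}$ so that $\lVert \Phi \rVert_{\calX_{h_0,\lambda_0}\to \calX_{h_0, \lambda_0}} \leq 1/2$ for all $\lambda_0 \geq \lambda_{00}$; then $I - \Phi$ is invertible with operator norm at most $2$, so $(\aleph, \beth) = \sum_{n=0}^\infty \Phi^n \calJ$ is the unique solution of \cref{eq:var_integrated} with $\zeta_0 = Z$, provided $\calJ$ lies in an appropriate space.

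For the base case $j = 0$, I would sharpen the argument by introducing the refined Banach space $\calY_{h_0, \lambda_0} \subseteq \calX_{h_0, \lambda_0}$ consisting of pairs $(\gimel, \daleth)$ with $\gimel \in h^\infty L^\infty(V_{h_0,\lambda_0};\bbC)$ and $\daleth \in h^\infty \operatorname{Mod}_A L^\infty(V_{h_0,\lambda_0};\bbC)$, and then establishing (i) $\calJ \in \calY_{h_0, \lambda_0}$, and (ii) $\Phi$ restricts to a bounded endomorphism of $\calY_{h_0, \lambda_0}$ with norm $O(\lambda_0^{-(\kappa+2)/2})$. Both facts rest on the same four weighted sup-norms $\lVert \lambda^{-1} AB \rVert_{L^\infty}$, $\lVert \exp(\cdots) \lambda^{-1} A^2 \rVert_{L^\infty}$, $\lVert \exp(\cdots) \lambda^{-1} B^2 \rVert_{L^\infty}$ that drove the proof of \Cref{lem:Phi_boundedness}, combined with the fact that $G \in h^\infty L^\infty$ (with any exponential weights in play entering only through the quasimode factors $A, B$ themselves). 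The hypothesis $\varsigma<0$ or $A \in \operatorname{span}_\bbC Q_\infty$ is what reconciles these estimates with the finer weight $\operatorname{Mod}_A$: in the classically allowed case $\operatorname{Mod}_A$ is essentially trivial, while when $A = Q_\infty$ it decays exponentially in a way that the Wronskian relation couples to the exponential growth of $B$, keeping $\operatorname{Mod}_A \cdot |B|$ polynomially controlled. The Neumann series then converges inside $\calY_{h_0, \lambda_0}$, yielding the $j = 0$ estimates.

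For the inductive step, I would differentiate the integral equation \cref{eq:var_integrated} $j$ times in $h$ to obtain a new fixed-point equation $\partial_h^j (\aleph, \beth) = \calJ_j + \Phi(\partial_h^j (\aleph, \beth))$, where $\calJ_j$ is a Leibniz-rule sum of terms involving $\partial_h^m G$ for $m \leq j$ (which remain in $h^\infty L^\infty$, since $G$ does) and the lower-order derivatives $\partial_h^k (\aleph, \beth)$ for $k < j$ (which lie in $\calY_{h_0, \lambda_0}$ by the inductive hypothesis), each paired with $h$-derivatives of the kernel entries $\hat{E}\lambda^{-1} AB, \hat{E}\lambda^{-1} A^2, \hat{E}\lambda^{-1} B^2$. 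The chain-rule factors of $\lambda/h$ produced by differentiating $A(\zeta/h^{2/(\kappa+2)})$ or $B(\zeta/h^{2/(\kappa+2)})$ in $h$ are tamed by rewriting higher $\lambda$-derivatives of $A, B$ via the ODE \cref{eq:barN} in terms of $A, A', B, B'$ themselves, so that the whole forcing $\calJ_j$ again belongs to $\calY_{h_0, \lambda_0}$. Re-applying the Neumann series inside $\calY_{h_0, \lambda_0}$ then gives the bound on $\partial_h^j (\aleph, \beth)$, closing the induction.

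The main obstacle is the sharpening from $\calX_{h_0, \lambda_0}$ to $\calY_{h_0, \lambda_0}$ in item (i): the raw Neumann-series estimate in $\calX_{h_0, \lambda_0}$ allows the first component to grow as $\exp(4\Theta(\kappa+2)^{-1}\zeta^{(\kappa+2)/2}/h)$, which when $\Theta = 1$ is genuinely exponentially large and cannot be absorbed into $h^\infty$. Confirming that the integration from $\zeta$ to $Z$ does not actually produce such growth requires the hypothesis to exclude exactly the sign configurations where exponential overshoot occurs, consistent with the discussion preceding the proposition about where initial data can be prescribed. The complementary regime must be treated by a separate proposition via $\Xi$ and the alternative initial surface $\zeta_0 = \lambda_0 h^{2/(\kappa+2)}$ from \Cref{lem:Xi_boundedness}.
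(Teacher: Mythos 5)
Your overall architecture — invoking \Cref{lem:Phi_boundedness} to make $\Phi$ a contraction on a weighted space, writing $(\aleph,\beth)$ as a Neumann series, and differentiating the integral equation to set up the induction in $j$ — agrees with the paper's. But there are two gaps worth flagging.

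First, the space $\calY_{h_0,\lambda_0}$ carrying the weight $h^\infty$ is not a Banach space: $h^\infty L^\infty = \bigcap_k h^k L^\infty$ is a Fr\'echet space, so the contraction mapping theorem does not apply as you've stated it. This is repairable (run the contraction in each $h^k L^\infty \oplus h^k\operatorname{Mod}_A L^\infty$, noting that $\Phi$ has $k$-uniform operator norm since multiplication by $h^k$ commutes with integration over $\omega$ at fixed $h$), but the paper sidesteps the issue entirely via a cleaner observation: for each fixed $h$ the map $F\mapsto(\aleph,\beth)$ is linear, so $(\aleph,\beth)[F] = h^k\,(\aleph,\beth)[h^{-k}F]$, and since $h^{-k}F$ again lies in the admissible class, it suffices to prove only the \emph{unweighted} estimates $\partial_h^j\aleph\in L^\infty$, $\partial_h^j\beth\in\operatorname{Mod}_A L^\infty$; the arbitrary extra $h^k$ decay is then free. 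This avoids all Fr\'echet considerations and keeps the fixed-point argument in genuine Banach spaces.

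Second, the proposition's hypothesis hands you $\aleph,\beth\in C^\infty$ already satisfying \cref{eq:var_integrated}; you are not constructing them, you are proving they land in the right function space. So ``$(\aleph,\beth) = \sum_n \Phi^n\calJ$ is the unique solution'' needs justification — you know the Neumann series gives the unique fixed point in $\calX_{h_0,\lambda_0}$, but the given $\aleph,\beth$ are a priori only in $C^\infty((0,Z]\times(0,\infty))$, not known to be in $\calX_{h_0,\lambda_0}$. The paper closes this by running the same contraction argument on $V_{h_0,\lambda_0}\cap\{h\geq h_1\}$ for each $h_1>0$; on that set the smooth solution is obviously bounded, so by uniqueness of the fixed point there it coincides with the Neumann series, and since $h_1$ is arbitrary the identification extends to all of $V_{h_0,\lambda_0}$. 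Your write-up should include some such step. Beyond these two points, the use of \Cref{lem:modAint} (monotonicity of $\operatorname{Mod}_A$ under integration from $\zeta$ to $Z$) and the rewriting of higher $\lambda$-derivatives via the ODE are exactly the paper's moves.
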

\begin{proof}
	Since $\aleph,\beth$ depend linearly on $F$, which is $O(h^\infty)$ relative to $Q$, it actually suffices to prove only that  
	\begin{equation} 
		\partial_h^j  \aleph \in L^\infty (V_{h_0,\lambda_0};\bbC),\qquad \partial_h^j \beth \in \operatorname{Mod}_A L^\infty (V_{h_0,\lambda_0};\bbC).
		\label{eq:misc_255}
	\end{equation} 
	Indeed, once this is known, we can use the $O(h^\infty)$ assumption on $F$ (and the linear dependence of $\aleph,\beth$ on $F$) to 	 
	conclude that $\partial_h^j  \aleph \in h^k L^\infty (V_{h_0,\lambda_0};\bbC)$ and $\partial_h^j \beth \in h^k \operatorname{Mod}_A L^\infty (V_{h_0,\lambda_0};\bbC)$ for all $j,k\in \bbN$.
	
	Let 
	\begin{align}
	\calI &= \int_{\zeta}^{Z} \bigg( \frac{\hat{E}(\omega,h)}{\lambda(\omega,h)}
	\begin{bmatrix}
	-A(\omega,h)B(\omega,h)\!\!\! & -B(\omega,h)^2 \\ 
	A(\omega,h)^2 & A(\omega,h)B(\omega,h)
	\end{bmatrix} 
	\begin{bmatrix} 
	\aleph(\omega,h) \\ \beth(\omega,h) 
	\end{bmatrix} \dd \omega, \\ 
	\calC &= \int_\zeta^{Z} G(\omega,h)  \begin{bmatrix}
	B(\omega,h) \\ - A(\omega,h)
	\end{bmatrix} \dd \omega. 
	\end{align} 
	Then, $(\aleph,\beth)^\intercal = \calI+\calC$. Differentiating, 
	\begin{equation}
	\frac{\partial^j}{\partial h^j} 
	\begin{bmatrix}
	\aleph \\ \beth 
	\end{bmatrix}
	= \frac{\partial^j \calI}{\partial h^j} + \frac{\partial^j \calC}{\partial h^j}.
	\end{equation}

	First consider 
	\begin{equation}
	\frac{\partial^k \calC}{\partial h^k} = 
	\sum_{k_1=0}^{k} \binom{k}{k_1}  \int_\zeta^Z \frac{\partial^{k-k_1} G}{\partial h^{k-k_1}} \frac{\partial^{k_1} }{\partial h^{k_1}}  \begin{bmatrix}
	B(\omega,h) \\ - A(\omega,h)
	\end{bmatrix} \dd \omega.
	\label{eq:misc_188}
	\end{equation}
	Each of the integrands above is in $h^\infty \zeta^{-K} \operatorname{Mod}_Q^{1/2} (( \operatorname{Mod}_B^{1/2} L^\infty) \oplus ( \operatorname{Mod}_A^{1/2} L^\infty)) = h^\infty \zeta^{-K} (L^\infty \oplus (\operatorname{Mod}_A L^\infty) )$ for some large $K\in \bbN$. Thus, by the to-be-proven \Cref{lem:modAint}, each term in the sum in \cref{eq:misc_188} is in $h^\infty (L^\infty \oplus \operatorname{Mod}_{A} L^\infty)$. 
	
	On the other hand, consider
	\begin{multline}
	\frac{\partial^k \calI}{\partial h^k} = \sum_{k_1=0}^{k}\sum_{k_2=0}^{k_1}\sum_{k_3=0}^{k_2} \binom{k}{k_1,k_2,k_3} \int_\zeta^Z \frac{\partial^{k_0-k_1} \hat{E}(\omega,h)}{ \partial h^{k_0-k_1} } \Big( \frac{\partial^{k_1-k_2}}{\partial h^{k_1-k_2}} \frac{1}{\lambda(\omega,h)} \Big)  \\ 
	\times \calM_{k_2-k_3,0}(\omega,h) \frac{\partial^{k_3}}{\partial h^{k_3}} 
	\begin{bmatrix}
	\aleph(\omega,h) \\ \beth(\omega,h)
	\end{bmatrix} \dd \omega.
	\label{eq:misc_200}
	\end{multline}
	Suppose that we have proven \cref{eq:misc_255} for $j<k$.
	Inserting a cutoff $\chi_0 \in C^\infty(M)$ which is identically $1$ on $V_{h_0,\lambda_0}$ and therefore on $\mathrm{cl}_MV_{h_0,\lambda_0} $ and identically vanishing near $\mathrm{be}$, the inductive hypothesis shows, unless $k_3=k$, that the integrand in \cref{eq:misc_200} satisfies 
	\begin{equation}
	\frac{\partial^{k_0-k_1} \hat{E}(\omega,h)}{ \partial h^{k_0-k_1} } \Big( \frac{\partial^{k_1-k_2}}{\partial h^{k_1-k_2}} \frac{1}{\lambda(\omega,h)} \Big)  \calM_{k_2-k_3,0}(\omega,h) \frac{\partial^{k_3}}{\partial h^{k_3}} 
	\begin{bmatrix}
	\aleph(\omega,h) \\ \beth(\omega,h)
	\end{bmatrix} 
	\in h^\infty 
	\begin{bmatrix}
	L^\infty \\ 
	\operatorname{Mod}_A L^\infty 
	\end{bmatrix}
	\end{equation}
	on $\bar{V}_{h_0,\lambda_0}$. Thus, by \Cref{lem:modAint}, each term in the sum in \cref{eq:misc_200}, except possibly the single term with $k_3 = k$, lies in $h^\infty (L^\infty \oplus \operatorname{Mod}_A L^\infty)$.

	To summarize, for some $\calC_j \in h^\infty (L^\infty (V_{h_0,\lambda_0};\bbC) \oplus \operatorname{Mod}_{A} L^\infty (V_{h_0,\lambda_0};\bbC))$, 
	\begin{equation}
	\frac{\partial^j}{\partial h^j} 
	\begin{bmatrix}
	\aleph \\ \beth 
	\end{bmatrix}
	= \Phi \frac{\partial^j}{\partial h^j}
	\begin{bmatrix}
	\aleph \\ 
	\beth 
	\end{bmatrix}
	+ \calC_j. 
	\end{equation}
	Consider the map $L^\infty (V_{h_0,\lambda_0};\bbC) \oplus \operatorname{Mod}_{A} L^\infty (V_{h_0,\lambda_0};\bbC)\to L^\infty (V_{h_0,\lambda_0};\bbC) \oplus \operatorname{Mod}_{A} L^\infty (V_{h_0,\lambda_0};\bbC)$ given by
	\begin{equation}
	\begin{bmatrix}
	\gimel \\ \daleth
	\end{bmatrix}
	\mapsto \Phi\begin{bmatrix}
	\gimel \\ \daleth
	\end{bmatrix}
	+ \calC_j. 
	\label{eq:misc_191}
	\end{equation}
	So, $(\partial_h^j \aleph ,\partial_h^j \beth)^\intercal$ is a fixed point of the affine map \cref{eq:misc_191}.

	By \Cref{lem:Phi_boundedness}, if $\lambda_0$ is sufficiently large, then $\Phi$ is a contraction map. Thus, the map \cref{eq:misc_191} has a unique fixed point $(\gimel_j,\daleth_j)$, and it is given by a convergent series $\sum_{n=0}^\infty \Phi^n \calC_j$. For any $h_1>0$, the map \cref{eq:misc_191} is also a contraction map on $L^\infty (V_{h_0,\lambda_0} \cap \{h\geq h_1\};\bbC) \oplus \operatorname{Mod}_{A} L^\infty (V_{h_0,\lambda_0}\cap \{h\geq h_1\};\bbC)$, and $(\gimel_j|_{\{h\geq h_1\}},\daleth_j|_{\{h\geq h_1\}})$ is a fixed point for it. But, so is 
	\begin{equation} 
	\partial_h^j(\aleph,\beth)|_{V_{h_0,\lambda_0}\cap \{h\geq h_1\}} \in C^\infty(V_{h_0,\lambda_0}\cap \{h\geq h_1\})^2. 
	\end{equation}
	Thus, by the uniqueness of the fixed point, $(\gimel_j,\daleth_j)$ agrees with $\smash{\partial_h^j}(\aleph,\beth)$ in $\{h\geq h_1\}$. Since $h_1$ can be taken arbitrarily small, the agreement holds everywhere in $V_{h_0,\lambda_0}$. We can therefore conclude that 
	\begin{equation}
	\partial_h^j (\aleph,\beth) \in L^\infty (V_{h_0,\lambda_0};\bbC) \oplus \operatorname{Mod}_{A} L^\infty (V_{h_0,\lambda_0};\bbC). 
	\end{equation}
\end{proof}

\begin{lemma}
	If $H\in h^\infty  \operatorname{Mod}_A L^\infty(V_{h_0,\lambda_0};\bbC)$, then $\int_\zeta^Z H(\omega,h) \dd \omega \in h^\infty \operatorname{Mod}_A L^\infty(V_{h_0,\lambda_0};\bbC)$. 
	\label{lem:modAint}
\end{lemma}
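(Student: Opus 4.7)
The strategy is to exploit the monotonicity of $\operatorname{Mod}_A(\omega,h)=\operatorname{Mod}_A(\omega/h^{2/(\kappa+2)})$ as a function of $\omega$ with $h$ fixed, for $(\omega,h)\in V_{h_0,\lambda_0}$. In the classically forbidden case $\varsigma>0$, where $A=Q_\infty$, \Cref{prop:quasimode_large_argument} gives $|A|,|A'|\sim \rho^{\kappa/(2\kappa+4)} \exp(-2(\kappa+2)^{-1} \lambda^{(\kappa+2)/2})$ for $\lambda\gg 1$. Consequently, provided $\lambda_0$ is sufficiently large (which is the standing assumption, as in \Cref{prop:est1}), the function $\lambda\mapsto \operatorname{Mod}_A(\lambda)$ is monotonically decreasing on $[\lambda_0,\infty)$, and therefore so is $\omega\mapsto \operatorname{Mod}_A(\omega,h)$ on the $\omega$-slice of $V_{h_0,\lambda_0}$. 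In the classically allowed case $\varsigma<0$, the asymptotic $|A|,|A'|\sim \rho^{\kappa/(2\kappa+4)}$ (bounded, in absolute value, above and below by positive multiples of $\lambda^{-\kappa/(2\kappa+4)}$) gives an analogous monotonicity for $\lambda_0$ large, or else one simply uses comparability to a constant.

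Granted this monotonicity, the remainder is a direct estimate. The assumption $H\in h^\infty \operatorname{Mod}_A L^\infty(V_{h_0,\lambda_0};\bbC)$ means that for each $k\in\bbN$ there exists a constant $C_k>0$ with $|H(\omega,h)|\leq C_k h^k \operatorname{Mod}_A(\omega,h)$ throughout $V_{h_0,\lambda_0}$. For $(\zeta,h)\in V_{h_0,\lambda_0}$, the segment $[\zeta,Z]\times\{h\}$ lies entirely inside $V_{h_0,\lambda_0}$, and monotonicity yields $\operatorname{Mod}_A(\omega,h)\leq \operatorname{Mod}_A(\zeta,h)$ for all $\omega\in[\zeta,Z]$. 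Consequently,
\begin{equation*}
\Big|\int_\zeta^Z H(\omega,h)\dd\omega\Big| \leq C_k h^k \int_\zeta^Z \operatorname{Mod}_A(\omega,h)\dd\omega \leq C_k h^k (Z-\zeta)\operatorname{Mod}_A(\zeta,h) \leq C_k Z h^k \operatorname{Mod}_A(\zeta,h).
\end{equation*}
As $k$ is arbitrary, this demonstrates $\int_\zeta^Z H(\omega,h)\dd\omega\in h^\infty \operatorname{Mod}_A L^\infty(V_{h_0,\lambda_0};\bbC)$, as required.

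The only nonobvious ingredient is the monotonicity claim, and it is the place one would need to exercise care. If $\operatorname{Mod}_A$ as defined by \cref{eq:modulus_def} contains a cutoff factor that prevents strict monotonicity on a bounded initial interval, one replaces the uniform comparison $\operatorname{Mod}_A(\omega,h)\leq \operatorname{Mod}_A(\zeta,h)$ by $\operatorname{Mod}_A(\omega,h)\leq C\operatorname{Mod}_A(\zeta,h)$ with $C$ depending only on $\lambda_0$; this suffices to conclude the same containment. Everything else is an elementary majorization, and no special structural features of $V_{h_0,\lambda_0}$ beyond the $\lambda_0$-largeness hypothesis already invoked in \Cref{prop:est1} are needed.
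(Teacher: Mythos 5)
Your proof is correct and follows essentially the same route as the paper's: factor out $h^k\operatorname{Mod}_A$ from $H$, then dominate $\operatorname{Mod}_A(\omega,h)$ on $[\zeta,Z]\times\{h\}$ by a constant multiple of $\operatorname{Mod}_A(\zeta,h)$ using the (approximate) monotone decrease of $\operatorname{Mod}_A(\lambda)$ for large $\lambda$. The paper does not assert genuine monotonicity on $[\lambda_0,\infty)$; instead it invokes \Cref{lem:mod_exp_lower} to get $\operatorname{Mod}_A(\lambda)\leq 2\operatorname{Mod}_A(\lambda')$ for $\lambda>\lambda'\geq\lambda_1$ and then handles the compact remainder $[\lambda_0,\lambda_1]$ by comparability — which is exactly the hedged fallback you mention — so the two arguments coincide.
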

\begin{proof}
	In the $\varsigma<0$ case, the claim is true by the polynomial growth/decay of the elements of $\calQ$, so assume $\varsigma>0$, in which case $A\in \operatorname{span}_\bbC Q_\infty \backslash \{0\}$. 
	We can write $H(\zeta,h) = h^k H_k(\zeta,h) \operatorname{Mod}_A(\zeta,h)$ for some $H_k \in h^k L^\infty (V_{h_0,\lambda_0};\bbC)$. Then, 
	\begin{equation}
	\Big|\int_\zeta^Z H(\omega,h)\dd \omega \Big| \leq h^k Z \lVert H_k \rVert_{L^\infty} \sup\{\operatorname{Mod}_A (\omega,h) : \omega\in [\zeta,Z] \} . 
	\end{equation}
	The supremum on the right-hand side is finite and can be bounded as 
	\begin{equation}
	\sup\{\operatorname{Mod}_A (\omega,h) : \omega\in [\zeta,Z] \} \leq \operatorname{sup}\{ \operatorname{Mod}_A(\lambda) : \lambda\geq \lambda(\zeta,h) \} < \infty
	\end{equation}
	on $V_{h_0,\lambda_0}$. 
	By \Cref{lem:mod_exp_lower}, there exists some $\lambda_1>\lambda_0$ such that for all $\lambda,\lambda'\geq \lambda_1$ with $\lambda>\lambda'$,  $\operatorname{Mod}_A(\lambda) \leq 2 \operatorname{Mod}_A(\lambda')$. Consequently:
	\begin{equation}
	\operatorname{sup}\{ \operatorname{Mod}_A(\lambda) : \lambda\geq \lambda(\zeta,h) \} \leq 
	\begin{cases}
	2 \operatorname{Mod}_A(\lambda(\zeta,h)) & (\lambda(\zeta,h) \geq \lambda_1 ), \\ 
	\operatorname{max}\{2 \operatorname{Mod}_A(\lambda_1) , \operatorname{sup}_{\lambda \in [\lambda(\zeta,h) , \lambda_1]}\operatorname{Mod}_A(\lambda)  \} & (\text{otherwise} ).
	\end{cases}
	\end{equation} 
	If the second case holds, then $\lambda(\zeta,h) \in [\lambda_0,\lambda_1]$. Because $\operatorname{Mod}_A(\lambda)$ is nonvanishing, there exists some $C>1$ such that $\operatorname{Mod}_A(\lambda') \leq C \operatorname{Mod}_A(\lambda)$ for all $\lambda,\lambda' \in [\lambda_0,\lambda_1]$. The inequality above therefore yields 
	\begin{equation}
	\operatorname{sup}\{ \operatorname{Mod}_A(\lambda) : \lambda\geq \lambda(\zeta,h) \} \leq 2C \operatorname{Mod}_A(\lambda(\zeta,h)) = 2 C \operatorname{Mod}_A(\zeta,h).
	\end{equation}
	So, all in all, 
	\begin{equation}
	\Big|\int_\zeta^Z H(\omega,h)\dd \omega \Big| \leq 2 C h^k Z \lVert H_k \rVert_{L^\infty}  \operatorname{Mod}_A(\zeta,h) \in h^k \operatorname{Mod}_A L^\infty .
	\end{equation}
	Since $k$ was arbitrary, this completes the proof. 
\end{proof}

\subsection{Case without exponential decay}
\label{subsec:bad}

\begin{lemma}
	If $\varsigma<0$ or $Q\notin \operatorname{span}_\bbC Q_\infty$, so that the \emph{first} case of \cref{eq:misc_189} holds, then, for each $h_0,\lambda_0>0$, $\Xi$ is a bounded endomorphism of $\calX_{h_0,\lambda_0}$. Moreover, for fixed $h_0>0$, the operator norm $\lVert \Xi \rVert_{\calX_{h_0,\lambda_0}\to \calX_{h_0,\lambda_0}}$ satisfies 
	\begin{equation} 
	\lVert \Xi \rVert_{\calX_{h_0,\lambda_0}\to \calX_{h_0,\lambda_0}} =
	O(\lambda_0^{-(\kappa+2)/2})
	\end{equation} 
	as $\lambda_0\to\infty$. 
	\label{lem:Xi_boundedness}
\end{lemma}
\begin{proof}
	Let $\gimel\Xi$ denote the first component of $\Xi$, and let $\daleth \Xi$ denote the second. 
	We want to bound 
	\begin{equation}
	\lVert \Xi (\gimel,\daleth)
	\rVert_{\calX_{h_0,\lambda_0}} = \lVert \exp(-4\Theta (\kappa+2)^{-1} \omega^{(\kappa+2)/2}/h ) \gimel \Xi(\omega,h) \rVert_{L^\infty} + \lVert \daleth \Xi \rVert_{L^\infty}
	\label{eq:misc_205}
	\end{equation}
	for $\gimel \in \exp(4\Theta (\kappa+2)^{-1} \zeta^{(\kappa+2)/2}/h ) L^\infty(V_{h_0,\lambda_0};\bbC)$ and $\daleth \in L^\infty(V_{h_0,\lambda_0};\bbC)$.  
	The final term above can be bounded by
	\begin{multline}
	| \daleth \Xi(\zeta,h) | \leq Z \lVert \hat{E} \rVert_{L^\infty} ( \lVert \exp(\cdots) \lambda^{-1} A^2 \rVert_{L^\infty} \lVert \exp(-4\Theta (\kappa+2)^{-1} \omega^{(\kappa+2)/2}/h ) \gimel(\omega,h) \rVert_{L^\infty} \\ +
	\lVert \lambda^{-1} A B \rVert_{L^\infty} \lVert \daleth \rVert_{L^\infty}  ) .
	\end{multline}
	In order to bound the first term in \cref{eq:misc_205}, the inequality
	\begin{multline}
	|\gimel \Xi (\zeta,h)| \leq Z \lVert \hat{E} \rVert_{L^\infty}( \lVert \lambda^{-1} AB \rVert_{L^\infty} \operatorname{sup}\{|\gimel(\omega,h)| :\omega\leq \zeta \}  \\ +  \operatorname{sup}\{\lambda^{-1} |B(\omega/h^{2/(\kappa+2)})|^2: \omega\leq \zeta\} \lVert \daleth \rVert_{L^\infty} )
	\end{multline}
	can be used. Because $\exp(4\Theta(\kappa+2)^{-1} \zeta^{(\kappa+2)/2}/h)$ is increasing on $(0,Z)$ for each individual $h$, 
	\begin{align}
	&\operatorname{sup}\{|\gimel(\omega,h)| :\omega\leq \zeta \}  \leq \exp(4\Theta (\kappa+2)^{-1} \zeta^{(\kappa+2)/2}/h ) \lVert  \exp(-4\Theta (\kappa+2)^{-1} \omega^{(\kappa+2)/2}/h ) \gimel(\omega,h) \rVert_{L^\infty}, 
	\intertext{and}
	&\operatorname{sup}\{\lambda^{-1} |B(\omega/h^{2/(\kappa+2)})|^2: \omega\leq \zeta\} \leq \exp(4\Theta (\kappa+2)^{-1} \zeta^{(\kappa+2)/2}/h )  \lVert \exp(\cdots) \lambda^{-1} B^2 \rVert_{L^\infty}.  
	\end{align}
	So, 
	\begin{multline} 
	\lVert \exp(-4\Theta (\kappa+2)^{-1} \omega^{(\kappa+2)/2}/h ) \gimel \Xi(\omega,h) \rVert_{L^\infty} \\ \leq Z \lVert \hat{E} \rVert_{L^\infty} ( \lVert \lambda^{-1} A B \rVert_{L^\infty} +  \lVert \exp(\cdots) \lambda^{-1} B^2 \rVert_{L^\infty}) \lVert (\gimel,\daleth) \rVert_{\calX_{h_0,\lambda_0}}.
	\end{multline} 
	Altogether, $\lVert \Xi (\gimel,\daleth) \rVert_{\calX_{h_0,\lambda_0}} \leq \Sigma Z \lVert \hat{E} \rVert_{L^\infty} \lVert (\gimel,\daleth) \rVert_{\calX_{h_0,\lambda_0}}$ for $\Sigma = 	\lVert \lambda^{-1} AB\rVert_{L^\infty}+ \lVert \exp(\cdots)\lambda^{-1} B^2 \rVert_{L^\infty}+ \lVert \exp(\cdots)\lambda^{-1} A^2 \rVert_{L^\infty}$. 
\end{proof}

\begin{propositionp}
	There exists some $\lambda_{00}>0$ such that, if $\lambda_0\geq \lambda_{00}$, then for any $\aleph,\beth \in C^\infty((0,Z]_\zeta \times (0,\infty)_h;\bbC) $ satisfying \cref{eq:var_integrated} with $\zeta_0(h) = \lambda_0 h^{2/(\kappa+2)}$, then, if $\varsigma<0$ or $Q\notin \operatorname{span}_\bbC Q_\infty$, then
	\begin{equation}
	\frac{\partial^j \aleph }{\partial h^j} \in h^\infty \operatorname{Mod}_{B}  L^\infty (V_{h_0,\lambda_0};\bbC), \qquad 
	\frac{\partial^j \beth }{\partial h^j} \in h^\infty   L^\infty (V_{h_0,\lambda_0};\bbC)
	\end{equation} 
	hold for all $j\in \bbN$.
	\label{prop:est2}
\end{propositionp}
The proof is analogous to the previous, for instance using \Cref{lem:Xi_boundedness} in place of \Cref{lem:Phi_boundedness}, and using \Cref{lem:modBint} in place of \Cref{lem:modAint}, so we omit the details.

\begin{lemma}
	If $H\in h^\infty  \operatorname{Mod}_B L^\infty(V_{h_0,\lambda_0};\bbC)$, then, letting $\zeta_0 = \lambda_0 h^{2/(\kappa+2)}$, $\int_{\zeta_0}^\zeta H(\omega,h) \dd \omega \in h^\infty \operatorname{Mod}_B L^\infty(V_{h_0,\lambda_0};\bbC)$. 
	\label{lem:modBint}
\end{lemma}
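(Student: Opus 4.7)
The plan is to mirror the proof of Lemma \ref{lem:modAint} almost verbatim, exchanging the roles of $A$ and $B$ and of the upper and lower limits of integration. The underlying mechanism is that, on the region of integration $[\zeta_0,\zeta] \subseteq [\lambda_0 h^{2/(\kappa+2)}, Z]$, the function $\operatorname{Mod}_B$ attains its supremum (up to a multiplicative constant) at the \emph{upper} endpoint $\zeta$, precisely because, for $\varsigma>0$, $B$ is the exponentially growing mode complementary to $A=Q_\infty$. In the $\varsigma<0$ case the oscillatory nature of $B$ means $\operatorname{Mod}_B$ is pinched between two positive constants on any compact subset of $(0,\infty)_\lambda$ and so bounded in $\lambda$, rendering the statement essentially trivial, differing from the standard bound $\smash{|\int_{\zeta_0}^{\zeta} H \dd\omega|} \leq Z \lVert H \rVert_{L^\infty}$ only by the fact that we want to keep the $\operatorname{Mod}_B$ weight.

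More precisely, I would write $H = h^k H_k \operatorname{Mod}_B$ for $H_k \in h^k L^\infty(V_{h_0,\lambda_0};\bbC)$ and then estimate
\begin{equation}
	\Big|\int_{\zeta_0}^\zeta H(\omega,h) \dd \omega \Big| \leq h^k Z \lVert H_k \rVert_{L^\infty}\; \sup\{\operatorname{Mod}_B(\omega,h) : \omega\in [\zeta_0,\zeta]\},
\end{equation}
and bound the supremum on the right by $\sup\{\operatorname{Mod}_B(\lambda) : \lambda\in [\lambda_0, \lambda(\zeta,h)]\}$. The main step is then to establish a ``one-sided comparability'' analogue of Lemma \ref{lem:mod_exp_lower}: there exists $\lambda_1\geq \lambda_0$ such that for all $\lambda_1 \leq \lambda' \leq \lambda$ one has $\operatorname{Mod}_B(\lambda') \leq 2 \operatorname{Mod}_B(\lambda)$. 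This follows from the exponential-polyhomogeneous asymptotics in \Cref{prop:quasimode_large_argument}: in the $\varsigma>0$ case $\operatorname{Mod}_B(\lambda)$ is asymptotic to a positive constant times $\lambda^{-\kappa/(2\kappa+4)} \exp(4(\kappa+2)^{-1}\lambda^{(\kappa+2)/2})$, so the monotonicity is clear for $\lambda$ large, while the compactness argument on $[\lambda_0,\lambda_1]$ handles the rest.

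With this one-sided comparability in hand, I split into cases based on whether $\lambda(\zeta,h) \geq \lambda_1$ or $\lambda(\zeta,h) \in [\lambda_0,\lambda_1]$. In the first case,
\begin{equation}
	\sup\{\operatorname{Mod}_B(\lambda) : \lambda\in [\lambda_0, \lambda(\zeta,h)]\} \leq \operatorname{max}\{2\operatorname{Mod}_B(\lambda(\zeta,h)), \sup_{\lambda \in [\lambda_0,\lambda_1]} \operatorname{Mod}_B(\lambda) \},
\end{equation}
and both terms are bounded by a constant times $\operatorname{Mod}_B(\lambda(\zeta,h)) = \operatorname{Mod}_B(\zeta,h)$ (using that $\operatorname{Mod}_B$ is continuous and nonvanishing on $[\lambda_0,\lambda_1]$ and that $\operatorname{Mod}_B(\lambda(\zeta,h)) \geq \operatorname{Mod}_B(\lambda_1)/2$ when $\lambda(\zeta,h)\geq \lambda_1$). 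In the second case, the supremum is taken over $[\lambda_0, \lambda(\zeta,h)]\subseteq [\lambda_0,\lambda_1]$, which is compact, so there is a uniform constant $C>1$ with $\operatorname{Mod}_B(\lambda') \leq C\operatorname{Mod}_B(\lambda)$ for all $\lambda,\lambda' \in [\lambda_0,\lambda_1]$.

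Combining the two cases yields $|\int_{\zeta_0}^\zeta H \dd\omega| \leq 2 C h^k Z \lVert H_k \rVert_{L^\infty} \operatorname{Mod}_B(\zeta,h)$ for some universal constant, so the integral lies in $h^k \operatorname{Mod}_B L^\infty(V_{h_0,\lambda_0};\bbC)$. Since $k\in \bbN$ is arbitrary, the conclusion follows. The only technical point to be careful about is the monotonicity lemma for $\operatorname{Mod}_B$ in the $\varsigma > 0$ case, but this is immediate from the leading-order Liouville--Green asymptotics of $B$ and $B'$ and is no harder than the analogous statement for $A$ used in the proof of Lemma \ref{lem:modAint}.
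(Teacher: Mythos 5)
Your proof is correct and follows the paper's own argument essentially verbatim: the same decomposition $H = h^k H_k \operatorname{Mod}_B$, the same reduction to bounding $\sup\{\operatorname{Mod}_B(\lambda) : \lambda \in [\lambda_0,\lambda(\zeta,h)]\}$, the same one-sided near-monotonicity of $\operatorname{Mod}_B$ extracted from \Cref{lem:mod_exp_lower}, and the same two-case split at $\lambda_1$. (Two small slips worth fixing before submission: the algebraic prefactor in the $\varsigma>0$ asymptotics of $\operatorname{Mod}_B$ is $\lambda^{-\kappa/2}$, not $\lambda^{-\kappa/(2\kappa+4)}$ — you wrote the exponent for $B$ itself rather than $|B|^2$ — and your justification of the $\varsigma<0$ case should invoke \Cref{lem:mod_poly_lower} to show $\operatorname{Mod}_B$ is pinched between powers of $h$ on $V_{h_0,\lambda_0}$, not ``constants on compact subsets,'' since $\lambda$ is unbounded there.)
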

\begin{proof}
	In the $\varsigma<0$ case, the claim is true by the polynomial growth/decay of the elements of $\calQ$, so assume $\varsigma>0$. 
	We can write $H(\zeta,h) = h^k H_k(\zeta,h) \operatorname{Mod}_B(\zeta,h)$ for some $H_k \in L^\infty (V_{h_0,\lambda_0};\bbC)$. Then, 
	\begin{equation}
	\Big|\int_{\zeta_0}^\zeta H(\omega,h)\dd \omega \Big| \leq h^k Z \lVert H_k \rVert_{L^\infty} \sup\{\operatorname{Mod}_B (\omega,h) : \omega\in [\zeta_0,\zeta] \} . 
	\end{equation}
	The supremum on the right-hand side is finite and can be written as 
	\begin{equation}
	\sup\{\operatorname{Mod}_B (\omega,h) : \omega\in [\zeta_0,\zeta] \} \leq \operatorname{sup}\{ \operatorname{Mod}_B(\lambda) : \lambda_0\leq \lambda  \leq \lambda(\zeta,h) \} < \infty. 
	\end{equation}
	By \Cref{lem:mod_exp_lower}, there exists some $\lambda_1>0$ such that for all $\lambda,\lambda'\geq \lambda_1$ with $\lambda>\lambda'$,  $2\operatorname{Mod}_B(\lambda) \geq  \operatorname{Mod}_B(\lambda')$. Consequently: 
	\begin{equation}
	\operatorname{sup}\{ \operatorname{Mod}_B(\lambda) : \lambda_0\leq \lambda\leq \lambda(\zeta,h) \} \leq \max\{2 \operatorname{Mod}_B(\lambda(\zeta,h)) , \operatorname{sup}_{\lambda \in [\lambda_0, \lambda_1]}\operatorname{Mod}_B(\lambda) \}.
	\end{equation}
	In $\lambda>\lambda_0$, there exists some $C>2$ such that $\operatorname{Mod}_B(\lambda')\leq C\operatorname{Mod}_B(\lambda)$ whenever $\lambda'\in [\lambda_0,\lambda_1]$. Thus, 
	$\operatorname{sup}\{ \operatorname{Mod}_B(\lambda) : \lambda_0\leq \lambda\leq \lambda(\zeta,h) \} \leq C \operatorname{Mod}_B(\lambda(\zeta,h))$. So, 
	\begin{equation}
	\Big|\int_{\zeta_0}^\zeta H(\omega,h)\dd \omega \Big| \leq C h^k Z \lVert H_k \rVert_{L^\infty}  \operatorname{Mod}_B(\zeta,h) \in h^k \operatorname{Mod}_B L^\infty .
	\end{equation}
	Since $k$ was arbitrary, this completes the proof.
\end{proof}

\section{Analysis at the low corner of $\mathrm{fe}$}
\label{sec:low}

We now analyze the forced ODE $Pu=F$ near $\mathrm{fe} \cap \mathrm{be}$. The forcing $F \in C^\infty(M^\circ)$ is now assumed to satisfy 
\begin{equation} 
\operatorname{supp} F \Subset \bar{U}_{h_0, \Lambda }  = \{0\leq \lambda <\Lambda \text{ and } \varrho < \bar{\varrho}\} \subset M
\end{equation} 
for some $\Lambda>0$ and $\bar{\varrho} \in (0,Z/\Lambda)$, related to $h_0$ by $h_0 = \bar{\varrho}^{(\kappa+2)/2}$. (Note that $\bar{U}_{h_0,\Lambda}\cap \mathrm{ie} = \varnothing$.) 
Refer to \Cref{fig:low} for the definition of $\lambda,\varrho$. 
The behavior of $F$ at $\mathrm{be} = \{\lambda=0\}$ will be specified. 
We draw attention to the fact that $\Lambda$ may be arbitrarily large (but still finite). 
Thus, it is a bit misleading to refer to the analysis below as occurring ``near'' $\mathrm{fe}\cap \mathrm{be}$,  what matters is excluding $\mathrm{ie}\cup \mathrm{ze}$.

We can identify $\bar{U}_{h_0,\Lambda}$ with $[0,\Lambda)_\lambda\times [0,\bar{\varrho})_\varrho$. 
In the coordinates $(\lambda,\varrho)$, $P$ is a family $\{\hat{P}(\varrho)\}_{\varrho\geq 0}$ of operators on $[0,\Lambda]_\lambda$ converging, as $\varrho \to 0^+$, to $N(P)$ in an appropriate sense, e.g.\ in the topology of $\smash{\operatorname{Diff}^2_{\mathrm{b,E}}[0,\Lambda]_\lambda }$. The indicial roots, $\gamma_\pm = 1/2\pm \alpha$,
are independent of $\varrho$, per the assumption that $\alpha$ be a constant. Recall that we are assuming that $\Re \alpha >0$.

If $\calE,\calF$ are two index sets, we use 
\begin{equation} 
\calA_{\mathrm{c}}^{\calE,\calF}( [0,\Lambda)_\lambda\times [0,\bar{\varrho})_\varrho) \subseteq \calA^{-\infty,-\infty, \calE,\calF}(M)
\end{equation} 
to denote the set of polyhomogeneous functions $u$ with index set $\calE$ at $\{\varrho=0\}$ and $\calF$ at $\mathrm{be}=\{\lambda=0\}$ satisfying $\operatorname{supp} u \Subset \bar{U}_{h_0, \Lambda }= [0,\Lambda)_\lambda\times [0,\bar{\varrho})_\varrho$, so in particular $u$ is compactly supported.

\begin{proposition}
	Suppose that, for some index sets $\calE,\calF\subset \bbC\times \bbN$ with $\Re j>-3/2-\Re \alpha$ for every $(j,k)\in \calF$, we are given $F\in \calA_{\mathrm{c}}^{\calE,\calF}( [0,\Lambda)_\lambda\times [0,\bar{\varrho})_\varrho)$.
	Then, there exists a solution 
	\begin{equation} 
	u\in \calA_{\mathrm{c}}^{\calE,\calG}( [0,\Lambda)_\lambda\times [0,\bar{\varrho})_\varrho)
	\label{eq:misc_0jk}
	\end{equation} 
	to $Pu=F$ for some index set $\calG$. 
	\label{prop:low_corner}
\end{proposition}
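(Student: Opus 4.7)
The plan is to exploit the fact that, in the coordinates $(\lambda,\varrho)$ on $\bar{U}_{h_0,\Lambda}$, the operator factors as $P = \varrho^\kappa\tilde{P}(\varrho)$, where
\begin{equation*}
\tilde{P}(\varrho) = -\partial_\lambda^2 + \varsigma\lambda^\kappa + \lambda^{-2}\bigl(\alpha^2 - \tfrac{1}{4} + \Psi(\lambda)\bigr) + \varrho^2 E
\end{equation*}
is a smooth one-parameter family of regular singular operators on $[0,\Lambda]_\lambda$ with $\tilde{P}(0) = N$ and indicial roots $\gamma_\pm = 1/2 \pm \alpha$ independent of $\varrho$ (by the hypothesis that $\alpha$ is constant). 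The equation $Pu = F$ is thus a smooth family of inhomogeneous regular singular ODEs on $[0,\Lambda]_\lambda$, and I would construct a right inverse by variation of parameters, chosen so that the compact support in $\lambda$ is automatic.

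First I would appeal to the standard theory of regular singular ODE with smooth parameter dependence to construct linearly independent homogeneous solutions $\phi_\pm(\lambda,\varrho) \in \ker\tilde{P}(\varrho)$, jointly smooth in $\varrho\in[0,\bar{\varrho}]$ and polyhomogeneous at $\lambda=0$ with $\phi_\pm \in \lambda^{\gamma_\pm}\calA^{\bbN}[0,\Lambda]_\lambda$, with a possible $\lambda^{\gamma_+}\log\lambda$ correction in $\phi_-$ in the resonant case $2\alpha\in\bbN$. The Wronskian $W(\varrho)$ is then smooth and nowhere vanishing on $[0,\bar{\varrho}]$. The candidate solution is
\begin{equation*}
u(\lambda,\varrho) = \frac{1}{\varrho^\kappa W(\varrho)}\Bigl[\phi_-(\lambda,\varrho)\!\int_\lambda^\Lambda \phi_+(\lambda',\varrho)F(\lambda',\varrho)\,d\lambda' - \phi_+(\lambda,\varrho)\!\int_\lambda^\Lambda\phi_-(\lambda',\varrho)F(\lambda',\varrho)\,d\lambda'\Bigr],
\end{equation*}
which a direct computation shows satisfies $Pu = F$. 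Because $F$ vanishes identically in a neighborhood of $\lambda = \Lambda$, both integrals vanish there and $u$ inherits the compact support in $[0,\Lambda)_\lambda\times[0,\bar{\varrho})_\varrho$.

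Polyhomogeneity at $\{\varrho=0\}$ with the claimed index set (up to the natural shift from the $\varrho^{-\kappa}$ prefactor, which cancels against the scaling built into the index set convention) is inherited from smoothness of $\phi_\pm$, $W$, and $F$ in $\varrho$, together with the fact that integration in $\lambda'$ preserves $\varrho$-regularity. Polyhomogeneity at $\{\lambda=0\}$, with some index set $\calG$, is obtained by substituting the asymptotic expansion of $F$ at $\lambda=0$ into the integrals: each term $\lambda'^j\log^k\lambda'$ with $(j,k)\in\calF$ contributes, after multiplication by $\phi_\pm(\lambda')$ and integration, a polyhomogeneous piece of leading order $\lambda^{\gamma_\pm + j + 1}$, possibly with additional log factors at resonances where $\gamma_\pm + j + 1$ coincides with an element of the index set of $\phi_\mp$. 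The index set $\calG$ can be explicitly written down from $\calF$, $\gamma_\pm$, and the indicial resonances.

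The main technical obstacle is making precise sense of the integrals when $F$ is relatively singular at $\lambda=0$: the hypothesis $\Re j > -3/2 - \Re\alpha$ ensures $\int_\lambda^\Lambda \phi_+ F\,d\lambda'$ is absolutely convergent as $\lambda\to 0^+$, while $\int_\lambda^\Lambda \phi_- F\,d\lambda'$ may diverge (logarithmically or as an inverse power of $\lambda$) but is multiplied by $\phi_+(\lambda) \sim \lambda^{\gamma_+}$, and the identity $\gamma_+ + \gamma_- = 1$ guarantees that any resulting divergence is reabsorbed into a polyhomogeneous expansion with index set still inside $\calG$. Finally, joint polyhomogeneity at the corner $(\lambda,\varrho)=(0,0)$ follows from carrying out the term-by-term $\lambda$-expansion uniformly in $\varrho$, which is permitted by the smoothness of the integrands in $\varrho$ all the way down to $\varrho=0$.
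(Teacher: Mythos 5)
Your proposal is correct and uses the same core idea as the paper — variation of parameters for a smooth one-parameter family of regular singular ODEs on $[0,\Lambda]_\lambda$, with the homogeneous solutions $\phi_\pm$ playing the role of the paper's $v_\pm$. The genuine difference is in the choice of integration limits: you put both integrals $\int_\lambda^\Lambda$, i.e.\ you directly build the solution with vanishing Cauchy data at $\lambda=\Lambda$, so compact support in $[0,\Lambda)_\lambda$ is automatic. The paper instead forms the Green's-function integral $v=\int_0^\infty K(\lambda,\lambda',\varrho)F\,d\lambda'$ built from the recessive solution at $\lambda'=0$ (so $\int_0^\lambda v_+F$ is manifestly absolutely convergent under the hypothesis on $\calF$), and then must perform a second step, adding a homogeneous solution $w$ matching $-v,-v'$ at $\lambda=\Lambda$, to restore compact support. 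The two constructions produce the same $u$ by uniqueness of the final-value problem at $\Lambda$. What your route buys is the elimination of that second step; what it costs is that the integral $\int_\lambda^\Lambda\phi_-F\,d\lambda'$ need not converge as $\lambda\to 0^+$ under the stated hypothesis $\Re j>-3/2-\Re\alpha$ (only $\Re j>-3/2+\Re\alpha$ would give absolute convergence), so you must argue, as you do, that the divergent terms are themselves polyhomogeneous in $\lambda$ and that multiplication by $\phi_+\sim\lambda^{1/2+\alpha}$ restores a polyhomogeneous expansion; the paper's arrangement sidesteps this by having the potentially divergent piece already multiplied by the recessive $v_+$ inside the kernel. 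Both approaches handle joint polyhomogeneity at the corner $(\lambda,\varrho)=(0,0)$ at the same level of rigor (both gloss over it as a routine consequence of smooth $\varrho$-dependence), so neither is a gap; your proof is valid.
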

\begin{proof}
	The map $[0,\bar{\varrho})_{\varrho} \ni \varrho \mapsto \hat{P}(\varrho)\in \lambda^{-2}\operatorname{Diff}_{\mathrm{b,E}}^2[0,\Lambda]_\lambda$
	is smooth. Recall that $\operatorname{Diff}_{\mathrm{b,E}}[0,\Lambda]$ consists of those differential operators in the algebra generated over $C^\infty[0,\Lambda]$ by $\lambda \partial_\lambda$. 
	Moreover, the coefficient of $\partial_\lambda$ in $P(\varrho)$ is identically zero.
	From the theory of regular singular ODE, there exist independent solutions 
	\begin{equation} 
	\{v_\pm(\lambda,\varrho)\}_{\varrho \in [0,\bar{\varrho})} \subseteq \calA^{\calF(\alpha),(0,0)}[0,\Lambda]_\lambda
	\end{equation}
	to $\hat{P}(\varrho) v_{\pm}(\lambda,\varrho) = 0$ depending smoothly on $\varrho$, all the way down to $\varrho=0$. Here, $\calF(\alpha)$ is the index set at $\lambda=0$. Choose $v_+$ to be recessive at $\lambda=0$.

	Now let $W(\varrho) =  v_-(\lambda,\varrho)v_+'(\lambda,\varrho)-v_-'(\lambda,\varrho) v_+(\lambda,\varrho) $ denote their Wronskian, where the primes denote differentiation in the first slot. The Wronskian is a function of $\varrho$ alone, smooth down to $\varrho=0$. 
	Since $v_\pm$ are independent, $W$ is nonvanishing. 
	Now let 
	\begin{equation}
	K(\lambda,\lambda',\varrho) = \frac{1}{W(\varrho)} 
	\begin{cases}
	v_- (\lambda,\varrho) v_+ (\lambda',\varrho) & (\lambda > \lambda'), \\ 
	v_+(\lambda,\varrho) v_-(\lambda',\varrho) & (\lambda<\lambda').
	\end{cases}
	\end{equation}
	Consider the function $v(\lambda,\varrho) = K(\lambda,F(-,\varrho),\varrho)$, i.e.\ 
	\begin{equation}
	v(\lambda,\varrho) = \int_0^\infty K(\lambda,\lambda',\varrho) F(\lambda',\varrho) \dd \lambda ' .
	\end{equation}
	The integral converges because $v_+(\lambda) \in \lambda^{1/2 + \Re \alpha} L^\infty_{\mathrm{loc}}[0,\infty)_\lambda$, so 
	\begin{equation} \Big| 
	\int_0^\lambda v_+(\lambda',\varrho) F(\lambda',\varrho) \dd \lambda'\Big| \preceq \int_0^\lambda s^{1/2 + \Re \alpha + \min \calF} \dd s \preceq \int_0^\lambda s^{-1+\epsilon} \dd s < \infty
	\end{equation}
	for some $\epsilon>0$, 
	where $\min \calF = \min\{\Re j : (j,k)\in \calF\}$.

	The function $v$ satisfies $\hat{P}(\varrho)v = F$, and it is polyhomogeneous: $v \in \calA^{\calE,\calG_0}( [0,\Lambda)_\lambda\times [0,\bar{\varrho})_\varrho)$
	for some index set $\calG_0$, which can be computed in terms of $\calF(\alpha)$ and $\calF$. 
	
	We have $v(\Lambda,\varrho),\partial_\lambda v(\lambda,\varrho)|_{\lambda=\Lambda} \in \calA^{\calE}[0,\bar{\varrho})_{\varrho}$. 
	
	Note that $v$ may not satisfy $\operatorname{supp} v \Subset [0,\Lambda)_\lambda \times [0,\bar{\varrho})_\varrho$, so $v$ is not the desired solution to $Pu=F$. The next step is to add to $v$ a linear combination of $v_\pm$ so that the resulting solution is compactly supported in $\lambda$. 
	
	Let $w(-,\varrho)$ denote the solution to $\hat{P}(\varrho)w(-,\varrho)=0$ with initial conditions $w(\Lambda,\varrho) =- v(\Lambda,\varrho)$ and $\partial_\lambda w(\lambda,\varrho)|_{\lambda=\Lambda}=-\partial_\lambda v(\lambda,\varrho)|_{\lambda=\Lambda}$. This can be written in terms of $v_\pm$ as 
	\begin{equation}
	w(\lambda,\varrho) =- \frac{1}{W(\varrho)} 
	\begin{bmatrix}
	v_-(\lambda,\varrho) \\ 
	v_+(\lambda,\varrho)
	\end{bmatrix}^\intercal 
	\begin{bmatrix}
	v_+'(\Lambda,\varrho) & -v_+(\Lambda,\varrho) \\ 
	-v_-'(\Lambda,\varrho) & v_-(\Lambda,\varrho)
	\end{bmatrix}
	\begin{bmatrix}
	v(\Lambda,\varrho) \\ 
	v'(\Lambda,\varrho)
	\end{bmatrix}.
	\end{equation}
	Thus, $w$ is also in $\calA^{\calE,\calF(\alpha)}( [0,\Lambda)_\lambda\times [0,\bar{\varrho})_\varrho)$. Letting $u=v+w$, we have $u\in \calA^{\calE,\calG}( [0,\Lambda)_\lambda\times [0,\bar{\varrho})_\varrho)$ for $\calG$ the smallest index set containing $\calG_0,\calF(\alpha)$. This solves the desired ODE with vanishing initial data at $\Lambda$. Since the forcing satisfies $\operatorname{supp}F \Subset [0,\Lambda)_\lambda\times [0,\bar{\varrho} )_\varrho$, this implies the compact  support condition in \cref{eq:misc_0jk}. 
\end{proof}

\begin{remark}[The $\Re\alpha=0$ case]
	In the main part of the paper, we assume that $\Re \alpha>0$. The reason (besides the fact that the ODE only depends on $\alpha^2$, so we may assume without loss of generality that $\Re \alpha\geq 0$) is that when $\Re \alpha=0$, then the two indicial roots $\gamma_\pm$ of the normal operator $N$ have the same real part. Possibly even $\gamma_-=\gamma_+$. 
	
	Most of the discussion above goes through, \textit{mutatis mutandis}, except the explicit index sets involved in expansions at $\mathrm{be}$ may change. Since the analysis at $\mathrm{be}$ is confined to this section, \S\ref{sec:low}, it suffices to examine how this section changes upon allowing $\Re \alpha=0$. (Some of the discussion in \S\ref{sec:quasimodes}, \S\ref{sec:modulus} also involves behavior at $\mathrm{be}$, but the only place this is used in the main argument is in the present section.) There, the main difference between the $\Re \alpha=0$ and $\Re \alpha\neq 0$ cases is that in the former case it no longer makes sense to speak of a ``recessive'' solution to the ODE. The only place we used this was in choosing a nonzero solution $v_+$ of the ODE such that 
	\begin{equation} 
		v_+(\lambda) \in \lambda^{1/2 + \Re \alpha} L^\infty_{\mathrm{loc}}[0,\infty)_\lambda.
	\end{equation}  
	If $\Re \alpha>0$, then a non-recessive solution of the ODE will not have this property, only lying in $\lambda^{1/2 - \Re \alpha} L^\infty_{\mathrm{loc}}[0,\infty)_\lambda$. However, if $\Re \alpha=0$, then \emph{every} solution of the ODE lies in $\lambda^{1/2} L^\infty_{\mathrm{loc}}[0,\infty)_\lambda$, so the argument goes through.
\end{remark}

\section{Main proof}
\label{sec:main}

We turn now to the proof of \Cref{thm2} in the $W=1$ case which we observed in \S\ref{sec:Langer} suffices. Our goal, given $Q\in \calQ$, is to construct
\begin{itemize}
	\item $\beta, \gamma \in \calA^{\calE_0,\infty}(M)$ with $\operatorname{supp} \beta,\operatorname{supp} \gamma$ disjoint from $\mathrm{be}$, and
	\item  $\delta \in \calA^{\calE_0,\calG}(M)$ with $\operatorname{supp} \delta \cap( \mathrm{ie}\cup \mathrm{ze})= \varnothing$, 
\end{itemize} 
where $\calE_0$ is the index sets referenced in the theorem, such that the function $u$ defined by
\begin{equation}
u =  (1+\varrho_{\mathrm{ze}}\varrho_{\mathrm{fe}} \beta) Q\Big( \frac{\zeta}{h^{2/(\kappa+2)}} \Big) +  h^{(2\kappa+2)/(\kappa+2)} \gamma Q'\Big( \frac{\zeta}{h^{2/(\kappa+2)}} \Big)+ \varrho_{\mathrm{be}}^{1/2-\alpha} \varrho_{\mathrm{fe}} \delta 
\label{eq:misc_218}
\end{equation}
solves $Pu=0$ in $\{h<h_0\}$ for some $h_0>0$.

The upshot of \S\ref{sec:ze}, as recorded in \Cref{prop:ze_upshot}, was that, letting $\calE_ 0 = \calE-1\subset \bbN\times \bbN$, with $\calE$ as in the proposition, there exist $\beta_0,\gamma_0\in \calA^{\calE_0}(M)$ with support disjoint from $\mathrm{be}$ such that the function 
\begin{equation} 
u_0 = (1+ \varrho_{\mathrm{ze}} \varrho_{\mathrm{fe}} \beta_0) Q + \varrho_{\mathrm{ze}}^{(\kappa+1)/(\kappa+2)} \varrho_{\mathrm{fe}} \gamma_0 Q' 
\end{equation} 
satisfies $Pu_0 = f_0 Q + g_0 Q'$ for $f_0,g_0 \in \smash{\varrho_{\mathrm{be}}^{-1}\varrho_{\mathrm{ze}}^\infty \varrho_{\mathrm{fe}}^\kappa \calA^{\calE}(M)}$, with $g_0$ supported away from $\mathrm{be}$. Write $f_0 = f_{00} + f_{01}$ for
\begin{equation}
	f_{00},f_{01} \in \smash{\varrho_{\mathrm{ze}}^\infty \varrho_{\mathrm{fe}}^\kappa \calA^{\calE}(M)}
\end{equation}
with $f_{00}$ supported away from $\mathrm{be}$ and $f_{01}$ supported away from $\mathrm{ze}$. This decomposition can be arranged because $\mathrm{ze}\cap \mathrm{be} =\varnothing$.

We now apply \Cref{prop:high_corner} with $f =-f_{00}$ and $g=-g_{0}$ to produce $\beta_1,\gamma_1 \in\smash{ \varrho_{\mathrm{ze}}^\infty \calA^{\calE_0,\infty}(M)}$ supported away from $\mathrm{ie}\cup \mathrm{be}$ and $R\in \smash{\varrho_{\mathrm{fe}}^\kappa \calA^{\calE,\infty}(M)}$ supported away from $\mathrm{ie}\cup\mathrm{ze}\cup\mathrm{be}$ such that the function $u_1 = \varrho_{\mathrm{fe}}(\beta_1 Q + \gamma_1 Q')$ solves $Pu_1 = (-f_{00}+f_1) Q + (-g_{0}+g_1)Q'+R$
for some $f_1,g_1 \in h^\infty C^\infty(M;\bbC)$ supported away from $\mathrm{be}$.

Now apply \Cref{prop:error} with $f = - f_1$ and $g = - g_1$ to produce $\beta_2,\gamma_2 \in h^\infty C^\infty(M;\bbC)$ supported away from $\mathrm{be}$ such that the function $u_2 = \beta_2 Q + \gamma_2 Q'$ solves $P u_2 = - f_1 Q - g_1 Q' + H$
for some $H\in h^\infty C^\infty(M;\bbC)$ supported away from $\mathrm{ie}\cup \mathrm{ze}\cup \mathrm{be}$. 
There exist $\Lambda,h_1>0$ such that the quantity $\bar{\varrho}$ defined by
\begin{equation} 
\bar{\varrho} = h_1^{2/(\kappa+2)}
\end{equation} 
satisfies $\bar{\varrho} < Z/\Lambda$ (so that the set $\bar{U}_{h_1,\Lambda}$ is defined) and  such that $(\operatorname{supp} R \cup \operatorname{supp} H) \cap \{h<h_1\} \Subset \bar{U}_{h_1,\Lambda}$.
Let $\chi \in C_{\mathrm{c}}^\infty[0,\infty)_h$ be identically $1$ near $h=0$ and supported in $\{h<h_1\}$.  

Finally, we apply \Cref{prop:low_corner} 
with $F = - \chi f_{01} Q  - \chi R - \chi H$
to give, for some index set $\calG$, $\delta_0\in \calA^{\calE,\calG}(M)$ with $\operatorname{supp} \delta_0 \cap (\mathrm{ie}\cup \mathrm{ze}) = \varnothing$ satisfying 
\begin{equation} 
	P \delta_0 = - \chi f_{01} Q  - \chi R - \chi  H.
\end{equation} 
The hypothesis of that proposition regarding the index set of the forcing is satisfied, because the only contribution to the index set is $f_{01} Q$, which has index set $\calF(\alpha)-1$ at zero. So, $\min \{\calF(\alpha) - 1\} = -1/2- \Re \alpha > -3/2 - \Re \alpha$.

\begin{figure}[h!]
	\begin{tikzpicture}
	\fill[gray!5] (5,1.5) -- (0,1.5) -- (0,0)  arc(90:0:1.5) -- (5,-1.5) -- cycle;
	\draw[dashed] (5,1.5) -- (5,-1.5);
	\node (ff) at (.75,-.75) {fe};
	\node (zfp) at (-.35,.75) {$\mathrm{ze}$};
	\node (pf) at (2.5,1.8) {$\mathrm{ie}$};
	\node (mf) at (3.25,-1.8) {$\mathrm{be}$};
	\draw (5,1.5) -- (0,1.5) -- (0,0)  arc(90:0:1.5)  -- (5,-1.5);
	\filldraw[darkblue, opacity =.25] (0,0) arc(90:30:1.5) to[out=30, in=185] (5, 0) -- (5,1.5) -- (0,1.5) -- cycle;
	\filldraw[gray, opacity =.35, pattern = north east lines] (0,0) arc(90:50:1.5) to[out=40, in=180] (5, .7) -- (5,1.5) -- (0,1.5) -- cycle; 
	\node (?) at (1.2,.9) {$\operatorname{supp} \beta_0,\gamma_0$};
	\node[darkblue] (?) at (3.5,.25) {$\operatorname{supp} f_0,g_0$};
	\end{tikzpicture}
	\qquad
	\begin{tikzpicture}
	\fill[gray!5] (5,1.5) -- (0,1.5) -- (0,0)  arc(90:0:1.5) -- (5,-1.5) -- cycle;
	\draw[dashed] (5,1.5) -- (5,-1.5);
	\node[white] (ff) at (.75,-.75) {fe};
	\node[white] (zfp) at (-.35,.75) {$\mathrm{ze}$};
	\node[white] (pf) at (2.5,1.8) {$\mathrm{ie}$};
	\node[white] (mf) at (3.25,-1.8) {$\mathrm{be}$};
	\filldraw[darkblue, opacity =.25] (0,0) arc(90:10:1.5) to[out=20, in=182] (5, -.7) -- (5,1.5) -- (0,1.5) -- cycle;	
	\node[darkblue, opacity=.9] (?) at (4,1) {$\operatorname{supp} f_1,g_1$};
	\filldraw[darkgray, opacity =.25, pattern = north east lines] (0,0) arc(90:45:1.5) to[out=40, in=185] (2.7, .25) -- (2.7,1) -- (0,1) -- cycle;
	\node (?) at (1.1,.65) {$\operatorname{supp} \beta_1,\gamma_1$};
	\filldraw[darkred, opacity=.25] (3.75,.5) to[out=185,in=40] (.92,-.32) arc(52:28:1.5) to[out=30,in=185] (3.75,-.22) -- cycle; 
	\draw (5,1.5) -- (0,1.5) -- (0,0)  arc(90:0:1.5)  -- (5,-1.5);
	\node[darkred] (?) at (2.5,-.05) {$\operatorname{supp} R$};
	\end{tikzpicture}
	\begin{tikzpicture}
	\fill[gray!5] (5,1.5) -- (0,1.5) -- (0,0)  arc(90:0:1.5) -- (5,-1.5) -- cycle;
	\draw[dashed] (5,1.5) -- (5,-1.5);
	\node[white] (ff) at (.75,-.75) {fe};
	\node[white] (zfp) at (-.35,.75) {$\mathrm{ze}$};
	\node[white] (pf) at (2.5,1.8) {$\mathrm{ie}$};
	\node[white] (mf) at (3.25,-1.8) {$\mathrm{be}$};
	\filldraw[darkred, opacity=.25]  (5,1.5) -- (2.5,1.5) -- (2.5,.55) to[out=195, in=40] (.88,-.29)  arc(54:30:1.5) to[out=30, in=180] (5,.1)  -- cycle ;
	\filldraw[darkgray, opacity =.25, pattern = north east lines] (0,0) arc(90:50:1.5) to[out=40, in=188] (2.7, .45) -- (2.7,1.5) -- (0,1.5) -- cycle;
	\node (?) at (1.3,1) {$\operatorname{supp} \beta_2,\gamma_2$}; 
	\draw (5,1.5) -- (0,1.5) -- (0,0)  arc(90:0:1.5)  -- (5,-1.5);
	\node[darkred] (?) at (3.5,1) {$\operatorname{supp} H$};
	\end{tikzpicture}
	\qquad 
	\begin{tikzpicture}
	\fill[gray!5] (5,1.5) -- (0,1.5) -- (0,0)  arc(90:0:1.5) -- (5,-1.5) -- cycle;
	\draw[dashed] (5,1.5) -- (5,-1.5);
	\node[white] (ff) at (.75,-.75) {fe};
	\node[white] (zfp) at (-.35,.75) {$\mathrm{ze}$};
	\node[white] (pf) at (2.5,1.8) {$\mathrm{ie}$};
	\node[white] (mf) at (3.25,-1.8) {$\mathrm{be}$}; 
	\filldraw[darkred, opacity =.25] (1.5,-1.5) arc(0:52:1.5) to[out=40, in=185] (3.75,.5) -- (3.75,-1.5) -- cycle;
	\node[darkred] (?) at (2.75,-.2) {$\operatorname{supp} F$};
	\node () at (2.75,-.65) {$=$};
	\node (?) at (2.75,-1) {$\operatorname{supp} \delta_0$};
	\draw (5,1.5) -- (0,1.5) -- (0,0)  arc(90:0:1.5)  -- (5,-1.5);
	\filldraw[darkgray, opacity =.25, pattern = north west lines] (1.5,-1.5) arc(0:52:1.5) to[out=40, in=185] (3.75,.5) -- (3.75,-1.5) -- cycle;
	\end{tikzpicture}
	\caption{\textit{Top left}: the supports of the functions $\beta_0,\gamma_0,f_0,g_0$ appearing in the first step of the argument, using \S\ref{sec:ze}. \textit{Top right}: the supports of the functions $\beta_1,\gamma_1, f_1,g_1, R$ appearing in the second step, using \S\ref{sec:fe}. \textit{Bottom left}: the supports of the functions $\beta_2,\gamma_2,H$ appearing in the third step, using \S\ref{sec:error}. \textit{Bottom right}: the supports of the functions appearing in the final step of the argument, using \S\ref{sec:low}.}
	\label{fig:step1}
\end{figure}

We can write $\delta_0 = \varrho_{\mathrm{be}}^{1/2-\alpha} \varrho_{\mathrm{fe}} \delta$
for $\delta \in \calA^{\calE_0,\calG}$, for some other $\calG$.
Set $u=u_0+u_1+u_2+\delta_0$. 
This solves 
\begin{equation} 
Pu=(1-\chi(h)) (f_{01}+H+R)
\end{equation} 
and has the form \cref{eq:misc_218}
for 
\begin{align}
	\begin{split}
		\beta &= \beta_0 + \varrho_{\mathrm{ze}}^{-1}\beta_1 + \varrho_{\mathrm{ze}}^{-1} \varrho_{\mathrm{fe}}^{-1}\beta_2, \\
		\gamma &= \gamma_0 +\varrho_{\mathrm{ze}}^{-(\kappa+1)/(\kappa+2)} \gamma_1+\varrho_{\mathrm{ze}}^{-(\kappa+1)/(\kappa+2)}\varrho_{\mathrm{fe}}^{-1}\gamma_2
	\end{split}
\end{align}
By construction, $\beta,\gamma$ lie in the desired function spaces, and they have the required support properties. 

If a new $h_0>0$ is chosen such that $\chi=1$ identically on $[0,h_0]$, then $Pu=0$ for $h<h_0$.

\appendix

\section{Bounds on the modulus function}
\label{sec:modulus}

In the body of the paper, we saw that, when upgrading the quasimode $Q\in \calQ\backslash \{0\}$ to a full solution to the ODE, it is convenient to work with the \emph{modulus function} $\operatorname{Mod}_Q(\lambda)$ defined by
\begin{equation}
\operatorname{Mod}_Q(\lambda) = |Q(\lambda)|^2 + \lambda^2  \langle \chi(1/\lambda)\lambda^{(\kappa+2)/2} \rangle^{-2} |Q'(\lambda)|^2,
\label{eq:modulus_def}
\end{equation} 
where $\langle \rho \rangle = (1+\rho^2)^{1/2}$ and $\chi\in C_{\mathrm{c}}^\infty(\bbR;[0,1])$ is identically $1$ in some neighborhood of the origin. In this section, $L^\infty = L^\infty (\bbR^+_\lambda)$. 

\begin{lemma}
	$\operatorname{Mod}_Q(\lambda)\neq 0$ for any $\lambda>0$.
	\label{lem:nonvanishing} 
\end{lemma}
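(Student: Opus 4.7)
The plan is to argue by contradiction using the uniqueness theorem for second-order linear ODEs at a regular point. Suppose that $\operatorname{Mod}_Q(\lambda_0) = 0$ for some $\lambda_0 > 0$. Since both terms in the definition \cref{eq:modulus_def} are nonnegative and the prefactor $\lambda_0^2 \langle \chi(1/\lambda_0)\lambda_0^{(\kappa+2)/2} \rangle^{-2}$ is strictly positive (as $\lambda_0 > 0$ and $\langle \cdot \rangle \geq 1$), we conclude that $Q(\lambda_0) = 0$ and $Q'(\lambda_0) = 0$.

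Now, the operator $N$ defined by \cref{eq:barN} is a second-order linear ordinary differential operator with smooth, nonsingular leading coefficient on $\bbR^+_\lambda$, so $\{\lambda_0\}$ is a regular point. The standard uniqueness theorem for the initial value problem then forces $Q \equiv 0$ on $\bbR^+_\lambda$, contradicting the assumption $Q \in \calQ \setminus \{0\}$. Therefore $\operatorname{Mod}_Q(\lambda) \neq 0$ for every $\lambda > 0$.

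There is no substantive obstacle here; the only point requiring a moment's thought is checking that the weight multiplying $|Q'(\lambda)|^2$ does not vanish for $\lambda > 0$, which is immediate from its explicit form. The cutoff $\chi(1/\lambda)$ is introduced only to regulate behavior as $\lambda \to \infty$ (where the normalization between $Q$ and $Q'$ changes to account for the exponential/oscillatory factor of rate $\lambda^{(\kappa+2)/2}$), and plays no role in this pointwise nonvanishing statement.
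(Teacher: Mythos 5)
Your proof is correct and takes the same route as the paper: reduce the vanishing of $\operatorname{Mod}_Q(\lambda_0)$ to the simultaneous vanishing of $Q(\lambda_0)$ and $Q'(\lambda_0)$ (using positivity of the weight on $|Q'|^2$), then invoke uniqueness for the initial value problem for the second-order ODE $NQ=0$ at the regular point $\lambda_0$ to force $Q\equiv 0$. The extra remark about the role of the cutoff $\chi$ is accurate but not needed.
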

\begin{proof}
	This is the statement that $Q(\lambda)$ and $Q'(\lambda)$ cannot vanish simultaneously. Because $Q$ satisfies the second-order ODE $NQ=0$, it is the case that, for each $\lambda_0>0$, $Q$ is uniquely determined among elements of $\calQ$ by the pair $(Q(\lambda_0),Q'(\lambda_0))$. If we were to have $Q(\lambda_0),Q'(\lambda_0)=0$, then this would force $Q$ to vanish identically, contrary to assumption.
\end{proof}

The key property of the modulus functions is that they control the \emph{other} elements of $\calQ$ and their derivatives:
\begin{proposition}
	Suppose that either $\varsigma<0$ or $Q\notin \operatorname{span}_\bbC Q_\infty$. Let $A\in \calQ$. Then, for any $k\in \bbN$, there exists some $K\in \bbN$ depending on $k$ such that
	\begin{equation} 
	\frac{\partial^k A(\lambda)}{\partial \lambda^k} \in 
	\lambda^{-K} \langle \lambda \rangle^{2K}
	\operatorname{Mod}_Q(\lambda)^{1/2}L^\infty.
	\label{eq:misc_mlm}
	\end{equation}
	If $\varsigma>0$, then $\partial_\lambda^k Q_\infty \in \lambda^{-K} \langle \lambda \rangle^{2K} \operatorname{Mod}_{Q_\infty}(\lambda)^{1/2}L^\infty$ for some $K\in \bbN$ depending on $k$.
	\label{prop:Mod1} 
\end{proposition}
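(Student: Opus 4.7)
The asserted bound is pointwise: we must show $|A^{(k)}(\lambda)|^2 \le C_k \lambda^{-2K}\langle \lambda\rangle^{4K}\operatorname{Mod}_Q(\lambda)$ for every $\lambda>0$. Since both sides are continuous and $\operatorname{Mod}_Q$ is pointwise positive by \Cref{lem:nonvanishing}, the ratio is automatically bounded on every compact subset of $(0,\infty)$. The content of the proposition is therefore the behavior as $\lambda\to 0^+$ and as $\lambda\to \infty$. My first move is to reduce to the cases $k\in\{0,1\}$. The ODE $NA=0$ can be rewritten as $A''(\lambda) = V(\lambda) A(\lambda)$ with $V(\lambda) = \varsigma \lambda^\kappa + \lambda^{-2}(\alpha^2-1/4+\Psi(\lambda))$, and $|V(\lambda)| \le C\lambda^{-2}\langle \lambda\rangle^{\kappa+2}$. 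Differentiating this identity repeatedly, each $A^{(k)}$ can be written as a linear combination of $A$ and $A'$ with coefficients bounded by some $\lambda^{-K'}\langle \lambda\rangle^{2K'}$. So it suffices to prove the estimate for $k=0,1$, at the cost of enlarging $K$.

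Near $\lambda=0$, I will invoke \Cref{prop:quasimode_small_argument}. Every element of $\calQ$ lies in $\calA^{\calF(\alpha)}[0,\infty)_\lambda$, so $|A(\lambda)|\le C \lambda^{1/2-\Re\alpha}\log^N(1/\lambda)$ and $|A'(\lambda)|\le C\lambda^{-1/2-\Re\alpha}\log^N(1/\lambda)$ for some $N\in \bbN$. Since $Q\neq 0$, one of its two leading coefficients in the basis of indicial solutions is nonzero, which forces $\operatorname{Mod}_Q(\lambda)^{1/2}\ge c\lambda^{1/2+\Re\alpha}(\log(1/\lambda))^{-N'}$ on a punctured neighborhood of $0$. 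Dividing, $|A(\lambda)|/\operatorname{Mod}_Q(\lambda)^{1/2}$ and $|A'(\lambda)|/\operatorname{Mod}_Q(\lambda)^{1/2}$ are $O(\lambda^{-2\Re \alpha -1})$ up to log factors, which is absorbed into $\lambda^{-K} L^\infty$ for $K$ large.

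Near $\lambda=\infty$, I will use \Cref{prop:quasimode_large_argument} and split into three subcases. When $\varsigma<0$, expand $Q = c_+^Q Q_+ + c_-^Q Q_-$ and $A = c_+^A Q_+ + c_-^A Q_-$ using the oscillatory basis. The leading large-$\lambda$ asymptotics $Q_\pm(\lambda) \sim \lambda^{-\kappa/4} e^{\pm 2i\lambda^{(\kappa+2)/2}/(\kappa+2)}$ and $Q_\pm'(\lambda) \sim \pm i \lambda^{\kappa/4} e^{\pm 2i\lambda^{(\kappa+2)/2}/(\kappa+2)}$ give, after direct substitution,
\begin{equation*}
\operatorname{Mod}_Q(\lambda) = 2(|c_+^Q|^2+|c_-^Q|^2)\lambda^{-\kappa/2}+o(\lambda^{-\kappa/2}),
\end{equation*}
the point being that the oscillating cross terms in $|Q|^2$ and $\lambda^{-\kappa}|Q'|^2$ cancel. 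Since $Q\neq 0$ the leading constant is positive, and meanwhile $|A(\lambda)|\le C\lambda^{-\kappa/4}$ and $|A'(\lambda)|\le C\lambda^{\kappa/4}$, so the required bound holds with $K\ge \kappa$. When $\varsigma>0$ and $Q\notin \operatorname{span}_\bbC Q_\infty$, the growing mode in $Q$ yields $|Q(\lambda)|^2 \sim |d|^2 \lambda^{-\kappa/2}e^{4\lambda^{(\kappa+2)/2}/(\kappa+2)}$ with $d\neq 0$; every $A\in \calQ$ grows at most at this rate, so again the ratio is bounded at infinity. The remaining case ($\varsigma>0$ with $Q=Q_\infty$, hence $A=Q_\infty$ by hypothesis) is immediate from the very definition of $\operatorname{Mod}_Q$.

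The one genuinely non-obvious step is the oscillatory computation of $\operatorname{Mod}_Q$ in the subcase $\varsigma<0$: a priori the two terms $|Q|^2$ and $\lambda^2\langle\chi(1/\lambda)\lambda^{(\kappa+2)/2}\rangle^{-2}|Q'|^2$ could each oscillate and cancel against each other to produce a lower bound decaying faster than $\lambda^{-\kappa/2}$, in which case no polynomial weight would suffice. The explicit cancellation of the cross terms -- which is what motivates the peculiar weight $\lambda\langle\chi(1/\lambda)\lambda^{(\kappa+2)/2}\rangle^{-1}$ in the definition of $\operatorname{Mod}_Q$ in the first place -- is therefore the crux of the proof; everything else is ODE bookkeeping.
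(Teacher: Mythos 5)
Your proof is correct and follows essentially the same route as the paper: reduce higher derivatives to $k\in\{0,1\}$ via the ODE, bound the ratio on compacts by \Cref{lem:nonvanishing}, near $\lambda=0$ by polyhomogeneity and a polynomial lower bound on $\operatorname{Mod}_Q$, and near $\lambda=\infty$ by the Liouville--Green asymptotics, with the oscillatory cross-term cancellation providing the crucial lower bound in the $\varsigma<0$ case. The one organizational difference is that you inline that cancellation computation, whereas the paper packages it as a separate statement (\Cref{lem:mod_poly_lower}) and cites it; your estimate is identical in substance to that lemma (and the $\log$ factors you allow near $\lambda=0$ are harmless but unnecessary, since $\Re\alpha>0$ puts the non-logarithmic indicial exponent in the lead).
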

\begin{proof}
	By \Cref{lem:nonvanishing}, the claim holds within any bounded interval $I\Subset \bbR^+_\lambda$ worth of $\lambda$. 
	
	To prove the claim in an interval $I\Subset [0,\infty)_\lambda$, which may contain $0$, first use that, because $A$ is polyhomogeneous at $\lambda=0$ by \Cref{prop:quasimode_small_argument}, it and its derivatives are growing at most polynomially as $\lambda\to 0^+$. Then use that, because $Q$ is nonzero, the functions $\operatorname{Mod}_Q(\lambda),\operatorname{Mod}_{Q_\infty}(\lambda)$ are decaying at most polynomially as $\lambda\to 0^+$. So the proposition also holds here. 
	
	It therefore remains only to check that \cref{eq:misc_mlm} holds in $\{\lambda \geq \lambda_0\}$ for some $\lambda_{0} = \lambda_{0}(k,A,Q)>0$, and to check the corresponding result for $Q_\infty$.

	If $\varsigma<0$, then the elements of $\calQ$ and their derivatives all grow at worst polynomially as $\lambda\to \infty$, as shown by \Cref{prop:quasimode_large_argument}. 
	Similarly, \Cref{lem:mod_poly_lower} below shows that $\operatorname{Mod}_Q$ obeys a polynomial \emph{lower bound} in the same asymptotic regime. 
	The proposition therefore holds in this case.

	If $\varsigma>0$ and $Q\notin \operatorname{span}_\bbC Q_\infty$, then, by \Cref{prop:quasimode_large_argument}, it is the case that $|A(\lambda)|\leq C |Q(\lambda)|$ for some $C>0$ if $\lambda$ is sufficiently large. Thus, since $|Q(\lambda)| \leq \operatorname{Mod}_Q(\lambda)^{1/2}$,
	the proposition holds in the $k=0$ case. Consider now the $k=1$ case. Differentiating the conclusion of \Cref{prop:quasimode_large_argument} once, $A'(\lambda)$ is at worst growing polynomially as $\lambda\to\infty$ relative to $Q$, so the bound \cref{eq:misc_mlm} also holds in this case.
	Having verified the $k=0,1$ cases of the proposition, we deduce the remaining cases from the ODE $NA = 0$ which $A$ is defined to satisfy. If $k\in \bbN^{\geq 2}$, differentiating this ODE $k-2$ times yields 
	\begin{equation}
	 \frac{\partial^{k} A}{\partial \lambda^k} = \sum_{j=0}^{k-2}  \binom{k-2-j}{j}\frac{\partial^j A}{\partial \lambda^j}   \frac{\partial^{k-2-j}}{\partial \lambda^{k-2-j}} \Big( \varsigma \lambda^\kappa + \frac{1}{\lambda^2} \Big(\alpha^2-\frac{1}{4} \Big) + \frac{\Psi(\lambda)}{\lambda^2} \Big).
	\end{equation}
	Taking $K_0\in \bbN$ sufficiently large, each of the derivatives of $\varsigma \lambda^\kappa + \lambda^{-2} (\alpha^2-1/4) + \lambda^{-2} \Psi(\lambda)$ lies in $\lambda^{-K_0} \langle \lambda \rangle^{2K_0} L^\infty(\bbR^+_\lambda)$. 
	We can conclude that 
	\begin{equation} 
	\partial_\lambda^k A \in \lambda^{-K-K_0} \langle \lambda \rangle^{2K+2K_0} \operatorname{Mod}_Q(\lambda)^{1/2} L^\infty,
	\end{equation} assuming we have proven already that $\partial_\lambda^j A \in \lambda^{-K} \langle \lambda \rangle^{2K} \operatorname{Mod}_Q(\lambda)^{1/2} L^\infty$
	for each $j\in \{0,\ldots,k-2\}$.
	This inductive argument completes the proof in the $Q\notin \operatorname{span}_\bbC Q_\infty$ case. 
	
	In the $Q\in \operatorname{span}_\bbC Q_\infty$ case, then the result follows from the same inductive argument, except the base cases $k=0,1$ are now trivial because we are only comparing $|Q_\infty|^2,|Q_\infty'|^2$ with $\smash{\operatorname{Mod}_{Q_\infty}}$. 
\end{proof}

Let $\lambda = \rho^{-2/(\kappa+2)}$. 
Recall that $1/\lambda^{\kappa+2}$ is a boundary-defining-function of $\{\lambda=\infty\}= \mathrm{ze}\cap \mathrm{fe}$ in $\mathrm{fe}$.

\begin{lemma} 
	If $\varsigma>0$, then $\operatorname{Mod}_Q(\rho^{-2/(\kappa+2)}) \in \exp(4 (\kappa+2)^{-1} \rho^{-1} )\rho^{\kappa/(\kappa+2)}C^\infty [0,\infty)_\rho$. Unless $Q \in \operatorname{span}_\bbC Q_\infty$, 
	\begin{equation} 
	\operatorname{Mod}_Q(\rho^{-2/(\kappa+2)})^{-1} \in \exp(-4 (\kappa+2)^{-1}  \rho^{-1} ) \rho^{-\kappa/(\kappa+2)}C^\infty [0,\infty)_\rho.
	\end{equation} 
	Also, $\operatorname{Mod}_{Q_\infty}(\rho^{-2/(\kappa+2)}) \in \exp(-4 (\kappa+2)^{-1}\rho^{-1} ) \rho^{\kappa/(\kappa+2)}C^\infty [0,\infty)_\rho$,
	and $\operatorname{Mod}_{Q_\infty}(\rho^{-2/(\kappa+2)})^{-1} \in \exp(4 (\kappa+2)^{-1}\rho^{-1}) \rho^{-\kappa/(\kappa+2)}C^\infty [0,\infty)_\rho$.
	\label{lem:mod_exp_lower}
\end{lemma}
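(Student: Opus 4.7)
My plan is to reduce the lemma to a direct calculation leveraging the exponential-polyhomogeneous structure of $Q$ and $Q_\infty$ at $\lambda = \infty$ supplied by Proposition \ref{prop:quasimode_large_argument}. For an arbitrary $Q \in \calQ$, I would start from the representations
\begin{equation*}
Q(\rho^{-2/(\kappa+2)}) = \exp\bigl(\tfrac{2}{\kappa+2}\rho^{-1}\bigr) \rho^{\kappa/(2\kappa+4)} f(\rho), \quad Q_\infty(\rho^{-2/(\kappa+2)}) = \exp\bigl(-\tfrac{2}{\kappa+2}\rho^{-1}\bigr) \rho^{\kappa/(2\kappa+4)} f_\infty(\rho)
\end{equation*}
with $f, f_\infty \in C^\infty[0,\infty)_\rho$, $f_\infty(0) \neq 0$, and $f(0) \neq 0$ iff $Q \notin \operatorname{span}_\bbC Q_\infty$ (as one sees by writing $Q$ in a basis containing $Q_\infty$ and using that the $Q_\infty$-contribution produces an $\exp(-4(\kappa+2)^{-1}\rho^{-1})$-flat perturbation of $f$).

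Second, I would derive the analogous representation for $Q'$. By the chain rule, $Q'(\rho^{-2/(\kappa+2)}) = -\tfrac{\kappa+2}{2}\rho^{1+2/(\kappa+2)} \tfrac{d}{d\rho} Q(\rho^{-2/(\kappa+2)})$. Differentiating the explicit expression above, the most singular $\rho \to 0$ contribution comes from differentiating the exponential, and a short algebraic rearrangement yields
\begin{equation*}
Q'(\rho^{-2/(\kappa+2)}) = \exp\bigl(\tfrac{2}{\kappa+2}\rho^{-1}\bigr) \rho^{-\kappa/(2\kappa+4)} g(\rho)
\end{equation*}
with $g \in C^\infty[0,\infty)_\rho$ and $g(0) = f(0)$. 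The analogous identity with opposite sign in the exponent holds for $Q_\infty'$, giving $g_\infty(0) = -f_\infty(0)$.

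Third, I would rewrite the weight in \eqref{eq:modulus_def} in the variable $\rho$: since $\chi(1/\lambda) = 1$ for $\lambda = \rho^{-2/(\kappa+2)}$ large, the prefactor of $|Q'|^2$ simplifies to $\lambda^2(1+\lambda^{\kappa+2})^{-1} = \rho^{2\kappa/(\kappa+2)}/(1+\rho^2)$. Assembling the pieces,
\begin{equation*}
\operatorname{Mod}_Q(\rho^{-2/(\kappa+2)}) = \exp\bigl(\tfrac{4}{\kappa+2}\rho^{-1}\bigr) \rho^{\kappa/(\kappa+2)} \Bigl(|f(\rho)|^2 + \tfrac{|g(\rho)|^2}{1+\rho^2}\Bigr),
\end{equation*}
and the parallel identity for $\operatorname{Mod}_{Q_\infty}$ with $+4$ replaced by $-4$ in the exponent. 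Since $f, \bar{f}, g, \bar{g}, (1+\rho^2)^{-1}$ are all smooth in $\rho$, so is the bracketed factor, establishing the two unconditional containment statements. When $Q \in \operatorname{span}_\bbC Q_\infty$, the first containment still holds because $\exp(-4(\kappa+2)^{-1}\rho^{-1})$ is Schwartz-flat at $\rho = 0$ and hence lies in $\exp(4(\kappa+2)^{-1}\rho^{-1}) C^\infty[0,\infty)_\rho$. The two inverse containments then follow from evaluating the bracketed factor at $\rho = 0$: its value is $2|f(0)|^2$ (respectively $2|f_\infty(0)|^2$), nonvanishing precisely under the stated hypothesis, so its reciprocal is a smooth function of $\rho$.

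The main technical obstacle is the chain-rule bookkeeping: verifying that the power $\rho^{-\kappa/(\kappa+2)}$ contributed by $|Q'|^2$ is exactly compensated by the weight $\rho^{2\kappa/(\kappa+2)}/(1+\rho^2)$, so that $|Q|^2$ and the weighted $|Q'|^2$ combine into a common polyhomogeneous prefactor $\rho^{\kappa/(\kappa+2)}$ times a single smooth function with controllable leading coefficient. Once this numerology is verified, everything else is elementary.
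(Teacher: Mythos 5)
Your proposal is correct and follows essentially the same strategy as the paper's proof: substitute the exponential-polyhomogeneous representations of $Q$ and $Q'$ from Proposition \ref{prop:quasimode_large_argument} into the definition of $\operatorname{Mod}_Q$, verify that the $\rho$-powers match up so that both terms contribute at order $\rho^{\kappa/(\kappa+2)}$, and read off the nonvanishing of the leading coefficient ($=2|f(0)|^2$) from the hypothesis $Q\notin\operatorname{span}_\bbC Q_\infty$. The only cosmetic difference is that you track the $\rho$-chain rule and the leading values $f(0)$, $g(0)$ explicitly, while the paper packages the derivative computation through the dilation-invariant vector field $\lambda\partial_\lambda=-\tfrac{\kappa+2}{2}\rho\partial_\rho$; both routes give the same result.
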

\begin{proof}
	Certainly $Q(\rho^{-2/(\kappa+2)})\in \exp(2 (\kappa+2)^{-1} \rho^{-1}) \rho^{\kappa/(2\kappa+4)} C^\infty [0,\infty)_\rho$.

	Writing $Q = \exp(2 (\kappa+2)^{-1} \rho^{-1} ) q$ for $q(\rho)\in \rho^{\kappa/(2\kappa+4)} C^\infty [0,\infty)_\rho$, taking derivatives yields 
	\begin{equation}
	\lambda \partial_\lambda Q(\rho^{-2/(\kappa+2)}) = \exp(2 (\kappa+2)^{-1} \rho^{-1})( \rho^{-1}  q +  \lambda \partial_\lambda q), 
	\label{eq:misc_258}
	\end{equation}
	and $\lambda \partial_\lambda q (\rho^{-2/(\kappa+2)}) \in \rho^{\kappa/(2\kappa+4)} C^\infty[0,\infty)_\rho$ by the dilation invariance of the operator $\lambda \partial_\lambda$. \Cref{eq:misc_258} yields 
	\begin{equation}
	\lambda \langle \lambda^{(\kappa+2)/2} \rangle^{-1} \partial_\lambda Q(\lambda) = \exp(2 (\kappa+2)^{-1}\rho^{-1} ) \rho^{\kappa/(2\kappa+4)} C^\infty[0,\infty)_\rho. 
	\end{equation}
	Thus, $\operatorname{Mod}_Q(\lambda)$ lies in the claimed space. Except for the $Q\in \operatorname{span}_\bbC Q_\infty$ case, the function $q$ has nonvanishing leading order term at $\lambda=\infty$, since otherwise $Q$ would be in $\operatorname{span}_\bbC Q_\infty$ by \Cref{prop:quasimode_large_argument}. From this, it follows that the leading order term of $\operatorname{Mod}_Q(\lambda)\exp(-4 (\kappa+2)^{-1} \rho^{-1} ) \in \rho^{\kappa/(\kappa+2)} C^\infty[0,\infty)_\rho$
	is nonvanishing.  This implies that $\operatorname{Mod}_Q(\lambda)^{-1}$ lies in the desired space. 
	
	The remaining clause of the theorem is proven similarly, switching the signs of the exponentials around. 
\end{proof}

\begin{proposition}
	Suppose that $\varsigma>0$ and $Q\notin \operatorname{span}_\bbC Q_\infty$. 
	For some $K\in \bbN$, we have $\operatorname{Mod}_{Q_\infty} \in \lambda^{-K} \langle \lambda \rangle^{2K} \operatorname{Mod}_Q^{-1} L^\infty$ and $\operatorname{Mod}_{Q_\infty}^{-1} \in \lambda^{-K} \langle \lambda \rangle^{2K} \operatorname{Mod}_{Q} L^\infty$. Consequently, for any $n,n',m,m'\in \bbZ$ satisfying $n'-m'=n-m$,
	it is the case that
	\begin{equation}
	\operatorname{Mod}_Q^n \operatorname{Mod}_{Q_\infty}^m L^\infty \subseteq \lambda^{-K} \langle \lambda \rangle^{2K} \operatorname{Mod}_Q^{n'} \operatorname{Mod}_{Q_\infty}^{m'} L^\infty 
	\end{equation}
	for some $K=K(n,m,n',m')\in \bbN$.
	\label{prop:Mod2}
\end{proposition}
\begin{proof}
	This is an immediate corollary of \Cref{lem:mod_exp_lower} (and \Cref{prop:quasimode_small_argument}, to control $\lambda\to 0^+$ behavior).
\end{proof}

If $\varsigma<0$, then, unless $Q \in \operatorname{span}_\bbC Q_\pm$, then the real-valued functions $|Q|^2$ and $\operatorname{Mod}_Q - |Q|^2$ have oscillatory terms as their leading asymptotics in the $\lambda\to \infty$ limit, and thus vanish infinitely often. We already know that the modulus function itself is nonvanishing, but it has to be ruled out that it does not take on any superpolynomially small values. Indeed:
\begin{lemma}
	If $\varsigma<0$, then, for some $C=C(Q)>0$, $\operatorname{Mod}_Q(\lambda) - C\lambda^{-\kappa/2} \in  \lambda^{-(\kappa+1)} L^\infty_{\mathrm{loc}} (0,\infty]_{\lambda}$.
	Thus, $\operatorname{Mod}_Q(\lambda)^{-1} -  C^{-1} \lambda^{\kappa/2} \in  \lambda^{-1} L^\infty_{\mathrm{loc}} (0,\infty]_{\lambda}$. 
	\label{lem:mod_poly_lower}
\end{lemma}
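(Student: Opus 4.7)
The strategy is to compute the leading asymptotics of $\operatorname{Mod}_Q(\lambda)$ as $\lambda\to\infty$ using \Cref{prop:quasimode_large_argument} and to exploit the special form of the weight in $\operatorname{Mod}_Q$: it is tailored so that, at $\lambda=\infty$, the weighted $|Q'|^2$ matches the order of $|Q|^2$, with the oscillatory cross terms entering with opposite signs and cancelling upon summation.

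Concretely, I would write $Q=c_+Q_++c_-Q_-$ with $(c_+,c_-)\neq(0,0)$ and expand
\[
Q_\pm(\lambda)=e^{\pm i\varphi(\lambda)}\lambda^{-\kappa/4}(a_\pm+r_\pm(\lambda)),\qquad \varphi(\lambda)=\frac{2\lambda^{(\kappa+2)/2}}{\kappa+2},
\]
with $a_\pm\in\bbC^\times$ and $r_\pm(\lambda)=O(\lambda^{-(\kappa+2)/2})$ smooth in $\rho=\lambda^{-(\kappa+2)/2}$ (this is guaranteed by \Cref{prop:quasimode_large_argument} together with the nonvanishing of the leading term). Differentiating, the phase derivative dominates, giving
\[
Q_\pm'(\lambda)=\pm i a_\pm\lambda^{\kappa/4}e^{\pm i\varphi}(1+O(\lambda^{-(\kappa+2)/2}))+O(\lambda^{-\kappa/4-1}),
\]
the latter error coming from $\partial_\lambda(\lambda^{-\kappa/4}A_\pm)$ with $A_\pm:=a_\pm+r_\pm$. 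A direct computation then yields
\[
|Q|^2=\lambda^{-\kappa/2}\bigl[|c_+A_+|^2+|c_-A_-|^2+2\Re\bigl(c_+\overline{c_-}A_+\overline{A_-}e^{2i\varphi}\bigr)\bigr]+O(\lambda^{-\kappa-1}),
\]
\[
|Q'|^2=\lambda^{\kappa/2}\bigl[|c_+A_+|^2+|c_-A_-|^2-2\Re\bigl(c_+\overline{c_-}A_+\overline{A_-}e^{2i\varphi}\bigr)\bigr]+O(\lambda^{-1}).
\]
Since $\chi(0)=1$, the weight admits the expansion
$\lambda^2\langle\chi(1/\lambda)\lambda^{(\kappa+2)/2}\rangle^{-2}=\lambda^{-\kappa}+O(\lambda^{-2\kappa-2})$ at $\lambda=\infty$, so the weighted $|Q'|^2$ contributes at order $\lambda^{-\kappa/2}$. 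Adding yields a perfect cancellation of the $e^{2i\varphi}$ cross terms, leaving
\[
\operatorname{Mod}_Q(\lambda)=2\lambda^{-\kappa/2}\bigl(|c_+a_+|^2+|c_-a_-|^2\bigr)+O(\lambda^{-\kappa-1}),
\]
with the remainder absorbing the $r_\pm$-corrections to $|A_\pm|^2$ (each of relative size $\lambda^{-(\kappa+2)/2}$) and the weight correction $O(\lambda^{-2\kappa-2})\cdot\lambda^{\kappa/2}$. Setting $C=2(|c_+a_+|^2+|c_-a_-|^2)>0$ (nonzero since $(c_+,c_-)\neq(0,0)$ and $a_\pm\neq 0$) proves the first claim near $\lambda=\infty$; bounding on any compact subinterval of $(0,\infty)$ is automatic by \Cref{lem:nonvanishing}.

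For the reciprocal statement, writing $\operatorname{Mod}_Q(\lambda)=C\lambda^{-\kappa/2}(1+f(\lambda))$ with $f(\lambda)=O(\lambda^{-\kappa/2-1})$, one gets $\operatorname{Mod}_Q^{-1}-C^{-1}\lambda^{\kappa/2}=-C^{-1}\lambda^{\kappa/2}f(1+f)^{-1}=O(\lambda^{-1})$ near infinity, and local boundedness on $(0,\infty)$ again follows from \Cref{lem:nonvanishing}. The only delicate point is keeping track of the exponents in the two error contributions to verify that they land at precisely $\lambda^{-\kappa-1}$ (rather than worse), which is what makes $\lambda^2\langle\cdots\rangle^{-2}$ — as opposed to any other weight of the same leading order — the right choice in the definition of $\operatorname{Mod}_Q$.
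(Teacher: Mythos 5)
Your proposal is correct and matches the paper's own proof: both decompose $Q=c_+Q_++c_-Q_-$, extract the leading nonvanishing amplitudes in the large-argument expansion of $Q_\pm$ from \Cref{prop:quasimode_large_argument}, observe that the oscillatory cross terms $e^{\pm 2i\varphi}$ enter $|Q|^2$ and the weighted $|Q'|^2$ with opposite signs and cancel, and identify the constant as $C=2(|c_+a_+|^2+|c_-a_-|^2)$ (the paper writes $a_\pm$ as $C_\pm$, and tracks remainders in $\rho=\lambda^{-(\kappa+2)/2}$ rather than in $\lambda$, but this is only a change of notation). Your bookkeeping of the error orders — the $O(\lambda^{-(\kappa+2)/2})$ relative correction from $r_\pm$ and the $O(\lambda^{-2\kappa-2})$ weight correction, both landing at or below $\lambda^{-(\kappa+1)}$ in absolute terms since $\kappa>-2$ — is the same computation the paper carries out.
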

\begin{proof}
	We write $Q= c_- Q_- + c_+ Q_+$ for $c_-,c_+\in \bbC$ not both zero. As above, we can write $Q_\pm = \exp( \pm 2 i(\kappa+2)^{-1}  \lambda^{(\kappa+2)/2} ) q_\pm$ for $q_\pm \in \rho^{\kappa/(2\kappa+4)} C^\infty [0,\infty)_\rho$ having nonvanishing leading parts at $\lambda=\infty$, so that 
	\begin{equation}
		\lim_{\rho \to 0^+} \rho^{-\kappa/(2\kappa+4)} q_\pm \neq 0.
	\end{equation}
	Thus, for some $C_\pm \in \bbC\backslash \{0\}$, we have $q_\pm - \rho^{\kappa/(2\kappa+4)} C_\pm \in \rho^{(3\kappa+4)/(2\kappa+4)} C^\infty [ 0,\infty)_\rho$.
	Observe that 
	\begin{align}
	|Q|^2 &= |c_-|^2 |q_-|^2 + |c_+|^2 |q_+|^2+  2 \Re [  \exp( 4 i(\kappa+2)^{-1} \lambda^{(\kappa+2)/2} ) c_+ c_-^* q_+ q_-^*] \\ 
	&=  \rho^{\kappa/(\kappa+2)} (|c_- C_-|^2 + |c_+ C_+ |^2 +  2 \Re [  \cdots ]) + \rho^{(2\kappa+2)/(\kappa+2)} L^\infty,
	\end{align}
	where $\Re [\cdots] = \Re [\exp( 4 i(\kappa+2)^{-1} \lambda^{(\kappa+2)/2} ) c_+ c_-^* C_+ C_-^*]$.

	By the same computation as above, $\lambda \langle \lambda^{(\kappa+2)/2} \rangle^{-1} \partial_\lambda Q_\pm(\lambda)  = \exp(\pm 2i  (\kappa+2)^{-1} \lambda^{(\kappa+2)/2}) \tilde{q}_\pm$ for $\tilde{q}_\pm$ satisfying 
	\begin{equation}
	\tilde{q}_\pm \mp i \rho^{\kappa/(2\kappa+4)} C_\pm \in \rho^{(3\kappa+4)/(2\kappa+4)} C^\infty [0,\infty)_\rho.   
	\end{equation}
	Thus, 
	\begin{align}
	|\lambda \langle \lambda^{(\kappa+2)/2} \rangle^{-1}Q_\pm'|^2 &= |c_-|^2 |\tilde{q}_-|^2 + |c_+|^2 |\tilde{q}_+|^2+  2 \Re [  \exp( 4 i(\kappa+2)^{-1} \lambda^{(\kappa+2)/2} ) c_+ c_-^* \tilde{q}_+ \tilde{q}_-^*] \\ 
	&=  \rho^{\kappa/(\kappa+2)} (|c_- C_-|^2 + |c_+ C_+ |^2 -  2 \Re [  \cdots ]) + \rho^{(2\kappa+2)/(\kappa+2)}L^\infty,
	\end{align}
	So, $\operatorname{Mod}_Q(\rho^{-2/(\kappa+2)}) = \rho^{\kappa/(\kappa+2)} C + \rho^{(2\kappa+2)/(\kappa+2)} L^\infty$ for $C=2|c_-C_-|^2+2|c_+C_+|^2$. Rewriting this in terms of $\lambda$, the result is the claim.
\end{proof}

\section{The initial value problem}
\label{sec:ivp}

We now consider the derivation of \Cref{thm1} from \Cref{thm2}.

Let $Q_1,Q_2 \in \calQ$ denote a basis of $\calQ$. If $\varsigma<0$, then choose $Q_1 = Q_-$ and $Q_2 = Q_+$. If $\varsigma>0$, choose $Q_1 = Q_\infty$ and $Q_2$ to be an independent element of $\calQ$. 
Let $u_1,u_2$ denote the corresponding solutions to $Pu=0$ produced by \Cref{thm2}, for $Q=Q_1$ and $Q=Q_2$, respectively.

First, a lemma:
\begin{lemma}
	Suppose that $W=1$, so that $\zeta=z$. 
	The Wronskian $W\{u_1,u_2\}(h) = u_1 u_2 ' - u_1' u_2$ satisfies 
	\begin{equation}
	W\{u_1,u_2\}(h) - h^{-2/(\kappa+2)} W\{Q_1,Q_2\} \in h^{\kappa/(\kappa+2)} C^\infty [0,\infty)_h ,
	\label{eq:misc_231}
	\end{equation}
	where $W\{Q_1,Q_2\} = Q_1(\lambda)Q_2'(\lambda) - Q_1'(\lambda)Q_2(\lambda)$, evaluated at arbitrary $\lambda$. 
	\label{lem:Wr}
\end{lemma}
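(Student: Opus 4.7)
The key observation is that $W\{u_1,u_2\}(\zeta,h)$ is independent of $\zeta$ for each fixed $h$: the ODE $Pu=0$ has no first-order term (since $P=-h^2\partial_\zeta^2+V(\zeta)+h^2\psi$), so Abel's identity gives $\partial_\zeta W\{u_1,u_2\}=0$. I will therefore evaluate the Wronskian at $\zeta=Z$, where the $\delta$-correction in the formula for $u_i$ vanishes for $h$ sufficiently small, since $\delta\in\calA^{\calE_0,\calG}(M)$ has $\operatorname{supp}\delta\cap(\mathrm{ie}\cup\mathrm{ze})=\varnothing$, and in particular $\delta|_{\zeta=Z}=0$.

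At $\zeta=Z$, I write $u_i=(1+A_i)Q_i(\lambda_0)+B_iQ_i'(\lambda_0)$, where $\lambda_0=Z/h^{2/(\kappa+2)}$, and where $A_i(h)=(\varrho_{\mathrm{ze}}\varrho_{\mathrm{fe}}\beta_i)|_{\zeta=Z}$ and $B_i(h)=(\varrho_{\mathrm{ze}}^{(\kappa+1)/(\kappa+2)}\varrho_{\mathrm{fe}}\gamma_i)|_{\zeta=Z}$. Using the local chart $(x,\rho^2)=(\zeta,h^2/\zeta^{\kappa+2})$ near $\mathrm{ie}\cap\mathrm{ze}$, in which $\beta_i,\gamma_i$ (being polyhomogeneous with index set $(0,0)$ at both adjacent faces) restrict to smooth functions of $h^2/Z^{\kappa+2}$ along $\mathrm{ie}$, one reads off that $A_i\in h^2\,C^\infty[0,\infty)_{h^2}$ and $B_i\in h^{2(\kappa+1)/(\kappa+2)}\,C^\infty[0,\infty)_{h^2}$, with identical orders for the $\zeta$-derivatives $A_i'=(\partial_\zeta\tilde\beta_i)|_{\zeta=Z}$ and $B_i'$ by polyhomogeneity. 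The chain rule $\partial_\zeta Q(\zeta/h^{2/(\kappa+2)})=h^{-2/(\kappa+2)}Q'(\zeta/h^{2/(\kappa+2)})$ then yields
\[
u_i'|_{\zeta=Z}=A_i' Q_i+(1+A_i)h^{-2/(\kappa+2)}Q_i'+B_i'Q_i'+B_i h^{-2/(\kappa+2)}Q_i''.
\]

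Substituting into $W\{u_1,u_2\}=u_1u_2'-u_1'u_2$ and using $Q_i''=V(\lambda)Q_i$ (with $V(\lambda)=\varsigma\lambda^\kappa+\lambda^{-2}(\alpha^2-1/4+\Psi)$), the $Q_i''$-contributions simplify via the identities $Q_1Q_2''-Q_1''Q_2=0$ and $Q_1'Q_2''-Q_1''Q_2'=-V\,W\{Q_1,Q_2\}$. After collection, the leading term is $h^{-2/(\kappa+2)}W\{Q_1,Q_2\}$; all remaining cross-terms contribute to the difference with explicit order of at least $h^{(2\kappa+2)/(\kappa+2)}=h\cdot h^{\kappa/(\kappa+2)}$. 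For example, $\bigl[(1+A_1)(1+A_2)-1\bigr]h^{-2/(\kappa+2)}W\{Q_1,Q_2\}$ contributes $h^{-2/(\kappa+2)}\cdot O(h^2)=O(h^{(2\kappa+2)/(\kappa+2)})$, smooth in $h^2$; the term $-B_1B_2\,V(\lambda_0)\,h^{-2/(\kappa+2)}W\{Q_1,Q_2\}$ has order $h^{-2/(\kappa+2)+4(\kappa+1)/(\kappa+2)-2\kappa/(\kappa+2)}=h^{(2\kappa+2)/(\kappa+2)}$, again smooth in $h^2$ (using that $\Psi(\lambda_0)$ is smooth in $h^2$ by the hypothesis $\Psi(\rho^{-2/(\kappa+2)})\in C^\infty[0,\infty)_{\rho^2}$); and the mixed $A,B,A',B'$ cross terms multiplying $Q_iQ_j$, $Q_iQ_j'$, $Q_i'Q_j'$ are treated analogously.

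The main obstacle is the careful $h$-order bookkeeping: the fractional exponents $h^{\pm 2/(\kappa+2)}$, $h^{-2\kappa/(\kappa+2)}$, and $h^{2(\kappa+1)/(\kappa+2)}$ arise from distinct sources (the chain rule, the evaluation of $V(\lambda_0)$, and the boundary-defining-function $\varrho_{\mathrm{ze}}^{(\kappa+1)/(\kappa+2)}$), and verifying that they combine into the single smooth structure $h^{\kappa/(\kappa+2)}\,C^\infty[0,\infty)_h=h^{-2/(\kappa+2)}\cdot h\cdot C^\infty[0,\infty)_h$ requires following the combinatorics through each term. Once the orders are right, smoothness in $h$ follows automatically from smoothness in $h^2$ of $A_i,B_i,A_i',B_i'$ and of $\Psi(\lambda_0)$.
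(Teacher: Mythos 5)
Your strategy matches the paper's: the Wronskian is $\zeta$-independent (Abel, since $P$ has no first-order term), so evaluate at $\zeta=Z$ where $\delta$ vanishes for $h$ small, then track the $h\to 0^+$ orders. Where you diverge from the paper is the \emph{parameterization}. You keep the $Q$--$Q'$ decomposition explicit, writing $u_i = (1+A_i)Q_i + B_iQ_i'$; the paper instead uses the fact that $Q_i(\lambda)$ is nonvanishing for $\lambda$ large to absorb the $Q_i'$ contribution into a multiplicative correction, $u_i = (1+h a_i)Q_i$ with $a_i\in C^\infty((0,Z]_\zeta\times[0,\infty)_h)$. The paper's route is slicker: the Wronskian factors transparently, no second derivative of $Q$ appears, and the only input is that $Q_1Q_2'$, $Q_1'Q_2$ and $h^{-\kappa/(\kappa+2)}Q_1Q_2$ are smooth in $h$ near $\mathrm{ie}\cap\mathrm{ze}$.

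Your route introduces a subtlety that your write-up glosses over, and the explanation you give for handling it is incorrect even though the conclusion is not. After substituting $Q_i''=VQ_i$, the net contribution from $Q_1Q_2''$ (coming from $u_1u_2'$) and $Q_1''Q_2$ (coming from $u_1'u_2$) is
\begin{equation*}
  \frac{(1+A_1)B_2 - B_1(1+A_2)}{h^{2/(\kappa+2)}}\,V(\lambda_0)\,Q_1(\lambda_0)Q_2(\lambda_0),
\end{equation*}
and together with the $Q_1'Q_2'$ cross this gives $h^{-2/(\kappa+2)}\bigl[(1+A_1)B_2 - B_1(1+A_2)\bigr]\bigl(VQ_1Q_2 - Q_1'Q_2'\bigr)$. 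Your invocation of the identity $Q_1Q_2''-Q_1''Q_2=0$ does not cause this term to cancel: the identity is between the $Q$-factors alone, while the two contributions carry the different coefficients $(1+A_1)B_2/\sigma$ and $B_1(1+A_2)/\sigma$. Without further input, $(1+A_1)B_2 - B_1(1+A_2)\approx B_2-B_1$ is of order $h^{2(\kappa+1)/(\kappa+2)}$, so the whole term is only $O(h^{\kappa/(\kappa+2)})$ -- one full power of $h$ worse than your stated bound $O(h^{(2\kappa+2)/(\kappa+2)})$. The bound you asserted is in fact true, but for a reason you never give: in Theorem~\ref{thm2} the $Q$-dependent parts of $\beta,\gamma$ (coming from \S\ref{sec:fe}--\S\ref{sec:error}) are all $O(\varrho_{\mathrm{ze}}^\infty)$, hence $O(h^\infty)$ at $\zeta=Z$, and only the $Q$-independent $\beta_0,\gamma_0$ of Proposition~\ref{prop:ze_upshot} contribute at any polynomial order. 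Thus $A_1-A_2$ and $B_1-B_2$ are $O(h^\infty)$, and the troublesome coefficient really is negligible. The paper's factorization never encounters this issue because it never produces a $Q''$ term.

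That said, your final conclusion is still correct: the lemma asserts membership in $h^{\kappa/(\kappa+2)}C^\infty[0,\infty)_h$, and even the unimproved $O(h^{\kappa/(\kappa+2)})$ bound on the problematic cross term -- immediate from $B_2-B_1\in h^{2(\kappa+1)/(\kappa+2)}C^\infty$ and $VQ_1Q_2-Q_1'Q_2'\in h^{-\kappa/(\kappa+2)}C^\infty[0,\infty)_h$ -- lands inside the claimed space. So the gap only affects the subsidiary assertion that all subleading terms are an extra full power of $h$ better; it does not affect the validity of the lemma. Either supply the $O(h^\infty)$ observation above, or simply drop the claim that the cross terms are $O(h^{(2\kappa+2)/(\kappa+2)})$ and settle for the (sufficient) $O(h^{\kappa/(\kappa+2)})$ bound. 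You should also flag, as the paper does, that the computation is done in the $W=1$ case; the general case then follows from the Langer diffeomorphism, under which the $z$-Wronskian equals the $\zeta$-Wronskian since $(W^{-1/4}\xi^{\kappa/4})^2\,\zeta'(z)=1$.
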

Here, $Q_j'(\lambda)=\partial_\lambda Q_j(\lambda)$, whereas $u_j'=\partial_\zeta u_j(\zeta,h)$. 
\begin{proof}

	Because $u_1u_2' - u_1'u_2$ is only a function of $h$, we can evaluate it at any $\zeta$. Near $\mathrm{ie}$, we can write 
	$u_1 = (1+h a_1) Q_1(\lambda)$ and $u_2 = (1+h a_2) Q_2(\lambda)$ for $a_1,a_2 \in C^\infty((0,Z]_\zeta\times [0,\infty)_{h};\bbC)$
	and $\lambda = \zeta/h^{2/(\kappa+2)}$, in part because, for the chosen $Q_\bullet$, the functions $Q_1(\lambda),Q_2(\lambda)$ are not vanishing for $\lambda$ sufficiently large. (This is just a way of saying that the Liouville--Green ansatz only requires a single term, unlike Langer's ansatz which involves both $Q,Q'$.)

	Then, explicitly, 
	\begin{multline}
	  W\{u_1,u_2\}(h) = h^{-2/(\kappa+2)} W\{Q_1,Q_2\} + hW\{Q_1(\lambda), a_2 Q_2(\lambda) \}(h) \\
	   + hW\{a_1 Q_1(\lambda), Q_2(\lambda)\}(h) + h^2 W\{a_1 Q_1(\lambda), a_2 Q_2(\lambda)\}(h).
	\end{multline}
	(Because $a_2 Q_2,a_1 Q_1$ typically do not solve the ODE, the ``Wronskians'' on the right-hand side depend on the $\lambda$ at which they are evaluated, but we will not denote this explicitly.) 
	
	The particular choice of $Q_1,Q_2$ means that 
	\begin{equation}
	h^{-\kappa/(\kappa+2)} Q_1(\lambda)Q_2(\lambda), Q_1'(\lambda) Q_2(\lambda), Q_1(\lambda) Q_2'(\lambda)   \in C^\infty((0,Z]_\zeta\times [0,\infty)_{h};\bbC),
	\end{equation}
	using \Cref{prop:quasimode_large_argument}, where $\lambda$ is still $\zeta/h^{2/(\kappa+2)}$.

	Then, we explicitly evaluate
	\begin{multline}
	W\{Q_1(\lambda), a_2 Q_2(\lambda)\}(h) = a_2' Q_1(\lambda) Q_2(\lambda)  + h^{-2/(\kappa+2)} a_2 Q_1(\lambda)  Q_2'(\lambda) \\ - h^{-2/(\kappa+2)} a_2 Q_1'(\lambda) Q_2(\lambda) \in h^{-2/(\kappa+2)} C^\infty,
	\end{multline} 
	\begin{multline}
	W\{a_1 Q_1(\lambda), Q_2(\lambda)\}(h) = -a_1' Q_2(\lambda) Q_1 (\lambda) - h^{-2/(\kappa+2)} a_1 Q_2(\lambda)  Q_1'(\lambda) \\ 
	+ h^{-2/(\kappa+2)} a_1 Q_2'(\lambda) Q_1(\lambda)  \in h^{-2/(\kappa+2)} C^\infty,
	\end{multline}
	and 
	\begin{multline}
	W\{a_1 Q_1, a_2 Q_2\} = a_1  a_2' Q_1 Q_2 + h^{-2/(\kappa+2)} a_1 a_2 Q_1 Q_2' - a_1' a_2 Q_1 Q_2 - h^{-2/(\kappa+2)} a_1 a_2 Q_1 Q_2' \\ 
	\in h^{-2/(\kappa+2)} C^\infty,
	\end{multline}
	where $C^\infty = C^\infty((0,Z]_\zeta\times [0,\infty)_{h};\bbC)$. Adding everything together, the claim follows.
\end{proof}

We return to the proof of \Cref{thm1}.  
By the Langer diffeomorphism, it suffices to consider the $W=1$ case.
Thus, if $h$ is sufficiently small, \Cref{lem:Wr} implies that  $u_1,u_2$ are linearly independent, and a solution $u$ to $Pu=0$ with prescribed initial data $u|_{\mathrm{ie}},u'|_{\mathrm{ie}}$ can be written as 
\begin{align}
\begin{split}
u(\zeta,h) &= 
\begin{bmatrix}
u_1(\zeta,h) \\ u_2(\zeta,h)
\end{bmatrix}^\intercal
\begin{bmatrix}
u_1|_{\mathrm{ie}}(h) & u_2|_{\mathrm{ie}}(h) \\ 
u_1'|_{\mathrm{ie}}(h) & u_2'|_{\mathrm{ie}}(h)
\end{bmatrix}^{-1} 
\begin{bmatrix}
u|_{\mathrm{ie}}(h) \\ u'|_{\mathrm{ie}}(h)
\end{bmatrix} \\ 
&= \frac{1}{W\{u_1,u_2\}(h) } 
\begin{bmatrix}
u_1(\zeta,h) \\ u_2(\zeta,h)
\end{bmatrix}^\intercal
\begin{bmatrix}
u_2'|_{\mathrm{ie}}(h) & -u_2|_{\mathrm{ie}}(h) \\ 
-u_1'|_{\mathrm{ie}}(h) & u_1|_{\mathrm{ie}}(h)
\end{bmatrix}
\begin{bmatrix}
u|_{\mathrm{ie}}(h) \\ u'|_{\mathrm{ie}}(h)
\end{bmatrix}.
\end{split}
\end{align}
From \Cref{lem:Wr} (and the fact that $W\{Q_1,Q_2\}\neq0$), it follows that the
reciprocal $1/W\{u_1,u_2\}(h)$ is polyhomogeneous.
Thus, $u$ is a linear combination of products of functions of exponential-polyhomogeneous type on $M$, so is of exponential-polyhomogeneous type itself. 

\section{Collapsing $\mathrm{fe}$}
\label{sec:collapsing}

We now consider some simplifications to the main result that apply when $\kappa \in \{-1,0,1\}$ and $P$ is of the form 
\begin{equation}
P = - h^2 \frac{\partial^2}{\partial z^2} + \varsigma z^\kappa W(z)  + \frac{h^2 c}{z^2} + h^2 E, 
\label{eq:misc_291}
\end{equation}
where 
\begin{itemize}
	\item if $\kappa \in \{0,1\}$, then $c=0$ and $E\in C^\infty( [0,Z]_z\times [0,\infty)_{h^2};\bbC)$,
	\item if $\kappa=-1$, then $c\in \bbC$ is arbitrary and $E\in z^{-1} C^\infty( [0,Z]_z\times [0,\infty)_{h^2};\bbC)$.
\end{itemize}
These are exactly the three main cases that Olver considers in \cite{OlverOriginal, OlverTransition, OlverBook}. Our goal is to explain why, in this setting, we can improve the description of the coefficients in \Cref{thm2} \emph{by collapsing} (i.e.\ blowing down) $\mathrm{fe}$, so as to get a result akin to the Langer--Olver/uniform JWKB expansion which Langer \cite{Langer31, Langer32, Langer49} and Olver base their treatment of transition point problems on. In other words, we want to explain why, in \Cref{thm2}, polyhomogeneity on $M$ can be replaced by ordinary smoothness on the rectangle.

It is convenient in applications to allow the analysis to extend beyond $z=0$. Let $Z_- \in (-\infty,0]$, $W \in C^\infty([Z_-,Z]_z;\bbR^+)$, and suppose that
\begin{equation}
E \in 
\begin{cases}
C^\infty([Z_-,Z]_z\times [0,\infty)_{h^2};\bbC ) & (\kappa=0,1), \\ 
z^{-1}C^\infty([Z_-,Z]_z\times [0,\infty)_{h^2};\bbC ) & (\kappa=-1).
\end{cases}
\end{equation}

With the stated assumptions, we have the following:
\begin{theorem}
	For any $Q\in \calQ$, there exist $\beta,\gamma \in C^\infty( [0,Z]_z\times [0,\infty)_{h^2};\bbC)$ such that the function $u$ defined by 
	\begin{equation}
	u = 
	\sqrt[4]{ \frac{\xi^\kappa}{W} }
	\bigg[ (1+h^2 \beta) Q \Big(  \frac{\zeta}{h^{2/(\kappa+2)}} \Big) + h^{(2\kappa+2)/(\kappa+2)} \gamma Q'\Big(  \frac{\zeta}{h^{2/(\kappa+2)}} \Big)   \bigg]
	\label{eq:misc_228}
	\end{equation}
	solves $Pu=0$, where 
	\begin{equation} 
	\zeta(z) = \operatorname{sign}(z) \Big(\frac{\kappa+2}{2} \int_0^{|z|} \omega^{\kappa/2} \sqrt{W( \operatorname{sign}(z) \omega)} \dd \omega \Big)^{2/(\kappa+2)} \in C^\infty(\bbR_z),
	\end{equation}
	$\xi(z)=z^{-1}\zeta(z)$. 
	In fact, if $\kappa \in \{-1,0\}$, we can arrange $\gamma \in zC^\infty( [0,Z]_z\times [0,\infty)_{h^2};\bbC)$.
	\label{thm:collapsed}
\end{theorem}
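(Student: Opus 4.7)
The plan is to trace through the four-step scheme of \Cref{thm2} and check that each step can be carried out within the class of functions smooth on the original rectangle $[Z_-, Z]_z \times [0,\infty)_{h^2}$, thereby avoiding the blowup $M$. By the Langer diffeomorphism of \Cref{sec:Langer}, I reduce to $W = 1$ and write $\zeta$ for $z$. The hypothesis $\phi = 0$ forces $\alpha^2 = 1/4$ and $\Psi = 0$, so I take $\alpha = 1/2$; the normal operator then reduces to $N = -\partial_\lambda^2 + \varsigma \lambda^\kappa$, whose indicial roots at $\lambda = 0$ are the non-negative integers $0, 1$.

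I first redo the Langer--Olver recursion \cref{eq:rec1}--\cref{eq:rec2} of \Cref{sec:ze} with $\phi = 0$, verifying by induction that for $\kappa \in (-2, 2)$ and $E$ in the hypothesized class, each coefficient $\beta_k, \gamma_k$ extends smoothly to $\zeta = 0$ for an appropriate choice of integration constants $C_k$. The crucial point is that the operator
\begin{equation}
f \mapsto \frac{1}{\zeta^{\kappa/2}} \int_\zeta^Z f(\omega) \omega^{-\kappa/2} \, \mathrm{d}\omega
\end{equation}
preserves $C^\infty[0, Z]$ when $\kappa < 2$: $\omega^{-\kappa/2}$ is integrable at $0$, so with $C_k$ chosen to cancel the would-be $\zeta^{-\kappa/2}$ leading singularity, the remaining integral carries a $\zeta^{(2-\kappa)/2}$ factor that balances the prefactor. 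In the $\kappa = -1$ case, the $z^{-1}$ singularity of $E$ is absorbed by the $\omega^{1/2}$ weight before integration. Asymptotic summation of these smooth coefficients produces smooth $\beta, \gamma$ on the rectangle, with
\begin{equation}
L \begin{bmatrix} \beta \\ \gamma \end{bmatrix} \in h^\infty C^\infty([0,Z]_z \times [0,\infty)_{h^2}; \bbC)^2.
\end{equation}

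The analogues of \Cref{sec:fe} and \Cref{sec:low} are then unnecessary, since the remaining error is $O(h^\infty)$ uniformly on the whole rectangle (rather than merely at $\mathrm{ze}$). It is solved away by the variation-of-parameters argument of \Cref{sec:error}, which outputs $h^\infty C^\infty$-type corrections in the present smoothness-on-rectangle setting. Because $\alpha = 1/2$ makes the indicial roots at $\mathrm{be}$ integer and non-negative, no fractional-power or logarithmic behavior is generated there, and the correction $\delta$ of \Cref{thm2} can be chosen to vanish, yielding precisely the form of $u$ asserted in \Cref{thm:collapsed}.

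The main obstacle will be the inductive step: verifying that the recursion preserves smoothness at $\zeta = 0$ uniformly in $k$, i.e.\ that the $C_k$ can be chosen consistently so that $\sum h^{2k} \beta_k$ and $\sum h^{2k+1} \gamma_k$ asymptotically sum to smooth (rather than merely polyhomogeneous) functions on the rectangle. This is essentially the classical content of Langer's analysis in the regime $\kappa < 2$ and reduces to careful bookkeeping of $\zeta$-regularity through the recursion, combined with the boundedness of the integration operator displayed above.
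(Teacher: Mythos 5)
Your proposal follows essentially the same route as the paper's Appendix~C proof: reduce to $W=1$ via the Langer diffeomorphism, observe that $\phi=0$ forces $\alpha=1/2$ and $\Psi=0$, run the \S\ref{sec:ze} recursion with $\phi=0$ and verify by induction that $\beta_k,\gamma_k$ extend smoothly through $\zeta=0$ for suitable $C_k$ (the same ``one extra order of integrability'' bookkeeping you describe), asymptotically sum in $C^\infty$ on the original rectangle, and close the resulting $O(h^\infty)$ error with the \S\ref{sec:error} variation of parameters, bypassing \S\ref{sec:fe} and \S\ref{sec:low} entirely. One small caveat: your stated reason for the vanishing of $\delta$ --- that $\alpha=1/2$ gives integer indicial roots, so no fractional-power or logarithmic behavior at $\mathrm{be}$ --- is not quite what is going on (with $\alpha=1/2$ the indicial roots differ by an integer, so logarithms \emph{can} appear in $Q(\lambda)$ near $\lambda=0$, e.g.\ in the $\kappa=-1$ integer-order Bessel case); the actual mechanism is structural: the \S\ref{sec:ze} error is already $O(h^\infty)$ on the whole rectangle, and the \S\ref{sec:error} fixed-point argument can be run there directly by taking $h_0$ small in place of $\lambda_0$ large, so the \S\ref{sec:low} regular-singular correction at the low corner is never invoked.
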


Under the stated conditions, one can simplify \Cref{thm1} in an analogous way. 

The $\kappa=0$ case is just the ordinary Liouville--Green approximation, and the $\kappa=1,-1$ cases are the expansions in \cite[Chp.\ 11, 12]{OlverBook}, respectively. The only difference is that differentiability of the expansion in the semiclassical parameter is included.

\begin{proof}
	By the Langer diffeomorphism, it suffices to consider the $W=1$ case. 
	The proof of the theorem in this case follows that of \Cref{thm2}, with a few simplifications.

	The first key simplification is that the coefficients functions $\beta_k,\gamma_k \in C^\infty(0,Z]_\zeta$ defined by \cref{eq:rec1}, \cref{eq:rec2}, with $C_k$ as in the body and an appropriate choice of $c_k$, are all extendable to elements of $C^\infty[Z_-,Z]_\zeta$. (And if $\kappa=0,-1$, then the same applies to $\zeta^{-1} \gamma_\bullet$.) 
	If, for some $k\in \bbN$, we know that $\beta_1(\zeta),\ldots,\beta_k(\zeta) \in  C^\infty [Z_-,Z]_\zeta$, and if, in the $\kappa=-1$ case, we also know that $\gamma_{k-1} \in \zeta C^\infty[Z_-,Z]_\zeta$ (taking this to be $0$ if $k=0$), then \cref{eq:rec2}, which now reads 
	\begin{equation}
	\gamma_k(\zeta) = -\frac{\varsigma}{2\zeta^{\kappa/2}} \int_0^\zeta \Big( \frac{\mathrm{d}^2 \beta_k(\omega)}{\dd \omega^2} - \sum_{j=0}^k E_j(\omega) \beta_{k-j}(\omega) + \frac{2c}{\omega} \frac{\mathrm{d}}{\mathrm{d} \omega} \Big( \frac{\gamma_{k-1}(\omega)}{\omega} \Big) \Big) \frac{\dd \omega}{\omega^{\kappa/2}},
	\label{eq:misc_279}
	\end{equation}
	tells us that $\gamma_k(\zeta) \in C^\infty[Z_-,Z]_\zeta$ (using the assumptions on $c,E$). In fact, if $\kappa=-1,0$, then $\gamma_{k}(\zeta) \in \zeta C^\infty[Z_-,Z]_\zeta$ (using that, if $\kappa=0$, we are assuming that $c=0$). 
	
	Suppose we also know that $\gamma_0,\dots,\gamma_{k-2}\in \zeta C^\infty[Z_-,Z]_\zeta$. \Cref{eq:rec1}, for some choice of $c_k$, reads 
	\begin{equation}
	\beta_{k+1} =  - \frac{1}{2}\int_0^\zeta \Big(\frac{\mathrm{d}^2\gamma_k(\omega)}{\dd \omega^2} - \sum_{j=0}^k E_j(\omega) \gamma_{k-j}(\omega) \Big) \dd \omega. 
	\end{equation} 
	So, $\beta_{k+1}$ is also smooth. Indeed, the only terms in the integrand which might not be smooth are the $E_\bullet$'s, but this is only allowed when $\kappa=-1$, and in this case $\gamma_{k-j}(\omega)\in \omega C^\infty[Z_-,Z]_\omega$, so the product $E_j \gamma_{k-j}$ is still smooth.
	
	Since $\beta_0 = 1$, one proceeds inductively to conclude that all $\beta_k,\gamma_k$ are smooth down to $\zeta=0$ (and through, if $Z_-<0$), and that, if $\kappa \in \{-1,0\}$, then $\zeta^{-1}\gamma_k(\zeta)$ is also smooth. 
	Then, the $\beta_k,\gamma_k$ can be asymptotically summed, not in polyhomogeneous spaces on $M$ as in the body of the paper, but in  $C^\infty=C^\infty([Z_-,Z]_\zeta\times [0,\infty)_{h^2};\bbC)$. If $\kappa \in \{-1,0\}$, then  we can instead asymptotically sum $\gamma$ in $z C^\infty$. 
	Then, one gets a $v$ with the form of the right-hand side of \cref{eq:misc_228} such that $Pv  \in h^\infty C^\infty$. 
	
	The analysis in \S\ref{sec:error} applies mutatis mutandis, and there is no longer any need to restrict attention to $\lambda\geq \lambda_0$ for some $\lambda_0>0$. Instead, the fixed point argument takes $h_0$ sufficiently small so as to guarantee the smallness of the operator norms of the maps $\Phi,\Xi$. So the argument in that section directly produces a $w$ of the form $w = \delta Q + \varepsilon Q'$ for $\delta,\varepsilon \in h^\infty C^\infty$ such that $Pw = - Pv$. 
	
	Then, $u = w+v$ has the desired form, after absorbing the $\delta,\varepsilon$ into a redefinition of $\beta,\gamma$, and satisfies $Pu=0$.  
\end{proof}

\begin{remark}
	In \cite[\S12.14.4]{OlverBook}\cite{OlverTransition}, Olver lists a few exceptional cases that, while not of the form above (either because $\kappa\geq 2$ or else because $\kappa\in \{0,1\}$ but $c\neq 0$), can be transformed into the form above with $\kappa=-1$ via a change of variables. 
	One of these exceptional cases appears often enough to be worth mentioning: if $W,E$ are both even in $z$, then we can allow $c\neq 0$ if $\kappa=0$. 
	We saw an example of this in \Cref{rem:SHO_Coulomb_relation}.
	\label{rem:exsimp}
\end{remark}
 
\section{Details of the hydrogen example}
\label{sec:hydrogen_details}
We now present the details omitted from \S\ref{subsec:Coulomb}, which studied the hydrogen operator
\begin{equation}
	P=- \frac{\partial^2}{\partial r^2} - E - \frac{\mathsf{Z}}{r} + \frac{\ell(\ell+1) }{r^2} ,
	\label{eq:misc_285}
\end{equation} 
for $E\in \bbR, \mathsf{Z}\in \bbR^\times$, and $\ell\in \bbN$. 
This appendix consists of two subsections:
\begin{itemize}
	\item The first, \S\ref{subsubsec:coulombic_high_energy}, discusses the simplest example we are aware of where, to the best of our knowledge, our expansion is novel, in this case at the first subleading order. 
	This is the problem of high-energy behavior near the Coulombic singularity, i.e.\ the regime where $|E|\to\infty$ and $r\to 0$ together, when the initial data is supplied at fixed $r>0$ rather than $r=0$. 
	In this application, $\kappa=0$ (this is the Liouville--Green case), but the Coulombic singularity of the $\psi$ term renders traditional methods and expansions inapplicable, as we will explain.
	\item  \S\ref{subsubsec:Rydberg} describes the opposite asymptotic regime, the \emph{Rydberg limit}, where $E\to 0$ and $r\to\infty$ together. The $\kappa=-1,1$ cases arise in this example. Here, the interesting behavior is not due to the Coulombic singularity, but rather the long-range nature of the Coulomb potential, which, despite decaying as $r\to\infty$, can compete with the energy term $-E$ in \cref{eq:misc_285} when $E$ is small.
	
	The results in this subsection are more classical than those in \S\ref{subsubsec:coulombic_high_energy}. One of our purposes in presenting the details is to illustrate how semiclassical methods can sometimes be applied in asymptotic regimes that are not obviously semiclassical. (It is well-known how semiclassical analysis arises in the high-energy, $E\to\infty$ limit. See e.g.\ \S\ref{subsubsec:coulombic_high_energy}. This is why Weyl's law is regarded as a semiclassical result. That semiclassical analysis is relevant to the \emph{low}-energy, $E\to 0$, limit of the hydrogen atom is less well-known.) Here, our expansions are not new, but their term-by-term differentiability in all directions seems to be (cf.\ \cite{Toprak}).
\end{itemize}

Standard references on the Coulomb wavefunctions include \cite{Curtis}\cite[\href{http://dlmf.nist.gov/33}{\S33}]{NIST}.

\subsection{High-energy asymptotics near the singularity}
\label{subsubsec:coulombic_high_energy}

The operator $P$ in \cref{eq:misc_285} is not of the form \cref{eq:1}, but $Pu=0$ is equivalent to $P_0u=0$ for $P_0 = |E|^{-1} P$, which has the form 
\begin{equation}
	P_0 =  - h^2 \frac{\partial^2}{\partial z^2} \mp 1 + h^2 \psi 
\end{equation}
for $h=|E|^{-1/2}$, where we are now writing `$z$' in place of `$r$,' the sign $\mp$ is positive if $E$ is negative and negative if $E$ is positive, and 
\begin{equation}
	\psi = \frac{\ell(\ell+1)}{z^2} - \frac{\mathsf{Z}}{z} .
\end{equation}
Thus, $P$ has the form \cref{eq:1} for $\kappa=0$, and $\psi$ is of precisely the form \cref{eq:psi_init}, with $\nu = \ell(\ell+1)$, $\varphi(h) =-\mathsf{Z}$, and $G=0$. 
Thus, we immediately get from \Cref{thm1} the existence of asymptotic expansions as $|E|\to\infty$ and/or $z\to 0^+$. Besides the ``pure'' large-energy regime ($|E|\to\infty$ for $z$ fixed) and the ``pure'' small-argument regime ($z\to 0^+$ for $|E|$ fixed), we also have a transitional asymptotic regime, fe, probed by taking $|E|\to \infty$ and $z\to 0^+$ together while fixing the ratio $\lambda = r |E|^{1/2}$. See \Cref{fig:X}.

It is the last regime that is of interest to us here.
Using \cref{eq:misc_ak3}, the asymptotics in question can be rephrased in terms of the asymptotic behavior of the Whittaker functions $\operatorname{WhittW}_{k,\mu},\operatorname{WhittM}_{k,\mu}$ in the limit where $k\to 0$. The Whittaker-M function is well-behaved in this limit, since it is defined via its Taylor series, and the coefficients in that Taylor series are smooth down to $k=0$ \cite[\href{http://dlmf.nist.gov/13.14.E6}{Eq.\ 13.14.6}]{NIST} (and derivatives in $k$ are under sufficient control, as can be shown using the ODE). On the other hand, it is difficult to see whether the Whittaker-W function should be well-behaved in this limit, since it is singled out among the solutions of Whittaker's ODE by its large-argument behavior, and large-argument asymptotics are strongly affected by the presence/absence of a Coulomb potential.

Since the Whittaker functions can be related to the confluent hypergeometric functions, we can ask the corresponding questions about the confluent hypergeometric functions.

Does a complete solution exist for any of these equivalent problems? If by ``complete solution'' we mean a full asymptotic expansion, the answer appears to be no. I am unable to locate in standard references such as \cite{Bateman}\cite{Curtis}\cite{OlverBook}\cite{NIST}, much about the relevant limits.
If $k$ is fixed to $0$, and then $z\to 0^+$, one has an explicit expansion for the Whittaker-W function \cite[\href{http://dlmf.nist.gov/13.14.E8}{Eq.\ 13.14.8}]{NIST}, but it is not clear what happens when one differentiates that expansion in $k$. Indeed, in \cite{Curtis}, Curtis states, citing \cite{OlverTransition}:
\begin{quote}
	\textit{No expansions can be found in terms of functions of a single variable, however, which are uniformly valid in an interval containing the origin.} \cite[\S7.2]{Curtis}
\end{quote}
It therefore appears that the asymptotic expansion that \Cref{thm1} states exists is novel in this context.

Why has this case not been understood previously? 
The answer has to do with the Coulombic $1/z$ singularity in $\psi$. This simple pole breaks standard arguments. Indeed, Olver delimits the cases the Langer--Olver/uniform WKB method can handle in \cite[\S12.14]{OlverBook}, and the case we are considering here is not among them: it is not one of Olver's three main cases (which only allow $\psi$ to be singular if $\kappa=-1$, whereas $\kappa=0$ here), and it is not one of the exceptional forms enumerated in \cite[\S12.14.4]{OlverBook}, because \cite[Eq.\ 12.14.17]{OlverBook} allows only a pure double pole -- no single pole. (This is why the Coulomb term $1/z$ is the problem, not $\ell(\ell+1)/z^2$,  even though the latter is more singular at $z=0$.)

Rather than being a technical glitch, this is a fundamental obstacle, one that exists for conceptual reasons (that Olver understood): no expansion of the sort Olver wanted holds. As discussed in \S\ref{subsec:JWKB} (in the $\kappa=1$ case), one sees in such an asymptotic expansion a special function and its derivative, but the \emph{coefficients} of those special functions are
formal series in $h$ whose coefficients are smooth functions of $z$, all the way down to $z=0$. That is, the coefficients (what we call $\beta$ and $\gamma$ in \Cref{thm2}, \Cref{thm:collapsed}) are smooth on $[0,Z]_z\times [0,\infty)_{h^2}$. As Olver explains, this is not possible in the case we are currently considering. He proves a general no-go theorem to this effect in \cite{OlverTransition}, which is the source of Curtis's claim above; see \cite[\S12.14]{OlverBook} for a summary.
This is where Olver's discussion ends and where ours begins. 
The coefficients $\beta,\gamma$ are (as \Cref{thm2} guarantees) instead polyhomogeneous on a compactification with \emph{two} $h\to 0^+$ edges, and each has its own asymptotic expansion associated with it. More specifically, $\beta,\gamma$ are polyhomogeneous on $M$. This is how we evade Olver's no-go theorem.

To see why $\beta,\gamma$ cannot both be smooth on $[0,Z]_z\times [0,\infty)_{h^2}$, let us recall Olver's argument in \cite[\S12.14]{OlverBook}. This boils down to attempting the proof of \Cref{thm:collapsed} and seeing what fails.  
In the present setting, what \Cref{thm:collapsed} would state is the following Langer--Olver-type result: for any $Q\in \calQ$ (where $\calQ$ is the set of products $t^{1/2} I(t)$ for $I(t)$ a Bessel function -- unmodified if $E>0$, modified otherwise -- of order $\nu=2^{-1}+\ell$), there exist $\beta,\gamma \in C^\infty([0,Z]_z\times [0,\infty)_{h^2};\bbC)$ such that 
\begin{equation}
	u = (1+h^2 \beta) Q\Big( \frac{z}{h} \Big) + h \gamma Q'\Big( \frac{z}{h} \Big) 
	\label{eq:misc_03x}
\end{equation}
solves $Pu=0$. (Cf.\ \cref{eq:JWKB_uniform}.) But, the proof of \Cref{thm:collapsed} tells us what $\gamma_0=\gamma|_{h=0}$ would have to be: up to a constant, and taking absolute values for simplicity, 
\begin{equation}
	|\gamma_0(z)| = \frac{1}{2} \Big|\int^1_z    \frac{\mathsf{Z}}{\omega}   \dd \omega\Big| 
	\label{eq:misc_0y9}
\end{equation} 
(since the $\beta_0$ in \cref{eq:misc_279} is identically $1$). But unless $\mathsf{Z}=0$, this integral diverges logarithmically as $z\to 0^+$; $\gamma_0$ cannot be smooth at $z=0$.

We can see this in some basic numerics by examining the growth rate of $\gamma$ as $z,h\to 0^+$ together. 
To simplify matters as much as possible, consider only the case $E>0$ and $\ell=0$. Then, $\calQ = \operatorname{span}_\bbC\{e^{it},e^{-it}\}$, as discussed in \S\ref{subsec:model}. Then, 
the two terms in \cref{eq:misc_03x} can be combined into one, so what we want to disprove is the existence of solutions $u_\pm \in \ker P$ of the form 
\begin{equation}
	u_\pm = (1+h \Gamma_\pm) e^{\pm iz/h} 
\end{equation}
for $\Gamma_\pm \in C^\infty([0,Z]_z\times [0,\infty)_{h};\bbC)$. Again, we know that there exist polyhomogeneous $\Gamma_\pm$ (with a specified index set, so that $\Gamma_\pm$ is $O(1)$ at $\mathrm{ze}$ away from $\mathrm{fe}$) such that $u_\pm \in \ker P$. The question is whether the $\Gamma_\pm$ can be smooth before blowing up the corner of $[0,Z]_z\times [0,\infty)_{h}$. What we will check numerically is that we cannot have
\begin{equation}
	\Gamma_\pm = O(1).
	\label{eq:misc_042}
\end{equation}
For each individual $h>0$, the ODE $P(h)u=0$ is regular singular at $z=0$, with indicial roots $0,1$, so the elements of $\{\Gamma_\pm(h)\}_{h>0}$ must extend continuously to $z=0$. (Alternatively, this can be read off \cite[\href{http://dlmf.nist.gov/13.14}{\S13.14}]{NIST}.) Consequently, if \cref{eq:misc_042} fails, it must be because $\Gamma_\pm$ blows up at the corner $\{z=0,h=0\}$, in accordance with the discussion above. For the sake of contradiction, let us assume \cref{eq:misc_042} and see what follows.

Let $u_{1,0}$ denote the solution to $Pu=0$ such that $u(1)=0$ and $u'(0)=0$. Then, it follows from \cref{eq:misc_042} that 
\begin{equation}
	u_{1,0} = \cos(h^{-1}(z-1)) + O(h).
	\label{eq:misc_043}
\end{equation}
Let $\calE = h^{-1} (u_{1,0} - \cos(h^{-1}(z-1))) = |E|^{1/2}(u_{1,0} - \cos( |E|^{1/2}(r-1)))$. If \cref{eq:misc_043} were true, then $\calE$ would be $O(1)$.
What we will do is follow  $\calE$ along the three curves depicted in \Cref{fig:cone_test}. What we will see is that, when following $\calE$ along the curve $\{r=1/|E|^{1/2}\}$ (which hits fe), $\calE$ diverges logarithmically. So, $\calE \neq O(1)$ and \cref{eq:misc_042} is numerically falsified.

\Cref{fig:cone_test} shows plots of $\calE$ followed along these three curves.
Different behaviors are seen along each: 
\begin{enumerate}
	\item In the first, where $E$ is fixed and $z\to 0^+$, the function $\calE$ converges, just because in the $\ell=0$ case the elements of \cref{eq:misc_ak3} are continuous functions of $z$, as discussed above.
	\item In the second, where $z$ is fixed and $E\to\infty$, $\calE$ oscillates boundedly, in accordance with the Liouville--Green theory.
	\item In the third, $\calE$ oscillates with amplitude $\propto \log (E)$. This is in accordance with the logarithmic divergence of \cref{eq:misc_0y9} as $z\to 0^+$. 
\end{enumerate}

\begin{figure}[h]
	\begin{subfigure}[b]{.32\textwidth}
		\includegraphics[scale=.17]{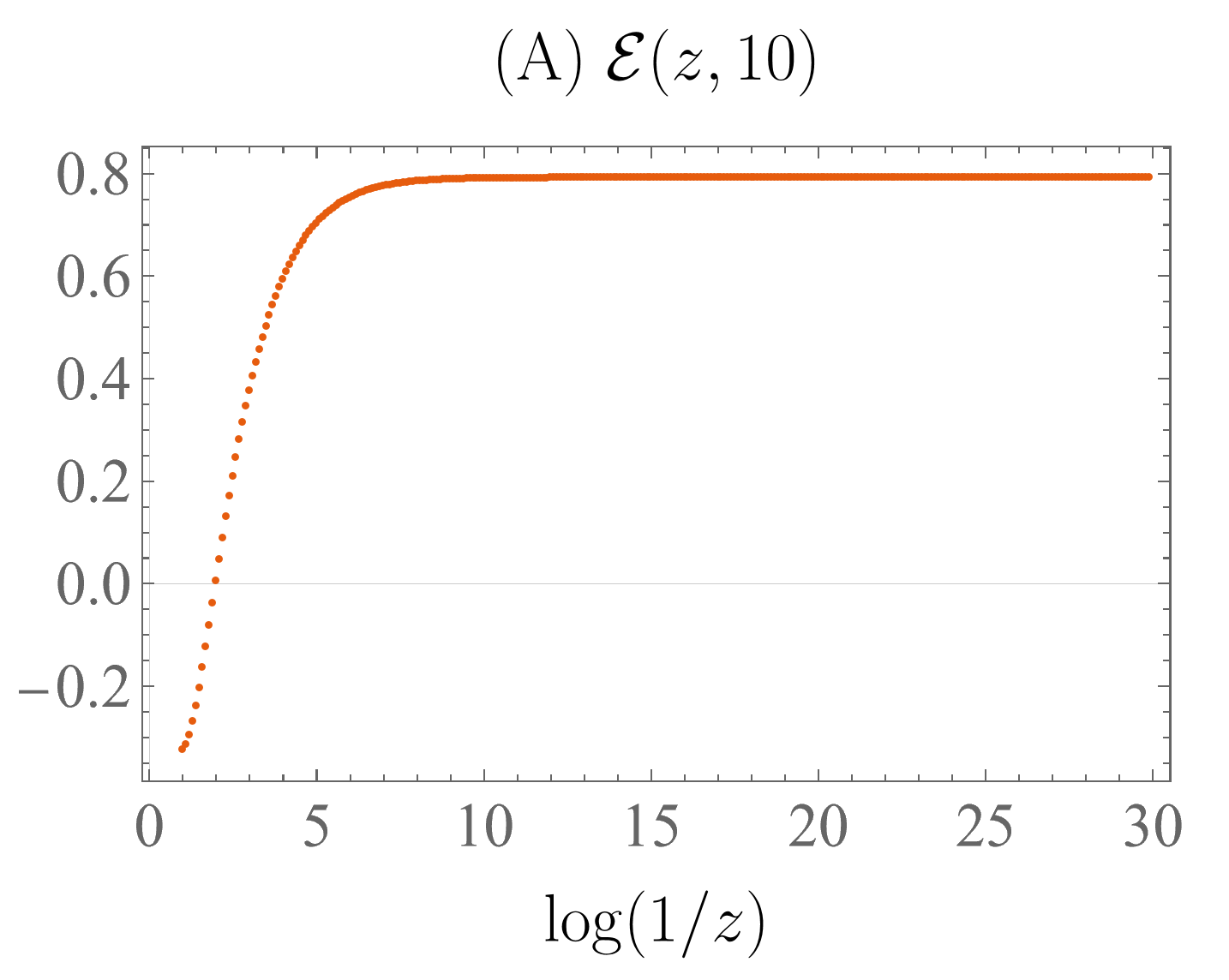}
	\end{subfigure}
	\begin{subfigure}[b]{.32\textwidth}
		\includegraphics[scale=.17]{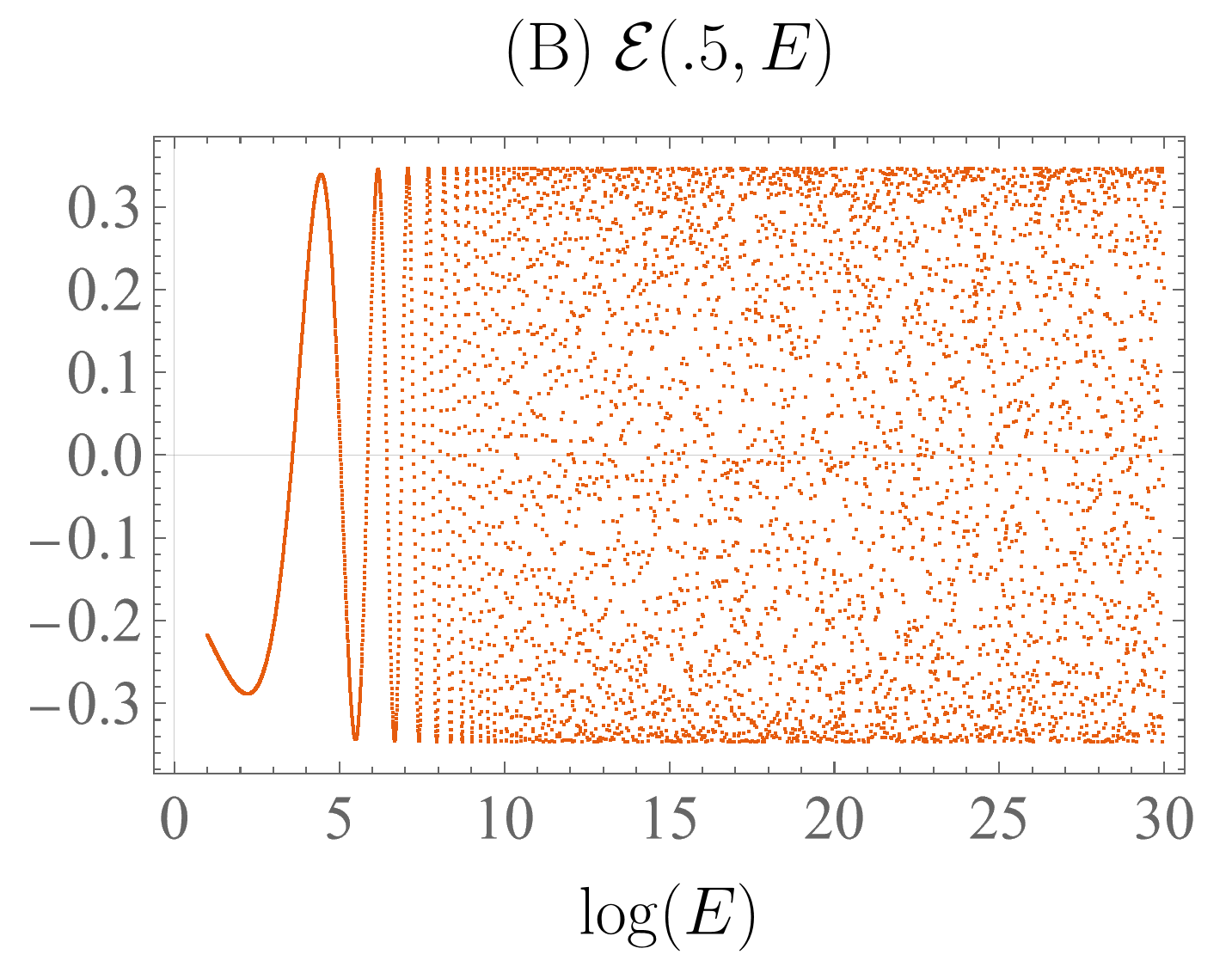}
	\end{subfigure}\;
	\begin{subfigure}[b]{.32\textwidth}
		\includegraphics[scale=.17]{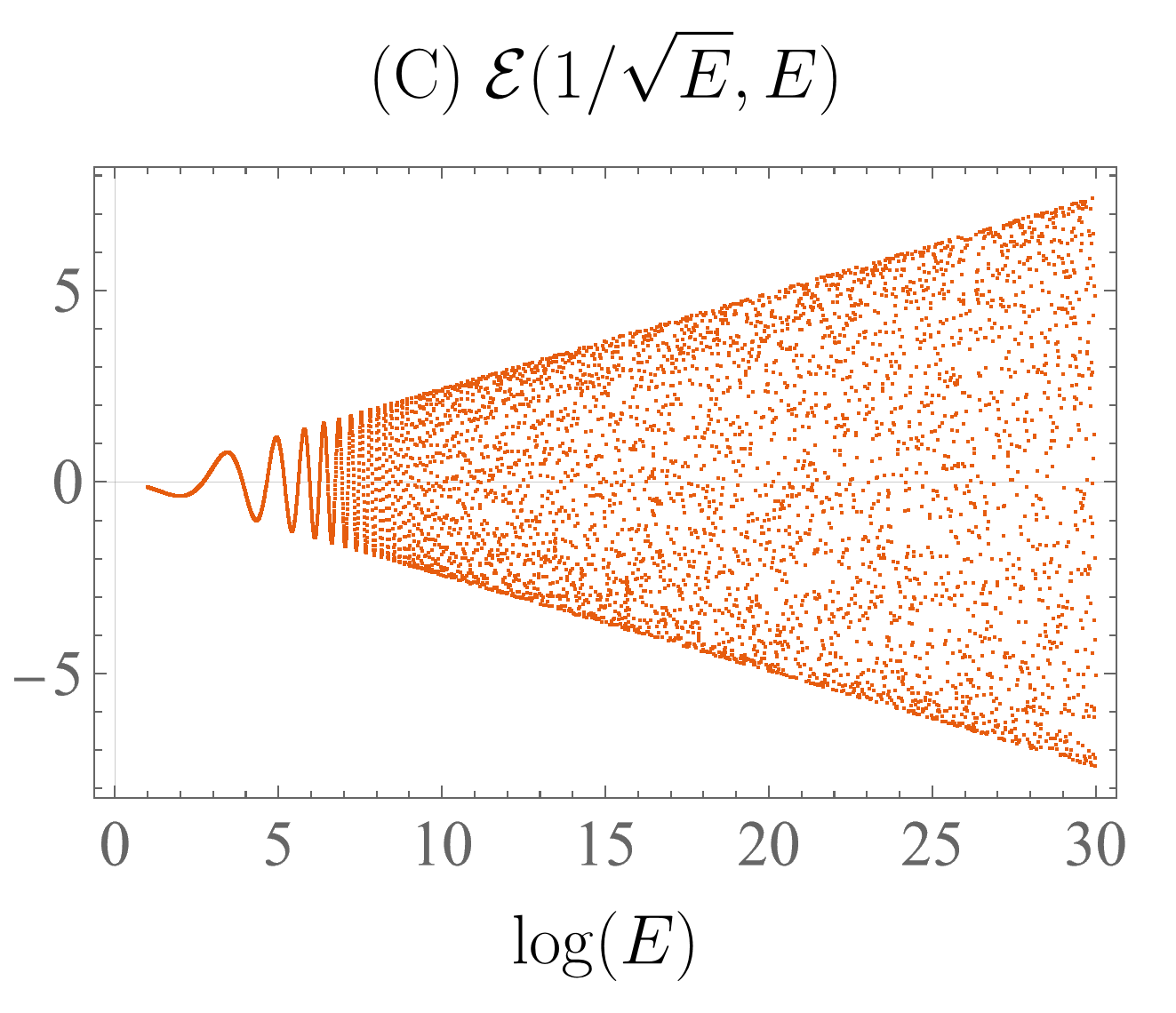}
	\end{subfigure}
	\begin{tikzpicture}[scale=.9, decoration={
		markings,
		mark=at position .5 with {\arrow[scale=1.5,>=latex]{>}}}]
	\draw[white] (-1.5,0) -- (0,0);
	\fill[fill=lightgray!20] (0,-2) -- (0,1.5) -- (5,1.5) -- (5,-.5) to[out=180,in=90] (3.5,-2) -- cycle;
	\begin{scope}
	\clip (0,-2) -- (0,1.5) -- (5,1.5) -- (5,-.5) to[out=180,in=90] (3.5,-2) -- cycle;
	\draw[darkred, postaction={decorate}] (1.5,1.5)--(5,-2);
	\node[below left, darkred] () at (3.2,-.1) {$\Gamma_1=$(C)};
	\end{scope}
	\draw[darkred, postaction={decorate}] (0,1)  to[out=0,in=180] (5,.7) node[above left] {(B)};
	\draw[darkred, postaction={decorate}] (.7,1.5) node[above] {(A)} to[out=-90,in=90] (.5,-2);
	\draw  (5,1.5) -- (5,-.5) to[out=180,in=90] (3.5,-2) -- (0,-2);
	\draw (0,-2) -- (0,1.5) -- (5,1.5);
	\node () at (2,-2.25) {be};
	\node[white] () at (2,-2.7) {be};
	\node () at (5.35,.5) {ze};
	\node () at (4.25,-1.25) {fe};
	\node () at (2.5,1.8) {$M^{(\mathrm{I})}$ };
	\draw[darkgray, ->] (3.4,-1.9) to[out=90, in=231] (3.7,-1.05) node[left] {$rE^{1/2}$};
	\draw[darkgray, ->] (4.9,-.4) to[out=180,in=25] (4.2,-.6) node[above] {$\frac{1}{r\sqrt{E}}$};
	\draw[darkgray, ->] (4.9,-.4) --(4.9,.3) node[left] {$r$};
	\draw[darkgray, ->] (3.4,-1.9) --(2.7,-1.9) node[above] {$E^{-1}$};
	\end{tikzpicture}
	\includegraphics[scale=.49]{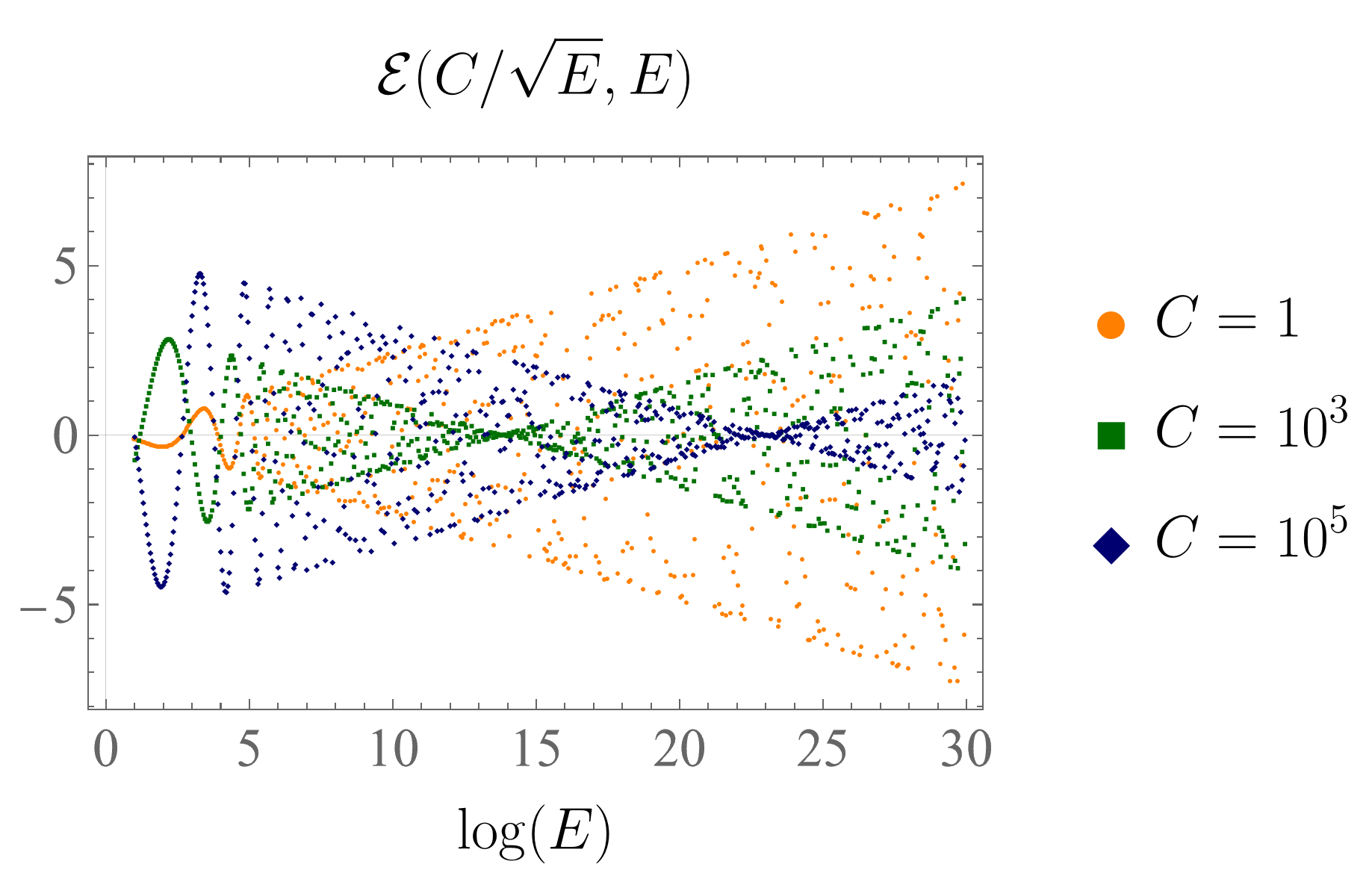}
	\caption{The scaled error $\calE = E^{1/2} (u_{1,0} - \cos(E^{1/2}(z-1)))$ between $u_{1,0}$ and its leading ansatz $\cos(h^{-1}(z-1))$, sampled periodically along the three depicted curves in $(0,1)_r\times \bbR^+_E$, (\textit{top left}) a level set of $E=h^{-2}$ (\textit{top middle}) a level set of $z=r$, and (\textit{top right}) along $\{z=1/E^{1/2}\}$. \textit{Bottom left:} The three curves hit the interiors of $\mathrm{be},\mathrm{ze},\mathrm{fe}\subset M$ respectively. The fact that we see $\calE$ diverging like $\log E$ along $\{z=1/E^{1/2}\}$ as $E\to\infty$ shows why we need the regime fe. \textit{Bottom right:} the same plot in (C), except showing $\calE$ sampled on $\Gamma_C$ for three $C$.}
	\label{fig:cone_test}
\end{figure}

This demonstrates why polyhomogeneity on $M$ is the notion required to understand the behavior of $\calE$: the two boundary edges $\{z=0\},\{h=0\}\subset [0,Z]_z\times [0,\infty)_{h^2}$ are not enough to capture the diversity of asymptotic behavior seen. Fundamentally different behavior, in this case a logarithmic divergence which is not seen at either adjacent edge, is seen at the corner $\{z=0,h=0\}$.

\begin{remark}[The difficult point is the high corner $\mathrm{ze}\cap\mathrm{fe}$]
	So far, we have been supplying initial data away from the transition point, but an alternative is supplying a boundary condition at the transition point. For example, this is how the Whittaker-M function is defined (whereas the Whittaker-W function is best characterized by its large-argument behavior). 
	
	It turns out (see the argument in \S\ref{sec:low}) that it is straightforward to study the asymptotics of solutions of the ODE in $\{r|E|^{1/2}<C\}$, for any $C>0$, if we supply the boundary condition at $r=0$. In other words, if we supply the boundary condition at the transition point, then it becomes straightforward to study asymptotics on $M\backslash \mathrm{ze}$. 
	But then it becomes difficult to understand asymptotics at $\mathrm{ze}$, that is as $E\to\infty$ for $r>0$ fixed. In \cite{Toprak}, Black--Toprak--Vergara--Zou required uniform estimates for the Whittaker-M function, so they needed to understand behavior at $\mathrm{ze}$, as well as at $\mathrm{fe}$. That they were supplying the boundary condition at the transition point did not trivialize the problem.

	What we have been doing here is supplying the initial data at $r=1$, and then the difficulty is understanding asymptotics at $\mathrm{fe}$. 
	What this makes clear is that, regardless of where the initial data is supplied, the crux of the problem is getting past the ``high corner'' $\mathrm{ze}\cap \mathrm{fe}$'' of $M$. In other words, the problem is one of \emph{connecting} the asymptotics in $\{r|E|^{1/2}<C\}$ with the asymptotics in $\{r>\varepsilon\}$. 
	The real novelty in our analysis here is the joint expansion near $\mathrm{ze}\cap\mathrm{fe}$, which provides a full asymptotic expansion for the connection formula.
\end{remark}

\begin{figure}[t!]
	\begin{tikzpicture}[scale=.9, decoration={
		markings,
		mark=at position .5 with {\arrow[scale=1.5,>=latex]{>}}}]
	\draw[white] (-1.5,0) -- (0,0);
	\fill[fill=lightgray!20] (0,-2) -- (0,1.5) -- (5,1.5) -- (5,-.5) to[out=180,in=90] (3.5,-2) -- cycle;
	\begin{scope}
	\clip (0,-2) -- (0,1.5) -- (5,1.5) -- (5,-.5) to[out=180,in=90] (3.5,-2) -- cycle;
	\fill[darkgray!20] (.2,-2) -- (5,-2) -- (.2,0) -- cycle;
	\fill[darkgray!20] (.2,1) -- (5,.1) -- (5,1.5) -- (.2,1.5) -- cycle;
	\end{scope}
	\node () at (1.6,-1.6) {\cite[\href{http://dlmf.nist.gov/33.5}{\S33.5}]{NIST} };
	\node () at (3,1.1) {\cite[\S6.1]{Curtis} };
	\draw  (5,1.5) -- (5,-.5) to[out=180,in=90] (3.5,-2) -- (0,-2);
	\draw (0,-2) -- (0,1.5) -- (5,1.5);
	\node () at (2,-2.25) {be};
	\node[white] () at (2,-2.7) {be};
	\node () at (5.35,.5) {ze};
	\node () at (4.25,-1.25) {fe};
	\node () at (2.5,1.8) {$M^{(\mathrm{I})}$ };
	\fill[black] (5,-.5) circle (1.5pt) node[right] {$\mathrm{ze}\cap\mathrm{fe}$};
	\draw[darkred, postaction={decorate}] (4.85,1.5) to[out=-90, in=30] (4.2,-.3) to[out=210, in=90] (3.2,-2); 
	\end{tikzpicture}
	\caption{The crux of \S\ref{subsubsec:coulombic_high_energy} is connecting the asymptotics at ze away from fe with the asymptotics at fe away from ze. Each region has been labeled with a reference where the relevant asymptotics can be found. Our challenge comes from navigating around the corner $\mathrm{ze}\cap\mathrm{fe}$, the ``high corner'' of $M$.}
\end{figure}

\subsection{The Rydberg regime}
\label{subsubsec:Rydberg}

Next, we discuss the low-energy regime. The material in this subsection is more classical than the material in the last, and the approximations are not new. One can find them, for example, in \cite[\S11.4.3, \S12]{OlverBook}\cite[\href{http://dlmf.nist.gov/33.20}{\S33.20}]{NIST}. That the expansions can be differentiated term-by-term in each direction seems to be, but only in the case where $\operatorname{sign}(E)\neq \operatorname{sign}(\mathsf{Z})$.\footnote{The $E,\mathsf{Z}>0$ case, i.e.\ hydrogen ``ionization,'' is contained in \cite[Appendix A]{SussmanACL}. The $E,\mathsf{Z}<0$ case is strictly easier, as it is entirely classically forbidden --- a charged particle in a purely repulsive potential cannot have negative energy, and the corresponding Schr\"odinger operator admits no bound states. This is why we said above that the differentiability-in-all-directions of the Rydberg expansions is novel only when $\operatorname{sign}(E)\neq \mathsf{Z}$. However, $\operatorname{sign}(E)\neq \mathsf{Z}$ is the most interesting case, because of the turning point at $\hat{r}=\mathsf{Z}$.} Given the amount of work required in \cite[\S2]{Toprak} to prove the first few symbolic estimates, this technical improvement is worth proving in general, once and for all, which is what we do in this paper. However, the purpose of this subsection is entirely expository; our goal is to explain how the $\kappa=1,-1$ cases of the semiclassical operator \cref{eq:1} appear in the present, non-semiclassical problem, and to illustrate the utility of geometric singular analysis as an organizational tool in multi-scale analysis. Consequently, we will only discuss the basic asymptotics, which can be considered standard material.

In order to rewrite \cref{eq:Whittaker} in semiclassical form, let $\hat{r} = r h^2$ for $h= |E|^{1/2}$. Then, \cref{eq:Whittaker} becomes 
\begin{equation}
	- h^2 \frac{\mathrm{d}^2 u}{\mathrm{d} \hat{r}^2} - \Big( \pm 1 + \frac{\mathsf{Z}}{\hat{r}} - \frac{h^2\ell(\ell+1)}{\hat{r}^2}\Big) u = 0.
\end{equation}
This is now manifestly semiclassical. This idea of rescaling the the independent variable to transform a non-semiclassical problem into a semiclassical one is common in applications. We will see it again below. Let us now study this ODE on the rectangle  
\begin{equation} 
	[0,\infty)_{\hat{r}}\times [0,\infty)_{h^2} = [0,\infty)_{\hat{r}}\times [0,\infty)_E
\end{equation} 
and identify the transition points.

If $\operatorname{sign}(E)=\operatorname{sign}(\mathsf{Z})$, there is only a single transition point, namely at $\hat{r}=0$. The potential $\mp 1 - \mathsf{Z}/\hat{r}$ has a simple pole there, so this is the $\kappa=-1$ case, as depicted in \Cref{fig:covering}. 

This transition point is also present if $\operatorname{sign}(E)\neq \operatorname{sign}(\mathsf{Z})$, but in this case we have another transition point at $\hat{r}= \mathsf{Z}$, where the potential 
\begin{equation} 
	V(\hat{r})=\mp 1 - \mathsf{Z}/\hat{r}
\end{equation} 
vanishes simply. Thus, this falls into the $\kappa=1$ case; we have a turning point.
The physics contained in this statement is the following:
\begin{itemize}
	\item $\mathsf{Z}>0$ case: if the potential is attractive and $E<0$, then the corresponding classical dynamics is the Kepler problem for a particle bound in a Newtonian gravitational well. A ball thrown up comes down. That is the turning point.
	\item $\mathsf{Z}<0$ case: if the potential is repulsive and $E>0$, then the corresponding classical dynamics is Rutherford scattering. A proton with fixed energy can only come so close to another proton before it is repelled. 
\end{itemize}
Either way, it is an almost immediate corollary of \Cref{thm1} that solutions of the ODE with exponential-polyhomogeneous data at, say, $\hat{r}=1000$ are exponential-polyhomogeneous on the mwc $X$ in \Cref{fig:X}, within the regions that are called 
\begin{equation} 
	M^{(\mathrm{II})}, M_0^{(\mathrm{III})}, M_0^{(\mathrm{IV})} \subset X_1
\end{equation} 
in \Cref{fig:covering}.

The only reason for the word ``almost'' in the preceding paragraph is that it may not be immediately obvious that performing the $\kappa=-1$ quasihomogeneous blowup of $\{\hat{r}=0,E=0\}$ in $[0,\mathsf{Z}/2)_{\hat{r}}\times [0,\infty)_{|E|}$ gives what is depicted as $M^{(\mathrm{II})}$ in \Cref{fig:covering}, the edge 
\begin{equation} 
	\mathrm{cl}_X\{E=0,r<\infty\} \subset M^{(\mathrm{II})}
\end{equation} 
of which is labeled as being parameterized by $r$, whereas the quasihomogeneous blowup resolves the ratio $\lambda=\hat{r}/h^{2/(\kappa+2)}$. But indeed, because $\kappa=-1$, 
\begin{equation} 
	\lambda = \hat{r} / h^{2/(\kappa+2)} = \hat{r}/|E|=r.
\end{equation}
So, the front face of the blowup is in fact parameterized by $r$. (Moreover, the edge $\{\hat{r}=0,|E|>0\}$ of $[0,\infty)_{\hat{r}}\times [0,\infty)_{|E|}$ is naturally identified with the edge $\{r=0,|E|>0\}$ of $X$.)

The reader may note that this is the same computation at the end of \S\ref{subsec:model} in order to show that $M$ can be identified with a subset $M'\subseteq M_2$ of the mwc that we called `$M_2$' there; see \Cref{fig_alt}. What we called $M'$ is what we are calling $M^{(\mathrm{II})}$ now.

\begin{figure}[h!]
	\begin{center}
		\hspace{3em}
		\begin{tikzpicture}[scale=.9]
			\fill[fill=lightgray!20] (0,-1.5) -- (0,1.9) -- (4,1.9) -- (4,-1.5) -- cycle;
			\draw (4,-1.5) -- (0,-1.5) -- (0,1.9);
			\draw[dashed] (0,1.9) -- (4,1.9) -- (4,-1.5);
			\node () at (2,-1.8) {$\{\hat{r}=0\}$};
			\node () at (2,1.3) {$\{\hat{r}=\mathsf{Z}\}$};
			\node[white] () at (2,2.2) {$\{\hat{r}=0\}$};
			\draw[darkred, ->] (.1,-1.4) -- (.1,-.5) node[right] {$\hat{r}$};
			\draw[darkred, ->] (.1,-1.4) -- (1,-1.4) node[above] {$|E|$};
			\fill[black] (0,-1.5) circle (2pt);
			\fill[black] (0,1) circle (2pt);
			\draw[dashed] (0,1) -- (4,1);
		\end{tikzpicture}
		\qquad 
		\begin{tikzpicture}[scale=.9]
			\fill[fill=lightgray!20] (1.5,-1.5) to[out=90, in=0] (0,0) -- (0,.7) arc(-90:90:.3) -- (0,1.9) -- (4,1.9) -- (4,-1.5) -- cycle;
			\begin{scope}
				\clip (1.5,-1.5) to[out=90, in=0] (0,0) -- (0,.7) arc(-90:90:.3) -- (0,1.9) -- (4,1.9) -- (4,-1.5) -- cycle;
				\fill[fill=darkgray!20] (0,-1.5) rectangle (4,.3);
				\fill[fill=darkgray!20] (4,.4) rectangle (0,1);
				\fill[fill=darkgray!20] (0,1) rectangle (4,2);
			\end{scope}
			\draw (4,-1.5) -- (1.5,-1.5) to[out=90, in=0] (0,0) -- (0,.7) arc(-90:90:.3) -- (0,1.9);
			\draw[dashed] (0,1.9) -- (4,1.9) -- (4,-1.5);
			\node[white] () at (2,-1.8) {$\{\hat{r}=0\}$};
			\node () at (3.25,1.3) {$\{\hat{r}=\mathsf{Z}\}$};
			\node[white] () at (2,2.2) {$\{\hat{r}=0\}$};
			\draw[dashed] (.3,1) -- (4,1);
			\draw[darkred, ->] (1.6,-1.4) -- (2.4,-1.4) node[above] {$-E$};
			\draw[darkred, ->] (1.6,-1.4) to[out=90,in=-65] (1.42,-.7) node[right] {$-\hat{r}/E$};
			\draw[darkred, ->] (.1,.1) -- (.1,.5) node[right] {$\hat{r}$};
			\draw[darkred, ->] (.1,.1) to[out=0,in=170] (.65,0) node[right] {$-E/\hat{r}$};
			\node () at (3.4,-.5) {$M^{(\mathrm{II})}$};
			\node () at (3.4,.7) {$M^{(\mathrm{III})}$};
			\node () at (1,1.5) {$M^{(\mathrm{IV})}$};
		\end{tikzpicture}
		\quad 
		\begin{tikzpicture}[xscale=-.9,yscale=.9] 
			\fill[fill=lightgray!20] (-.2,-2) -- (-.2,0) to[out=180,in=270] (-1.7,1.5) -- (-5,1.5) -- (-5,-.5) -- (-5,-2) -- cycle;
			\draw[dashed] (-1.7,1.5) -- (-5,1.5) -- (-5,-2);
			\begin{scope}
				\clip (-.2,-2) -- (-.2,0) to[out=180,in=270] (-1.7,1.5) -- (-5,1.5) -- (-5,-2) -- cycle;
				\fill[fill=darkgray!20] (0,-2) -- (0,1) to[out=225, in=30] (-3.3,-2) -- cycle;
				\fill[fill=darkgray!20] (-.1,1) to[out=225, in=30] (-3.5,-2) -- (-5,-2) -- (-5,-1.2) to[out=20, in=225] (-.8,1.5);
				\fill[fill=darkgray!20] (0,2) to[out=228, in=25] (-5,-1.375) -- (-5,2) -- cycle;
				\draw[dashed] (0,1.5) to[out=214, in=20] (-5,-1.5);
				\filldraw[fill=white] (-1.3,.6) circle (.4);
			\end{scope}
			\draw (-5,-2) -- (-.2,-2) -- (-.2,0) to[out=180,in=270] (-1.7,1.5);
			\node () at (-5.8,-1.4) {$\{\hat{r}=\mathsf{Z}\}$};
			\fill[white] (-1.3,.6) circle (.39);
			\draw[darkred, ->] (-.3,-1.9) --(-1,-1.9) node[above] {$-E$};
			\draw[darkred, ->] (-.3,-1.9) --(-.3,-1.2) node[right] {$r$};
			\node[black] () at (-2.5,-2.3) {$\mathrm{cl}\{\hat{r}=0,E<0\} = \{r=0\}$};
			\draw[darkred, ->] (-.3, -.1) -- (-.3,-.75) node[right] {$r^{-1}$};
			\draw[darkred, ->] (-.3, -.1) to[out=180, in=-30] (-.8,.01) node[below] {$\hat{r}$};
			\node () at (-1.9,-1.7) {$M^{(\mathrm{II})}$};
			\node () at (-4.1,-1.7) {$M^{(\mathrm{III})}$};
			\node () at (-3.5,.5) {$M^{(\mathrm{IV})}$};
			\draw[dashed] (-1.7,1.5) -- (-5,1.5);
			\draw[dashed] (-1.7,1.5) -- (-5,1.5) -- (-5,-2);
		\end{tikzpicture}
	\end{center}
	\caption{\textit{Left:} the rectangle $[0,\infty)_{\hat{r}}\times [0,\infty)_E$ considered in \S\ref{subsubsec:Rydberg}. Here, $\hat{r} = r|E|$. The transition points which need to be blown up are indicated. (The $\hat{r}=\mathsf{Z}$ transition point is absent if $\operatorname{sign}(E)=\operatorname{sign}(\mathsf{Z})$.) \textit{Middle:} the result of performing the two blowups. This corresponds to a neighborhood of $\{E=0\}$ in $X$, in \Cref{fig:X}. \textit{Right:} a diffeomorphic change of perspective, based on $\hat{r}/|E|=r$, showing the identification of the middle figure with the low-energy region (excluding $\{\hat{r}=\infty\}$) in \Cref{fig:X}.}
	\label{fig:cone_test_paths}
\end{figure}

Both transition points here are best dealt with using \Cref{thm:collapsed}, which applies in this setting. 

It is worth taking a moment to understand what the space $\calQ$ of quasimodes appearing in this theorem  (and \Cref{thm2}) are: they are exactly the solutions of the ODE \cref{eq:Whittaker} frozen at zero energy:
\begin{equation}
	\calQ = \Big\{Q(r) : \frac{\mathrm{d}^2 Q}{\mathrm{d} r^2} = \Big(- \frac{\mathsf{Z}}{r} +\frac{\ell(\ell+1)}{r^2} \Big) Q \Big\}.
\end{equation}
This is a form of Bessel's ODE \emph{in terms of $r^{1/2}$}: 
\begin{equation}
	\calQ= \operatorname{span}_\bbC \{ r^{-1/2}J_{1+2\ell}(2 \sqrt{\mathsf{Z} r}),  r^{-1/2}K_{1+2\ell}(2 \sqrt{\mathsf{Z} r})\}.
	\label{eq:calQ_Coulomb_model}
\end{equation}
Elementarily, it is true that within compact subsets of $\bbR^+_r$ (and this can be extended uniformly up to $r=0$), we can find a basis of solutions of the original ODE $Pu=0$, \cref{eq:Whittaker}, that are approximated by elements of the kernel $\calQ$  of the ``zero-energy'' ODE as $E\to 0$. Indeed, any 
basis with constant initial data on $r=1$ has this property. But this does not tell us how to approximate our solutions as $E\to 0$ \emph{while} $r\to\infty$. 
This is what the Langer--Olver approximation/\Cref{thm2} gives, an improved approximation that holds uniformly within $\{\hat{r}<Z\}$ for any $Z<\mathsf{Z}$. (Near or past the turning point $\hat{r}=\mathsf{Z}$, which again is only present if $\operatorname{sign}(E)\neq \operatorname{sign}(\mathsf{Z})$, we need to instead use \cref{eq:JWKB_uniform}.) 

The hydrogen bound states are, up to a factor of $r$, the solutions of \cref{eq:Whittaker} that are normalized in $L^2(\bbR^+_r,r^2 \dd r)$ (which exist only if $E<0<\mathsf{Z}$). These exist only for a countable set of $E$ because normalizability as $r\to 0^+$ picks out, for each $E<0$, a one-dimensional subspace of solutions and normalizability as $r\to\infty$ picks out another one-dimensional subspace of solutions; only for a countable set of $E$, converging  up to zero energy from below, will these subspaces coincide, in which case we have a bound state. The fact that the hydrogen bound states are normalized based on their $L^2$-norm rather than their initial data along some curve means that we cannot directly apply \Cref{thm1} to them, but this obstacle is quickly overcome by renormalizing the wavefunctions according to their large- or small-argument asymptotics. 

The hydrogen wavefunction (or really their radial profile) with principal quantum number $n\in \bbN$ and azimuthal quantum number $\ell\in \bbN$ is given by
\begin{equation}
	\psi_{n,\ell}(r) = \sqrt{ \Big( \frac{\mathsf{Z}}{n} \Big)^3 \frac{(n-\ell-1)!}{2n(n+\ell)!}} e^{-\mathsf{Z}r/(2n)} \Big( \frac{\mathsf{Z}r}{n} \Big)^\ell L^{2\ell+1}_{n-\ell-1} \Big( \frac{ \mathsf{Z}r}{n} \Big),
\end{equation}
where $L^\alpha_k$ is the $\alpha$th generalized Laguerre polynomial of degree $k$. This has eigenvalue $E_n = -\mathsf{Z}^2/(4n^2)$.
(Note: $\psi_{n,\ell}$ does not satisfy 
\begin{equation} 
	\psi''_{n,\ell}+(E_n + \mathsf{Z}/r-\ell(\ell+1)/r^2)\psi_{n,\ell} = 0, 
\end{equation} 
Rather, $r \psi_{n,\ell}$ does. This is because when separating variables in the 3D hydrogen atom to get \cref{eq:Whittaker}, one conjugates by $r^{-1}$, in accordance with \Cref{rem:reduction}, to remove the $\partial_r$ term in the ODE coming from the first-order term present in the 3D Laplacian when written in spherical coordinates.) The normalization of $\psi_{n,\ell}$ is such that 
\begin{equation}
	\int_0^\infty |\psi_{n,\ell}(r)|^2 r^2 \dd r =1,
\end{equation}
so $\psi_{n,\ell}$ is a unit vector in $L^2(\bbR^+,r^2 \dd r)$.

The discussion above suggests that $\psi_{n,\ell}$ should have interesting behavior on four different scales:
\begin{itemize}
	\item The $r = o(1/E)$ scale, where the bound states should be approximable by a multiple of $J_{1+2\ell}(2\sqrt{\mathsf{Z} r})$, the recessive element of $\calQ$ (\cref{eq:calQ_Coulomb_model}). In order to study this scale, note that 
	\begin{align}
		r^{-1/2}J_{1+2\ell}(2\sqrt{\mathsf{Z}r}) &= (1+o(1)) \mathsf{Z}^{2^{-1}+\ell} r^\ell/(2+2\ell-1)!\\ 
		\psi_{n,\ell}(r) &= (1+o(1)) 2^{-1/2} \mathsf{Z}^{\ell+3/2} n^{-(\ell+2)} r^\ell   L_{n-\ell-1}^{2\ell+1}(0) \sqrt{\frac{(n-\ell-1)!}{(n+\ell)!}}
	\end{align}
	as $r\to 0^+$. 
	So we should have $\psi_{n,\ell}(r)\approx C_{n,\ell}  r^{-1/2}J_{1+2\ell}(2\sqrt{\mathsf{Z}r})$ for 
	\begin{equation}
		C_{n,\ell} = \frac{ \mathsf{Z} L_{n-\ell-1}^{2\ell+1}(0) }{\sqrt{2}n^{\ell+2} } (2+2\ell-1)! \sqrt{\frac{(n-\ell-1)!}{(n+\ell)!}} .
	\end{equation}
	\item The $r\sim C/E$ scale for $C\in (0,\mathsf{Z})$ is the semiclassical ``classically allowed'' region, where Liouville--Green applies. We can group this with the $r\sim C/E$ scale for $C>\mathsf{Z}$, the semiclassical ``classically forbidden'' region where Liouville--Green applies.
	
	An interesting way to study the ODE in this regime is to consider the probability measure $\mu_{n,\ell}$ on $\bbR^+_{\hat{r}}$ defined by 
	\begin{equation}
		\dd \mu_{n,\ell} (\hat{r}) = |\psi_{n,\ell}(r)|^2 r^2 \dd r =  |E_n|^{-3} |\psi_{n,\ell}(\hat{r}/|E_n|) |^2 \hat{r}^2\dd \hat{r} .
	\end{equation}
	According to Born's rule, this is the probability density of finding the electron at radius $r=\hat{r}/E_n$ away from the origin. As $n\to\infty$, the curve $\dd \mu_{n,\ell}/ \dd \hat{r}$ forms a distinct envelope, 
	\begin{equation}
		 \frac{4}{\mathsf{Z}\pi} \Big(\frac{\mathsf{Z}}{\hat{r}}-1\Big)^{-1/2}1_{\hat{r}\in [0,\mathsf{Z}]} \dd \hat{r}, 
		 \label{eq:misc_349}
	\end{equation}
	in accordance with Liouville--Green.
	A closely related statement is that $\mu_{n,\ell}$ converges in law to the probability measure $\mu_\infty$ given by
	\begin{equation}
		\dd \mu_\infty(\hat{r}) = \frac{2}{\mathsf{Z}\pi} \Big(\frac{\mathsf{Z}}{\hat{r}}-1\Big)^{-1/2}1_{\hat{r}\in [0,\mathsf{Z}]} \dd \hat{r}. 
		\label{eq:misc_350}
	\end{equation}
	This admits a semiclassical interpretation in terms of the amount of time a ball thrown in the air spends at each height. 
	
	\begin{figure}[h!]
		\includegraphics[scale=.7]{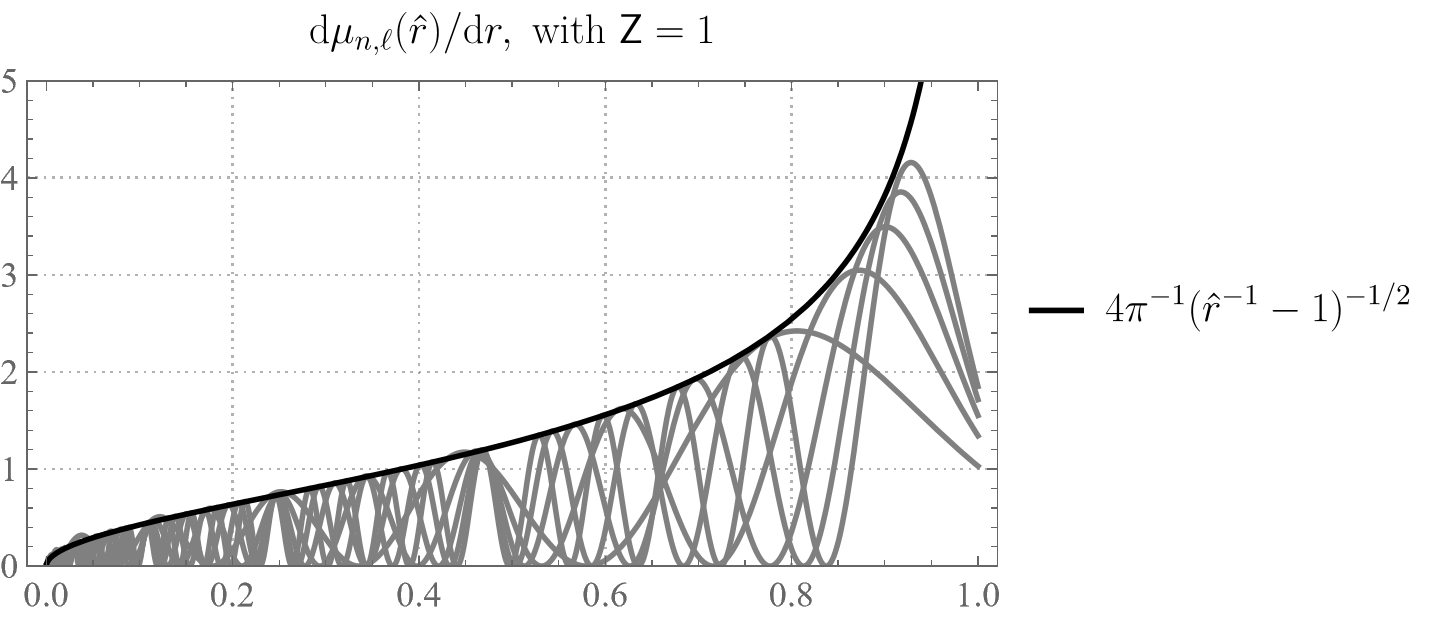}
		\caption{The densities $\dd \mu_{n,\ell}/ \mathrm{d} r$, plotted vs. $\hat{r}$.}
	\end{figure}

	(The different factor of $2$ in \cref{eq:misc_349}, \cref{eq:misc_350}, is due to the oscillatory nature of $\psi_{n,\ell}$; the average value of $|\psi_{n,\ell}|^2$ over a small interval's worth of $\hat{r}$ is very close to half of the value of the envelope on that interval.)
	
	The envelope and $\dd \mu_\infty / \dd \hat{r}$ become singular at two points: the origin and the turning point, i.e.\ at the two transition points of the underlying semiclassical ODE.
	\item  The final scale is $r=\mathsf{Z}/E + o(E^{-4/3})$ the region near the turning point, where we expect the behavior of the Airy funtions to be relevant. Indeed, a large Airy-like spike can be seen in this regime --- see \Cref{fig:hydro}.
\end{itemize}

\begin{figure}[h!]
	\begin{tikzpicture}[scale=1.2]
		\fill[lightgray!20] (-3,0) -- (0,0) -- (0,1.5) arc(-90:-180:1.5) -- (-3,3) -- cycle;
		\draw (-3,0) -- (0,0) -- (0,1.5) arc(-90:-180:1.5) -- (-3,3);
		\draw[dashed] (-3,0) -- (-3,3);
		\draw[white] (0,0) -- (0,-1);
		\draw[darkred,->] (-.1,.1) -- (-.1,.6) node[left] {$r$};
		\draw[darkred,->] (-.1,.1) -- (-.6,.1) node[above] {$-E$};
		\draw[darkred,->] (-.1,1.4) -- (-.1,.9) node[left] {$r^{-1}$};
		\draw[darkred,->] (-.1,1.4) to[out=180, in=-20] (-.6,1.5) node[left] {$-rE$};
		\draw[darkred,->] (-1.6,2.9) -- (-2.1,2.9) node[below] {$-E$};
		\draw[darkred,->] (-1.6,2.9) to[out=-90,in=110] (-1.5,2.4) node[below left] {$(-rE)^{-1}$};
		\node () at (-1.9,.6) {$Y$};
		\fill[black] (-1.26,2.2) circle (1.5pt);
		\draw[dashed] (-1.26,2.2) node[right] {\;$\{-rE=\mathsf{Z}\}$} -- (-3,1);
	\end{tikzpicture}\qquad
	\includegraphics[scale=.5]{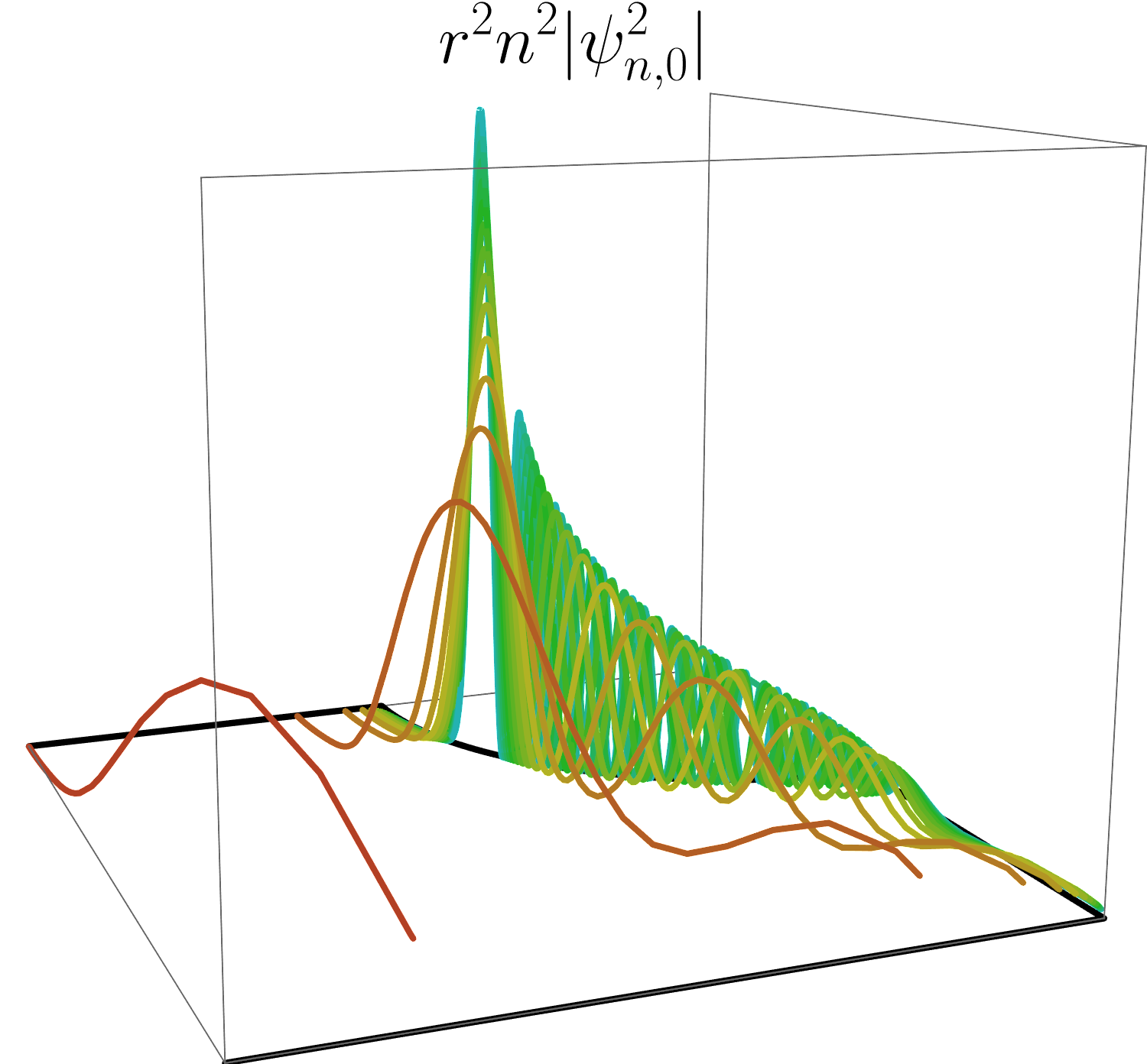}
	\caption{A plot of the radial probability density of the first 15 hydrogen s-orbital radial functions (rescaled by $n^2$), plotted over the compactification $\iota:\bbR^-_E\times \bbR^+_r\hookrightarrow Y$ for $Y$ depicted on the left. Each curve over which $r^2 n^2 |\psi_{n,0}^2|$ is plotted is the image under $\iota$ of $(E_n,r)$, where $E_n<0$ is the $n$th hydrogen eigenvalue. The hydrogen bound states display different behavior on each of three scales, as can be seen from the figure on the right. Note the Airy-like spike near $\{-rE =\mathsf{Z}\}$.}
	\label{fig:hydro}
\end{figure}

\section{An example with logarithms}
\label{sec:second}
Let us return to the simple harmonic oscillator, but this time, rather than consider the ``usual'' semiclassical scaling in \S\ref{subsec:parabolic}, consider 
\begin{equation} 
	P = - h^2 \frac{\partial^2}{\partial z^2} + z^2-h^2 .
	\label{eq:misc_zy2}
\end{equation}  
We will investigate the $\kappa=2$ transition point (a double turning point/equilibrium point) of \cref{eq:misc_zy2} at $z=0$. 
Our goal is to calculate the appearance of log terms in semiclassical expansions, showing again why the notion of polyhomogeneity, which generalizes smoothness in part by allowing log terms, is the appropriate notion to work with, not smoothness.

\begin{remark}[Relationship to \S\ref{subsec:parabolic}]
	If we begin with \cref{eq:misc_049} (with $\mathsf{k}=1$, $\ell=0$), for $E>0$, and define $z = \sqrt{E}r$ and $h = E$, then the $P$ there becomes, up to an unimportant normalization, the right-hand side of \cref{eq:misc_zy2}. Fixing $z$ and sending $h\to 0^+$ corresponds to sending $E\to 0^+$ and sending $r\to\infty$ at some rate; see \Cref{fig:parabolic_X_2}. Thus, in the mwc $X_1$ depicted in \Cref{fig:parabolic_X}, the regime we are exploring at present is the result of blowing up the midpoint of the edge $\{\hat{r}=\infty\}$.
	
	We saw in \S\ref{subsec:parabolic} that it was not necessary to blow up this point if initial data is prescribed at $r=\hat{r} \langle E\rangle^{1/2}$ for $\hat{r}$ fixed. What we are doing in the present section is instead prescribing initial data at $r = z |E|^{-1/2}$ for $z$ fixed. We will see interesting asymptotics in the corresponding regime not because it is a fundamental regime for the operator but because it is where we are supplying the boundary condition. 
\end{remark}

\begin{figure}[h!]
	\begin{tikzpicture}
		\filldraw[fill=lightgray!20] (-1,.5) arc(90:0:.5)  -- (2.5,0) arc(180:90:.5) -- (3,1.5) arc(-90:-270:.5) -- (3,3) -- (-1,3) -- cycle;
		\draw[dashed, darkred] (1,3) to[out=-90, in=135] (2.6,.355) ;
		\fill[darkred] (1,3) circle (1.5pt);
		\node () at (.15,2) {$X_1$};
		\draw[dashed, darkred] (1,3) -- (1,0);
		\node[darkred] () at (3.8,.2) {$\{E^{1/2} r=z_0\}$};
		\node[darkred] () at (.25,.5) {$\{E = 0\}$};
	\end{tikzpicture}
	\begin{tikzpicture}
		\filldraw[fill=lightgray!20] (-1,.5) arc(90:0:.5)  -- (2.5,0) arc(180:90:.5) -- (3,1.5) arc(-90:-270:.5) -- (3,3) -- (1.25,3) arc(0:-180:.5) -- (-1,3) -- cycle;
		\fill[black] (.75,2.5) circle (1.5pt);
		\draw[dashed] (.75,2.5) -- (.75,0);
		\draw[darkred, ->] (.85,2.4) to[out=10, in=235] (1.2,2.6) node[right] {$\sqrt{E} r$};
		\node[white] () at (3.8,.2) {$\{$};
		\draw[darkred,->] (.85,.1) -- (1.5,.1) node[above] {$E$};
		\draw[darkred,->] (.85,.1) -- (.85,.75) node[right] {$r$};
		\draw[darkred, ->] (.85,2.4) -- (.85,2) node[right] {$1/r$};
		\node () at (.15,2) {$X_2$};
		\node[gray] () at (.4,1.25) {fe};
		\node[gray] () at (2.1,.25) {be};
		\node[gray] () at (.9,2.8) {ze};
	\end{tikzpicture}
	\hspace{-1em}
	\begin{tikzpicture}
		\fill[lightgray!20] (0,3) -- (0,1.5) to[out=0, in=90] (1.5,0) -- (4,0) -- (4,3) -- cycle;
		\draw (4,3) -- (0,3) -- (0,1.5) to[out=0, in=90] (1.5,0) -- (4,0);
		\draw[dashed] (4,3) -- (4,0);
		\node[white] () at (0,.2) {$\{$};
		\node () at (2.75,2.25) {$M$};
		\draw[darkred, ->] (1.6,.1) -- (2.3,.1) node[above] {$E$};
		\draw[darkred, ->] (1.6,.1) to[out=90, in=-60] (1.45,.7) node[right] {$r$};
		\draw[darkred, ->] (.1,1.6) -- (.1,2.2) node[right] {$\sqrt{E} r$};
		\draw[darkred, ->] (.1,1.6) to[out=0,in=160] (.7,1.47) node[right] {$1/r$};
		\node[darkgray]() at (3.25,.2) {be};
		\node[darkgray]() at (-.3,2.5) {ze};
		\node[darkgray]() at (.85,.85) {fe};
	\end{tikzpicture}
	\caption{\textit{Left:} a level set of $z = \sqrt{E} r$ in the compactification $X_1 \hookleftarrow \bbR_E\times \bbR^+_r$ described in \S\ref{subsec:parabolic} and depicted in \Cref{fig:parabolic_X}. \textit{Middle:} the result $X_2$ of blowing up the top endpoint of that curve so as to resolve the regime we are currently studying. \textit{Right:} the compactification $M$ in this case, which can be identified with a neighborhood to the right of the dashed line $\mathrm{cl}_{X_2}\{E=0\}$ in $X_2$.}
	\label{fig:parabolic_X_2}
\end{figure}

The solutions of $Pu=0$ can be written
\begin{equation}
	u = c_1(h) U\Big(  \frac{h}{2}, i\sqrt{\frac{2}{h}} z \Big) + c_2(h)U\Big(- \frac{h}{2},  \sqrt{\frac{2}{h}} z \Big), 
	\label{eq:par}
\end{equation}
for arbitrary $c_1,c_2:(0,\infty)\to \bbC$, where $U$ is the usual parabolic cylinder function \cite[Chp. 6- \S6]{OlverBook}. 
From \cref{eq:par}, it is not apparent where logarithmic terms at $\mathrm{fe}$ might come from. Indeed, if we take $c_1=1$ and $c_2=0$, or vice versa, then $u$ is simply smooth at $\mathrm{fe}^\circ$, due to the fact that $U(a,z) \in C^\infty(\bbR_a\times K^\circ_z;\bbC)$
for $K\subset \bbC$ compact; see \cite[\href{http://dlmf.nist.gov/12.4}{\S12.4}]{NIST}. But this is just because if we fix $c_1,c_2$, then the $u$ given by \cref{eq:par} has, in the $h\to 0^+$ expansion of its initial data at $z=1$, logarithmic terms there. So, if instead of fixing $c_1,c_2$, we instead choose $c_1(h),c_2(h)$ so as to make $u$ have $h$-independent initial data at $\mathrm{ie}=\{z=1\}$, then the $h\to 0^+$ expansion of $c_1(h),c_2(h)$ will have to have logarithmic terms. Then, in the $h\to 0^+$ expansion of $u$, the logarithmic terms will have to be at fe instead.

In order to see the logarithmic terms of \begin{equation} 
	u=U(-h/2,2^{1/2} z h^{-1/2})
\end{equation} 
at $\mathrm{ze}^\circ$, we use the coordinate system $(x,\rho)$ near $\mathrm{fe}\cap\mathrm{ze}$, where $x= z^2$ and $\rho = 2^{-1/2} z^{-1}h^{1/2}$. Here, $x$ is a local bdf for $\mathrm{fe}$, up to a change of smooth structure, and $\rho$ is a local bdf for $\mathrm{ze}$, again up to a change of smooth structure. (These changes of smooth structure do not affect the spaces of polyhomogeneous functions.)
In terms of $x,\rho$, we have
\begin{equation}
	u= U( -x \rho^2 ,1/\rho).
\end{equation}
The large-argument expansion of the parabolic cylinder function \cite[\href{http://dlmf.nist.gov/12.9}{\S12.9}]{NIST} gives the Poincar\'e-type expansion
\begin{equation}
	u \sim e^{- 1/4\rho^2} \rho^{1/2-x\rho^2} \sum_{k=0}^\infty (-1)^k \frac{\Gamma(1/2+2k-x\rho^2)}{k! \Gamma(1/2-x\rho^2)} \Big(\frac{\rho^2}{2} \Big)^k 
	\label{eq:misc_030}
\end{equation}
as $\rho \to 0$. For each $x>0$, the term $\digamma_k=\Gamma(1/2+2k-x\rho^2)/\Gamma(1/2-x\rho^2)$ is smooth down to $\rho=0$, so it can be expanded in powers of $\rho$.
The logarithmic terms in \cref{eq:misc_030} are hidden in the \begin{equation} 
	\rho^{-x\rho^2}=e^{-x\rho^2 \log \rho}
\end{equation} 
term. Indeed, we have the following polyhomogeneous expansion:
\begin{equation}
	\rho^{-x\rho^2}  \sim \sum_{k=0}^\infty \frac{1}{k!} (-x\rho^2 \log \rho)^k .
\end{equation}
So, when organized into the form of an exponential-polyhomogeneous expansion by expanding $\rho^{-x\rho^2}$ and expanding each $\digamma_k$, \cref{eq:misc_030} has logarithmic terms, as claimed.

\printbibliography
\end{document}